\newtheorem{theorem}{Theorem}[section]
\newtheorem{lemma}[theorem]{Lemma}
\newtheorem{corollary}[theorem]{Corollary}
\newtheorem{claim}{Claim}[lemma]
\theoremstyle{definition}
\newtheorem{definition}[theorem]{Definition}
\newtheorem{remark}[theorem]{Remark}
\newenvironment{example}
  {\pushQED{\qed}\examplex}
  {\popQED\endexamplex}
\algnewcommand\Input{\item[\textbf{Input:}]}
\algnewcommand\Output{\item[\textbf{Output:}]}
\algnewcommand{\IfThen}[2]{
	\State \algorithmicif\ #1\ \algorithmicthen\ #2
}
\algnewcommand{\IfThenElse}[3]{
	\State \algorithmicif\ #1\ \algorithmicthen\ #2 \ \algorithmicelse\ #3
}
\newcommand{\algmargin}{\the\ALG@thistlm}   
\algnewcommand{\parState}[1]{\State%
	\parbox[t]{\dimexpr\linewidth-\algmargin}{\strut #1\strut}
}
\algnewcommand{\ForInline}[2]{%
    \State \algorithmicfor\ #1\ \algorithmicdo\ #2 \algorithmicend\ \algorithmicfor%
}
\def\N{{\mathbb N}}
\def\R{{\mathbb R}}
\def\SS{{\cal S}}
\def\B{{\cal B}}
\def\C{{\cal C}}
\def\I{{\cal I}}
\def\U{{\cal U}}
\def\D{{\cal D}}
\def\T{{\cal T}}
\def\O{{\cal O}}
\def\P{{\cal P}}
\def\H{{\cal H}}
\def\X{{\cal X}}
\def\L{{\cal L}}
\def\A{{\cal A}}
\def\F{{\cal F}}
\newcommand{\conv}{\textup{conv}}
\newcommand{\Chi}{\raisebox{0pt}[.9ex][.9ex]{$\chi$}}
\let\bar\overline
\let\tilde\widetilde
\newcommand{\textuptt}[1]{{\textup{\texttt{#1}}}}
\providecommand{\keywords}[1]
{
  \small	
  \noindent {\bf \textit{Keywords}:} #1
}
\title{Affinely representable lattices, stable matchings, \\and choice functions}
\author{Yuri Faenza\and Xuan Zhang}
\date{ IEOR Department, Columbia University \\ \{yf2414,xz2569\}@columbia.edu }
\begin{document}

\maketitle

\begin{abstract}
    Birkhoff's representation theorem~\cite{birkhoff1937rings} defines a bijection between elements of a distributive lattice and the family of upper sets of an associated poset. Although not used explicitly, this result is at the backbone of the combinatorial algorithm by Irving et al.~\cite{irving1987efficient} for maximizing a linear function over the set of stable matchings in Gale and Shapley's stable marriage model~\cite{gale1962college}. In this paper, we introduce a property of distributive lattices, which we term as affine representability, and show its role in efficiently solving linear optimization problems over the elements of a distributive lattice, as well as describing the convex hull of the characteristic vectors of lattice elements. We apply this concept to the stable matching model with path-independent quota-filling choice functions, thus giving efficient algorithms and a compact polyhedral description for this model. To the best of our knowledge, this model generalizes all models from the literature for which similar results were known, and our paper is the first that proposes efficient algorithms for stable matchings with choice functions, beyond classical extensions of the Deferred Acceptance algorithm. 
\end{abstract}

\keywords{Stable Matching; Choice Function; Distributive Lattice; Birkhoff's Representation Theorem; Extended Formulation}

\newpage
\tableofcontents

\newpage
\setcounter{page}{1}

\section{Introduction}

Since Gale and Shapley's seminal publication~\cite{gale1962college}, the concept of stability in matching markets has been widely studied by the optimization community. With minor modifications, the one-to-many version of Gale and Shapley's original stable \emph{marriage} model is currently employed in the National Resident Matching Program~\cite{roth1984evolution}, which assigns medical residents to hospitals in the US, and for assigning eighth-graders to public high schools in many major cities in the US~\cite{abdulkadirouglu2003school}.

In this paper, matching markets have two sides, which we call firms $F$ and workers $W$. In the marriage model, every agent from $F\cup W$ has a \emph{strict preference list} that ranks agents in the opposite side of the market. The problem asks for a \emph{stable matching}, which is a matching where no pair of agents prefer each other to their assigned partners. A stable matching can be found efficiently via the Deferred Acceptance (DA) algorithm~\cite{gale1962college}. Although successful, the marriage model does not capture features that have become of crucial importance both inside and outside academia. For instance, there is growing attention to models that can increase diversity in school cohorts~\cite{nguyen2019stable,tomoeda2018finding}. Such constraints cannot be represented in the original model, or its one-to-many or many-to-many generalizations, since admission decisions with diversity concerns cannot be captured by a strict preference list. 

To model these and other markets, instead of ranking individual potential partners, each agent $a\in F\cup W$ is endowed with a \emph{choice function} $\C_a$ that picks a team she prefers the best from a given set of potential partners. See, e.g.,~\cite{echenique2015control,aygun2016dynamic,kamada2015efficient} for more applications of models with choice functions. Models with choice functions were first studied in~\cite{roth1984stability,kelso1982job} (see Section~\ref{sec:literature}). \emph{Mutatis mutandis}, one can define a concept of stability in this model as well (for this and the other technical definition mentioned below, see Section~\ref{sec:basics}). Two classical assumptions on choices functions are \emph{substitutability} and \emph{consistency}, under which the existence of stable matchings is guaranteed~\cite{hatfield2005matching,aygun2013matching}. Clearly, existence results are not enough for applications (and for optimizers). Interestingly, little is known about efficient algorithms in models with choice functions. Only extensions of the classical Deferred Acceptance algorithm for finding the one-side optimal matching have been studied for this model~\cite{roth1984stability,chambers2017choice}.

The goal of this paper is to study algorithms for optimizing a linear function $w$ over the set of stable matchings in models with choice functions, where $w$ is defined over firm-worker pairs. Such questions are classical in combinatorial optimization, see, e.g.,~\cite{schrijver2003combinatorial} (and~\cite{manlove2013algorithmics} for problems on matching markets). We focus on two models. The first model (\textsc{CM-Model}) assumes that all choice functions are substitutable, consistent, and cardinal monotone. The second model (\textsc{CM-QF-Model}) additional assumes that for one side of the market, choice functions are also \emph{quota-filling}. Both models generalize all classical models where agents have strict preference lists, on which results for the question above were known. For these models, Alkan~\cite{alkan2002class} has shown that stable matchings form a distributive lattice. As we argue next, this is a fundamental property that allows us to solve our optimization problem efficiently.

\subsection{Our contributions and techniques}\label{sec:techniques+results}

We give here a high-level description of our approach and results. For the standard notions of posets, distributive lattices, and related definitions see Section~\ref{sec:basic:poset}. All sets considered in this paper are finite. 

Let $\L=(\X,\succeq)$ be a distributive lattice, where the elements of $\X$ are distinct subsets of a base set $E$ and $\succeq$ is a partial order on $\X$. We refer to $S \in \X$ as an \emph{element} (of the lattice). Birkhoff's theorem~\cite{birkhoff1937rings} implies that we can associate\footnote{The result proved by Birkhoff is actually a bijection between the families of lattices and posets, but in this paper we shall not need it in full generality.} to $\L$ a poset $\B=(Y,\succeq^\star)$ such that there is a bijection $\psi: {\cal X} \rightarrow {\cal U}({\cal B})$, where ${\cal U}({\cal B})$ is the family of \emph{upper sets} of ${\cal B}$. $U\subseteq Y$ is an upper set of $\B$ if $y \in U$ and  $y' \succeq^\star y$ for some $y' \in Y$ implies $y' \in U$. We say therefore that $\B$ is a \emph{representation poset} for $\L$ with the \emph{representation function} $\psi$. See Example~\ref{ex:first} below. $\B$ may contain much fewer elements than the lattice ${\cal L}$ it represents, thus giving a possibly ``compact'' description of ${\cal L}$.

The representation poset $\B$ and the representation function $\psi$ are univocally defined per Birkhoff's theorem. Moreover, the representation function $\psi$ satisfies that for $S,S'\in \X$, $S\succeq S'$ if and only if $\psi(S)\subseteq \psi(S')$. Although $\B$ explains how elements of $\X$ are related to each other with respect to $\succeq$, it does not contain any information on which items from $E$ are contained in each lattice element. We introduce therefore Definition~\ref{def:representation-poset}. For $S\in \X$ and $U\in \U(\B)$, we write $\Chi^S \in \{0,1\}^E$ and $\Chi^U\in \{0,1\}^Y$ to denote their characteristic vectors, respectively.

\begin{definition} \label{def:representation-poset} 
    Let ${\cal L}=(\X,\succeq)$ be a distributive lattice on a base set $E$ and $\B=(Y,\succeq^\star)$ be a representation poset for ${\cal L}$ with representation function $\psi$. $\B$ is an \emph{affine representation} of ${\cal L}$ if there exists an affine function $g:\R^Y \rightarrow \R^{E}$ such that $g(\Chi^U)= \chi^{\psi^{-1}(U)}$, for all $U\in {\cal U}(\B)$. In this case, we also say that $\B$ \emph{affinely represents} ${\cal L}$ via function $g$ and that ${\cal L}$ is \emph{affinely representable}. 
\end{definition}

In Definition~\ref{def:representation-poset}, we can always assume $g(u)=Au + x^0$, where $A \in \{0,\pm 1\}^{E \times Y}$ and $x^0$ is the characteristic vector of the maximal element of $\L$.

\begin{example}\label{ex:first}

    Consider first the distributive lattice $\L=(\X,\succeq)$ whose Hasse diagram is given in the Figure~\ref{fig:aff-rep-yes}, with base set $E= \{1,2,3,4\}$.
    
    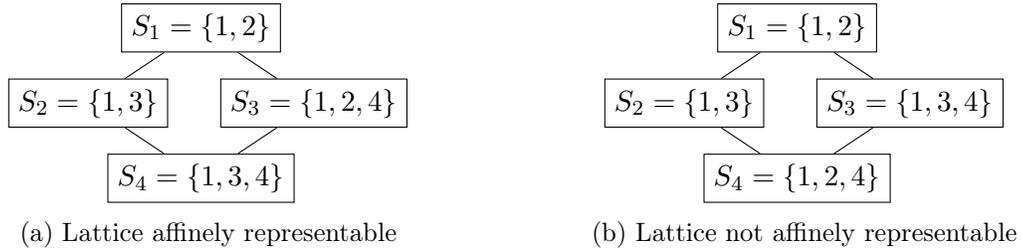
\begin{figure}[h]
    \centering 
    \begin{subfigure}{.48\textwidth}
    \centering
    \begin{tikzpicture}
        \node[rectangle, draw] (1) at (1,1) {$S_1=\{1,2\}$};
        \node[rectangle, draw] (2) at (-.5,0) {$S_2=\{1,3\}$};
        \node[rectangle, draw] (3) at (2.5,0) {$S_3=\{1,2,4\}$};
        \node[rectangle, draw] (4) at (1,-1) {$S_4=\{1,3,4\}$};
        \draw[] (1) -- (2);
        \draw[] (1) -- (3);
        \draw[] (4) -- (2);
        \draw[] (4) -- (3);
    \end{tikzpicture}
    \caption{Lattice affinely representable}\label{fig:aff-rep-yes}
    \end{subfigure}%
    \begin{subfigure}{.48\textwidth}
    \centering
    \begin{tikzpicture}
        \node[rectangle, draw] (1) at (1,1) {$S_1=\{1,2\}$};
        \node[rectangle, draw] (2) at (-.5,0) {$S_2=\{1,3\}$};
        \node[rectangle, draw] (3) at (2.5,0) {$S_3=\{1,3,4\}$};
        \node[rectangle, draw] (4) at (1,-1) {$S_4=\{1,2,4\}$};
        \draw[] (1) -- (2);
        \draw[] (1) -- (3);
        \draw[] (4) -- (2);
        \draw[] (4) -- (3);
    \end{tikzpicture}
    \caption{Lattice not affinely representable} \label{fig:aff-rep-no}
    \end{subfigure}%
    \caption{Lattices for Example~\ref{ex:first}.}
    \end{figure}

    The representation poset $\B=(Y,\succeq^\star)$ of ${\cal L}$ contains two non-comparable elements, $y_1$ and $y_2$. The representation function $\psi$ maps $S_i$ to $U_i$ for $i\in [4]$ with $U_1= \emptyset$, $U_2= \{y_1\}$, $U_3= \{y_2\}$, and $U_4= \{y_1, y_2\}$. That is, $\U(\B)=\{U_1, U_2, U_3, U_4\}$. One can think of $y_1$ as the operation of adding $\{3\}$ and removing $\{2\}$, and $y_2$ as the operation of adding $\{4\}$. $\B$ affinely represents ${\cal L}$ via the function $g(\Chi^U)=A\Chi^U+ \Chi^{S_1}$ where 

    \begin{figure}[H]
    \centering
    \begin{tikzpicture}
    \node[] (a) at (-1,0) {\normalsize $A =\begin{pmatrix} 0 & 0 \\ -1 & 0 \\ 1 & 0 \\ 0 & 1 \end{pmatrix},$};
    \node[left, right=0cm of a] (b) {\normalsize as};
    \node[left, right=0cm of b] {\normalsize
    $\begin{array}{lclclclc}
        g(\Chi^{U_1})^\intercal&= &(0,0,0,0) &+ &(1,1,0,0) &= (1,1,0,0) &= (\Chi^{S_1})^\intercal; \\[.5mm]
        g(\Chi^{U_2})^\intercal&= &(0,-1,1,0) &+ &(1,1,0,0) &= (1,0,1,0) &= (\Chi^{S_2})^\intercal; \\[.5mm]
        g(\Chi^{U_3})^\intercal&= &(0,0,0,1) &+ &(1,1,0,0) &= (1,1,0,1) &= (\Chi^{S_3})^\intercal; \\[.5mm]
        g(\Chi^{U_4})^\intercal&= &(0,-1,1,1) &+ &(1,1,0,0) &= (1,0,1,1) &= (\Chi^{S_4})^\intercal.
    \end{array}$
    };
    \end{tikzpicture}
    \end{figure}

    Next consider the distributive lattice $\L'$ whose Hasse diagram is presented in Figure~\ref{fig:aff-rep-no}. Note that the same poset $\B$ represents $\L'$ with the same representation function $\psi$. Nevertheless, $\L'$ is not affinely representable. If it is and such a function $g(\Chi^U)=A\Chi^U+ \Chi^{S_1}$ exists, then since $(\Chi^{U_1} + \Chi^{U_4})^\intercal = (1,1) = (\Chi^{U_2} + \Chi^{U_3})^\intercal$, we must have
    \begin{equation*}
        \Chi^{S_1}+ \Chi^{S_4}= (\Chi^{S_1}+ A\Chi^{U_1})+ (\Chi^{S_1}+ A\Chi^{U_4})=  (\Chi^{S_1}+ A\Chi^{U_2})+ (\Chi^{S_1}+ A\Chi^{U_3})= \Chi^{S_2}+ \Chi^{S_3}.
    \end{equation*}
    However, this is clearly not the case as $(\Chi^{S_1} + \Chi^{S_4})^\intercal = (2,2,0,1)$ but $(\Chi^{S_2} + \Chi^{S_3})^\intercal = (2,0,2,1)$.
\end{example}

As we show next, affine representability allows one to efficiently solve linear optimization problems over elements of a distributive lattice. In particular, it generalizes properties that are at the backbone of algorithms for optimizing a linear function over the set of stable matchings in the marriage model and its one-to-many and many-to-many generalizations (see, e.g., \cite{irving1987efficient,bansal2007polynomial}). For instance, in the marriage model, the base set $E$ is the set of potential pairs of agents from two sides of the market, $\X$ is the set of stable matchings, and for $S,S' \in \X$, we have $S\succeq S'$ if every firm prefers its partner in $S$ to its partner in $S'$. Elements of its representation poset are certain (trading) cycles, called \emph{rotations}. 

\begin{lemma}\label{thm:reduction-intro}
    Suppose we are given a poset ${\cal B}=(Y,\succeq^\star)$ that affinely represents a lattice ${\cal L}=(\X,\succeq)$ with representation function $\psi$. Let $w:E\rightarrow \R$ be a linear function over the base set $E$ of $\L$. Then the problem $\max \{w^\intercal  \Chi^S : S \in \X\}$ can be solved in time \texttt{min-cut}$(|Y|+2)$, where \texttt{min-cut}$(k)$ is the time complexity required to solve a minimum cut problem with nonnegative weights in a digraph with $k$ nodes.  
\end{lemma}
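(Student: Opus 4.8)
The plan is to transport the optimization from the lattice $\L$ to the family of upper sets $\U(\B)$ by means of the affine function $g$, and then to recognize the resulting problem as a maximum-weight closure problem, which is solved by a single minimum cut computation.

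First I would exploit affine representability. Writing $g(u)=Au+x^0$ with $A\in\{0,\pm1\}^{E\times Y}$ and $x^0$ the characteristic vector of the maximal element of $\L$ (as guaranteed right after Definition~\ref{def:representation-poset}), for every $S\in\X$ we have $\Chi^S=g(\Chi^{\psi(S)})=A\,\Chi^{\psi(S)}+x^0$ and therefore
\begin{equation*}
w^\intercal\Chi^S=(A^\intercal w)^\intercal\Chi^{\psi(S)}+w^\intercal x^0 .
\end{equation*}
Because $\psi$ is a bijection between $\X$ and $\U(\B)$, maximizing $w^\intercal\Chi^S$ over $S\in\X$ is equivalent, up to the additive constant $w^\intercal x^0$, to maximizing $c^\intercal\Chi^U$ over $U\in\U(\B)$, where $c:=A^\intercal w\in\R^Y$. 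Forming $c$ costs time linear in the size of $A$.

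Next I would reduce this maximum-weight upper-set problem to a minimum cut. Let $D$ be the digraph on node set $Y$ with an arc from $y$ to each element covering it in $\B$ (any digraph whose transitive closure is the strict order of $\B$ serves equally well). By definition, $U\subseteq Y$ is an upper set of $\B$ if and only if no arc of $D$ leaves $U$, i.e.\ $U$ is a closure of $D$; so we seek a maximum-weight closure of $D$ under node weights $c$. I would solve this by the classical maximum-weight-closure-to-minimum-cut reduction (Picard): add a source $s$ and a sink $t$, an arc $s\to y$ of capacity $c_y$ for each $y$ with $c_y>0$, an arc $y\to t$ of capacity $-c_y$ for each $y$ with $c_y<0$, and give every arc of $D$ capacity $M:=1+\sum_{y:\,c_y>0}c_y$. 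All capacities are nonnegative and the network has $|Y|+2$ nodes. A minimum $s$--$t$ cut $(\{s\}\cup C,\,\{t\}\cup(Y\setminus C))$ crosses no arc of $D$ — this is forced, since the closure $C=\emptyset$ already yields the value $\sum_{y:\,c_y>0}c_y<M$, so any cut severing an arc of $D$ is not minimum — whence $C$ is a closure; a short accounting then shows the cut value equals $\sum_{y:\,c_y>0}c_y-c^\intercal\Chi^C$, so minimizing the cut maximizes $c^\intercal\Chi^C$ over upper sets.

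Finally, from a minimum cut I read off the optimal upper set $U^\star=C$, return the lattice element $S^\star=\psi^{-1}(U^\star)$, and report the optimal value $c^\intercal\Chi^{U^\star}+w^\intercal x^0$. The only nontrivial computation is the single minimum cut on the $(|Y|+2)$-node network, which gives the claimed running time $\texttt{min-cut}(|Y|+2)$; forming $c$, building $D$, reading off $C$, and applying $\psi^{-1}$ are elementary overhead. I expect the step requiring the most care to be orienting the arcs of $D$ so that its closures coincide exactly with the upper sets of $\B$, together with checking that every capacity is nonnegative so the instance stays within the stated minimum-cut oracle; verifying the cut-value identity is routine but is the part I would spell out in full.
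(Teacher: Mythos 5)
Your proof is correct and follows essentially the same route as the paper: use the affine map $g$ to turn the problem into maximizing $(A^\intercal w)^\intercal\Chi^U$ over upper sets of $\B$, then invoke the closure/min-cut reduction on a digraph with $|Y|+2$ nodes. The only difference is that you spell out Picard's maximum-weight-closure construction (arc orientation, capacities, and the cut-value identity), whereas the paper simply cites~\cite{picard1976maximal} for that step.
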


\begin{proof}
    Let $g(u)=Au+x^0$ be the affine function from the definition of affine representability. We have:
    $$\max_{S \in \X} w^\intercal  \Chi^S = \max_{ U \in {\cal U}({\cal B})} w^\intercal  g(\chi^U) = \max_{U \in {\cal U}({\cal B})} w^\intercal  (A\chi^U + x^0) = w^\intercal  x^0 + \max_{U \in {\cal U}({\cal B})} (w^\intercal A)\chi^U.$$
    Our problem boils down therefore to the optimization of a linear function over the upper sets of ${\cal B}$. It is well-known that the latter problem is equivalent to computing a minimum cut in a digraph with $|Y|+2$ nodes~\cite{picard1976maximal}.
\end{proof}

We want to apply Lemma~\ref{thm:reduction-intro} to the \textsc{CM-QF-Model} model. Observe that a choice function may be defined on all the (exponentially many) subsets of agents from the opposite side. We avoid this computational concern by modeling choice functions via an oracle model. That is, choice functions can be thought of as agents' private information. The complexity of our algorithms will therefore be expressed in terms of $|F|$, $|W|$, and the time required to compute the choice function $\C_{a}(X)$ of an agent $a \in F \cup W$, where the set $X$ is in the domain of $\C_a$. The latter running time is denoted by \textuptt{oracle-call} and we assume it to be independent of $a$ and $X$. Our first result is the following.

\begin{theorem}\label{thm:main-intro}
    The distributive lattice $({\cal S},\succeq)$ of stable matchings in the \textsc{CM-Model} is affinely representable. Its representation poset $(\Pi,\succeq^\star)$ has $O(|F||W|)$ elements. This representation poset, as well as its representation function $\psi$ and affine function $g(u)= Au + x^0$, can be computed in time $O(|F|^3 |W|^3 \textuptt{oracle-call})$ for the \textsc{CM-QF-Model}. Moreover, matrix $A$ has full column rank.
\end{theorem}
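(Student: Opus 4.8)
The plan is to realize the abstract Birkhoff representation poset concretely through a theory of \emph{rotations} adapted to choice functions, and then to read off the affine function, the size bound, and the rank statement from that theory. Throughout I take $\succeq$ to be the firm-side order, so that the firm-optimal stable matching is the maximal element of $(\SS,\succeq)$ and its characteristic vector plays the role of $x^0$. Building on Alkan's theorem that $(\SS,\succeq)$ is a distributive lattice, the first step is to record explicit formulas for the meet $S\wedge S'$ and the join $S\vee S'$ of two stable matchings in terms of the choice functions $\C_a$. Substitutability and consistency (path-independence) are the properties that make these agent-wise operations close up inside $\SS$, so that $(\SS,\succeq)$ really is the lattice to which Birkhoff's theorem applies, yielding the representation poset $\Pi$ and the function $\psi$.

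The central structural step --- and the one I expect to be the main obstacle --- is the \emph{valuation identity}
\begin{equation*}
    \Chi^{S} + \Chi^{S'} \;=\; \Chi^{S\wedge S'} + \Chi^{S\vee S'} \qquad \text{for all } S,S'\in\SS.
\end{equation*}
This is exactly the condition that fails for the non-representable lattice in Example~\ref{ex:first}, so any proof of affine representability must pass through it. Here cardinal monotonicity does the heavy lifting: through the analogue of the rural-hospitals theorem it forces every agent to be matched to the same number of partners in every stable matching, which is what rules out the asymmetric gains and losses that would otherwise break the identity. I would prove it by a local exchange argument on the symmetric difference $(S\cup S')\setminus(S\cap S')$, matching each pair that leaves one of $S,S'$ with a pair that enters the corresponding meet or join.

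Granting the valuation identity, affine representability follows from the classical fact that a vector-valued valuation on a finite distributive lattice is determined affinely by its increments across the join-irreducible elements. The join-irreducibles of $\U(\Pi)$ are the principal upper sets, one per element of $\Pi$, which I identify with the \emph{rotations}; setting $A_{\cdot,\rho}$ to be the increment of $U\mapsto\Chi^{\psi^{-1}(U)}$ across rotation $\rho$ and $x^0=\Chi^{\psi^{-1}(\emptyset)}$, the identity yields $\Chi^{\psi^{-1}(U)}=A\Chi^{U}+x^0$ for every $U\in\U(\Pi)$, so $\Pi$ affinely represents $(\SS,\succeq)$ via $g(u)=Au+x^0$, and the column $A_{\cdot,\rho}$ records the pairs added minus those removed when $\rho$ is eliminated. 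Since all stable matchings have the same size, every covering step of the lattice both adds and removes at least one pair, so I can assign to each rotation a firm--worker pair that it adds. The monotone worsening of partners along any maximal descent from the firm-optimal matching guarantees that such a pair is touched by no earlier rotation in the order; hence these assigned pairs are pairwise distinct, which at once bounds $|\Pi|\le|F||W|$ and, after ordering the rotations by a linear extension of $\succeq^\star$, exhibits an upper-triangular submatrix of $A$ with $\pm 1$ diagonal, proving that $A$ has full column rank.

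It remains to make the construction effective in the \textsc{CM-QF-Model} and to bound its running time. I would give an algorithm that starts from the firm-optimal matching and repeatedly locates and eliminates the currently minimal rotations, descending through the lattice until the worker-optimal matching is reached. The quota-filling assumption is what makes a rotation locally identifiable: it lets one detect the next ``cyclic'' exchange and test its feasibility through a bounded number of choice-function oracle calls, much as filled quotas expose rotations in the classical models. Bounding the number of descent phases and the oracle calls spent per phase gives the overall bound $O(|F|^3|W|^3)\,\textuptt{oracle-call}$; while descending, the procedure records for each rotation its net-change vector (a column of $A$), its comparabilities in $\succeq^\star$, and the upper set it realizes, thereby outputting $\Pi$, $\psi$, and $g$ simultaneously.
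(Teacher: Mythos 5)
Your structural argument is correct, and it takes a genuinely different route from the paper. The paper never works with the valuation identity directly: it instead constructs, for each stable matching $\mu$, the set $P(\mu)$ of \emph{stable} pairs $(f,w)$ with $w\in \C_f(\mu(f)\cup\{w\})$, proves that $\mu\mapsto P(\mu)$ is an isomorphism onto a ring of sets $(\P,\subseteq)$ (Theorem~\ref{thm:iso-ros}), and then invokes Birkhoff's theory of minimal differences/centers for rings of sets to get the poset, the $0/\pm1$ matrix, and full column rank. Your route — check that $\Chi^{S}+\Chi^{S'}=\Chi^{S\wedge S'}+\Chi^{S\vee S'}$ (which in Alkan's model follows almost immediately from his definition of the meet together with the concordance property $S\cap T\subseteq S\vee T$, so your planned exchange argument is more work than needed) and then use the standard fact that a vector-valued valuation on a finite distributive lattice is an affine function of the Birkhoff upper set, with increments indexed by join-irreducibles — is shorter and cleaner for the representability statement alone. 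What the paper's heavier P-set machinery buys is reuse: the ring-of-sets view also drives the algorithmic half (maximal chains of $\P$ contain every minimal difference, Theorem~\ref{thm:chain-md}, and the membership-test algorithm for the partial order, Theorem~\ref{thm:alg-correct-irreducible-ros}). One rigor gap on your side: both your disjointness-of-added-pairs claim (hence $|\Pi|\le|F||W|$) and your triangularity claim rest on ``monotone worsening of partners along any maximal descent,'' i.e., once a pair leaves a matching going down the lattice it never reappears. With choice functions there is no preference order in which partners ``worsen,'' so this is a genuine lemma (the paper's Lemma~\ref{lem:out-cannot-be-in}, proved via path-independence, and its consequence Lemma~\ref{lem:in-then-out}); you assert it rather than prove it.

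The algorithmic half, by contrast, has a genuine gap. Your plan — ``locate and eliminate the currently minimal rotations,'' with quota-filling making a rotation ``locally identifiable'' as the next cyclic exchange, ``much as filled quotas expose rotations in the classical models'' — is precisely the approach the paper argues cannot be transplanted: in the \textsc{CM-Model}/\textsc{CM-QF-Model} a rotation need not be a simple cycle (Example~\ref{ex:rotations}), and the classical detection of rotations via each agent's next-best stable partner has no analogue because there is no strict ordering of partners. The paper's actual algorithm finds an immediate descendant of a given stable matching by a nontrivial generalization of McVitie--Wilson break-marriage (Algorithm~\ref{alg:BM}), and the correctness proof (Theorem~\ref{thm:bm-immediate}) is the technical heart of that half; quota-filling enters there not as ``local identifiability'' but through the full-quota property (Lemma~\ref{lem:full-quota}) and a counting argument showing the procedure is successful. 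Your sketch contains no substitute for this step, and without it ``bounding the number of descent phases and the oracle calls per phase'' has nothing to bound.

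A second, smaller omission: a single maximal chain gives only a linear extension of $\succeq^\star$, not the partial order. Saying the procedure ``records its comparabilities in $\succeq^\star$'' skips the separate algorithm the paper needs for this (Algorithm~\ref{alg:irreducible}): for each pair of rotations one must test whether certain perturbed matchings $\mu\triangle\rho_j^-\triangle\rho_j^+$ are stable, and the correctness of that test is again justified through the ring-of-sets isomorphism. So the representability, size, and rank claims stand (modulo the monotonicity lemma), but the computability claim for the \textsc{CM-QF-Model} — which is half of the theorem — is not established by your proposal.
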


In Theorem~\ref{thm:main-intro}, we assumed that operations, such as comparing two sets and obtaining an entry from the set difference of two sets, take constant time. If this is not the case, a factor mildly polynomial in $|F|\cdot|W|$ needs to be added to the running time. Observe that Theorem~\ref{thm:main-intro} is the union of two statements. First, the distributive lattice of stable matchings in the \textsc{CM-Model} is affinely representable. Second, this representation and the corresponding functions $\psi$ and $g$ can be found efficiently for the \textsc{CM-QF-Model}. Those two results are proved in Section~\ref{sec:affine-representability} and Section~\ref{sec:algo}, respectively. Combining Theorem~\ref{thm:main-intro}, Lemma~\ref{thm:reduction-intro} and algorithms for min-cut (see, e.g., \cite{schrijver2003combinatorial}), we obtain the following.

\begin{corollary}\label{cor:main-intro}
    The problem of optimizing a linear function over the set of stable matchings in the \textsc{CM-QF-Model} can be solved in time $O(|F|^3 |W|^3 \textuptt{oracle-call})$.
\end{corollary}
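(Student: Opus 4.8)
The plan is to combine the two ingredients the paper has already assembled: Theorem~\ref{thm:main-intro}, which produces the representation poset and the associated functions for the \textsc{CM-QF-Model}, and Lemma~\ref{thm:reduction-intro}, which reduces linear optimization over the lattice to a single minimum-cut computation once an affine representation is in hand. Since Corollary~\ref{cor:main-intro} is stated as a consequence of these two results together with standard min-cut algorithms, the proof is essentially an exercise in bookkeeping the running times, and I would present it as such.

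First I would invoke Theorem~\ref{thm:main-intro} to obtain, for the given instance of the \textsc{CM-QF-Model}, the representation poset $(\Pi,\succeq^\star)$ together with its representation function $\psi$ and the affine function $g(u)=Au+x^0$. The theorem guarantees this preprocessing costs $O(|F|^3|W|^3\,\textuptt{oracle-call})$ time, and crucially it records that $|\Pi|=O(|F||W|)$, so the representation poset is of polynomial size. I would emphasize that this is where all the model-specific work lives: once we have $\B=(\Pi,\succeq^\star)$ and $g$ explicitly, the optimization problem no longer references choice functions at all.

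Next I would apply Lemma~\ref{thm:reduction-intro} with this affine representation. The lemma tells us that $\max\{w^\intercal\Chi^S : S\in\SS\}$ can be solved in time $\textup{\texttt{min-cut}}(|\Pi|+2)$. Substituting $|\Pi|=O(|F||W|)$, the digraph on which we compute a minimum cut has $O(|F||W|)$ nodes. Any standard polynomial-time min-cut algorithm (for instance those in~\cite{schrijver2003combinatorial}) runs in time polynomial in the number of nodes, and in particular well within $O(|F|^3|W|^3)$ on a graph with $O(|F||W|)$ nodes; here I would note that forming the reduced objective $w^\intercal A$ and building the min-cut instance from $g$ is dominated by the preprocessing cost. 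Adding the two phases, the total running time is $O(|F|^3|W|^3\,\textuptt{oracle-call})$, as claimed.

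The one point that deserves care, rather than a genuine obstacle, is verifying that the min-cut phase does not dominate the stated bound and that the arithmetic on the vector $w^\intercal A$ is accounted for. Since $A\in\{0,\pm1\}^{E\times Y}$ with $|E|=O(|F||W|)$ and $|Y|=O(|F||W|)$, computing $w^\intercal A$ takes $O(|F|^2|W|^2)$ time, and the min-cut call on $O(|F||W|)$ nodes is comfortably subsumed by the preprocessing term; I would simply remark that the construction time from Theorem~\ref{thm:main-intro} therefore dictates the overall complexity. No deeper difficulty arises, since affine representability has already done the conceptual heavy lifting by converting a combinatorial optimization over lattice elements into a linear optimization over upper sets.
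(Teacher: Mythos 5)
Your proposal is correct and follows exactly the paper's argument: the paper derives Corollary~\ref{cor:main-intro} precisely by combining Theorem~\ref{thm:main-intro} (construction of the affine representation with $|\Pi|=O(|F||W|)$ in $O(|F|^3|W|^3\,\textuptt{oracle-call})$ time) with Lemma~\ref{thm:reduction-intro} (reduction to a min-cut instance on $|\Pi|+2$ nodes), noting the min-cut phase is dominated by the preprocessing. Your additional bookkeeping of the cost of forming $w^\intercal A$ and of the min-cut call is a harmless elaboration of the same argument.
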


As an interesting consequence of studying a distributive lattice via the poset that affinely represents it, one immediately obtains a linear description of the convex hull of the characteristic vectors of elements of the lattice (see Section~\ref{sec:poly}). In contrast, most stable matching literature (see Section~\ref{sec:literature}) has focused on deducing linear descriptions for special cases of our model via ad-hoc proofs, independently of the lattice structure. 

\begin{theorem}\label{thm:poly-intro}
    Let ${\cal L}=(\X,\succeq)$ be a distributive lattice and ${\cal B}=(Y,\succeq^\star)$ be a poset that affinely represents it via function $g(u)=Au + x^0$. Then the extension complexity of $\conv(\X)\coloneqq \conv\{\Chi^S: S \in \X\}$ is $O(|Y|^2)$. If moreover $A$ has full column rank, then $\conv(\X)$ has $O(|Y|^2)$ facets.
\end{theorem}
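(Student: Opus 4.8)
The plan is to exhibit $\conv(\X)$ as an affine image of the \emph{order polytope} of the representation poset $\B$, and then to read both bounds off the classical facial structure of order polytopes. First I would rewrite the vertex set using affine representability. Since $\psi$ is a bijection from $\X$ onto $\U(\B)$ and $\Chi^S = A\Chi^{\psi(S)} + x^0$ for every $S\in\X$, and since an affine map commutes with taking convex hulls, we get
\[
\conv(\X) \;=\; \conv\{A\Chi^U + x^0 : U \in \U(\B)\} \;=\; A\cdot \O(\B) + x^0,
\]
where $\O(\B) \coloneqq \conv\{\Chi^U : U \in \U(\B)\}$. Thus $\conv(\X)$ is the image of $\O(\B)$ under the affine map $u\mapsto Au + x^0$.

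Second, I would invoke the standard inequality description of $\O(\B)$. The convex hull of the characteristic vectors of the upper sets of a poset is an integral polytope — the constraint matrix below has rows with a single $+1$ and a single $-1$, hence is a network matrix and is totally unimodular (see, e.g., \cite{schrijver2003combinatorial,picard1976maximal}) — namely
\[
\O(\B) \;=\; \{u \in [0,1]^Y : u_{y'} \ge u_{y}\ \text{whenever}\ y' \succeq^\star y\}.
\]
Only the box constraints $0\le u_y\le 1$ and the covering inequalities are needed, so $\O(\B)$ is cut out by $O(|Y|^2)$ inequalities and hence has at most $O(|Y|^2)$ facets.

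The extension-complexity bound then follows from the extended formulation
\[
\conv(\X) \;=\; \bigl\{x \in \R^{E} : \exists\, u \in \R^{Y},\ x = Au + x^0,\ u \in \O(\B)\bigr\},
\]
whose defining system consists of the $O(|Y|^2)$ order inequalities on $u$ together with the equations $x = Au + x^0$ (equations contribute no facets); projecting out $u$ leaves $\conv(\X)$, so its extension complexity is $O(|Y|^2)$. For the sharper facet count, note that full column rank of $A$ makes $u\mapsto Au + x^0$ injective, so its restriction to $\O(\B)$ is an affine isomorphism onto $\conv(\X)$; affinely isomorphic polytopes have isomorphic face lattices, and in particular the same number of facets, giving $O(|Y|^2)$ facets for $\conv(\X)$.

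The step I expect to be the crux is the facial analysis of $\O(\B)$: one must justify both that the vertices of the displayed inequality system are \emph{exactly} the characteristic vectors of upper sets (integrality) and that at most $O(|Y|^2)$ of the inequalities are facet-defining. Both are classical, but they are what powers the whole argument; once they are in hand, the affine-image and injectivity arguments are routine.
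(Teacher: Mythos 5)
Your proposal is correct and follows essentially the same route as the paper: both express $\conv(\X)$ as the affine image $x^0 + A\cdot\O(\B)$ of the order polytope, obtain the extension complexity from the $O(|Y|^2)$-inequality description of $\O(\B)$, and use full column rank of $A$ to get an affine isomorphism and hence a facet-count bound. The only cosmetic difference is that you justify the order polytope's integrality and inequality description via total unimodularity, whereas the paper cites Stanley's characterization of its vertices and facets directly; both are standard and interchangeable here.
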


Theorem~\ref{thm:main-intro} and Theorem~\ref{thm:poly-intro} imply the following description of the stable matching polytope $\conv(\SS)$, i.e., the convex hull of the characteristic vectors of stable matchings. 

\begin{corollary}
    $\conv(\SS)$ has $ O(|F|^2|W|^2)$ facets in the \textsc{CM-Model}.
\end{corollary}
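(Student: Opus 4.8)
The plan is to obtain this purely by chaining the two preceding theorems, with no new combinatorial work required. First I would invoke Theorem~\ref{thm:main-intro} in the \textsc{CM-Model}: it asserts that the distributive lattice $(\SS,\succeq)$ of stable matchings is affinely representable, that its representation poset $(\Pi,\succeq^\star)$ satisfies $|\Pi|=O(|F||W|)$, and — crucially — that the matrix $A$ of the affine map $g(u)=Au+x^0$ has full column rank. These three facts are exactly the inputs that the stronger conclusion of Theorem~\ref{thm:poly-intro} demands.

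Having set this up, I would apply Theorem~\ref{thm:poly-intro} with the identification $\X=\SS$ and $Y=\Pi$. Since $A$ has full column rank, that theorem yields that $\conv(\SS)$ has $O(|\Pi|^2)$ facets (rather than merely the extension-complexity bound $O(|\Pi|^2)$ that one would get without the rank hypothesis). Substituting the cardinality estimate $|\Pi|=O(|F||W|)$ from Theorem~\ref{thm:main-intro} gives
\[
O(|\Pi|^2)=O\!\left((|F||W|)^2\right)=O(|F|^2|W|^2),
\]
which is the claimed bound on the number of facets of the stable matching polytope.

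I expect essentially no obstacle here: the corollary is an immediate specialization, and the single point that actually needs verification is that the full-column-rank hypothesis of Theorem~\ref{thm:poly-intro} is met. That is guaranteed precisely by the final clause of Theorem~\ref{thm:main-intro}. Were that clause unavailable, the argument would degrade to only the extension-complexity statement $O(|F|^2|W|^2)$ and would not control the facet count directly; so the one thing worth flagging in the write-up is that we are consuming exactly the ``$A$ has full column rank'' conclusion to pass from extension complexity to an honest bound on facets.
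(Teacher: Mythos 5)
Your proposal is correct and is exactly the paper's own argument: the corollary is stated there as an immediate consequence of chaining Theorem~\ref{thm:main-intro} (affine representability in the \textsc{CM-Model}, $|\Pi|=O(|F||W|)$, and full column rank of $A$) with the facet-count conclusion of Theorem~\ref{thm:poly-intro}. You also correctly isolate the only point needing care, namely that the full-column-rank clause is what upgrades the extension-complexity bound to a genuine facet bound.
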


We next give an example of a lattice represented via a non-full-column rank matrix $A$. 

\begin{example}\label{ex:not-aff-isomorphic}
    Consider the distributive lattice given in Figure~\ref{fig:non-full-rank-X}. It can be represented via the poset $\B=(Y,\succeq^\star)$ that contains three elements $y_1$, $y_2$, and $y_3$ where $y_1\succeq^\star y_2\succeq^\star y_3$. The upper sets of $\B$ are $\U(\B)= \{\emptyset, \{y_1\}, \{y_1, y_2\}, \{y_1, y_2, y_3\}\}$. In addition, $\B$ affinely represents ${\cal L}$ via the function $g(\Chi^U)= A\Chi^U+ \Chi^{S_1}$, where $A$ is given in Figure~\ref{fig:non-full-rank-A}. Matrix $A$ clearly does not have full column rank.
\end{example}

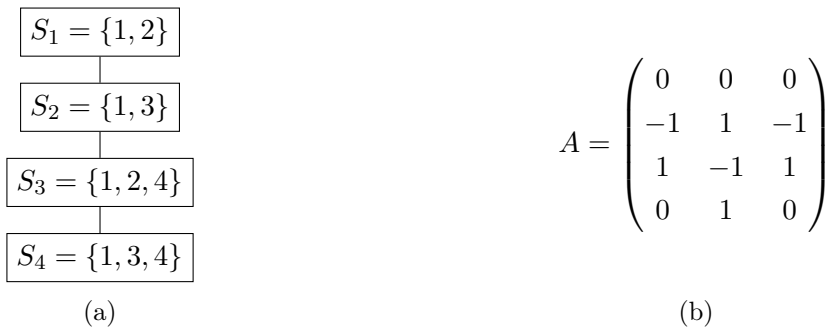
\begin{figure}[H]
    \centering
    \begin{subfigure}{.48\textwidth}
    \centering
    \begin{tikzpicture}
        \node[rectangle, draw] (1) at (1,1) {\normalsize $S_1=\{1,2\}$};
        \node[rectangle, draw] (2) at (1,0) {\normalsize $S_2=\{1,3\}$};
        \node[rectangle, draw] (3) at (1,-1) {\normalsize $S_3=\{1,2,4\}$};
        \node[rectangle, draw] (4) at (1,-2) {\normalsize $S_4=\{1,3,4\}$};
        \draw[] (1) -- (2);
        \draw[] (2) -- (3);
        \draw[] (3) -- (4);
    \end{tikzpicture}
    \caption{} \label{fig:non-full-rank-X}
    \end{subfigure}%
    \begin{subfigure}{.48\textwidth}
    \centering
    \begin{tikzpicture}
        \node[] at (0,1.2) {};
        \node[] at (0,-2.2) {};
        \node[] at (0,-.5) {\normalsize $A=\begin{pmatrix}0 & 0 & 0 \\ -1 & 1 & -1 \\ 1 & -1 & 1 \\ 0 & 1 & 0\end{pmatrix}$};
    \end{tikzpicture}
    \caption{} \label{fig:non-full-rank-A}
    \end{subfigure}%
    \caption{Affine representation with non-full-column-rank matrix $A$} \label{fig:non-full-rank}
\end{figure}

Lastly, in Section~\ref{sec:representation-and-algo}, we discuss alternative ways to represent choice functions, dropping the oracle-model assumption. Interestingly, we show that choice functions in the \textsc{CM-Model} (i.e., substitutable, consistent, and cardinal monotone) do not have polynomial-size representation because the number of possible choice functions in such a model is doubly-exponential in the size of acceptable partners.

\subsection{Relationship with the literature} \label{sec:literature} 

Gale and Shapley~\cite{gale1962college} introduced the one-to-one stable marriage (\textsc{SM-Model}) and the one-to-many stable admission model (\textsc{SA-Model}), and presented an algorithm which finds a stable matching. McVitie and Wilson~\cite{mcvitie1971stable} proposed the break-marriage procedure that allows us to find the full set of stable matchings. Irving et al.~\cite{irving1987efficient} presented an efficient algorithm for the maximum-weighted stable matching problem with weights over pairs of agents, utilizing the fact stable matchings form a distributive lattice~\cite{knuth1976marriages} and that its representation poset -- an affine representation following our terminology -- can be constructed efficiently via the concept of rotations~\cite{irving1986complexity}. The above-mentioned structural and algorithm results were shown for its many-to-many generalization (\textsc{MM-Model}) by Ba{\"\i}ou and Balinski~\cite{baiou2000many}, and Bansal et al.~\cite{bansal2007polynomial}. A complete survey of results on these models can be found, e.g., in~\cite{gusfield1989stable,manlove2013algorithmics}.

For models with substitutable and consistent choice functions, Roth~\cite{roth1984stability} proved that stable matchings always exist by generalizing the algorithm presented in~\cite{gale1962college}. Blair~\cite{blair1988lattice} proved that stable matchings form a lattice, although not necessarily distributive. Alkan~\cite{alkan2001preferences} showed that if choice functions are further assumed to be quota-filling, the lattice is distributive.  Results on (non-efficient) enumeration algorithms for certain choice functions appeared in~\cite{martinez2004algorithm}.

It is then natural to investigate whether algorithms from~\cite{bansal2007polynomial,irving1986complexity} can be directly extended to construct the representation poset in the \textsc{CM-QF-Mode} or the more general \textsc{CM-Model}. However, their definition of rotation and techniques rely on the fact that there is a strict ordering of partners, which is not available with choice functions. This, for instance, leads to the fact that the symmetric difference of two stable matchings that are adjacent in the Hasse Diagram of the lattice is a simple cycle, which is not always true in the \textsc{CM-Model} (see Example~\ref{eg:p-set}). We take then a more fundamental approach by showing a carefully defined ring of sets is isomorphic to the set of stable matchings, and thus we can construct the rotation poset following a maximal chain of the stable matching lattice. This approach conceptually follows the one by Gusfield and Irving~\cite{gusfield1989stable} for the \textsc{SM-Model} and leads to a generalization of the break-marriage procedure from~\cite{mcvitie1971stable}. Again, proofs in \cite{gusfield1989stable,mcvitie1971stable} heavily rely on the strict ordering of partners, while we need to tackle the challenge of not having one.

Besides the combinatorial perspective, another line of research focuses on the polyhedral aspects. Linear descriptions of the convex hull of the characteristic vectors of stable matchings are provided for the \textsc{SM-Model}~\cite{vate1989linear,rothblum1992characterization,roth1993stable}, the \textsc{SA-Model}~\cite{baiou2000stable}, and the \textsc{MM-Model}~\cite{fleiner2003stable}. In this paper, we provide a polyhedral description for the \textsc{CM-QF-Model}, by drawing connections between the order polytope (i.e., the convex hull of the characteristic vectors of upper sets of a poset) and Birkhoff's representation theorem of distributive lattices. 
A similar approach has been proposed in~\cite{aprile20182}: their result can be seen as a specialization of Theorem~\ref{thm:main-intro} to the \textsc{SM-Model}.

\section{Basics} \label{sec:basics}

\subsection{Posets, lattices, and distributivity} \label{sec:basic:poset}

A set $X$ endowed with a partial order relation $\ge$, denoted as $(X,\ge)$, is called a \emph{partially ordered set (poset)}.  When the partial order $\ge$ is clear from context, we often times simply use $X$ to denote the poset $(X, \ge)$. Let $a,a'\in X$, if $a'> a$, we say $a'$ is a \emph{predecessor} of $a$ in poset $(X,\ge)$, and $a$ is a \emph{descendant} of $a'$ in poset $(X,\ge)$. If moreover, there is no $b\in X$ such that $a' > b > a$, we say that $a'$ an \emph{immediate predecessor} of $a$ in poset $(X,\ge)$ and that $a$ is an \emph{immediate descendant} of $a'$ in poset $(X,\ge)$. If $a\not\ge a'$ and $a'\not\ge a$, we say $a$ and $a'$ are \emph{incomparable}.

For a subset $S\subseteq X$, an element $a\in X$ is said to be an \emph{upper bound} (resp.~\emph{lower bound}) of $S$ if for all $b\in S$, $a\ge b$ (resp.~$b\ge a$). An upper bound (resp.~lower bound) $a'$ of $S$ is said to be its \emph{least upper bound} or \emph{join} (resp.~\emph{greatest lower bound} or \emph{meet}), if $a \ge a'$ (resp.~$a'\ge a$) for each upper bound (resp.~lower bound) $a$ of $S$.

A \emph{lattice} is a poset for which every pair of elements has a join and a meet, and for every pair those are unique by definition. Thus, two binary operations are defined over a lattice: join and meet. A lattice is \emph{distributive} where the operations of join and meet distribute over each other.

For $n \in \N$, we denote by $[n]$ the set $\{1,\cdots,n\}$. Two lattices are said to be \emph{isomorphic} if there is a structure-preserving mapping between them that can be reversed by an inverse mapping. Such a structure-preserving mapping is called an \emph{isomorphism} between the two lattices.

\subsection{The firm-worker models} \label{sec:basic:model}
    
Let $F$ and $W$ denote two disjoint finite sets of agents, say firms and workers respectively. Associated with each firm $f\in F$ is a \emph{choice function} $\C_f: 2^{W(f)} \rightarrow 2^{W(f)}$ where $W(f)\subseteq W$ is the set of \emph{acceptable} partners of $f$ and $\C_f$ satisfies the property that for every $S\subseteq W(f)$, $\C_f(S)\subseteq S$. Similarly, a choice function $\C_w: 2^{F(w)} \rightarrow 2^{F(w)}$ is associated to each worker $w$. We assume that for every firm-worker pair $(f,w)$, $f\in F(w)$ if and only if $w\in W(f)$. We let $\C_W$ and $\C_F$ denote the collection of firms' and workers' choice functions respectively. A \emph{matching market} (or an instance) is a tuple $(F, W, \C_F, \C_W)$. Following~\cite{alkan2002class}, we define below the properties of \emph{substitutability}, \emph{consistency}, and \emph{cardinal monotonicity (law of aggregate demand)} for choice function $\C_a$  of an agent $a$.

\begin{definition}[Substitutability]
	An agent $a$'s choice function $\C_a$ is substitutable if for any set of partners $S$, $b\in \C_a(S)$ implies that for all $T\subseteq S$, $b\in \C_a(T\cup \{b\})$.
\end{definition}

\begin{definition}[Consistency]
	An agent $a$'s choice function $\C_a$ is consistent if for any sets of partners $S$ and $T$, $\C_a(S) \subseteq T \subseteq S$ implies $\C_a(S) = \C_a(T)$.
\end{definition}

\begin{definition}[Cardinal monotonicity]
    An agent $a$'s choice function $\C_a$ is cardinal monotone if for all sets of partners $S\subseteq T$, we have $|\C_a(S)|\le |\C_a(T)|$.
\end{definition}

Intuitively, substitutability implies that if an agent is selected from a set of candidates, she will also be selected from a smaller subset; consistency is also called ``irrelevance of rejected contracts''; and cardinal monotonicity implies that the size of the image of the choice function is monotone with respect to set inclusion. 

Aizerman and Malishevski~\cite{aizerman1981general} showed that a choice function is substitutable and consistent if and only if it is \emph{path-independent}.

\begin{definition}[Path-independence]
	An agent $a$'s choice function $\C_a$ is path-independent if for any sets of partners $S$ and $T$, $\C_a(S\cup T) = \C_a\big( \C_a(S)\cup T\big)$.
\end{definition}

We next prove a few properties of path-independent choice functions.

\begin{lemma} \label{lem:PI-repeat}
    Let $\C:2^A\rightarrow 2^A$ be a path-independent choice function and let $A_1, A_2\subseteq A$. If $\C(A_1\cup \{a\})=\C(A_1)$ for every $a\in A_2\setminus A_1$, then $\C(A_1\cup A_2)= \C(A_1)$.
\end{lemma}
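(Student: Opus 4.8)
The plan is to add the elements of $A_2 \setminus A_1$ to $A_1$ one at a time and show by induction that the choice never changes. I would write $A_2 \setminus A_1 = \{a_1, \dots, a_k\}$ in an arbitrary order and set $B_0 = A_1$ and $B_i = A_1 \cup \{a_1, \dots, a_i\}$ for $i \in [k]$, so that $B_k = A_1 \cup A_2$. I then prove the claim $\C(B_i) = \C(A_1)$ for every $i$; the case $i = k$ is exactly the statement of the lemma.

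The base case $\C(B_0) = \C(A_1)$ is immediate. For the inductive step, assume $\C(B_{i-1}) = \C(A_1)$. Since $B_i = B_{i-1} \cup \{a_i\}$, path-independence applied with $S = B_{i-1}$ and $T = \{a_i\}$ gives $\C(B_i) = \C\big(\C(B_{i-1}) \cup \{a_i\}\big)$, and substituting the inductive hypothesis this equals $\C\big(\C(A_1) \cup \{a_i\}\big)$. Applying path-independence a second time, now with $S = A_1$ and $T = \{a_i\}$, yields $\C(A_1 \cup \{a_i\}) = \C\big(\C(A_1) \cup \{a_i\}\big)$. Hence $\C(B_i) = \C(A_1 \cup \{a_i\})$, and because $a_i \in A_2 \setminus A_1$ the hypothesis of the lemma gives $\C(A_1 \cup \{a_i\}) = \C(A_1)$. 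Chaining these equalities closes the induction.

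The whole argument rests on the single observation that path-independence lets one replace any set by its choice image without affecting the outcome: used in one direction it collapses the already-processed block $B_{i-1}$ down to $\C(A_1)$, and used in the other direction it rewrites $\C\big(\C(A_1) \cup \{a_i\}\big)$ back into $\C(A_1 \cup \{a_i\})$, so that the per-element hypothesis applies verbatim. There is no genuine obstacle here; the only point requiring care is to invoke path-independence \emph{twice} in the step, so that the new element $a_i$ is always recombined with $A_1$ rather than with the larger set $B_{i-1}$, which is precisely what makes the given hypothesis usable. Since the order chosen for $a_1, \dots, a_k$ plays no role, the conclusion is independent of it.
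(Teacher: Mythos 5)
Your proof is correct and follows essentially the same approach as the paper's: both arguments process the elements of $A_2\setminus A_1$ one at a time, applying path-independence twice per element (once to collapse the already-handled set to $\C(A_1)$, once to rewrite $\C\big(\C(A_1)\cup\{a_i\}\big)$ as $\C(A_1\cup\{a_i\})$) so that the per-element hypothesis applies. The paper phrases this as a single chain of equalities peeling off one element at a time rather than an explicit induction, but the mechanism is identical.
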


\begin{proof}
    Assume $A_2\setminus A_1= \{a_1,a_2,\cdots,a_t\}$. Then, by repeated application of the path independence property,
    \begin{align*}
        \C(A_1\cup A_2) &= \C(A_1\cup \{a_1, a_2, \cdots, a_t\})= \C(\C(A_1\cup \{a_1\}) \cup\{a_2, \cdots, a_t\}) \\
        &= \C(\C(A_1) \cup \{a_2, \cdots, a_t\}) = \C(A_1 \cup \{a_2, a_3, \cdots, a_t\}) = \cdots  = \C(A_1).
    \end{align*}
\end{proof}

\begin{corollary} \label{cor:PI-repeat}
    Let $\C:2^A\rightarrow 2^A$ be a path-independent choice function and let $A_1, A_2\subseteq A$. If $a\notin \C(A_1\cup \{a\})$ for every $a\in A_2\setminus A_1$, then $\C(A_1\cup A_2)= \C(A_1)$.
\end{corollary}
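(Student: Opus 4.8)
The plan is to reduce Corollary~\ref{cor:PI-repeat} to Lemma~\ref{lem:PI-repeat}, which has already been established. The two statements have identical conclusions, namely $\C(A_1 \cup A_2) = \C(A_1)$, and nearly identical hypotheses: Lemma~\ref{lem:PI-repeat} assumes $\C(A_1 \cup \{a\}) = \C(A_1)$ for every $a \in A_2 \setminus A_1$, whereas the corollary assumes the ostensibly weaker condition $a \notin \C(A_1 \cup \{a\})$ for every such $a$. So the entire content of the proof is to show that this weaker per-element hypothesis implies the hypothesis of the lemma; once that implication is in hand, the conclusion follows by directly invoking Lemma~\ref{lem:PI-repeat}.

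Concretely, first I would fix an arbitrary $a \in A_2 \setminus A_1$ and show that $a \notin \C(A_1 \cup \{a\})$ forces $\C(A_1 \cup \{a\}) = \C(A_1)$. The natural tool is consistency (Definition of consistency), since a path-independent choice function is substitutable and consistent. The idea is that removing the single rejected element $a$ from the set $A_1 \cup \{a\}$ should not change the chosen set. Precisely, since $a \notin \C(A_1 \cup \{a\})$, we have $\C(A_1 \cup \{a\}) \subseteq A_1 \subseteq A_1 \cup \{a\}$. Applying consistency with $S = A_1 \cup \{a\}$ and $T = A_1$, which satisfy $\C(S) \subseteq T \subseteq S$, yields $\C(A_1 \cup \{a\}) = \C(A_1)$, exactly the hypothesis of Lemma~\ref{lem:PI-repeat} for this element $a$.

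Since $a \in A_2 \setminus A_1$ was arbitrary, this establishes $\C(A_1 \cup \{a\}) = \C(A_1)$ for every $a \in A_2 \setminus A_1$, which is precisely the hypothesis needed to apply Lemma~\ref{lem:PI-repeat}. Invoking that lemma then gives $\C(A_1 \cup A_2) = \C(A_1)$, completing the proof.

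I do not anticipate a genuine obstacle here; the corollary is essentially a convenience repackaging of the lemma under a more easily-checked hypothesis. The only point requiring a moment of care is making sure I am entitled to use consistency: the hypotheses refer to $\C$ being path-independent, and by the Aizerman--Malishevski characterization cited in the excerpt, path-independence is equivalent to substitutability together with consistency, so consistency is available. The step $\C(A_1 \cup \{a\}) \subseteq A_1$ deserves a one-line justification — it holds because $\C(A_1 \cup \{a\}) \subseteq A_1 \cup \{a\}$ always (every choice function selects a subset of its argument) and $a$ is excluded by hypothesis — but this is routine.
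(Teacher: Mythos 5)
Your proposal is correct and follows exactly the paper's own argument: use consistency (applied with $S = A_1 \cup \{a\}$, $T = A_1$) to upgrade the hypothesis $a \notin \C(A_1 \cup \{a\})$ into $\C(A_1 \cup \{a\}) = \C(A_1)$, and then invoke Lemma~\ref{lem:PI-repeat} directly. The paper's proof is a two-line version of the same reasoning; your write-up merely spells out the inclusion $\C(A_1 \cup \{a\}) \subseteq A_1$ that justifies the consistency step.
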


\begin{proof}
    By the consistency property of $\C$, $a\notin \C(A_1\cup \{a\})$ implies $\C(A_1\cup \{a\}) = \C(A_1)$. Lemma~\ref{lem:PI-repeat} then applies directly.
\end{proof}

\begin{lemma} \label{lem:dom-P-pre}
    Let $\C: 2^A\rightarrow 2^A$ be a path-independent choice function and let $A_1,A_2\subseteq A$, $a \in A$. Assume $\C(A_1\cup A_2)=A_1$ and $a\in \C(A_1\cup \{a\})$. Then, $a\in \C(A_2\cup \{a\})$.
\end{lemma}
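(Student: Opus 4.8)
The plan is to enlarge both sides by the element $a$ and then shrink back down, using path-independence to ``absorb'' the set $A_2$ into $A_1$ and then substitutability to discard $A_1$ again. Concretely, I would first prove the intermediate claim that $a\in \C(A_1\cup A_2\cup \{a\})$, and then peel off $A_1$ via substitutability to reach the desired conclusion $a\in \C(A_2\cup \{a\})$.

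For the intermediate claim, I would apply the path-independence identity $\C(S\cup T)=\C(\C(S)\cup T)$ with $S=A_1\cup A_2$ and $T=\{a\}$. Since by hypothesis $\C(A_1\cup A_2)=A_1$, this gives
\[
\C(A_1\cup A_2\cup \{a\})=\C\big(\C(A_1\cup A_2)\cup \{a\}\big)=\C(A_1\cup \{a\}).
\]
The second hypothesis, $a\in \C(A_1\cup \{a\})$, then immediately yields $a\in \C(A_1\cup A_2\cup \{a\})$. Thus $a$ is chosen even from the full set $A_1\cup A_2\cup\{a\}$, which is in fact slightly stronger than what the statement asks.

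For the final step, I would invoke substitutability, which is available because path-independent choice functions are exactly the substitutable and consistent ones, by Aizerman and Malishevski~\cite{aizerman1981general}. Applying substitutability to the set $S=A_1\cup A_2\cup \{a\}$, the selected element $a\in \C(S)$, and the subset $T=A_2\subseteq S$, the membership established above gives $a\in \C(A_2\cup \{a\})$, as desired.

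The argument is short and I do not expect a serious obstacle once the right reduction is identified; the only point requiring care is the order in which the two hypotheses are combined. Path-independence is what lets one show that appending $a$ to $A_1\cup A_2$ has the same effect as appending it to $A_1$ alone (this is where $\C(A_1\cup A_2)=A_1$ is used), and only afterwards does substitutability allow one to drop the now-irrelevant part $A_1$ while keeping $a$ among the chosen elements. Reversing these two moves, or trying to apply substitutability directly to $A_1\cup\{a\}$, fails because $A_2$ need not be contained in $A_1\cup\{a\}$.
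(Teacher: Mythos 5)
Your proof is correct. The first half is exactly the paper's: apply path-independence with $S=A_1\cup A_2$, $T=\{a\}$ to get $\C(A_1\cup A_2\cup\{a\})=\C(A_1\cup\{a\})\ni a$. Where you diverge is the final step. The paper stays entirely inside the path-independence identity: it writes $A_1\cup A_2\cup\{a\}$ as $(A_1\setminus\{a\})\cup(A_2\cup\{a\})$ and uses path-independence again to get $\C(A_1\cup A_2\cup\{a\})=\C\big(\C(A_1\setminus\{a\})\cup\C(A_2\cup\{a\})\big)$; since $\C$ of a set is contained in that set and $a\notin\C(A_1\setminus\{a\})$, it concludes $a\in\C(A_2\cup\{a\})$. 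You instead invoke substitutability (available via the Aizerman--Malishevski equivalence, which the paper states in Section~\ref{sec:basic:model}) applied to $S=A_1\cup A_2\cup\{a\}$ and $T=A_2$, which yields the conclusion in one line. Both are valid; the paper's version is self-contained in path-independence and needs no appeal to the equivalence theorem, while yours is slightly more direct once substitutability is granted, and your closing remark about why the subset relation forces this order of moves is accurate.
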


\begin{proof}
    By path-independence, we have that $\C(A_1\cup A_2\cup \{a\}) = \C(\C(A_1\cup A_2)\cup \{a\}) = \C(A_1\cup \{a\})$ and thus $a\in \C(A_1\cup A_2\cup \{a\})$. Also, by path-independence, we have $\C(A_1\cup A_2\cup \{a\}) = \C\big(\C(A_1 \setminus \{a\}) \cup \C(A_2\cup \{a\})\big)$. Since $a\notin \C(A_1 \setminus \{a\})$, it must be that $a\in \C(A_2\cup \{a\})$.
\end{proof}

A \emph{matching} $\mu$ is a mapping from $F\cup W$ to $2^{F\cup W}$ such that for all $w\in W$ and $f\in F$, (1) $\mu(w)\subseteq F(w)$; (2) $\mu(f)\subseteq W(f)$; and (3) $w\in \mu(f)$ if and only if $f\in \mu(w)$. A matching can also be viewed as a collection of firm-worker pairs. That is, $\mu\equiv \{(f,w): f\in F, w\in \mu(f)\}$. Thus, we use $(f,w)\in \mu$, $w\in \mu(f)$, and $f\in \mu(w)$ interchangeably. We say a matching $\mu$ is \emph{individually rational} if for every agent $a$, $\C_a(\mu(a))=\mu(a)$. An acceptable firm-worker pair $(f,w)\notin \mu$ is called a \emph{blocking pair} if $w\in \C_f(\mu(f)\cup \{w\})$ and $f\in \C_w(\mu(w)\cup \{f\})$, and when such pair exists, we say $\mu$ is \emph{blocked by} the pair or the pair blocks $\mu$. A matching $\mu$ is \emph{stable} if it is individually rational and it admits no blocking pairs. If $f$ is matched to $w$ in some stable matching, we say that $(f,w)$ is a \emph{stable pair} and that $f$ (resp.~$w$) is a \emph{stable partner} of $w$ (resp.~$f$). We denote by $\SS(\C_F, \C_W)$ the set of stable matchings in the market $(F,W,\C_F, \C_W)$, and when the market is clear from the context we abbreviate $\SS:=\SS(\C_F, \C_W)$.
 
Alkan~\cite{alkan2002class} showed the following.

\begin{theorem}[\cite{alkan2002class}] \label{thm:alkan}
    Consider a matching market $(F,W,\C_F,\C_W)$ and assume $\C_F$ and $\C_W$ are substitutable, consistent, and cardinal monotone. Then $\SS(\C_F, \C_W)$ is a distributive lattice under the partial order relation $\succeq$ where $\mu_1\succeq \mu_2$ if for all $f\in F$, $\C_f(\mu_1(f))\cup \mu_2(f)) = \mu_1(f)$. The join (denoted by $\vee$) and meet (denoted by $\wedge$) operations of the lattice are defined component-wise. That is, for all $f\in F$:
	\begin{align*}
	(\mu_1\vee \mu_2)(f) &\coloneqq \mu_1(f)\vee \mu_2(f) \coloneqq \C_f(\mu_1(f)\cup \mu_2(f)), \\
	(\mu_1 \wedge \mu_2)(f) &\coloneqq\mu_1(f)\wedge \mu_2(f) \\
	&\coloneqq \big(\big( \mu_1(f) \cup \mu_2(f)\big) \setminus (\mu_1\vee \mu_2)(f) \big) \cup \big(\mu_1(f) \cap \mu_2(f)\big).
	\end{align*}
    Moreover, $\SS(\C_F,\C_W)$ satisfies the \emph{polarity} property: $\mu_1\succeq \mu_2$ if and only if $\C_w(\mu_1(w))\cup \mu_2(w)) = \mu_2(w)$ for every worker $w\in W$.
\end{theorem}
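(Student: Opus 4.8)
The plan is to verify the three assertions in the order stated: that $\succeq$ is a partial order, that the component-wise join and meet are well-defined stable matchings (and are the least upper bound and greatest lower bound), and finally distributivity and polarity. The only primitives available are substitutability, consistency, cardinal monotonicity, the no-blocking-pair condition, and the path-independence consequences recorded in Lemma~\ref{lem:PI-repeat}, Corollary~\ref{cor:PI-repeat}, and Lemma~\ref{lem:dom-P-pre}; every step has to be reduced to these. (Note that quota-filling is not assumed here.)

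First I would check that $\succeq$ is a partial order. Reflexivity is immediate from individual rationality, since $\C_f(\mu(f)\cup\mu(f))=\C_f(\mu(f))=\mu(f)$ for every stable $\mu$, and antisymmetry is immediate from the defining identity. For transitivity, suppose $\C_f(\mu_1(f)\cup\mu_2(f))=\mu_1(f)$ and $\C_f(\mu_2(f)\cup\mu_3(f))=\mu_2(f)$, and abbreviate $X_i=\mu_i(f)$. Applying path-independence to the two ways of grouping $X_1\cup X_2\cup X_3$ gives
\begin{align*}
    \C_f(X_1\cup X_3) &= \C_f\big(\C_f(X_1\cup X_2)\cup X_3\big) = \C_f(X_1\cup X_2\cup X_3)\\
    &= \C_f\big(X_1\cup \C_f(X_2\cup X_3)\big) = \C_f(X_1\cup X_2) = X_1,
\end{align*}
so $\mu_1\succeq\mu_3$.

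The central and hardest step is to show that $\mu_1\vee\mu_2$ and $\mu_1\wedge\mu_2$, which are prescribed only firm-by-firm, are genuine matchings and are stable. Here I would prove a \emph{pointwise complementarity} lemma: for stable $\mu_1,\mu_2$, the set of pairs obtained when every firm $f$ keeps $\C_f(\mu_1(f)\cup\mu_2(f))$ coincides with the set obtained when every worker $w$ keeps $\big((\mu_1(w)\cup\mu_2(w))\setminus \C_w(\mu_1(w)\cup\mu_2(w))\big)\cup\big(\mu_1(w)\cap\mu_2(w)\big)$. Equivalently, $w\in(\mu_1\vee\mu_2)(f)$ if and only if $f\in(\mu_1\vee\mu_2)(w)$, so the prescription really is a matching, and the worker-side reading is exactly the meet formula. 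The argument would separate common pairs (in $\mu_1(f)\cap\mu_2(f)$) from symmetric-difference pairs: for the latter I would show that a firm and a worker have \emph{opposite} preferences — if $f$ keeps $w$ then $w$ rejects $f$, and conversely — extracting this from the absence of blocking pairs together with substitutability, since a pair kept by both of its endpoints would block whichever of $\mu_1,\mu_2$ omits it. Stability of $\mu_1,\mu_2$ together with cardinal monotonicity is then what forces the firm-side and worker-side cardinalities to agree, ruling out an inconsistent assignment; stability of the join follows because a pair blocking $\mu_1\vee\mu_2$ would, by substitutability, already block $\mu_1$ or $\mu_2$. This lemma is the main obstacle: lacking a strict order on partners one cannot track partners one at a time, so the whole burden falls on combining path-independence, cardinal monotonicity, and stability to control the set-valued choices.

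Given this lemma, the remaining claims are comparatively routine. That $\mu_1\vee\mu_2$ is an upper bound follows from consistency: with $\lambda(f)=\C_f(\mu_1(f)\cup\mu_2(f))$ one has $\lambda(f)\subseteq \lambda(f)\cup\mu_1(f)\subseteq \mu_1(f)\cup\mu_2(f)$, so $\C_f(\lambda(f)\cup\mu_1(f))=\lambda(f)$, i.e. $\mu_1\vee\mu_2\succeq\mu_1$, and symmetrically for $\mu_2$; that it is the \emph{least} upper bound follows because any $\nu\succeq\mu_1,\mu_2$ satisfies $\C_f(\nu(f)\cup\mu_1(f)\cup\mu_2(f))=\nu(f)$, which path-independence rewrites as $\C_f(\nu(f)\cup\lambda(f))=\nu(f)$. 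The meet is handled dually. For distributivity I would use that in any lattice the two distributive laws are equivalent and then verify $\mu_1\wedge(\mu_2\vee\mu_3)=(\mu_1\wedge\mu_2)\vee(\mu_1\wedge\mu_3)$ firm-by-firm from the explicit join/meet formulas via repeated path-independence (Lemma~\ref{lem:PI-repeat} and Corollary~\ref{cor:PI-repeat}). Finally, polarity is precisely the firm/worker symmetry built into the complementarity lemma: $\mu_1\succeq\mu_2$ says every firm's choice from $\mu_1(f)\cup\mu_2(f)$ returns $\mu_1(f)$, and complementarity translates this into every worker's choice from $\mu_1(w)\cup\mu_2(w)$ returning $\mu_2(w)$, which is the stated equivalence.
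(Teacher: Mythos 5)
A preliminary point: the paper does not prove Theorem~\ref{thm:alkan} at all --- it is quoted from Alkan~\cite{alkan2002class} and used as a black box (the paper's own work starts downstream of it, with Lemma~\ref{lem:dom-P} onward). So your attempt must be judged against Alkan's theorem itself, not against anything in this paper. Your outer skeleton is sound: the partial-order checks, the upper-bound and least-upper-bound computations via path-independence and consistency, and the recognition that everything hinges on a firm--worker complementarity lemma (which is exactly the polarity statement) are all correct, and you correctly note that quota-filling is not available.

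The gap is that the steps you defer are precisely where the theorem lives, and for two of them the argument you gesture at would not work as stated. (1) In the complementarity lemma, your ``easy'' direction is right: a symmetric-difference pair kept by both endpoints blocks, via substitutability, whichever of $\mu_1,\mu_2$ omits it. But the converse --- that no symmetric-difference pair is rejected by both endpoints --- is the crux, and ``cardinal monotonicity forces the cardinalities to agree'' restates what must be proved rather than proving it. A correct completion is a global count: cardinal monotonicity gives $\sum_f|\C_f(\mu_1(f)\cup\mu_2(f))|\ge\sum_f\max(|\mu_1(f)|,|\mu_2(f)|)\ge\tfrac12(|\mu_1|+|\mu_2|)$ and likewise on the worker side, while your easy direction bounds the combined sum above by $|\mu_1\triangle\mu_2|+2|\mu_1\cap\mu_2|=|\mu_1|+|\mu_2|$; forcing every inequality to be tight then delivers, simultaneously, the equal-quota property, concordance, and the complementarity you want. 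Note that concordance, i.e.\ $\mu_1(f)\cap\mu_2(f)\subseteq\C_f(\mu_1(f)\cup\mu_2(f))$, is itself not obvious, yet your ``separate the common pairs'' step silently assumes it; without it the firm-side and worker-side prescriptions already disagree on common pairs, so the join is not even a matching. (2) Stability of the join: your one-liner works only on the firm side. On the worker side, $(\mu_1\vee\mu_2)(w)$ is the complementary set --- essentially the firms that $w$ \emph{rejects} from $\mu_1(w)\cup\mu_2(w)$, plus the common ones --- which is neither a choice image nor comparable by inclusion with $\mu_i(w)$, so neither substitutability nor path-independence converts $f\in\C_w((\mu_1\vee\mu_2)(w)\cup\{f\})$ into $f\in\C_w(\mu_i(w)\cup\{f\})$; worker-side individual rationality of the join needs a separate argument for the same reason. (3) Distributivity: the identity $\mu_1\wedge(\mu_2\vee\mu_3)=(\mu_1\wedge\mu_2)\vee(\mu_1\wedge\mu_3)$ does not reduce firm-by-firm ``by repeated path-independence,'' because the meet formula nests set differences inside the arguments of $\C_f$, which path-independence cannot unwind; indeed the identity is false for arbitrary sets and holds only by exploiting stability-derived facts (individual rationality, concordance, equal quotas), so it requires its own substantive argument. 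In short, the proposal is a reasonable roadmap, but the deferred steps are the theorem.
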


Because of the lattice structure, the firm- and worker-optimal stable matchings are well-defined, and we denote them respectively by $\mu_F$ and $\mu_W$. In addition, Alkan~\cite{alkan2002class} showed two properties, which we call \emph{concordance} (Proposition 7, \cite{alkan2002class}) and \emph{equal-quota} (Proposition 6, \cite{alkan2002class}), satisfied by the family of sets of partners under all stable matchings for every agent $a$. Let $\Phi_a \coloneqq \{\mu(a): \mu\in \SS(\C_F,\C_W)\}$. Then for all $S, T\in \Phi_a$, 
\begin{equation} \tag{concordance} \label{eq:concordance}
    S\cap T \subseteq S\vee T
\end{equation} 
and 
\begin{equation} \tag{equal-quota} \label{eq:equal-quota}
    |S|=|T| \eqqcolon \bar q_a.
\end{equation} 

Instead of cardinal monotonicity, an earlier paper of Alkan~\cite{alkan2001preferences} considers a more restrictive property of choice functions, called \emph{quota-filling}.

\begin{definition}[Quota-filling]
    An agent $a$'s choice function $\C_a$ is quota-filling if there exists $q_a\in \N$ such that for any set of partners $S$, $|\C_a(S)|=\min(q_a, |S|)$. We call $q_a$ the \emph{quota} of agent $a$.
\end{definition}

Intuitively, quota-filling means that an agent has a number of positions and she tries to fill these positions as many as possible. Note that quota-filling implies cardinal monotonicity. Let $q_a$ denote the quota of each agent $a\in F\cup W$. 

Our results from Section~\ref{sec:affine-representability} assume path-independence (i.e., substitutability and consistency) and cardinal monotonicity. In Section~\ref{sec:algo}, we will restrict our model by replacing cardinal monotonicity with quota-filling for one side of the market. These two models are what we call the \textsc{CM-Model} and the \textsc{CM-QF-Model}, respectively.

\subsection{MC-representation for path-independent choice functions}\label{sec:mc-representation}

We now introduce an alternative, equivalent description of choice functions for the model studied in this paper that we will use in examples throughout the paper, and investigate more in detail in Section~\ref{sec:representation-and-algo}.

Aizerman and Malishevski~\cite{aizerman1981general} showed that a choice function $\C_a$ is path-independent if and only if there exists a finite sequence of $p(\C_a)\in \N$ preference relations over acceptable partners, denoted as $\{\ge_{a,i}\}_{i\in [p(\C_a)]}$ indexed by $i$, such that for every subset of acceptable partners $S$, $\C_a(S) = \cup_{i\in [p(\C_a)]} \{x^*_{a,i}\}$, where $x^*_{a,i}=\max(S,\ge_{a,i})$ is the maximum element\footnote{If $S=\emptyset$, then $\max(S,\ge_{a,i})$ is defined to be $\emptyset$.} of $S$ according to $\ge_{a,i}$. We call this sequence of preference relations the \emph{Maximizer-Collecting representation} (MC-representation) of choice function $\C_a$. Note that for distinct $i_1,i_2\in [p(\C_a)]$, it is possible to have $x^*_{a,i_1}= x^*_{a,i_2}$.

Conceptually, one can view the MC-representation as follows: a firm is a collection of \emph{positions}, each of which has its own preference relation; a worker is a collection of \emph{personas}, each of whom also has his or her own preference relation. Each firm hires the best candidate for each position, and the same candidate can be hired for two positions if (s)he is the best for both. A symmetric statement holds for workers and personas.

\begin{remark}
    We would like to again highlight the differences between MC-representation of choice functions and the representation, in the \textsc{MM-Model}, by a single preference list $\ge_a$ together with a quota $q_a$. In particular, in the \textsc{MM-Model}, $\C_a(S)= \cup_{i\in [q_a]} \{\tilde x_{a,i}\}$, where $\tilde x_{a,i}=\max(S\setminus \{\tilde x_{a,j}: j\in [i-1]\}, \ge_a)$. Note that here for distinct $i_1,i_2\in [q_a]$, $\tilde x_{a,i_1} \neq \tilde x_{a,i_2}$ unless both are $\emptyset$.
\end{remark}

\section{Affine representability of the stable matching lattice} \label{sec:affine-representability}

In this section, we show that the distributive lattice of stable matchings in the model by~\cite{alkan2002class} is affinely representable. An algorithm to construct an affine representation is given in Section~\ref{sec:algo} where we additionally impose the quota-filling property upon choice functions of agents in one side of the markets. The proof of this section proceeds as follows. First, we show in Section~\ref{sec:sm-ring-of-sets} that the lattice of stable matchings $(\SS, \succeq)$ is isomorphic to a lattice $(\P, \subseteq)$ belonging to a special class, that is called \emph{ring of sets}. In Section~\ref{sec:affine-ring-of-sets}, we then show that ring of sets are always affinely representable.  In Section~\ref{sec:compact-representation}, we show a poset $(\Pi,\succeq^\star)$ representing $(\SS, \succeq)$. Lastly, in Section~\ref{sec:affine-repr-final}, we show how to combine all those results and ``translate'' the affine representability of $(\P, \subseteq)$ to  the affine representability of $(\SS, \succeq)$, concluding the proof.

\subsection{Isomorphism between the stable matching lattice and a ring of sets} \label{sec:sm-ring-of-sets}

A family $\H=\{H_1, H_2, \cdots, H_k\}$ of subsets of a \emph{base set} $B$ is a \emph{ring of sets} over $B$ if $\H$ is closed under set union and set intersection~\cite{birkhoff1937rings}. Note that a ring of sets is a distributive lattice with the partial order relation $\subseteq$, and the join and meet operations corresponds to set intersection and set union, respectively. An example of a ring of sets is given in Example~\ref{eg:ring-of-sets}.

In this and the following section, we fix a matching market $(F,W,\C_F,\C_W)$ and assume that $\C_F$ and $\C_W$ are path-independent and cardinal monotone (i.e., the framework of~\cite{alkan2002class}). Let $\phi(a)$ denote the set of stable partners of agent $a$. That is, $\phi(a)\coloneqq \{b: b\in \mu(a) \text{ for some } \mu\in \SS\}$. For a stable matching $\mu$, define $$P_f(\mu)\coloneqq \{w\in \phi(f): w\in \C_f(\mu(f)\cup \{w\})\},$$ and define the \emph{P-set} of $\mu$ as $$P(\mu)\coloneqq \{(f,w): f\in F, w\in P_f(\mu)\}.$$

The goal of this section is to show the following theorem, which gives a representation of the stable matching lattice as a ring of sets. Let $\P(\C_F, \C_W)$ denote the set $\{P(\mu): \mu\in \SS(\C_F, \C_W)\}$, and we often abbreviate $\P\coloneqq \P(\C_F, \C_W)$.

\begin{theorem} \label{thm:iso-ros}
    Assume $\C_F$ and $\C_W$ are path-independent and cardinal monotone. Then, 
    \begin{enumerate}
        \item[(1)] the mapping $P:\SS\rightarrow \P$ is a bijection; 
        \item[(2)] $(\P, \subseteq)$ is isomorphic to $(\SS, \succeq)$. That is, for two stable matchings $\mu_1, \mu_2\in \SS$, we have $\mu_2\succeq \mu_1$ if and only if $P(\mu_2)\subseteq P(\mu_1)$. Moreover, $P(\mu_1\vee \mu_2) =P(\mu_1)\cap P(\mu_2)$ and $P(\mu_1\wedge \mu_2) =P(\mu_1)\cup P(\mu_2)$. In particular, $(\P, \subseteq)$ is a ring of sets over the base set $\{(f,w): f\in F, w\in \phi(f)\}$.
    \end{enumerate}
\end{theorem}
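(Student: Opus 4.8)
My plan is to prove the order characterization in part (2) first, then deduce the bijection in part (1) and the ring-of-sets structure as consequences. Throughout I will use the elementary facts that, for a stable $\mu$ and a firm $f$, individual rationality gives $\C_f(\mu(f))=\mu(f)$, and hence $\mu(f)\subseteq P_f(\mu)\subseteq\phi(f)$. The skeleton has four steps: (i) order-preservation in one direction; (ii) strictness of that inclusion along comparable pairs; (iii) the set-identity $P(\mu_1\vee\mu_2)=P(\mu_1)\cap P(\mu_2)$; and (iv) assembling these into the converse direction, injectivity, and closure under $\cap,\cup$.

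For step (i) I would show $\mu_2\succeq\mu_1\Rightarrow P(\mu_2)\subseteq P(\mu_1)$. Fix $f$ and $w\in P_f(\mu_2)$, so $w\in\phi(f)$ and $w\in\C_f(\mu_2(f)\cup\{w\})$. Since $\mu_2\succeq\mu_1$ means $\C_f(\mu_2(f)\cup\mu_1(f))=\mu_2(f)$, applying Lemma~\ref{lem:dom-P-pre} with $A_1=\mu_2(f)$, $A_2=\mu_1(f)$, $a=w$ yields $w\in\C_f(\mu_1(f)\cup\{w\})$, i.e.\ $w\in P_f(\mu_1)$. For step (ii) I would show this inclusion is strict when $\mu_1\ne\mu_2$: by equal-quota \eqref{eq:equal-quota} some $f$ has $w\in\mu_1(f)\setminus\mu_2(f)$, and applying consistency to $\mu_2(f)\subseteq\mu_2(f)\cup\{w\}\subseteq\mu_2(f)\cup\mu_1(f)$ (whose choice set is $\mu_2(f)$) gives $\C_f(\mu_2(f)\cup\{w\})=\mu_2(f)\not\ni w$, so $(f,w)\in P(\mu_1)\setminus P(\mu_2)$. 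Equivalently, equality of $P$-sets on a comparable pair forces equality of the matchings.

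The real work is step (iii), and I expect the inclusion $P(\mu_1)\cap P(\mu_2)\subseteq P(\mu_1\vee\mu_2)$ to be the main obstacle; the reverse inclusion is immediate from (i) since $\mu_1\vee\mu_2$ dominates both $\mu_1$ and $\mu_2$. Using path-independence to rewrite the target as $w\in\C_f(\mu_1(f)\cup\mu_2(f)\cup\{w\})$, one sees that the hypotheses $w\in\C_f(\mu_1(f)\cup\{w\})$ and $w\in\C_f(\mu_2(f)\cup\{w\})$ are \emph{not} enough by themselves: for a general path-independent $\C_f$, an element chosen from $A\cup\{w\}$ and from $B\cup\{w\}$ need not be chosen from $A\cup B\cup\{w\}$. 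This is exactly where the stable-matching structure must enter. When $w\in\mu_1(f)\cap\mu_2(f)$ the claim follows from concordance \eqref{eq:concordance}, which forces $\mu_1(f)\cap\mu_2(f)\subseteq\C_f(\mu_1(f)\cup\mu_2(f))$. The remaining case $w\in\mu_1(f)\setminus\mu_2(f)$ (and its mirror) is the crux: here I would invoke stability of $\mu_2$ together with the polarity property of Theorem~\ref{thm:alkan} to pass to the worker side—since $(f,w)$ is acceptable and does not block $\mu_2$, $w\in\C_f(\mu_2(f)\cup\{w\})$ forces $f\notin\C_w(\mu_2(w)\cup\{f\})$—and then combine this with $w\in\mu_1(f)$ and equal-quota to conclude that $w$ survives in $\C_f(\mu_1(f)\cup\mu_2(f))$. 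This two-sided coupling, supplied for free in the classical marriage model by the ``opposition of interests'' between the two sides, is precisely what must be rebuilt from the path-independence, cardinal-monotonicity, concordance, and equal-quota axioms, and is the step I expect to be most delicate.

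Finally, step (iv) assembles everything. Given $P(\mu_2)\subseteq P(\mu_1)$, identity (iii) gives $P(\mu_1\vee\mu_2)=P(\mu_1)\cap P(\mu_2)=P(\mu_2)$; since $\mu_1\vee\mu_2\succeq\mu_2$, the strictness from step (ii) forces $\mu_1\vee\mu_2=\mu_2$, i.e.\ $\mu_2\succeq\mu_1$, which is the converse of (i) and establishes part (2). Injectivity (part (1)) then follows: $P(\mu_1)=P(\mu_2)$ yields $\mu_1\succeq\mu_2$ and $\mu_2\succeq\mu_1$, so $\mu_1=\mu_2$ by antisymmetry, while surjectivity onto $\P$ holds by definition. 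The meet-identity $P(\mu_1\wedge\mu_2)=P(\mu_1)\cup P(\mu_2)$ I would obtain symmetrically, running the step-(iii) argument through the worker-side/polarity formulation. Closure of $\P$ under $\cap$ and $\cup$ is then immediate (the intersection equals $P(\mu_1\vee\mu_2)\in\P$ and the union equals $P(\mu_1\wedge\mu_2)\in\P$), so $(\P,\subseteq)$ is a ring of sets over $\{(f,w):f\in F,\,w\in\phi(f)\}$, completing the proof.
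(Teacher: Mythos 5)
Your steps (i), (ii), and (iv) are correct, and step (iv) assembles the converse direction and injectivity exactly as the paper does (your step (ii) is essentially the paper's Lemma~\ref{lem:additional}). The genuine gap is in step (iii), at precisely the point you call the crux. Your case analysis --- $w\in\mu_1(f)\cap\mu_2(f)$, $w\in\mu_1(f)\setminus\mu_2(f)$, and its mirror --- is not exhaustive: $P_f(\mu_i)$ is in general strictly larger than $\mu_i(f)$, so a worker $w\in P_f(\mu_1)\cap P_f(\mu_2)$ may lie outside $\mu_1(f)\cup\mu_2(f)$ altogether ($w$ only needs to be matched to $f$ in some \emph{third} stable matching and to be chosen by $\C_f$ from $\mu_i(f)\cup\{w\}$ for $i=1,2$). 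This case is non-vacuous, including for incomparable $\mu_1,\mu_2$, and your polarity/blocking-pair argument cannot reach it: that argument needs $w\in\mu_1(f)\cup\mu_2(f)$ in order to place $w$ in $(\mu_1\wedge\mu_2)(f)$ when it drops out of the join, and no such placement is available here. Handling exactly this case is why the paper proves Lemma~\ref{lem:exist-dom-P}: for any $w\in P_f(\mu_1)$ there exists a stable matching $\bar\mu_1\succeq\mu_1$ with $w\in\bar\mu_1(f)$ (obtained by joining $\mu_1$ with a stable matching containing $(f,w)$ and using path-independence/consistency). Given that lemma, one takes $\bar\mu_1\succeq\mu_1$ and $\bar\mu_2\succeq\mu_2$ both containing $(f,w)$, gets $w\in(\bar\mu_1\vee\bar\mu_2)(f)$ from the \ref{eq:concordance} property, and concludes $w\in P_f(\mu_1\vee\mu_2)$ from $\bar\mu_1\vee\bar\mu_2\succeq\mu_1\vee\mu_2$ and your step (i). Without some substitute for this existence lemma, your step (iii) does not go through.

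A second, smaller gap: the meet identity does not follow ``symmetrically,'' since the P-sets are firm-side objects and no worker-side analogue of them is set up; the paper needs a separate argument here, namely Lemma~\ref{lem:exist-dom-P-middle} (a sandwiched refinement of Lemma~\ref{lem:exist-dom-P}) combined with the observation that $\mu_i\succeq\bar\mu_i\succeq\mu_1\wedge\mu_2$ for $i=1,2$ forces $\bar\mu_1\wedge\bar\mu_2=\mu_1\wedge\mu_2$. For what it is worth, the one crux case you do discuss can be completed along the lines you sketch: from stability, $f\notin\C_w(\mu_2(w)\cup\{f\})$; if $w$ dropped out of the join then $w\in(\mu_1\wedge\mu_2)(f)$, and polarity applied to $\mu_2\succeq\mu_1\wedge\mu_2$ plus substitutability would give $f\in\C_w(\mu_2(w)\cup\{f\})$, a contradiction (equal-quota is not needed). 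But this covers only a special case of the identity you must prove, so the overall proof remains incomplete.
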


\begin{remark} \label{rmk:P-set-defn}
    An isomorphism between the lattice of stable matchings and a ring of sets (also called P-set) is proved in the \textsc{SM-Model} by Gusfield and Irving~\cite{gusfield1989stable} as well. However, they define $P(\mu) := \{(f,w): f\in F, w\ge_f \mu(f)\}$, hence including firm-worker pairs that are not stable. We show in Example~\ref{eg:p-set} that in our more general setting, the P-set by~\cite{gusfield1989stable} is not a ring of sets. As a consequence, while in their model the construction of the P-set for a given stable matching is immediate, in ours it is not, since we need to know first which pairs are stable.
\end{remark}

\begin{lemma} \label{lem:dom-P}
    Let $\mu_1$ and $\mu_2$ be two stable matchings such that $\mu_2\succeq \mu_1$. Then, $P_f(\mu_2)\subseteq P_f(\mu_1)$ for every firm $f$.
\end{lemma}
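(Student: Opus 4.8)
The plan is to unfold the definitions and reduce the claimed set inclusion to a single membership statement about $f$'s choice function, which will then follow directly from the preparatory Lemma~\ref{lem:dom-P-pre}. Concretely, fix a firm $f$ and take an arbitrary $w \in P_f(\mu_2)$. By the definition of $P_f$, this means $w \in \phi(f)$ and $w \in \C_f(\mu_2(f) \cup \{w\})$. Since $P_f(\mu_1)$ by definition also consists only of stable partners of $f$, and the condition $w \in \phi(f)$ is already in hand, it suffices to show $w \in \C_f(\mu_1(f) \cup \{w\})$; this would place $w$ in $P_f(\mu_1)$ and establish the inclusion.

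Next I would translate the hypothesis $\mu_2 \succeq \mu_1$ into a statement about $f$'s choice function using the definition of the partial order in Theorem~\ref{thm:alkan}, namely $\C_f(\mu_2(f) \cup \mu_1(f)) = \mu_2(f)$. With this, the goal matches exactly the setup of Lemma~\ref{lem:dom-P-pre}: taking $\C = \C_f$ and setting the lemma's first set to $\mu_2(f)$, its second set to $\mu_1(f)$, and its distinguished element to $w$, the two hypotheses of that lemma become precisely $\C_f(\mu_2(f) \cup \mu_1(f)) = \mu_2(f)$ (true by the order relation) and $w \in \C_f(\mu_2(f) \cup \{w\})$ (true since $w \in P_f(\mu_2)$). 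Its conclusion is $w \in \C_f(\mu_1(f) \cup \{w\})$, which is exactly what is needed, completing the argument.

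The only real subtlety, and the single place where care is required, is bookkeeping in the role assignment: the order $\mu_2 \succeq \mu_1$ is defined through the firms' choice functions in an asymmetric way, so one must match $\mu_2(f)$ (not $\mu_1(f)$) to the ``$A_1$'' slot of Lemma~\ref{lem:dom-P-pre}, whose hypothesis reads $\C(A_1 \cup A_2) = A_1$. Once this pairing is fixed correctly, the argument is essentially a one-line application, and there is no genuine obstacle; indeed, Lemma~\ref{lem:dom-P-pre} appears to have been stated precisely to make this step immediate. I would not expect to invoke cardinal monotonicity here, only path-independence of $\C_f$, which already underlies Lemma~\ref{lem:dom-P-pre}.
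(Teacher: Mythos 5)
Your proof is correct and is exactly the paper's argument: translate $\mu_2 \succeq \mu_1$ into $\C_f(\mu_2(f) \cup \mu_1(f)) = \mu_2(f)$ and apply Lemma~\ref{lem:dom-P-pre} with $A_1 = \mu_2(f)$, $A_2 = \mu_1(f)$, $a = w$. The paper states this in two lines; you have simply spelled out the same role assignment explicitly.
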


\begin{proof}
    Since $\mu_2\succeq \mu_1$, we have that $\C_f(\mu_2(f) \cup \mu_1(f))=\mu_2(f)$. The claim then follows from Lemma~\ref{lem:dom-P-pre}.
\end{proof}

\begin{lemma} \label{lem:exist-dom-P}
    Let $\mu_1$ be a stable matching such that $w\in P_f(\mu_1)$ for some firm $f$ and worker $w$. Then, there exists a stable matching $\mu_2$ such that $\mu_2\succeq \mu_1$ and $w\in \mu_2(f)$.
\end{lemma}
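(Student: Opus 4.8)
The plan is to realize the desired matching as a join. First I would dispose of the trivial case: if $w\in\mu_1(f)$ then $\mu_2=\mu_1$ already works, so assume $w\notin\mu_1(f)$, equivalently $f\notin\mu_1(w)$. Since $w\in P_f(\mu_1)$ forces $w\in\phi(f)$, there is a stable matching $\nu$ with $w\in\nu(f)$, hence $f\in\nu(w)$. I would then set $\mu_2:=\mu_1\vee\nu$. By Theorem~\ref{thm:alkan} this is a stable matching, and being an upper bound of $\mu_1$ it satisfies $\mu_2\succeq\mu_1$; the entire difficulty is therefore to show $w\in\mu_2(f)$.

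A direct firm-side attack---trying to pass from $w\in\C_f(\mu_1(f)\cup\{w\})$ to $w\in\C_f(\mu_1(f)\cup\nu(f))=\mu_2(f)$---does not work, because substitutability only propagates membership in a choice set from larger ground sets to smaller ones, whereas here the extra partners $\nu(f)\setminus\{w\}$ could in principle crowd $w$ out. The key idea is to argue on the worker side instead. Since $\mu_1$ is stable and $(f,w)\notin\mu_1$ while $w\in\C_f(\mu_1(f)\cup\{w\})$, the pair $(f,w)$ can fail to block $\mu_1$ only if $f\notin\C_w(\mu_1(w)\cup\{f\})$; by consistency this upgrades to $\C_w(\mu_1(w)\cup\{f\})=\C_w(\mu_1(w))=\mu_1(w)$, the last equality by individual rationality of $\mu_1$. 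Unlike an acceptance, this rejection does propagate upward: writing $\mu_1(w)\cup\nu(w)=(\mu_1(w)\cup\{f\})\cup(\nu(w)\setminus\{f\})$ and applying path-independence, I obtain $\C_w(\mu_1(w)\cup\nu(w))=\C_w\big(\C_w(\mu_1(w)\cup\{f\})\cup(\nu(w)\setminus\{f\})\big)=\C_w(\mu_1(w)\cup(\nu(w)\setminus\{f\}))$, a set that does not contain $f$; hence $f\notin\C_w(\mu_1(w)\cup\nu(w))$.

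To finish, I would invoke the worker-side (polarity) form of Theorem~\ref{thm:alkan}: the worker-join $\C_w(\mu_1(w)\cup\nu(w))$ equals $(\mu_1\wedge\nu)(w)$, and the symmetric meet formula expresses $(\mu_1\vee\nu)(w)$ as $\big((\mu_1(w)\cup\nu(w))\setminus(\mu_1\wedge\nu)(w)\big)\cup(\mu_1(w)\cap\nu(w))$. Since $f\in\nu(w)\subseteq\mu_1(w)\cup\nu(w)$ but $f\notin(\mu_1\wedge\nu)(w)$, this formula places $f$ in the first set, so $f\in(\mu_1\vee\nu)(w)=\mu_2(w)$, i.e.\ $w\in\mu_2(f)$, as required. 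The main obstacle is precisely the asymmetry exploited in the second paragraph: one must recognize that the firm-side hypothesis is the wrong end to push on, translate it through the stability of $\mu_1$ into a rejection statement for worker $w$, and only then use path-independence---together with the symmetric worker-side meet/join description of the lattice---to transport that rejection to the join.
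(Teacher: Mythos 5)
Your construction coincides with the paper's up to the trivial case (the paper also takes $\mu_2=\mu_1\vee\nu$, handling $w\in\mu_1(f)$ via the concordance property rather than by taking $\mu_2=\mu_1$), but your argument for $w\in\mu_2(f)$ is genuinely different. The paper stays entirely on the firm side and never invokes the no-blocking-pair condition: it assumes $w\notin\mu_2(f)$, observes that the meet formula then forces $w\in(\mu_1\wedge\nu)(f)$, and uses $\mu_1\succeq\mu_1\wedge\nu$ together with path-independence and consistency to get $\C_f(\mu_1(f)\cup(\mu_1\wedge\nu)(f))=\C_f(\mu_1(f)\cup\{w\})\neq\mu_1(f)$, a contradiction. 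So your opening claim that a firm-side attack cannot work is true only of the naive direct propagation; the firm-side contradiction argument does go through and is shorter. Your route instead converts the hypothesis, via stability of $\mu_1$, into the worker-side rejection $f\notin\C_w(\mu_1(w)\cup\{f\})$ and propagates it by path-independence to $f\notin\C_w(\mu_1(w)\cup\nu(w))$; both of these steps are correct.

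There is, however, a justification gap at the end: the two identities you attribute to ``the worker-side (polarity) form of Theorem~\ref{thm:alkan}'' --- namely $\C_w(\mu_1(w)\cup\nu(w))=(\mu_1\wedge\nu)(w)$ and the dual expression for $(\mu_1\vee\nu)(w)$ --- are not what that theorem states. Polarity as stated is only an order characterization ($\mu_1\succeq\mu_2$ iff $\C_w(\mu_1(w)\cup\mu_2(w))=\mu_2(w)$ for all $w$); it does not by itself give componentwise worker-side formulas for the join and meet of two \emph{incomparable} matchings. Both identities are true (this duality goes back to Alkan) and are derivable from what the paper states: applying polarity to $\mu_1\succeq\mu_1\wedge\nu$ and $\nu\succeq\mu_1\wedge\nu$ and then path-independence gives $\C_w(\mu_1(w)\cup\nu(w)\cup(\mu_1\wedge\nu)(w))=(\mu_1\wedge\nu)(w)$, while the firm-side meet formula gives $(\mu_1\wedge\nu)(w)\subseteq\mu_1(w)\cup\nu(w)$, yielding the first identity; for the second, all you actually need is the pair-set identity $(\mu_1\vee\nu)\cup(\mu_1\wedge\nu)=\mu_1\cup\nu$, which again follows from the firm-side formulas. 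Alternatively, you can bypass the duality altogether and finish by contradiction using only stated facts: if $w\notin\mu_2(f)$, the firm-side meet formula gives $f\in(\mu_1\wedge\nu)(w)$; polarity applied to $\mu_1\succeq\mu_1\wedge\nu$ gives $f\in(\mu_1\wedge\nu)(w)=\C_w(\mu_1(w)\cup(\mu_1\wedge\nu)(w))$, and substitutability then yields $f\in\C_w(\mu_1(w)\cup\{f\})$, contradicting your rejection statement. With either patch your proof is complete, and it has the interesting feature of localizing exactly where stability (as opposed to mere lattice structure) is used.
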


\begin{proof}
    By definition of $P_f(\mu_1)$, we know there exists a stable matching $\mu_1'$ such that $w\in \mu_1'(f)$. Let $\mu_2\coloneqq \mu_1\vee \mu_1'$. We want to show that $w\in \mu_2(f)$. If $w\in \mu_1(f)$, then the claim follows due to the~\ref{eq:concordance} property. So assume $w\notin \mu_1(f)$ and also assume by contradiction that $w\notin \mu_2(f)$. Then, we must have $w\in (\mu_1\wedge \mu_1')(f)$ by definition of the meet. Since $\mu_1\succeq \mu_1\wedge \mu_1'$, we have $\C_f(\mu_1(f) \cup (\mu_1\wedge \mu_1')(f)) = \mu_1(f)$. However, applying path-independence and consistency, we have
    \begin{align*} 
         \C_f(\mu_1(f) \cup (\mu_1\wedge \mu_1')(f)) &= \C_f\big( \C_f(\mu_1(f) \cup (\mu_1\wedge \mu_1')(f) \setminus \{w\})\cup \{w\}\big) \\
         &= \C_f(\mu_1(f) \cup \{w\}) \neq \mu_1(f),
    \end{align*}
	which is a contradiction.
\end{proof}

\begin{lemma} \label{lem:exist-dom-P-middle}
    Let $\mu_1$ and $\mu_2$ be two stable matchings such that $\mu_2\succeq \mu_1$. Assume $w\in P_f(\mu_1)\setminus P_f(\mu_2)$ for some firm $f$. Then, there exists a stable matching $\bar\mu_1$ with $\mu_2\succeq \bar\mu_1\succeq \mu_1$ such that $w\in \bar\mu_1(f)$.
\end{lemma}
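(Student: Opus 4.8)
The plan is to realize the desired intermediate matching $\bar\mu_1$ as a meet of $\mu_2$ with a suitable stable matching that lies above $\mu_1$ and actually contains the pair $(f,w)$. First I would invoke Lemma~\ref{lem:exist-dom-P}: since $w\in P_f(\mu_1)$, there is a stable matching $\mu_3$ with $\mu_3\succeq \mu_1$ and $w\in \mu_3(f)$. Then I would set $\bar\mu_1\coloneqq \mu_2\wedge \mu_3$ and check the order constraints from the lattice axioms alone. The meet is a lower bound of its arguments, so $\mu_2\succeq \bar\mu_1$; and since $\mu_1$ is a common lower bound of $\mu_2$ and $\mu_3$ (both dominate it, by hypothesis and by Lemma~\ref{lem:exist-dom-P}), the greatest-lower-bound property of the meet forces $\bar\mu_1\succeq \mu_1$. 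This establishes $\mu_2\succeq \bar\mu_1\succeq \mu_1$.

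The remaining, and main, task is to show $w\in \bar\mu_1(f)$, for which I would use the explicit meet formula of Theorem~\ref{thm:alkan}, namely $(\mu_2\wedge \mu_3)(f)=\big((\mu_2(f)\cup \mu_3(f))\setminus (\mu_2\vee \mu_3)(f)\big)\cup (\mu_2(f)\cap \mu_3(f))$. Two preliminary observations pin down the situation. First, $w\notin \mu_2(f)$: otherwise individual rationality would give $w\in \C_f(\mu_2(f))=\C_f(\mu_2(f)\cup \{w\})$, i.e.\ $w\in P_f(\mu_2)$, contradicting $w\notin P_f(\mu_2)$. Since also $w\in \mu_3(f)$, the element $w$ lies in $\mu_2(f)\cup \mu_3(f)$ but not in $\mu_2(f)\cap \mu_3(f)$, so by the meet formula it suffices to prove $w\notin (\mu_2\vee \mu_3)(f)=\C_f(\mu_2(f)\cup \mu_3(f))$.

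The crux is exactly this last exclusion, and it is where substitutability does the work. From $w\notin P_f(\mu_2)$ together with $w\in \phi(f)$ we have $w\notin \C_f(\mu_2(f)\cup \{w\})$. Applying substitutability in its contrapositive form with $T=\mu_2(f)\subseteq \mu_2(f)\cup \mu_3(f)\eqqcolon S$ — if $w\notin \C_f(T\cup \{w\})$ then $w\notin \C_f(S)$ — yields $w\notin \C_f(\mu_2(f)\cup \mu_3(f))=(\mu_2\vee \mu_3)(f)$. Substituting back into the meet formula gives $w\in \bar\mu_1(f)$, completing the argument. I expect the only delicate point to be the correct contrapositive use of substitutability; everything else is routine bookkeeping with the lattice operations and the definition of the $P$-sets.
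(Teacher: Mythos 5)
Your proposal is correct and follows essentially the same route as the paper's own proof: invoke Lemma~\ref{lem:exist-dom-P} to get a stable matching above $\mu_1$ containing $(f,w)$, take its meet with $\mu_2$, and use the contrapositive of substitutability to exclude $w$ from the join, hence placing $w$ in the meet. Your additional bookkeeping (checking $w\notin\mu_2(f)$ and spelling out the greatest-lower-bound argument) is sound but matches what the paper leaves implicit.
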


\begin{proof}
    By Lemma~\ref{lem:exist-dom-P}, there exists a stable matching $\bar\mu_2 \succeq \mu_1$ such that $w\in \bar\mu_2(f)$. Let $\bar\mu_1\coloneqq \bar\mu_2 \wedge \mu_2$ and we claim that $\bar\mu_1$ is the desired matching. First, by definition of meet, we have $\mu_2\succeq \bar\mu_1\succeq \mu_1$. Since $w\notin P_f(\mu_2)$, by the contrapositive of the substitutability property, we have $w\notin \C_f(\mu_2(f)\cup \bar\mu_2(f))$, which implies that $w\notin (\mu_2\vee \bar\mu_2)(f)$. Therefore, $w\in \bar\mu_1(f)$, again by the definition of meet.
\end{proof}

\begin{lemma} \label{lem:1to1-ros}
    Let $\mu_1$ and $\mu_2$ be two stable matchings. Then, $$P(\mu_1\vee \mu_2) = P(\mu_1) \cap P(\mu_2) \quad \textup{and} \quad P(\mu_1\wedge \mu_2) = P(\mu_1) \cup P(\mu_2).$$
\end{lemma}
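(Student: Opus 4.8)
My plan is to establish each of the two set equalities as a pair of inclusions, and to exploit that the two identities are mirror images of each other. For the join, the inclusion $P(\mu_1\vee\mu_2)\subseteq P(\mu_1)\cap P(\mu_2)$ is immediate: since $\mu_1\vee\mu_2\succeq\mu_1$ and $\mu_1\vee\mu_2\succeq\mu_2$, Lemma~\ref{lem:dom-P} gives $P_f(\mu_1\vee\mu_2)\subseteq P_f(\mu_1)$ and $P_f(\mu_1\vee\mu_2)\subseteq P_f(\mu_2)$ for every firm $f$. Dually, from $\mu_1\succeq\mu_1\wedge\mu_2$ and $\mu_2\succeq\mu_1\wedge\mu_2$, Lemma~\ref{lem:dom-P} yields $P(\mu_1)\cup P(\mu_2)\subseteq P(\mu_1\wedge\mu_2)$. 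So the two ``easy'' inclusions come for free from monotonicity of $P_f$.

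The content lies in the reverse inclusions, and I would use the same device for both. Write $\nu:=\mu_1\vee\mu_2$, take $(f,w)\in P(\mu_1)\cap P(\mu_2)$, and suppose for contradiction that $(f,w)\notin P(\nu)$. Then $w\in P_f(\mu_i)\setminus P_f(\nu)$ for $i=1,2$, so applying Lemma~\ref{lem:exist-dom-P-middle} to each pair $\mu_i\preceq\nu$ produces stable matchings $\bar\mu_i$ with $\nu\succeq\bar\mu_i\succeq\mu_i$ and $w\in\bar\mu_i(f)$. Because $\bar\mu_i\succeq\mu_i$ we have $\bar\mu_1\vee\bar\mu_2\succeq\mu_1\vee\mu_2=\nu$, and because $\nu\succeq\bar\mu_i$ we have $\bar\mu_1\vee\bar\mu_2\preceq\nu$; hence $\bar\mu_1\vee\bar\mu_2=\nu$. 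Now $w\in\bar\mu_1(f)\cap\bar\mu_2(f)$, and the~\ref{eq:concordance} property gives $\bar\mu_1(f)\cap\bar\mu_2(f)\subseteq\bar\mu_1(f)\vee\bar\mu_2(f)=(\bar\mu_1\vee\bar\mu_2)(f)=\nu(f)$, so $w\in\nu(f)$. Individual rationality then forces $w\in\C_f(\nu(f)\cup\{w\})$, i.e.\ $w\in P_f(\nu)$, which is the contradiction.

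The meet is handled symmetrically. Writing $\eta:=\mu_1\wedge\mu_2$, I would take $(f,w)\in P(\eta)$ and suppose $w\notin P_f(\mu_1)$ and $w\notin P_f(\mu_2)$. Since $w\in P_f(\eta)\setminus P_f(\mu_i)$ with $\eta\preceq\mu_i$, Lemma~\ref{lem:exist-dom-P-middle} yields $\bar\eta_i$ with $\mu_i\succeq\bar\eta_i\succeq\eta$ and $w\in\bar\eta_i(f)$, and the same join/meet comparison as above shows $\bar\eta_1\wedge\bar\eta_2=\eta$. Reading off the explicit meet formula of Theorem~\ref{thm:alkan}, the element $w$, lying in $\bar\eta_1(f)\cap\bar\eta_2(f)$, lies in $(\bar\eta_1\wedge\bar\eta_2)(f)=\eta(f)$. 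Since $\eta(f)\subseteq\mu_1(f)\cup\mu_2(f)$, $w$ belongs to $\mu_1(f)$ or $\mu_2(f)$, and by individual rationality this places $w$ in $P_f(\mu_1)$ or $P_f(\mu_2)$, the desired contradiction.

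The step I expect to be the main obstacle is understanding why the reverse inclusions genuinely need the global lattice structure rather than the choice-function axioms alone. Knowing $w\in\C_f(\mu_1(f)\cup\{w\})$ and $w\in\C_f(\mu_2(f)\cup\{w\})$ does not imply $w\in\C_f(\mu_1(f)\cup\mu_2(f)\cup\{w\})$ for a general path-independent $\C_f$, since substitutability runs in the opposite direction; thus a pointwise argument fails. The intermediate stable matchings supplied by Lemma~\ref{lem:exist-dom-P-middle}, together with the~\ref{eq:concordance} property and the meet formula, are exactly what bridge this gap, and the linchpin keeping the argument inside the lattice is verifying $\bar\mu_1\vee\bar\mu_2=\nu$ (respectively $\bar\eta_1\wedge\bar\eta_2=\eta$).
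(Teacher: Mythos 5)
Your proof is correct and follows essentially the same route as the paper: the same decomposition into four inclusions, the two easy ones via Lemma~\ref{lem:dom-P}, and the hard ones via intermediate stable matchings combined through the lattice operations and the concordance property, with your meet argument matching the paper's almost verbatim. The only divergence is in the join direction, where the paper argues directly---Lemma~\ref{lem:exist-dom-P} produces $\bar\mu_i\succeq\mu_i$ with $w\in\bar\mu_i(f)$, concordance gives $w\in(\bar\mu_1\vee\bar\mu_2)(f)$, and Lemma~\ref{lem:dom-P} pushes this down to $P_f(\mu_1\vee\mu_2)$---while you argue by contradiction via Lemma~\ref{lem:exist-dom-P-middle} and the exact identity $\bar\mu_1\vee\bar\mu_2=\mu_1\vee\mu_2$; both are valid, and your version has the minor aesthetic advantage of handling join and meet by one symmetric template.
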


\begin{proof}
    Fix a firm $f$, and we want to show $P_f(\mu_1\vee \mu_2) = P_f(\mu_1) \cap P_f(\mu_2)$ and $P_f(\mu_1\wedge \mu_2) = P_f(\mu_1) \cup P_f(\mu_2)$. If $\mu_1(f)=\mu_2(f)$, then the claim is obviously true. Thus, for the following, we assume $\mu_1(f)\neq \mu_2(f)$. We first show that $P_f(\mu_1\vee \mu_2)\subseteq P_f(\mu_1)\cap P_f(\mu_2)$. Since $\mu_1\vee \mu_2 \succeq \mu_1, \mu_2$, the claim follows from Lemma~\ref{lem:dom-P}. Next, we show that $P_f(\mu_1\vee \mu_2)\supseteq P_f(\mu_1)\cap P_f(\mu_2)$. If $P_f(\mu_1)\cap P_f(\mu_2)=\emptyset$, then the claim follows trivially. So we assume $P_f(\mu_1)\cap P_f(\mu_2)\neq \emptyset$ and let $w\in P_f(\mu_1)\cap P_f(\mu_2)$. By Lemma~\ref{lem:exist-dom-P}, there exists a stable matching $\bar\mu_1$ such that $\bar\mu_1 \succeq \mu_1$ and $w\in \bar\mu_1(f)$. Similarly, there exists a stable matching $\bar\mu_2$ such that $\bar\mu_2 \succeq \mu_2$ and $w\in \bar\mu_2(f)$. Consider the stable matching $\bar\mu_1 \vee \bar\mu_2$. Because of the~\ref{eq:concordance} property, $w\in (\bar\mu_1 \vee \bar\mu_2)(f)$. In addition, by transitivity of $\succeq$, we have that $\bar\mu_1 \vee \bar\mu_2 \succeq \mu_1 ,\mu_2$ and thus $\bar\mu_1 \vee \bar\mu_2 \succeq \mu_1\vee \mu_2$ by minimality of $\mu_1\vee\mu_2$. Hence, by Lemma~\ref{lem:dom-P}, $w\in P_f(\mu_1 \vee\mu_2)$. This concludes the first part of the thesis.

    For the second half, we first show $P_f(\mu_1\wedge \mu_2)\subseteq P_f(\mu_1)\cup P_f(\mu_2)$. Let $w\notin P_f(\mu_1)\cup P_f(\mu_2)$, we want to show that $w\notin P_f(\mu_1\wedge \mu_2)$. Assume by contradiction that $w\in P_f(\mu_1\wedge \mu_2)$. $w\notin P_f(\mu_1)\cup P_f(\mu_2)$ implies $w\notin \mu_1(f)$ and $w\notin \mu_2(f)$ and thus, $w\notin (\mu_1\wedge \mu_2)(f)$. By Lemma~\ref{lem:exist-dom-P-middle}, for both $i\in \{1,2\}$, there exists a stable matching $\bar\mu_i$ such that $\mu_i\succeq \bar\mu_i \succeq \mu_1\wedge\mu_2$ and $w\in \bar\mu_i(f)$. Note that $\mu_1\wedge \mu_2 \succeq \bar\mu_1 \wedge \bar\mu_2 \succeq \mu_1\wedge\mu_2$, where the first relation holds because $\mu_i\succeq \bar \mu_i$ for both $i\in \{1,2\}$, and the second relation holds because $\bar \mu_1, \bar \mu_2 \succeq \mu_1\wedge \mu_2$. Hence, $\bar\mu_1 \wedge \bar\mu_2 = \mu_1\wedge\mu_2$. However, by the~\ref{eq:concordance} property over $\bar\mu_1$ and $\bar\mu_2$, we have $w\in (\bar\mu_1 \wedge \bar\mu_2)(f) = (\mu_1\wedge \mu_2)(f)$, which is a contradiction. 
    
    Lastly, we show $P_f(\mu_1\wedge \mu_2)\supseteq P_f(\mu_1)\cup P_f(\mu_2)$. Let $w\in P_f(\mu_1)\cup P_f(\mu_2)$ and wlog assume $w\in P_f(\mu_1)$. Since $\mu_1\succeq \mu_1\wedge \mu_2$, by Lemma~\ref{lem:dom-P}, we have $w\in P_f(\mu_1\wedge \mu_2)$.
\end{proof}

\begin{lemma} \label{lem:dom-P-uniq}
    Let $\mu_1$ and $\mu_2$ be two stable matchings such that $\mu_2\succ \mu_1$ and assume that $\mu_1(f)\neq \mu_2(f)$ for some $f\in F$. Then, $P_f(\mu_1)\neq P_f(\mu_2)$.
\end{lemma}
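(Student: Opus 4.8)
The plan is to produce a single explicit witness: a worker $w$ that lies in $P_f(\mu_1)$ but not in $P_f(\mu_2)$. Since $w$ would then belong to exactly one of the two sets, this immediately yields $P_f(\mu_1)\neq P_f(\mu_2)$, which is all that is asked. (Lemma~\ref{lem:dom-P} already tells us the inclusion can only go one way, $P_f(\mu_2)\subseteq P_f(\mu_1)$, so the natural place to look for the witness is the difference $P_f(\mu_1)\setminus P_f(\mu_2)$.)

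First I would locate the witness. By the~\ref{eq:equal-quota} property, $|\mu_1(f)|=|\mu_2(f)|$, so the hypothesis $\mu_1(f)\neq \mu_2(f)$ forces $\mu_1(f)\setminus \mu_2(f)\neq\emptyset$; pick any $w$ in this set difference. Membership $w\in P_f(\mu_1)$ is then immediate: $w\in\mu_1(f)$ certifies $w\in\phi(f)$, and individual rationality of $\mu_1$ gives $\C_f(\mu_1(f)\cup\{w\})=\C_f(\mu_1(f))=\mu_1(f)\ni w$.

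The crux is showing $w\notin P_f(\mu_2)$, i.e.\ $w\notin \C_f(\mu_2(f)\cup\{w\})$. Here I would exploit the domination $\mu_2\succeq\mu_1$, which by definition means $\C_f(\mu_2(f)\cup\mu_1(f))=\mu_2(f)$. Because $w\in\mu_1(f)$, we have the sandwich
\[
    \C_f\big(\mu_2(f)\cup\mu_1(f)\big)=\mu_2(f)\ \subseteq\ \mu_2(f)\cup\{w\}\ \subseteq\ \mu_2(f)\cup\mu_1(f),
\]
so consistency (irrelevance of rejected contracts) applied to the sets $\mu_2(f)\cup\{w\}$ and $\mu_2(f)\cup\mu_1(f)$ gives $\C_f(\mu_2(f)\cup\{w\})=\C_f(\mu_2(f)\cup\mu_1(f))=\mu_2(f)$. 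Since $w\notin\mu_2(f)$, this means $w\notin\C_f(\mu_2(f)\cup\{w\})$, hence $w\notin P_f(\mu_2)$, completing the argument.

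The main obstacle is getting the direction of the choice-function axioms right: substitutability only propagates membership from larger sets to smaller ones and is useless here, while the desired conclusion reads like its unavailable converse. The whole proof therefore hinges on recognizing that the chosen set $\mu_2(f)$ is squeezed between $\mu_2(f)\cup\{w\}$ and $\mu_2(f)\cup\mu_1(f)$, so that \emph{consistency} — not substitutability — is the correct tool. Once this observation is in place the rest is routine. I would also note that the strict relation $\mu_2\succ\mu_1$ is not genuinely needed; the argument only uses $\mu_2\succeq\mu_1$ together with $\mu_1(f)\neq\mu_2(f)$ and equal quotas.
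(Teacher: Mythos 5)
Your proof is correct, but it takes a genuinely different route from the paper's. The paper argues by contradiction: assuming $P_f(\mu_1)=P_f(\mu_2)$, it picks the same witness $w\in\mu_1(f)\setminus\mu_2(f)$ (existence via~\ref{eq:equal-quota}), then invokes the \emph{polarity} property of Theorem~\ref{thm:alkan} to get $\C_w(\mu_1(w)\cup\mu_2(w))=\mu_1(w)$, applies substitutability of $\C_w$ to conclude $f\in\C_w(\mu_2(w)\cup\{f\})$, and combines this with $w\in P_f(\mu_1)=P_f(\mu_2)$ to exhibit $(f,w)$ as a blocking pair of $\mu_2$, contradicting stability. You instead prove the non-membership $w\notin P_f(\mu_2)$ directly, using only the firm-side definition of $\succeq$ and consistency (the sandwich $\C_f(\mu_2(f)\cup\mu_1(f))=\mu_2(f)\subseteq\mu_2(f)\cup\{w\}\subseteq\mu_2(f)\cup\mu_1(f)$). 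Your key step is in fact exactly the content of the paper's Lemma~\ref{lem:additional}, which the paper proves later (in Section~\ref{sec:compact-representation}, via path-independence and consistency) and uses for other purposes; you have re-derived it inline. What your route buys: it is more elementary and self-contained --- it needs neither the polarity property (a nontrivial component of Alkan's theorem) nor the no-blocking-pair property of $\mu_2$, only individual rationality of $\mu_1$ and the domination relation, and, as you note, it works under $\succeq$ rather than $\succ$. What the paper's route buys is mainly expository ordering: at the point where Lemma~\ref{lem:dom-P-uniq} appears, Lemma~\ref{lem:additional} has not yet been stated, and the polarity/blocking-pair argument uses only facts already on the table. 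Had the paper proved Lemma~\ref{lem:additional} first, your argument would give a strictly leaner proof.
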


\begin{proof}
    Assume by contradiction that $P_f(\mu_1)=P_f(\mu_2)$. Let $w\in \mu_1(f)\setminus \mu_2(f)$. $w$ exists because $\mu_1(f)\neq\mu_2(f)$ and $|\mu_1(f)|=|\mu_2(f)|$ due to the~\ref{eq:equal-quota} property. Since the stable matching lattice $(\SS, \succeq)$ has the polarity property as shown in Theorem~\ref{thm:alkan}, we have that $\C_w(\mu_1(w)\cup \mu_2(w))=\mu_1(w)$ and thus, by substitutability, we have $f\in \C_w(\mu_2(w)\cup\{f\})$. On the other hand, $w\in \mu_1(f)$ implies that $w\in P_f(\mu_1)= P_f(\mu_2)$. Since $w\notin \mu_2(f)$, this means that $(f,w)$ is a blocking pair of $\mu_2$, which contradicts the stability assumption.
\end{proof}

\begin{lemma} \label{lem:P-injective}
    Let $\mu_1$ and $\mu_2$ be two distinct stable matchings and assume that $\mu_1(f)\neq \mu_2(f)$ for some $f\in F$. Then, $P_f(\mu_1)\neq P_f(\mu_2)$.
\end{lemma}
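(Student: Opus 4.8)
The plan is to reduce this statement to Lemma~\ref{lem:dom-P-uniq}, which already establishes exactly this conclusion under the extra hypothesis that $\mu_1$ and $\mu_2$ are comparable ($\mu_2\succ\mu_1$). To drop comparability, I would pass to the pair $\mu_1\wedge\mu_2$ and $\mu_1\vee\mu_2$: these are comparable and sandwich both $\mu_1$ and $\mu_2$, so any information about $P_f$ at the join and meet can be pulled back to $\mu_1,\mu_2$ via Lemma~\ref{lem:1to1-ros}.

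Concretely, I would assume by contradiction that $P_f(\mu_1)=P_f(\mu_2)$. Applying Lemma~\ref{lem:1to1-ros} at the firm $f$ gives $P_f(\mu_1\vee\mu_2)=P_f(\mu_1)\cap P_f(\mu_2)$ and $P_f(\mu_1\wedge\mu_2)=P_f(\mu_1)\cup P_f(\mu_2)$; under the contradiction hypothesis both right-hand sides equal $P_f(\mu_1)$, so $P_f(\mu_1\vee\mu_2)=P_f(\mu_1\wedge\mu_2)$. On the other hand $\mu_1\vee\mu_2\succeq\mu_1\wedge\mu_2$ always holds, so if I can show these two matchings disagree at $f$, i.e.\ $(\mu_1\vee\mu_2)(f)\neq(\mu_1\wedge\mu_2)(f)$, then $\mu_1\vee\mu_2\succ\mu_1\wedge\mu_2$ and Lemma~\ref{lem:dom-P-uniq} forces $P_f(\mu_1\vee\mu_2)\neq P_f(\mu_1\wedge\mu_2)$, contradicting the previous line.

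The crux is therefore the purely set-theoretic claim that $\mu_1(f)\neq\mu_2(f)$ implies $(\mu_1\vee\mu_2)(f)\neq(\mu_1\wedge\mu_2)(f)$. Writing $V=(\mu_1\vee\mu_2)(f)$, $U=\mu_1(f)\cup\mu_2(f)$, and $I=\mu_1(f)\cap\mu_2(f)$, the definition of meet from Theorem~\ref{thm:alkan} reads $(\mu_1\wedge\mu_2)(f)=(U\setminus V)\cup I$. I would suppose $V=(U\setminus V)\cup I$ and intersect both sides with $V$; the \ref{eq:concordance} property gives $I\subseteq V$, so the right-hand side collapses to $I$, forcing $V=I$. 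Then the \ref{eq:equal-quota} property yields $|I|=|V|=\bar q_f=|\mu_1(f)|$, and since $I\subseteq\mu_1(f)$ this gives $I=\mu_1(f)$, and symmetrically $I=\mu_2(f)$, hence $\mu_1(f)=\mu_2(f)$, a contradiction.

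I expect this set-theoretic step to be the only delicate point: it is precisely where concordance and equal-quota are genuinely invoked, and it is exactly the place where the argument would break down without the lattice structure guaranteed by Theorem~\ref{thm:alkan}. Everything else is a routine reduction of the incomparable case to the already-proved comparable case.
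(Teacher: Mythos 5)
Your proof is correct and follows essentially the same route as the paper's: assume $P_f(\mu_1)=P_f(\mu_2)$, apply Lemma~\ref{lem:1to1-ros} to conclude $P_f(\mu_1\vee\mu_2)=P_f(\mu_1\wedge\mu_2)$, and contradict Lemma~\ref{lem:dom-P-uniq} since $\mu_1\vee\mu_2\succ\mu_1\wedge\mu_2$ and they disagree at $f$. The only difference is that the paper states the set-theoretic step ($\mu_1(f)\neq\mu_2(f)$ implies $(\mu_1\vee\mu_2)(f)\neq(\mu_1\wedge\mu_2)(f)$) without proof, whereas you supply a valid justification of it via the \ref{eq:concordance} and \ref{eq:equal-quota} properties.
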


\begin{proof}
    Assume by contradiction that $P_f(\mu_1)=P_f(\mu_2)$. Then, we have $P_f(\mu_1\vee \mu_2)= P_f(\mu_1\wedge \mu_2)$ by Lemma~\ref{lem:1to1-ros}. However, $\mu_1(f)\neq \mu_2(f)$ implies that $(\mu_1\vee\mu_2)(f) \neq (\mu_1\wedge\mu_2)(f)$, which contradicts Lemma~\ref{lem:dom-P-uniq} since $\mu_1\vee\mu_2 \succ \mu_1\wedge \mu_2$.
\end{proof}

\begin{proof}[Proof of Theorem~\ref{thm:iso-ros}] 
    For (1), note that the mapping $P$ is onto by definition. It is therefore a bijection since it is also injective as shown in Lemma~\ref{lem:P-injective}. Next, we show (2). One direction of the first statement is shown in Lemma~\ref{lem:dom-P}. Conversely, if $P(\mu_2)\subseteq P(\mu_1)$, then by Lemma~\ref{lem:1to1-ros}, $P(\mu_1\vee \mu_2) = P(\mu_1)\cap P(\mu_2) = P(\mu_2)$. Hence, by Lemma~\ref{lem:P-injective}, we have $\mu_1\vee \mu_2 = \mu_2$ and thus, $\mu_2 \succeq \mu_1$. The second statement of (2) follows from Lemma~\ref{lem:1to1-ros}. The third follows from the second and the fact that stable matchings form a distributive lattice (Theorem~\ref{thm:alkan}).
\end{proof}

\begin{example} \label{eg:p-set}
    Consider the following instance with $4$ firms and $5$ workers. Agents' choice functions are given below in their MC-representations. For instance, the first position of firm $f_1$ prefers $w_1$ the most and prefers $w_2$ the least. 
    \[\begin{array}{ccl}
        f_1: & \ge_{f_1,1}: & w_1\ w_5\ w_3\ w_4\ w_2 \\
        & \ge_{f_1,2}: & w_2\ w_5\ w_4\ w_3\ w_1 \\
        & \ge_{f_1,3}: & w_1\ w_2\ w_3\ w_4\ w_5 \\
        f_2: & \ge_{f_2,1}: & w_4\ w_2\ w_1\ w_3\ w_5 \\
        f_3: & \ge_{f_3,1}: & w_3\ w_1\ w_2\ w_4\ w_5 \\
        f_4: & \ge_{f_4,1}: & w_5\ w_1\ w_2\ w_3\ w_4
    \end{array} \qquad \qquad \qquad 
    \begin{array}{ccl}
        w_1: & \ge_{w_1,1}: & f_3\ f_1\ f_2\ f_4 \\
        w_2: & \ge_{w_2,1}: & f_2\ f_1\ f_3\ f_4 \\
        w_3: & \ge_{w_3,1}: & f_1\ f_3\ f_2\ f_4 \\
        w_4: & \ge_{w_4,1}: & f_1\ f_2\ f_3\ f_4 \\
        w_5: & \ge_{w_5,1}: & f_4\ f_1\ f_2\ f_3 \\
        & 
    \end{array}\]
    There are four stable matchings in this instance:
    \begin{align*}
        \mu_F &= (f_1, w_1), (f_1, w_2), (f_2, w_4), (f_3, w_3), (f_4, w_5); \\ 
        \mu_1 &= (f_1, w_1), (f_1, w_4), (f_2, w_2), (f_3, w_3), (f_4, w_5); \\
        \mu_2 &= (f_1, w_2), (f_1, w_3), (f_2, w_4), (f_3, w_1), (f_4, w_5); \\
        \mu_W &= (f_1, w_3), (f_1, w_4), (f_2, w_2), (f_3, w_1), (f_4, w_5).
    \end{align*}
    Note that $\mu_1$ and $\mu_2$ are not comparable. Their corresponding P-sets are
    \begin{equation*}
    \resizebox{.97\textwidth}{!}{$
    \begin{aligned}
        P(\mu_F) &= (f_1, w_1), (f_1, w_2), (f_2, w_4), (f_3, w_3), (f_4, w_5); \\ 
        P(\mu_1) &= (f_1, w_1), (f_1, w_2), (f_1, w_4), (f_2, w_2), (f_2, w_4), (f_3, w_3), (f_4, w_5); \\
        P(\mu_2) &= (f_1, w_1), (f_1, w_2), (f_1, w_3), (f_2, w_4), (f_3, w_1), (f_3, w_3), (f_4, w_5); \\
        P(\mu_W) &= (f_1, w_1), (f_1, w_2), (f_1, w_3), (f_1, w_4), (f_2, w_2), (f_2, w_4), (f_3, w_1), (f_3, w_3), (f_4, w_5).
    \end{aligned}$
    }
    \end{equation*}
    One can easily check that the claims given in Lemma~\ref{lem:1to1-ros} are true. Note that if we follow the definition given in~\cite{gusfield1989stable} and include the pair $(f_1,w_5)$ in $P(\mu_1)$ and $P(\mu_2)$. Then Lemma~\ref{lem:1to1-ros} no longer holds since $w_5\notin P_{f_1}(\mu_F) = P_{f_1}(\mu_1 \vee \mu_2)$. 
\end{example}

\subsection{Affine representability of ring of sets via the poset of minimal differences} \label{sec:affine-ring-of-sets}

We now recall (mostly known) facts about posets representing ring of sets, and observe that the \emph{affine} representability of ring of sets easily follows from those. 

Fix a ring of sets $(\H,\subseteq)$ over a base set $B$, and let $H_0$ and $H_z$ denote respectively the unique minimal and maximal elements of $\H$. That is, for all $H\in \H$, we have $H_0\subseteq H\subseteq H_z$. For $a\in H_z$, let $H(a)$ denote the unique inclusion-wise minimal set among all sets in $\H$ that contain $a$, where uniqueness follows from the fact that $\H$ is closed under set intersection. That is, $$H(a)\coloneqq \bigcap \{H\in \H: a\in H\}.$$ In addition, define the set $\I(\H)$ of the \emph{irreducible} elements of $\H$ as follows $$\I(\H) \coloneqq \{H\in \H: \exists \; a\in H_z \text{ s.t. } H=H(a)\}.$$ Since $\I(\H)$ is a subset of $\H$, we can view $\I(\H)$ as a poset under the set containment relation.

For $H\in \I(\H)$, let $K(H)\coloneqq \{a\in H_z: H(a)=H\}$ denote the \emph{centers} of $H$. Note that $K(H_0) = H_0$. Define $\D(\H)$ as the set of centers of irreducible elements of $\H$ without the set $H_0$. Formally, $$\D(\H)\coloneqq \{K(H): H\in \I(\H), H\neq H_0\}.$$ 

Immediately from the definition of centers, we obtain the following.

\begin{lemma}\label{lem:center-uniq}
    Let $a\in B$. There is at most one $K_1\in \D(\H)$ such that $a\in K_1$. In particular, $|\D(\H)|=O(|B|)$.
\end{lemma}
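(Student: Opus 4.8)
The plan is to show that each element of $H_z$ lies in exactly one center set $K(H)$ with $H \in \I(\H)$; the ``at most one'' statement for $\D(\H)$ then follows immediately, since $\D(\H)$ is a subfamily of $\{K(H) : H \in \I(\H)\}$.

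First I would recall that for each $a \in H_z$ the set $H(a) = \bigcap\{H \in \H : a \in H\}$ is a single, well-defined element of $\H$, using that $\H$ is closed under intersection. Thus $a \mapsto H(a)$ is a genuine function on $H_z$, and by the definition of centers we have $a \in K(H)$ if and only if $H = H(a)$. Therefore $a$ belongs to the center set $K(H(a))$ and to no other: if $a \in K(H)$ for some $H$, then $H = H(a)$ is forced. This is the crux of the argument.

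To finish the first assertion, I would take any $a \in B$ and split into cases. If $a \notin H_z$, then no center set contains it, since centers are subsets of $H_z$. If $a \in H_z$, then $a$ lies in $K(H(a))$ and in no other $K(H)$ by the previous paragraph. Since $\D(\H)$ is obtained by deleting the single set $K(H_0) = H_0$ from $\{K(H) : H \in \I(\H)\}$, at most one member of $\D(\H)$ can contain $a$: none when $H(a) = H_0$ (equivalently $a \in H_0$), and exactly one otherwise.

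For the cardinality bound, the nonempty center sets $K(H)$ are pairwise disjoint subsets of $H_z \subseteq B$, so there are at most $|H_z| \le |B|$ of them; deleting $K(H_0)$ only decreases the count, giving $|\D(\H)| = O(|B|)$. I do not anticipate a genuine obstacle here: the only point needing care is the justification that $H(a)$ is single-valued, which relies on closure of $\H$ under intersection, since it is precisely this fact that forces distinct center sets to be disjoint.
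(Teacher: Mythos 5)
Your proof is correct and is exactly the argument the paper has in mind: the paper states the lemma follows ``immediately from the definition of centers,'' and your observation that the center sets are the fibers of the well-defined map $a \mapsto H(a)$ (well-defined by closure of $\H$ under intersection), hence pairwise disjoint, is precisely that immediate argument spelled out. The case analysis on $a \notin H_z$ and $H(a) = H_0$, and the disjointness-based counting for $|\D(\H)| = O(|B|)$, are all sound.
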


For $K_1\in \D(\H)$, let $I(K_1)$ denote the irreducible element from $\I(\H)$ such that $K(I(K_1))= K_1$. Let $\sqsupseteq$ be a partial order over the set $\D(\H)$ that is inherited from the set containment relation of the poset $\I(\H)$. That is, for $K_1,K_2\in \D(\H)$, we have $K_1 \sqsupseteq K_2$ if and only if $I(K_1)\subseteq I(K_2)$.

\begin{theorem}[\cite{birkhoff1937rings}] \label{thm:birkhoff}
    Let $(\H,\subseteq)$ be a ring of sets. Then, $(\D(\H),\sqsupseteq)$ is a representation poset for $(\H,\subseteq)$ with representation function $\psi_\H$, where $\psi_\H^{-1}(\bar\D) = \bigcup \{K_1: K_1\in \bar\D\} \cup H_0$ for any upper set $\bar\D$ of $(\D(\H), \sqsupseteq)$, and $H_0$ is the minimal element of $\H$. 
\end{theorem}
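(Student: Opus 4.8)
The plan is to verify directly that the map $\psi_\H$ specified through its inverse $\psi_\H^{-1}(\bar\D)=\bigcup\{K_1:K_1\in\bar\D\}\cup H_0$ is a well-defined bijection $\H\to\U(\D(\H))$ that is an order isomorphism with respect to inclusion. The natural candidate for $\psi_\H$ itself is $\Psi:H\mapsto\{K_1\in\D(\H):I(K_1)\subseteq H\}$, and the substance of the proof is to show that $\psi_\H^{-1}$ and $\Psi$ are mutually inverse and monotone. Throughout I would lean on two bookkeeping facts already available: by Lemma~\ref{lem:center-uniq} the centers $\{K(H):H\in\I(\H)\}$ partition $H_z$, with each $a\in H_z$ lying in the unique center $K(H(a))$; and for every $K_1\in\D(\H)$ one has $K_1\subseteq I(K_1)$ together with $I(K_1)=H(a)$ for each $a\in K_1$. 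I would also note at the outset that $\Psi(H)$ is genuinely an upper set of $(\D(\H),\sqsupseteq)$: if $I(K_1)\subseteq H$ and $K_2\sqsupseteq K_1$, then $I(K_2)\subseteq I(K_1)\subseteq H$.

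The first and most delicate step is to show that $\psi_\H^{-1}$ actually lands in $\H$, i.e.\ that $\bigcup\{K_1:K_1\in\bar\D\}\cup H_0\in\H$ for every upper set $\bar\D$. The key is the identity
$$\bigcup_{K_1\in\bar\D}K_1\;\cup\;H_0\;=\;\bigcup_{K_1\in\bar\D}I(K_1)\;\cup\;H_0,$$
from which membership in $\H$ is immediate since $\H$ is closed under union and each $I(K_1),H_0\in\H$. The inclusion $\subseteq$ follows from $K_1\subseteq I(K_1)$. For $\supseteq$, take $a\in I(K_1)$ with $K_1\in\bar\D$ and $a\notin H_0$; then the minimal set $H(a)$ satisfies $H(a)\subseteq I(K_1)$, so $K_2:=K(H(a))\in\D(\H)$ obeys $I(K_2)=H(a)\subseteq I(K_1)$, whence $K_2\sqsupseteq K_1$; since $\bar\D$ is an upper set we get $K_2\in\bar\D$, and $a\in K_2$ finishes the claim. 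This ``filling-up'' identity — where the upper-set hypothesis is exactly what promotes a union of centers to a union of genuine lattice elements — is the crux of the theorem and the place I expect to spend the most care.

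Next I would verify that the two maps are inverse to each other. Using the partition of $H_z$ by centers and the implication $a\in H\Rightarrow H(a)\subseteq H$ (valid since $H\in\H$ contains $a$), a short two-inclusion argument gives $\psi_\H^{-1}(\Psi(H))=H$ for every $H\in\H$; this shows simultaneously that $\psi_\H^{-1}$ is onto $\H$ and that $\Psi$ is a right inverse. Symmetrically, reusing the filling-up identity, I would show $\Psi(\psi_\H^{-1}(\bar\D))=\bar\D$ for every upper set $\bar\D$: one inclusion is trivial, and for the other one picks $a$ in a center $K_2$ with $I(K_2)\subseteq\psi_\H^{-1}(\bar\D)$, observes $a\notin H_0$ because $H(a)=I(K_2)\neq H_0$, and locates $a\in I(K_1)$ for some $K_1\in\bar\D$, forcing $K_2\sqsupseteq K_1$ and hence $K_2\in\bar\D$ by the upper-set property. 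Together these establish that $\psi_\H^{-1}$ is a bijection $\U(\D(\H))\to\H$ with inverse $\psi_\H=\Psi$.

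Finally, the order correspondence is direct from the closed form $\Psi(H)=\{K_1:I(K_1)\subseteq H\}$: clearly $H\subseteq H'$ implies $\Psi(H)\subseteq\Psi(H')$, while the union formula for $\psi_\H^{-1}$ is monotone in $\bar\D$ and yields the converse, so $H\subseteq H'\iff\psi_\H(H)\subseteq\psi_\H(H')$. Thus $\psi_\H$ is an order isomorphism between $(\H,\subseteq)$ and $(\U(\D(\H)),\subseteq)$, which is precisely what it means for $(\D(\H),\sqsupseteq)$ to be a representation poset for $(\H,\subseteq)$ with representation function $\psi_\H$; checking $\psi_\H^{-1}(\emptyset)=H_0$ and $\psi_\H^{-1}(\D(\H))=H_z$ confirms the endpoints. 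Every ingredient other than the filling-up identity of the first step is routine bookkeeping with the center/irreducible correspondence.
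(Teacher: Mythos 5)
Your proof is correct, but there is nothing in the paper to compare it against: Theorem~\ref{thm:birkhoff} is stated as a known result with a citation to Birkhoff, and the paper never proves it (the nearby facts it does prove --- Lemmas~\ref{lem:center-uniq}, \ref{lem:unq-upperset}, and~\ref{lem:center-md} --- are either inputs to or consequences of it). So your argument stands as a self-contained verification, and it is sound. The crux is exactly where you place it: the identity $\bigcup\{K_1:K_1\in\bar\D\}\cup H_0=\bigcup\{I(K_1):K_1\in\bar\D\}\cup H_0$, whose nontrivial inclusion is the only place the upper-set hypothesis is consumed (for $a\in I(K_1)\setminus H_0$ one has $H(a)\subseteq I(K_1)$, so the center $K(H(a))$ satisfies $K(H(a))\sqsupseteq K_1$ and hence lies in $\bar\D$), and this identity is what puts $\psi_\H^{-1}(\bar\D)$ inside $\H$, by closure of $\H$ under finite unions of the sets $I(K_1)$ and $H_0$. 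The two mutual-inverse computations with $\Psi(H)=\{K_1\in\D(\H):I(K_1)\subseteq H\}$ and the monotonicity in both directions are carried out correctly, and together they deliver precisely the order-preserving bijection that the paper relies on downstream (e.g., in Theorem~\ref{thm:affine-representaibility-ring-of-sets} and Lemma~\ref{lem:upperset-contained-dom}). One pedantic note: you attribute to Lemma~\ref{lem:center-uniq} the statement that the centers $\{K(H):H\in\I(\H)\}$ partition $H_z$, whereas that lemma only asserts that each element of $B$ lies in \emph{at most} one member of $\D(\H)$. The existence half --- every $a\in H_z$ lies in $K(H(a))$, with $K(H_0)=H_0$ covering the elements of $H_0$ --- is immediate from the definitions and you use it correctly, but it is not literally the cited lemma.
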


Lemma~\ref{lem:center-uniq} and Theorem~\ref{thm:birkhoff} directly imply the following.

\begin{theorem}\label{thm:affine-representaibility-ring-of-sets}
    Let $(\H,\subseteq)$ be a ring of sets over base set $B$. Then, $(\D(\H), \sqsupseteq)$ affinely represents $(\H,\subseteq)$ via affine function $g(u)=Au+x^0$, where $x^0$ is the characteristic vector of the minimal element of $\H$, and $A\in \{0,1\}^{B\times \D(\H)}$ has columns $\Chi^{K_1}$ for each $K_1\in \D(\H)$. Moreover, $A$ has full column rank.
\end{theorem}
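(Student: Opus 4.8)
The plan is to verify the two assertions of Definition~\ref{def:representation-poset} directly, leaning on the structural description of $\psi_\H$ supplied by Theorem~\ref{thm:birkhoff} and on the disjointness of centers guaranteed by Lemma~\ref{lem:center-uniq}. Since the representation poset and representation function are already handed to us by Birkhoff's theorem, the only thing left to check is that the specific affine map $g(u)=Au+x^0$ in the statement reproduces the characteristic vectors correctly, and then that $A$ has full column rank.

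First I would fix an arbitrary upper set $\bar\D$ of $(\D(\H),\sqsupseteq)$ and expand the candidate affine map. By construction the columns of $A$ are the vectors $\Chi^{K_1}$ for $K_1\in\D(\H)$, so that $g(\Chi^{\bar\D})=A\Chi^{\bar\D}+x^0=\Chi^{H_0}+\sum_{K_1\in\bar\D}\Chi^{K_1}$. On the other hand, Theorem~\ref{thm:birkhoff} gives $\psi_\H^{-1}(\bar\D)=H_0\cup\bigcup_{K_1\in\bar\D}K_1$, so the goal reduces to showing that this sum of characteristic vectors equals the characteristic vector of that union. The crux is the disjointness step: by Lemma~\ref{lem:center-uniq} every element of $B$ lies in at most one center of $\D(\H)$, so the sets $\{K_1:K_1\in\bar\D\}$ are pairwise disjoint, and each is also disjoint from $H_0=K(H_0)$ since $H_0$ is excluded from $\D(\H)$. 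Because the union is disjoint, the sum stays $0$/$1$-valued and equals $\Chi^{H_0\cup\bigcup_{K_1\in\bar\D}K_1}=\Chi^{\psi_\H^{-1}(\bar\D)}$, establishing $g(\Chi^{\bar\D})=\Chi^{\psi_\H^{-1}(\bar\D)}$ for every upper set and hence affine representability.

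For the full-column-rank claim I would observe that the columns $\Chi^{K_1}$ are the characteristic vectors of pairwise disjoint, nonempty sets. Nonemptiness holds because each $K_1=K(H)$ arises from an irreducible $H=H(a)$, and the witnessing element $a$ lies in $K_1$. Choosing one representative $a_{K_1}\in K_1$ for each $K_1$ then yields, by disjointness, distinct rows on which $A$ restricts to an identity matrix; the columns are therefore linearly independent and $A$ has full column rank.

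I do not expect a genuine obstacle: once Theorem~\ref{thm:birkhoff} and Lemma~\ref{lem:center-uniq} are available the argument is essentially bookkeeping. The only point that needs care is confirming that the centers partition $H_z$ cleanly---namely the mutual disjointness of the $K_1$'s and their disjointness from $H_0$---since this is exactly what converts a sum of indicators into the indicator of a union and what underlies the rank computation.
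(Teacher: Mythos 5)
Your proposal is correct and follows essentially the same route as the paper: both reduce the representability claim to Theorem~\ref{thm:birkhoff} together with the disjointness of centers from Lemma~\ref{lem:center-uniq}, and both derive full column rank from the fact that the columns of $A$ are indicators of pairwise disjoint nonempty sets. The only difference is one of exposition: the paper dismisses the identity $g(\Chi^U)=\Chi^{\psi_\H^{-1}(U)}$ as ``clear,'' whereas you spell out the underlying bookkeeping (pairwise disjointness of the centers and their disjointness from $H_0$, which turns the sum of indicators into the indicator of the union), which is exactly the right justification.
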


\begin{proof}
    Because of the representation function $\psi_\H$ given in Theorem~\ref{thm:birkhoff}, it is clear that $g(\Chi^U) = \Chi^{\psi_\H^{-1}(U)}$ for every upper set $U\in \U((\D(\H), \sqsupseteq))$. Note that every row of $A$ has at most one non-zero entry due to Lemma~\ref{lem:center-uniq}, and every column of $A$ contains at least one non-zero entry by definition. Therefore, $A$ has full column rank.
\end{proof}

\begin{lemma} \label{lem:unq-upperset}
    Let $(\H,\subseteq)$ be a ring of sets with minimal element $H_0$, and let $H\in \H$. If $H= \bigcup \{K_1: K_1\in \bar\D\} \cup H_0$ for some subset $\bar\D$ of $\D(\H)$, then $\bar\D$ is an upper set of $(\D(\H),\sqsupseteq)$.
\end{lemma}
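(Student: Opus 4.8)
The plan is to leverage the fact that the nonempty centers $\{K_1 : K_1\in\D(\H)\}$ together with $H_0$ partition $H_z$; this pins $\bar\D$ down uniquely from $H$ and reduces the upper-set condition to the minimality property of the sets $H(a)$.

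First I would record the partition. Pairwise disjointness of the centers in $\D(\H)$ is exactly Lemma~\ref{lem:center-uniq}. Each center is moreover disjoint from $H_0$: any $a\in H_0$ has $H(a)=H_0$ since $H_0$ is the minimal element of $\H$, so $a\in K(H_0)=H_0$ and $a$ lies in no $K_1\in\D(\H)$. An immediate consequence, which I would establish next, is that for every $K_1\in\D(\H)$ one has $K_1\subseteq H$ if and only if $K_1\in\bar\D$. The forward implication is clear from $H\supseteq\bigcup\{K_1:K_1\in\bar\D\}$; for the converse, pick $b\in K_1$ (centers are nonempty), note $b\in H$ and $b\notin H_0$, so $b$ lies in some $K'\in\bar\D$, and uniqueness of the center containing $b$ forces $K'=K_1$, hence $K_1\in\bar\D$.

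With this equivalence in hand, I would verify the upper-set condition straight from the definition of $\sqsupseteq$. Take $K_2\in\bar\D$ and any $K_1\in\D(\H)$ with $K_1\sqsupseteq K_2$, i.e.\ $I(K_1)\subseteq I(K_2)$; the goal is $K_1\in\bar\D$. Choosing $a\in K_2\subseteq H$ gives $H(a)=I(K_2)$, and since $a\in H\in\H$ the minimality of $H(a)$ yields $I(K_2)=H(a)\subseteq H$. Chaining containments, $I(K_1)\subseteq I(K_2)\subseteq H$, and because $K_1=K(I(K_1))\subseteq I(K_1)$ always holds, we get $K_1\subseteq H$. The equivalence from the first step then gives $K_1\in\bar\D$, which is precisely what the upper-set property requires.

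I expect the only real care to be in the definitional bookkeeping: that $\sqsupseteq$ reverses containment of irreducible elements, that $a\in K_2$ encodes $H(a)=I(K_2)$ via $K_2=K(I(K_2))$, and that $K_1\subseteq I(K_1)$. The single load-bearing analytic step is the minimality of $H(a)$, which converts the membership $a\in H$ into the set containment $I(K_2)\subseteq H$; everything else is disjointness bookkeeping inherited from Lemma~\ref{lem:center-uniq} and the construction in Theorem~\ref{thm:birkhoff}.
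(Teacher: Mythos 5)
Your proof is correct, but it takes a different route from the paper's. The paper's proof is a two-line uniqueness-plus-existence argument: by Lemma~\ref{lem:center-uniq} (and the disjointness of centers from $H_0$) there is \emph{at most one} subfamily of $\D(\H)$ whose union with $H_0$ equals $H$, and by Theorem~\ref{thm:birkhoff} there \emph{exists} such a subfamily that is an upper set, namely $\psi_\H(H)$; hence $\bar\D$ coincides with it. You share the same disjointness bookkeeping (your first two paragraphs essentially reprove the uniqueness step, via the characterization $\bar\D=\{K_1\in\D(\H): K_1\subseteq H\}$), but where the paper then simply cites Birkhoff's theorem for existence, you instead verify the upper-set property directly: for $K_2\in\bar\D$ and $I(K_1)\subseteq I(K_2)$, the minimality of $H(a)$ for $a\in K_2$ gives $I(K_2)=H(a)\subseteq H$, hence $K_1\subseteq I(K_1)\subseteq H$ and so $K_1\in\bar\D$. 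What this buys you is self-containedness: your argument needs only Lemma~\ref{lem:center-uniq} and the definitions of centers, irreducible elements, and $\sqsupseteq$, and in effect it re-derives the relevant fragment of Theorem~\ref{thm:birkhoff} (that the family of centers contained in $H$ is an upper set). What the paper's version buys is brevity, since Theorem~\ref{thm:birkhoff} is already available in its toolkit. All the individual steps you flag as delicate (centers are nonempty, $K_1=K(I(K_1))\subseteq I(K_1)$, $a\in K_2$ iff $H(a)=I(K_2)$, and $\sqsupseteq$ reversing containment of irreducibles) check out against the paper's definitions.
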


\begin{proof}
    By Lemma~\ref{lem:center-uniq}, there is at most one subset of ${\cal D}({\cal H})$ whose union of the elements together with $H_0$ gives $H$. On the other hand, Theorem~\ref{thm:birkhoff} implies that there exists one such subset which is also an upper set of $(\D(\H),\sqsupseteq)$. The claim follows thereafter.
\end{proof}

We elucidate in Example~\ref{eg:ring-of-sets} the definitions and facts above.

\begin{example} \label{eg:ring-of-sets}
    Consider the Hasse diagram of the ring of sets given in Figure~\ref{fig:ros-rep} with base set $B=\{a,b,c,d,e,f\}$ and $\H=\{H_1, \cdots, H_7\}$. The irreducible elements are $\I(\H) = \{H_1, H_2, H_3, H_5, H_7\}$. The center(s) of each irreducible element is underlined, and $\D(\H)= \{\{b\}, \{c\}, \{d,e\}, \{f\}\}$. The poset of $\D(\H)$ is represented in Figure~\ref{fig:pos-md}. The upper sets of poset $\D(\H)$ corresponding to $H_1, \cdots, H_7$ in the exact order are: $\emptyset$; $\{\{b\}\}$; $\{\{c\}\}$; $\{\{b\}, \{c\}\}$; $\{\{c\}, \{d,e\}\}$; $\{\{b\}, \{c\}, \{d,e\}\}$; and $\{\{b\}, \{c\}, \{d,e\}, \{f\}\}$. Affine function is $g(u)=Au+x^0$ with $(x^0)^\intercal=(1,0,0,0,0,0)$ and matrix $A$ given below in Figure~\ref{fig:matrix-A-ros}. Note that columns of $A$ correspond to $\{b\}$, $\{c\}$, $\{d,e\}$, $\{f\}$ in this order. 
\end{example}
    
\begin{figure}[ht]
    \begin{subfigure}{.44\textwidth}
    \centering
    \begin{tikzpicture}[scale=.7]
        \node[rectangle, draw] (0) at (2,4) {\ul{$a$}};
        \node[right= 0mm of 0] {\small $H_1$};
        \node[rectangle, draw] (1) at (0,3) {$a$ \ul{$b$}};
        \node[left= 0mm of 1] {\small $H_2$};
        \node[rectangle, draw] (2) at (4,3) {$a$ \ul{$c$}};
        \node[right= 0mm of 2] {\small $H_3$};
        \node[rectangle, draw] (3) at (2,2) {$a \; b \; c$};
        \node[left= 0mm of 3] {\small $H_4$};
        \node[rectangle, draw] (4) at (6,2) {$a \; c$ \ul{$d \; e$}};
        \node[right= 0mm of 4] {\small $H_5$};
        \node[rectangle, draw] (5) at (4,1) {$a \; b \; c \; d \; e$};
        \node[left= 0mm of 5] {\small $H_6$};
        \node[rectangle, draw] (6) at (4,-.5) {$a \; b \; c \; d \; e$ \ul{$f$}};
        \node[left= 0mm of 6] {\small $H_7$};
        \draw[] (0) -- (1);
        \draw[] (0) -- (2);
        \draw[] (1) -- (3);
        \draw[] (2) -- (3);
        \draw[] (2) -- (4);
        \draw[] (3) -- (5);
        \draw[] (4) -- (5);
        \draw[] (5) -- (6);
    \end{tikzpicture}
    \caption{$(\H, \subseteq)$} \label{fig:ros-rep}
    \end{subfigure}
    \begin{subfigure}{.29\textwidth}
    \centering
    \begin{tikzpicture}[scale=.7]
        \node[] at (0,4.7) {};
        \node[] at (0,0) {};
        \node[circle, draw, scale=.5] (1) at (0,3) {};
        \node[] at (-.4, 3) {$b$};
        \node[circle, draw, scale=.5] (2) at (2,3) {};
        \node[] at (2.4, 3) {$c$};
        \node[circle, draw, scale=.5] (3) at (2,2) {};
        \node[] at (2.5, 2) {$d \; e$};
        \node[circle, draw, scale=.5] (4) at (1,1) {};
        \node[] at (.6, 1) {$f$};
        \draw[] (1) -- (4);
        \draw[] (2) -- (3);
        \draw[] (3) -- (4);
    \end{tikzpicture}
    \caption{$(\D(\H),\sqsupseteq)$} \label{fig:pos-md}
    \end{subfigure}
    \begin{subfigure}{.24\textwidth}
    \centering
    \begin{tikzpicture}[scale=.8]
        \node[] at (0,2.3) {$A=\begin{pmatrix} 
        0 & 0 & 0 & 0 \\ 1 & 0 & 0 & 0 \\ 0 & 1 & 0 & 0 \\ 0 & 0 & 1 & 0 \\ 0 & 0 & 1 & 0 \\ 0 & 0 & 0 & 1 \\ 
        \end{pmatrix}$};
        \node[] at (0,4.5) {};
        \node[] at (0,0.3) {};
    \end{tikzpicture}
    \caption{Matrix $A$} \label{fig:matrix-A-ros}
    \end{subfigure}
    \caption{Hasse diagrams of a ring of sets and its representation poset, as well as the matrix $A$ for affine representability for Example~\ref{eg:ring-of-sets}.} 
\end{figure}
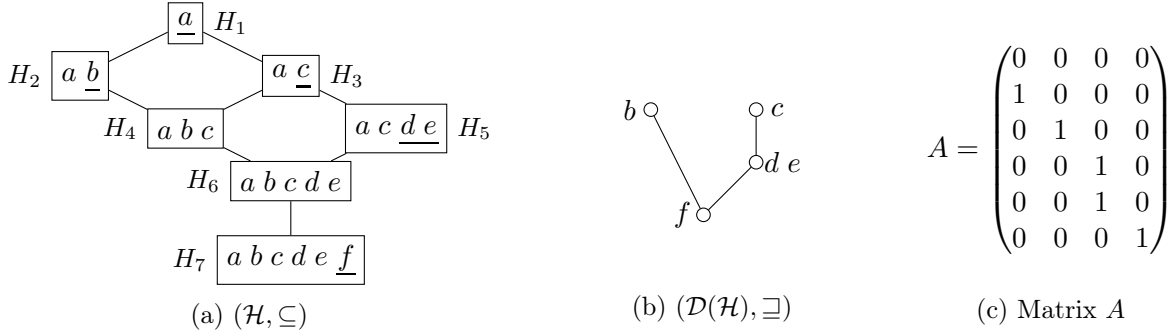

Alternatively, one can view $\D(\H)$ as the set of \emph{minimal differences} between elements of $\H$. The following lemma is established directly from Lemma 2.4.3 and Corollary 2.4.1 of \cite{gusfield1989stable}.

\begin{lemma} \label{lem:center-md} 
    $\D(\H)= \{H\setminus H': H' \text{ is an immediate predecessor of }H \text{ in } (\H, \subseteq)\}$.
\end{lemma}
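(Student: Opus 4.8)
The plan is to prove the set equality $\D(\H)= \{H\setminus H': H' \text{ is an immediate predecessor of }H \text{ in } (\H, \subseteq)\}$ by showing the two inclusions separately, relying on the structure of irreducible elements and their centers. The key objects to relate are, on the left, the centers $K(H)=\{a\in H_z: H(a)=H\}$ of irreducible elements $H\neq H_0$, and on the right, the set differences $H\setminus H'$ where $H'$ is an immediate predecessor of $H$ in the Hasse diagram of the ring of sets.

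First I would recall the two cited facts from Gusfield–Irving (Lemma 2.4.3 and Corollary 2.4.1) precisely, since the statement claims the result follows \emph{directly} from them. Lemma 2.4.3 characterizes irreducible elements in terms of having a unique immediate predecessor (i.e., $H\in \I(\H)$, $H\neq H_0$, if and only if $H$ covers exactly one element $H'$ in $(\H,\subseteq)$), and Corollary 2.4.1 identifies $H\setminus H'$ with the center $K(H)$. So the main work is to carefully translate between the containment-based language of centers used in this excerpt and the covering-based language of immediate predecessors used in the cited lemmas.

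For the inclusion $\D(\H)\subseteq \{H\setminus H': \dots\}$: take $K_1\in \D(\H)$, so $K_1=K(H)$ for some irreducible $H\neq H_0$. By the cited characterization, an irreducible element $H$ has a unique immediate predecessor $H'$ in the lattice, and one shows $H'=\bigcup\{H''\in\H: H''\subsetneq H\}$ (the join of everything strictly below, which lies in $\H$ by closure under union and is strictly smaller than $H$ precisely because $H$ is irreducible). Then I would verify $K(H)=H\setminus H'$: an element $a\in H_z$ satisfies $H(a)=H$ exactly when $a\in H$ but $a$ lies in no strictly smaller set of $\H$, i.e. $a\in H\setminus H'$. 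Conversely, for $\{H\setminus H': \dots\}\subseteq \D(\H)$: given $H$ and an immediate predecessor $H'$, the set $H\setminus H'$ is nonempty, and I would argue $H$ is irreducible (it has $H'$ below it as a cover, and by Lemma 2.4.3 having a cover of the right form forces irreducibility, or more directly, any $a\in H\setminus H'$ has $H(a)=H$ because every element of $\H$ strictly below $H$ is contained in $H'$). Hence $H\setminus H'=K(H)\in\D(\H)$, noting $H\neq H_0$ since $H_0$ has no predecessor.

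The main obstacle will be the careful handling of the equivalence between ``$H$ is irreducible'' (defined here via existence of a center, i.e. some $a$ with $H(a)=H$) and ``$H$ covers a unique element $H'$,'' together with establishing that every element strictly below an irreducible $H$ is in fact contained in its unique immediate predecessor $H'$ — this is exactly where the closure of $\H$ under union is used, and it is the content that the cited Gusfield–Irving results package. Since the problem statement asserts the result is \emph{established directly} from those two references, I would keep the argument brief, citing \cite{gusfield1989stable} for the covering/center correspondence and supplying only the short dictionary between their notation and the center notation $K(H)$, $H(a)$ introduced in this section.
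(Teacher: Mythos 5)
Your first inclusion is fine: for an irreducible $H\neq H_0$ with center set $K(H)$, the set $H'\coloneqq\bigcup\{H''\in \H: H''\subsetneq H\}$ belongs to $\H$ by closure under union, is a proper subset of $H$ (otherwise some $H''\subsetneq H$ would contain a center of $H$, contradicting $H(a)=H$), is therefore the unique immediate predecessor of $H$, and $K(H)=H\setminus H'$ follows exactly as you say. This gives $\D(\H)\subseteq\{H\setminus H':\dots\}$.

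The reverse inclusion contains a genuine error. You argue that if $H'$ is an immediate predecessor of $H$, then $H$ itself is irreducible, justified either by ``having a cover \dots forces irreducibility'' or by the claim that every element of $\H$ strictly below $H$ is contained in $H'$. Both are false: \emph{every} non-minimal element of $\H$ has at least one immediate predecessor, whereas irreducibility is equivalent to having a \emph{unique} one. Concretely, take $\H=\{\emptyset,\{1\},\{2\},\{1,2\}\}$, $H=\{1,2\}$, $H'=\{1\}$. Here $H'$ is an immediate predecessor of $H$, but $\{2\}\subsetneq H$ is not contained in $H'$, no $a$ satisfies $H(a)=H$ (since $H(1)=\{1\}$ and $H(2)=\{2\}$), and $K(H)$ is empty, so your conclusion $H\setminus H'=K(H)$ fails. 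The lemma nevertheless holds because $H\setminus H'=\{2\}=K(\{2\})$: the difference of a cover pair is the center of a generally \emph{different} irreducible element, namely $H(a)$ for $a\in H\setminus H'$. The repair runs as follows. Pick $a\in H\setminus H'$ and set $J\coloneqq H(a)$; note $J\neq H_0$, else $a\in H_0\subseteq H'$. For any $b\in H\setminus H'$, closure under union gives $H'\subsetneq H'\cup J\subseteq H$, so $H'\cup J=H$ by the covering property, whence $H\setminus H'\subseteq J$ and in particular $H(b)\subseteq J$; symmetrically $H'\cup H(b)=H$, so $a\in H(b)$ and $J=H(a)\subseteq H(b)$, giving $H(b)=J$, i.e., $b\in K(J)$. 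Conversely, any $b\in K(J)$ lies in $J\subseteq H$ and cannot lie in $H'$ (else $J=H(b)\subseteq H'$, contradicting $a\in J\setminus H'$). Hence $H\setminus H'=K(J)\in\D(\H)$. Without this re-routing through $H(a)$, your second inclusion does not go through.
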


A direct consequence of Lemma~\ref{lem:center-uniq} and Lemma~\ref{lem:center-md} is the following.

\begin{lemma} \label{lem:md-imply-immediate}
    Let $H', H\in \H$. If $H'\subseteq H$ and $H\setminus H'\in \D(\H)$, then $H'$ is an immediate predecessor of $H$ in $(\H,\subseteq)$.
\end{lemma}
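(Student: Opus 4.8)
The plan is to prove Lemma~\ref{lem:md-imply-immediate} by using Lemma~\ref{lem:center-md} to produce a concrete immediate predecessor of $H$ whose difference from $H$ is exactly the given minimal difference $H\setminus H'$, and then argue that this forces $H'$ itself to be that immediate predecessor. Write $K \coloneqq H\setminus H'$ and assume $K\in\D(\H)$ with $H'\subseteq H$. By Lemma~\ref{lem:center-md}, since $K\in\D(\H)$, there exist elements of $\H$ realizing $K$ as a difference across an immediate edge of the Hasse diagram; more usefully, $K=K(I(K))$ is the set of centers of an irreducible element, so I want to locate where $K$ ``appears'' relative to $H$.

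First I would set up the containment picture. Because $H'\subseteq H$ and $H\setminus H'=K$, we have $H=H'\cup K$ and $H'\cap K=\emptyset$. The goal is to show there is no $H''\in\H$ strictly between, i.e.\ no $H''$ with $H'\subsetneq H''\subsetneq H$. The key step is to use the uniqueness provided by Lemma~\ref{lem:center-uniq}: each base element lies in at most one center set in $\D(\H)$, so the centers partition (part of) $B$ and $K$ is an indivisible ``atom'' of change. Concretely, I would appeal to the representation of $\H$ via upper sets of $(\D(\H),\sqsupseteq)$ from Theorem~\ref{thm:birkhoff}: both $H$ and $H'$ correspond to upper sets $\bar\D_H$ and $\bar\D_{H'}$ of the poset $(\D(\H),\sqsupseteq)$, via $H=\bigcup\{K_1:K_1\in\bar\D_H\}\cup H_0$ and similarly for $H'$. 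Since $H'\subseteq H$ and each element of $B\setminus H_0$ lies in exactly one center, we get $\bar\D_{H'}\subseteq\bar\D_H$, and moreover $\bar\D_H\setminus\bar\D_{H'}$ consists precisely of those centers contained in $K=H\setminus H'$. Because the centers are disjoint and $K$ is itself a single center, $\bar\D_H\setminus\bar\D_{H'}=\{K\}$.

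The heart of the argument is then purely order-theoretic on the poset $(\D(\H),\sqsupseteq)$: the upper sets $\bar\D_{H'}$ and $\bar\D_H=\bar\D_{H'}\cup\{K\}$ differ by exactly one element $K$, so $\bar\D_{H'}$ is a \emph{maximal} upper set properly contained in $\bar\D_H$ (equivalently $K$ is a minimal element of $\bar\D_H$ with respect to $\sqsupseteq$, since removing a single element from an upper set yields an upper set iff that element is minimal). I would then invoke the bijection between elements of $\H$ and upper sets of $(\D(\H),\sqsupseteq)$, which is inclusion-reversing on upper sets and inclusion-preserving on the corresponding elements; an edge in the Hasse diagram of $(\H,\subseteq)$ corresponds exactly to a covering relation among upper sets, i.e.\ to adding a single minimal center. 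Hence $H'$ is an immediate predecessor of $H$ in $(\H,\subseteq)$.

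The main obstacle I anticipate is cleanly justifying the two translation steps: first that $H'\subseteq H$ together with $H\setminus H'=K$ forces $\bar\D_H\setminus\bar\D_{H'}$ to be the singleton $\{K\}$ (this needs Lemma~\ref{lem:center-uniq} so that no center is ``split'' between $H'$ and $K$, and uses that $K$ is exactly one center rather than a union of several), and second that a single-element gap between nested upper sets corresponds to a covering relation in $\H$ with no intermediate element. An alternative, possibly shorter route that avoids explicitly invoking the poset machinery would be to argue directly by contradiction: suppose some $H''\in\H$ satisfies $H'\subsetneq H''\subsetneq H$; then $H''\setminus H'$ and $H\setminus H''$ are nonempty, both are contained in $K=H\setminus H'$, and (using that $\H$ is closed under union and intersection so that $K$ decomposes along the differences of the chain $H'\subsetneq H''\subsetneq H$) this would express the single center $K$ as a nontrivial union contributed by two distinct covering steps, contradicting that $K$ is one center and hence one minimal difference by Lemma~\ref{lem:center-md}. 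I would lead with the upper-set translation since it most directly leverages the already-established Theorem~\ref{thm:birkhoff} and Lemma~\ref{lem:center-uniq}, keeping the contradiction argument as a fallback if the covering-relation correspondence needs spelling out.
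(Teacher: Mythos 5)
Your primary argument is correct, but it takes a genuinely different route from the paper. The paper's proof is essentially your fallback: assume some $\bar H\in\H$ with $H'\subsetneq \bar H\subsetneq H$; then Lemma~\ref{lem:center-md} (applied to any immediate-predecessor step of a chain refining $\bar H\subsetneq H$) yields a center $K_2\in\D(\H)$ with $\emptyset\neq K_2\subsetneq H\setminus H'$, and any element of $K_2$ then lies in two distinct centers, contradicting Lemma~\ref{lem:center-uniq} --- two lines, purely local, and it never touches the upper-set machinery. Your lead argument instead routes through Theorem~\ref{thm:birkhoff}: using disjointness of centers (Lemma~\ref{lem:center-uniq}) you show $\bar\D_H=\bar\D_{H'}\cup\{K\}$, and since no set can lie strictly between two sets differing in a single element, the poset isomorphism between $(\H,\subseteq)$ and the upper sets of $(\D(\H),\sqsupseteq)$ forces $H'$ to be covered by $H$. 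This is heavier (you must verify both translation steps you flag: that $H'\subseteq H$ gives $\bar\D_{H'}\subseteq\bar\D_H$, and that covering relations correspond under the bijection --- both do go through via center disjointness), but it is more structural and arguably more in the spirit of the paper's affine-representability theme, since it makes explicit that Hasse edges of $\H$ correspond to adding a single minimal center to an upper set. One small slip: the bijection $\bar\D\mapsto\bigcup\{K_1:K_1\in\bar\D\}\cup H_0$ is inclusion-\emph{preserving} in both directions for the ring of sets, not ``inclusion-reversing on upper sets'' as you write; your concrete containments use the correct direction, so this is terminological only and does not affect the proof.
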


\begin{proof}
    Let $K_1\coloneqq H\setminus H'$. Assume by contradiction that there exists $\bar H\in \H$ with $H'\subsetneq \bar H\subsetneq H$. Then, because of Lemma~\ref{lem:center-md}, there exists a center $K_2\in \D(\H)$ such that $\emptyset\neq K_2 \subsetneq K_1$. However, this contradicts Lemma~\ref{lem:center-uniq}. 
\end{proof}

\subsection{Representation of $({\cal S,\succeq})$ via the poset of rotations} \label{sec:compact-representation}

As discussed in Section~\ref{sec:affine-ring-of-sets}, the poset $(\D(\P),\sqsupseteq)$ associated with $(\P,\subseteq)$ provides a compact representation of $(\P,\subseteq)$ and can be used to \emph{reconstruct} $\P$ via Theorem~\ref{thm:affine-representaibility-ring-of-sets}. In this section, we show how to associate with $(\SS, \succeq)$ a poset that is isomorphic to $(\D(\P), \sqsupseteq)$, which can be used to reconstruct $\SS$. The precise statement is give in Theorem~\ref{thm:rp-isom-birkhoff} below. 

For $\mu, \mu'\in \SS$, with $\mu'$ being an immediate predecessor of $\mu$ in the stable matching lattice, let 
$$\rho^{+} (\mu', \mu) =\{(f,w): f\in F, w\in \mu(f)\setminus \mu'(f)\}$$ 
and 
$$\rho^{-} (\mu', \mu) =  \{(f,w): f\in F, w\in \mu'(f)\setminus \mu(f)\}.$$ 
Note that by definition, $$\mu= \mu'\triangle \rho^-(\mu', \mu) \triangle \rho^+(\mu', \mu) = \mu' \setminus \rho^-(\mu', \mu) \cup \rho^+(\mu', \mu).$$
We call $\rho(\mu', \mu)\coloneqq (\rho^{+} (\mu', \mu), \rho^{-} (\mu', \mu))$ a \emph{rotation} of $(\SS, \succeq)$. Let $\Pi(\SS)$ denote the set of rotations of $(\SS, \succeq)$. That is, $$\Pi(\SS) \coloneqq \{\rho(\mu',\mu): \mu' \textup{ is an immediate predecessor of } \mu \textup{ in } (\SS, \succeq)\}.$$ 

\begin{remark} 
    It is interesting to compare rotations in the current model~\cite{alkan2002class} with the analogous concept in the \textsc{MM-Model}. While in the latter case, rotations are simple cycles in the associated bipartite graph of agents~\cite{baiou2000many}, this may not be the case for our model, as Example~\ref{ex:rotations} shows.
\end{remark}

\begin{example}\label{ex:rotations}
    Consider the two stable matchings $\mu'$ and $\bar\mu$ shown in Example~\ref{eg:break-marriage}, where $\mu'$ is an immediate predecessor of $\bar\mu$. As  shown in Figure~\ref{fig:eg-matching-ncycle}, their symmetric difference is not a simple cycle. In Figure~\ref{fig:symmetric-difference}, solid lines are edges from $\mu'$ and dashed lines are those from $\bar\mu$.
\end{example}

\begin{figure}[ht]
    \centering
    \begin{subfigure}{.33\textwidth}
    \centering
    \begin{tikzpicture}[scale=.7]
        \node[rectangle, draw] (f1) at (0,4) {$f_1$};
        \node[rectangle, draw] (f2) at (0,3) {$f_2$};
        \node[rectangle, draw] (f3) at (0,2) {$f_3$};
        \node[rectangle, draw] (f4) at (0,1) {$f_4$};
        \node[rectangle, draw] (w1) at (3,4) {$w_1$};
        \node[rectangle, draw] (w2) at (3,3) {$w_2$};
        \node[rectangle, draw] (w3) at (3,2) {$w_3$};
        \node[rectangle, draw] (w4) at (3,1) {$w_4$};
        \draw[] (f1) -- (w2);
        \draw[] (f1) -- (w4);
        \draw[] (f2) -- (w1);
        \draw[] (f2) -- (w2);
        \draw[] (f3) -- (w3);
        \draw[] (f3) -- (w4);
        \draw[] (f4) -- (w1);
        \draw[] (f4) -- (w3);
    \end{tikzpicture}
    \caption{stable matching $\mu'$}
    \end{subfigure}%
    \begin{subfigure}{.33\textwidth}
    \centering
    \begin{tikzpicture}[scale=.7]
        \node[rectangle, draw] (f1) at (0,4) {$f_1$};
        \node[rectangle, draw] (f2) at (0,3) {$f_2$};
        \node[rectangle, draw] (f3) at (0,2) {$f_3$};
        \node[rectangle, draw] (f4) at (0,1) {$f_4$};
        \node[rectangle, draw] (w1) at (3,4) {$w_1$};
        \node[rectangle, draw] (w2) at (3,3) {$w_2$};
        \node[rectangle, draw] (w3) at (3,2) {$w_3$};
        \node[rectangle, draw] (w4) at (3,1) {$w_4$};
        \draw[] (f1) -- (w3);
        \draw[] (f1) -- (w4);
        \draw[] (f2) -- (w1);
        \draw[] (f2) -- (w4);
        \draw[] (f3) -- (w2);
        \draw[] (f3) -- (w3);
        \draw[] (f4) -- (w1);
        \draw[] (f4) -- (w2);
    \end{tikzpicture}
    \caption{stable matching $\bar\mu$}
    \end{subfigure}%
    \begin{subfigure}{.33\textwidth}
    \centering
    \begin{tikzpicture}[scale=.7]
        \node[rectangle, draw] (f1) at (0,0) {$f_1$};
        \node[rectangle, draw] (f2) at (2,0) {$f_2$};
        \node[rectangle, draw] (f3) at (2,2) {$f_3$};
        \node[rectangle, draw] (f4) at (0,2) {$f_4$};
        \node[rectangle, draw] (w2) at (1,1) {$w_2$};
        \node[rectangle, draw] (w3) at (-1,3) {$w_3$};
        \node[rectangle, draw] (w4) at (3,3) {$w_4$};
        \draw[thick] (f1) -- (w2);
        \draw[thick, dashed] (f1) to[in=-120, out=180] (w3);
        \draw[thick] (f2) -- (w2);
        \draw[thick, dashed] (f2) to[in=-60, out=0] (w4);
        \draw[thick] (f3) -- (w4);
        \draw[thick, dashed] (f3) -- (w2);
        \draw[thick] (f4) -- (w3);
        \draw[thick, dashed] (f4) -- (w2);
    \end{tikzpicture}
    \caption{symmetric difference $\bar\mu \triangle \mu'$} \label{fig:symmetric-difference}
    \end{subfigure}
    \caption{Two stable matchings neighboring in $(\SS,\succeq)$ and their symmetric difference.} \label{fig:eg-matching-ncycle}
\end{figure}
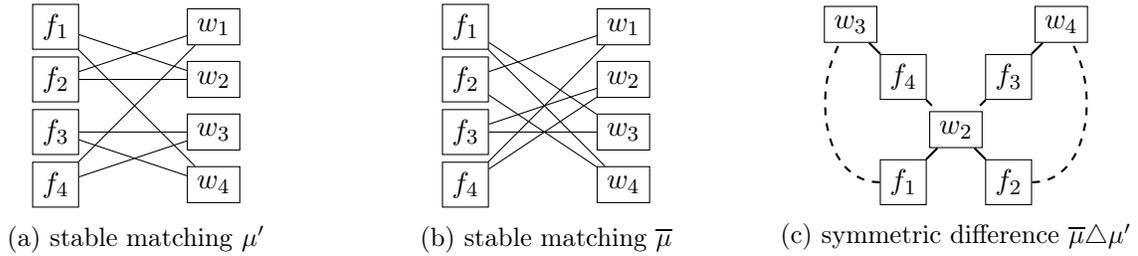

In the following, we focus on proving a bijection between $\D(\P)$ and $\Pi(\SS)$, and we often abbreviate $\Pi\coloneqq \Pi(\SS)$ and $\D\coloneqq \D(\P)$. In particular, we show the following.

\begin{theorem}\label{thm:rp-isom-birkhoff}
    Assume $\C_F$ and $\C_W$ are path-independent and cardinal monotone. Then, 
    \begin{enumerate}
        \item[(1)] the mapping $Q:\Pi\rightarrow \D$, with $Q(\rho)=\rho^+$, is a bijection; 
        \item[(2)] $(\D, \sqsupseteq)$ is isomorphic to the \emph{rotation poset} $(\Pi, \succeq^\star)$ where for two rotations $\rho_1, \rho_2\in \Pi$, $\rho_1\succeq^\star \rho_2$ if $Q(\rho_1)\sqsupseteq Q(\rho_2)$; 
        \item[(3)] $(\Pi, \succeq^\star)$ is a representation poset for $(\SS,\succeq)$ with representation function $\psi_\SS$ such that for any upper set $\bar\Pi$ of $(\Pi, \succeq^\star)$, $P(\psi_\SS^{-1} (\bar\Pi))= \psi_\P^{-1} (\{Q(\rho): \rho\in \bar\Pi\})$ where $\psi_\P$ is the representation function of $(\P, \subseteq)$ per Theorem~\ref{thm:birkhoff}; and $\psi_\SS^{-1}(\bar\Pi) = \big(\triangle_{\rho\in\bar\Pi} (\rho^{-} \triangle \rho^{+}) \big) \triangle \mu_F$, where $\triangle$ is the symmetric difference operator. Moreover, equivalently, we have $\psi_\SS^{-1}(\bar\Pi)=\mu_F \cup (\bigcup_{\rho\in\bar\Pi}\rho^+)\setminus (\bigcup_{\rho\in\bar\Pi}\rho^-)$.
    \end{enumerate}
\end{theorem}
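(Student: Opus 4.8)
The plan is to leverage the two isomorphisms already established---Theorem~\ref{thm:iso-ros} relating $(\SS,\succeq)$ to the ring of sets $(\P,\subseteq)$, and Theorem~\ref{thm:birkhoff}/Theorem~\ref{thm:affine-representaibility-ring-of-sets} relating $(\P,\subseteq)$ to its poset of minimal differences $(\D,\sqsupseteq)$---and simply transport the whole structure across the bijection $P$. The key observation is that the ``$P$'' operation sends a rotation $\rho(\mu',\mu)$ of $(\SS,\succeq)$ to the set difference $P(\mu')\setminus P(\mu)$ of the corresponding $\P$-elements, and this difference should turn out to be exactly a center $K_1\in\D$. So first I would fix a rotation $\rho=\rho(\mu',\mu)$ with $\mu'$ an immediate predecessor of $\mu$ in $(\SS,\succeq)$. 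By the isomorphism in Theorem~\ref{thm:iso-ros}(2), $\mu\succ\mu'$ translates to $P(\mu)\subsetneq P(\mu')$, and immediacy in $(\SS,\succeq)$ translates to $P(\mu)$ being an immediate predecessor of $P(\mu')$ in $(\P,\subseteq)$ (since $P$ is a lattice isomorphism, it preserves the covering relation). Then Lemma~\ref{lem:center-md} identifies $P(\mu')\setminus P(\mu)$ as a minimal difference, i.e.\ an element of $\D$.

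For part (1), I would show $Q(\rho)=\rho^+(\mu',\mu)$ coincides with this minimal difference $P(\mu')\setminus P(\mu)$, which would immediately give that $Q$ is well-defined into $\D$ and a bijection (because $P$ is a bijection and the covering relations match up, so distinct rotations correspond to distinct covering pairs in $(\P,\subseteq)$, hence distinct minimal differences by Lemma~\ref{lem:center-md}). The step needing care here is verifying that $\rho^+(\mu',\mu)=\{(f,w):w\in\mu(f)\setminus\mu'(f)\}$ really equals $P(\mu')\setminus P(\mu)$ rather than merely being in bijection with it. Using Lemma~\ref{lem:dom-P} ($\mu\succeq\mu'$ gives $P_f(\mu)\subseteq P_f(\mu')$) one inclusion is structural; for the reverse I expect to use that for $w\in P_f(\mu')\setminus P_f(\mu)$, Lemma~\ref{lem:exist-dom-P-middle} produces a stable matching strictly between $\mu'$ and $\mu$ containing $(f,w)$ unless $w$ is actually matched, and immediacy forces it to be matched---so $w\in\mu(f)\setminus\mu'(f)$. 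This is the place where the non-simple-cycle phenomenon (Example~\ref{ex:rotations}) could bite, so I would be careful not to assume $\rho^+$ and $\rho^-$ have equal size or any cycle structure.

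Part (2) is then essentially definitional: $\succeq^\star$ on $\Pi$ is \emph{defined} by pulling back $\sqsupseteq$ through $Q$, so once $Q$ is a bijection onto $\D$, the map $Q$ is tautologically an order isomorphism $(\Pi,\succeq^\star)\to(\D,\sqsupseteq)$. For part (3), I would combine $Q$ with the representation function $\psi_\P$ of $(\P,\subseteq)$ from Theorem~\ref{thm:birkhoff}: given an upper set $\bar\Pi$ of $(\Pi,\succeq^\star)$, its image $\{Q(\rho):\rho\in\bar\Pi\}$ is an upper set of $(\D,\sqsupseteq)$, and I define $\psi_\SS^{-1}(\bar\Pi)$ to be the unique stable matching whose $\P$-element is $\psi_\P^{-1}(\{Q(\rho):\rho\in\bar\Pi\})$; the identity $P(\psi_\SS^{-1}(\bar\Pi))=\psi_\P^{-1}(\{Q(\rho):\rho\in\bar\Pi\})$ then holds by construction, and that $(\Pi,\succeq^\star)$ is a representation poset follows since $P$ is an isomorphism and Birkhoff's bijection for $\P$ carries over. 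The remaining work is the two explicit reconstruction formulas. I expect the symmetric-difference formula $\psi_\SS^{-1}(\bar\Pi)=\big(\triangle_{\rho\in\bar\Pi}(\rho^-\triangle\rho^+)\big)\triangle\mu_F$ to follow by induction along a maximal chain from $\mu_F$: adding rotations one at a time in a linear extension of $\bar\Pi$, each step replaces $\rho^-$ by $\rho^+$ via $\mu=\mu'\setminus\rho^-\cup\rho^+$. The subtle point---and what I expect to be the main obstacle---is justifying the cleaner union/difference formula $\psi_\SS^{-1}(\bar\Pi)=\mu_F\cup(\bigcup_{\rho\in\bar\Pi}\rho^+)\setminus(\bigcup_{\rho\in\bar\Pi}\rho^-)$, i.e.\ showing the rotations' contributions do not interfere: that the $\rho^+$ sets of rotations in $\bar\Pi$ are essentially disjoint from (and the $\rho^-$ sets consistent with) one another so the nested symmetric differences collapse into a plain union minus a plain difference. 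I would derive this from Lemma~\ref{lem:center-uniq} (each base-set element lies in at most one center $K_1\in\D$), transported back through $Q$, together with $P(\mu_F)\cap(\bigcup_\rho\rho^-)$ and $P(\mu_F)$'s position as the minimal $\P$-element; the disjointness of centers is exactly what prevents a pair $(f,w)$ from being added and removed by different rotations in a way that would break the collapse.
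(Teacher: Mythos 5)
Your overall strategy---transporting Birkhoff's structure on $(\P,\subseteq)$ and $(\D,\sqsupseteq)$ back through the isomorphism $P$---is exactly the paper's, and your sketches for parts (2) and (3) (the definitional order-isomorphism, the chain-telescoping argument for the symmetric-difference formula, and the use of Lemma~\ref{lem:center-uniq} to collapse it into a plain union/difference) match the paper's proof. The genuine gap is in part (1), at the injectivity of $Q$. You argue that ``distinct rotations correspond to distinct covering pairs in $(\P,\subseteq)$, hence distinct minimal differences by Lemma~\ref{lem:center-md}.'' That inference is false: the map from covering pairs to minimal differences is many-to-one, not one-to-one---the same minimal difference $K\in\D$ is realized by many covering pairs, equivalently the same rotation labels many edges of the Hasse diagram (in Figure~\ref{fig:eg-sml}, $\rho_3$ labels three edges). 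Since a rotation is the \emph{pair} $(\rho^+,\rho^-)$ and $Q$ forgets $\rho^-$, injectivity of $Q$ is precisely the statement that $\rho^+$ determines $\rho^-$: whenever two covering pairs $(\mu_1',\mu_1)$ and $(\mu_2',\mu_2)$ satisfy $P(\mu_1)\setminus P(\mu_1')=P(\mu_2)\setminus P(\mu_2')$, one must show $\mu_1'(f)\setminus\mu_1(f)=\mu_2'(f)\setminus\mu_2(f)$ for every $f$. This cannot be read off the ring of sets, because $\rho^-$ is information about the matchings that the P-sets do not encode pointwise. The paper devotes Lemmas~\ref{lem:out-cannot-be-in}, \ref{lem:rot-out-f-dom}, \ref{lem:set-operation}, \ref{lem:rot-out-f} and Theorem~\ref{thm:1to1-rot-md} to exactly this, splitting into the cases $\mu_1=\mu_2$, $\mu_1,\mu_2$ comparable, and $\mu_1,\mu_2$ incomparable (the last handled by passing to joins). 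Your proposal contains no substitute for this argument, and parts (2) and (3) presuppose it.

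A secondary, more mechanical error: you have the lattice orientation backwards. In the paper a predecessor is \emph{greater}, so $\mu'\succ\mu$, and by Theorem~\ref{thm:iso-ros} this gives $P(\mu')\subsetneq P(\mu)$; the minimal difference is $P(\mu)\setminus P(\mu')$, and Lemma~\ref{lem:rot-in-f} states $\rho^+(\mu',\mu)=P(\mu)\setminus P(\mu')$. Your stated identity $\rho^+=P(\mu')\setminus P(\mu)$ cannot hold for any nonempty $\rho^+$: matched pairs of $\mu$ always lie in $P(\mu)$ by definition, so $\rho^+\cap\bigl(P(\mu')\setminus P(\mu)\bigr)=\emptyset$. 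With the orientation fixed, your reverse-inclusion argument (Lemma~\ref{lem:exist-dom-P-middle} plus immediacy) is exactly the paper's proof of Lemma~\ref{lem:rot-in-f}, but the forward inclusion needs Lemma~\ref{lem:additional} (that $w\in\mu(f)\setminus\mu'(f)$ implies $w\notin P_f(\mu')$), not merely the containment of P-sets from Lemma~\ref{lem:dom-P}.
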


\begin{lemma} \label{lem:additional}
    Let $\mu, \mu'\in \SS$ such that $\mu'\succ \mu$. If $w\in \mu(f)\setminus \mu'(f)$ for some $f$, then $w\notin P_f(\mu')$.
\end{lemma}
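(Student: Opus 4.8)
The plan is to reduce the entire statement to a single application of the consistency property of $\C_f$. First I would unpack the goal: by definition $P_f(\mu') = \{w \in \phi(f) : w \in \C_f(\mu'(f) \cup \{w\})\}$, so to prove $w \notin P_f(\mu')$ it suffices to show that $w \notin \C_f(\mu'(f) \cup \{w\})$. Note that this reduction makes the membership $w \in \phi(f)$ entirely irrelevant to the argument, which simplifies matters.

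Next I would extract the only structural input needed from the hypothesis. Since $\mu' \succ \mu$ implies $\mu' \succeq \mu$, the definition of the partial order $\succeq$ in Theorem~\ref{thm:alkan} gives directly that $\C_f(\mu'(f) \cup \mu(f)) = \mu'(f)$.

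The key step is to transfer information about the choice on the large set $\mu'(f) \cup \mu(f)$ to the small set $\mu'(f) \cup \{w\}$. Because $w \in \mu(f)$, we obtain the sandwich $\mu'(f) = \C_f(\mu'(f) \cup \mu(f)) \subseteq \mu'(f) \cup \{w\} \subseteq \mu'(f) \cup \mu(f)$, where the right inclusion uses precisely $w \in \mu(f)$. Applying consistency with $S = \mu'(f) \cup \mu(f)$ and $T = \mu'(f) \cup \{w\}$ then yields $\C_f(\mu'(f) \cup \{w\}) = \C_f(\mu'(f) \cup \mu(f)) = \mu'(f)$. Since $w \in \mu(f) \setminus \mu'(f)$ forces $w \notin \mu'(f)$, I conclude $w \notin \C_f(\mu'(f) \cup \{w\})$, and hence $w \notin P_f(\mu')$, as desired.

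I do not anticipate a genuine obstacle here; the single point requiring care is selecting $S$ and $T$ so that the chain $\C_f(S) \subseteq T \subseteq S$ demanded by consistency holds, and this is exactly the place where the hypothesis $w \in \mu(f)$ gets used (for the inclusion $T \subseteq S$) together with $\C_f(S) = \mu'(f)$ (for $\C_f(S) \subseteq T$). An alternative derivation via the contrapositive of substitutability is possible, but it is less direct than this two-line consistency argument, so I would favor the route above.
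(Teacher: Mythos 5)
Your proof is correct and is essentially the paper's argument: both start from $\C_f(\mu'(f)\cup\mu(f))=\mu'(f)$ (the definition of $\succeq$) and conclude by showing $\C_f(\mu'(f)\cup\{w\})=\mu'(f)$, which excludes $w$ since $w\notin\mu'(f)$. The only difference is bookkeeping: you reach the target equality by a single application of consistency, sandwiching $T=\mu'(f)\cup\{w\}$ between $\C_f(S)=\mu'(f)$ and $S=\mu'(f)\cup\mu(f)$ (where $T\subseteq S$ uses $w\in\mu(f)$), whereas the paper first applies path-independence to split off $\{w\}$ and then consistency; your route is marginally more economical since it avoids path-independence altogether.
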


\begin{proof}
    Since $\mu'\succ \mu$, we have $\C_f(\mu'(f)\cup \mu(f)) = \mu'(f)$. By path-independence and consistency, we have
    $$w\notin \mu'(f) = \C_f(\mu'(f)\cup \mu(f))= \C_f(\C_f(\mu'(f) \cup \mu(f) \setminus \{w\}) \cup\{w\}) = \C_f(\mu'(f)\cup \{w\}).$$
    Therefore, $w\notin P_f(\mu')$, concluding the proof.
\end{proof}

\begin{lemma} \label{lem:rot-in-f}
    Let $\mu, \mu'\in \SS$ such that $\mu'$ is an immediate predecessor of $\mu$ in the stable matching lattice. Then, $\mu(f)\setminus \mu'(f) = P_f(\mu)\setminus P_f(\mu')$ for all $f\in F$. In particular, $P(\mu)\setminus P(\mu') = \rho^{+} (\mu', \mu)$.
\end{lemma}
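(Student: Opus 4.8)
The plan is to establish the componentwise equality $\mu(f)\setminus\mu'(f)=P_f(\mu)\setminus P_f(\mu')$ for each fixed firm $f$; the ``in particular'' clause then follows immediately, since taking the union over all $f$ turns the left-hand side into $\rho^{+}(\mu',\mu)$ and the right-hand side into $P(\mu)\setminus P(\mu')$ by the definitions of these objects. Recall that, by the conventions of Section~\ref{sec:basic:poset}, $\mu'$ being an immediate predecessor of $\mu$ means $\mu'\succ\mu$ with no stable matching strictly between them. I would prove the componentwise equality by two inclusions.

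For the inclusion $\mu(f)\setminus\mu'(f)\subseteq P_f(\mu)\setminus P_f(\mu')$, take $w\in\mu(f)\setminus\mu'(f)$. Since $w\in\mu(f)$ and $\mu$ is individually rational, $\mu(f)\cup\{w\}=\mu(f)$, hence $w\in\C_f(\mu(f)\cup\{w\})=\C_f(\mu(f))=\mu(f)$; as $w$ is clearly a stable partner of $f$, this gives $w\in P_f(\mu)$. That $w\notin P_f(\mu')$ is precisely the content of Lemma~\ref{lem:additional}, which applies because $\mu'\succ\mu$ and $w\in\mu(f)\setminus\mu'(f)$.

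For the reverse inclusion $P_f(\mu)\setminus P_f(\mu')\subseteq\mu(f)\setminus\mu'(f)$, take $w\in P_f(\mu)\setminus P_f(\mu')$. First, $w\notin\mu'(f)$: otherwise the same individual-rationality argument applied to $\mu'$ would yield $w\in P_f(\mu')$, contradicting $w\notin P_f(\mu')$. It remains to show $w\in\mu(f)$, and this is where the immediate-predecessor hypothesis enters decisively. Since $\mu'\succeq\mu$ and $w\in P_f(\mu)\setminus P_f(\mu')$, Lemma~\ref{lem:exist-dom-P-middle} produces a stable matching $\bar\mu$ with $\mu'\succeq\bar\mu\succeq\mu$ and $w\in\bar\mu(f)$. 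Because $\mu'$ is an \emph{immediate} predecessor of $\mu$, no stable matching lies strictly between them, so $\bar\mu\in\{\mu,\mu'\}$; having already ruled out $w\in\mu'(f)$, we conclude $\bar\mu=\mu$, hence $w\in\mu(f)$.

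I expect the reverse inclusion, and specifically the step $w\in\mu(f)$, to be the crux of the argument: it is the only place where immediacy of the predecessor relation is genuinely used, and it relies on chaining Lemma~\ref{lem:exist-dom-P-middle} (to squeeze an intermediate stable matching carrying $w$ in $f$'s partner set between $\mu$ and $\mu'$) with the absence of any stable matching strictly between $\mu$ and $\mu'$ in the lattice. The remaining three sub-claims are routine consequences of individual rationality and Lemma~\ref{lem:additional}.
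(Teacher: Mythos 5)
Your proof is correct and follows essentially the same route as the paper's: the forward inclusion via individual rationality and Lemma~\ref{lem:additional}, and the reverse inclusion by first showing $w\notin\mu'(f)$ and then invoking Lemma~\ref{lem:exist-dom-P-middle} together with immediacy to force $w\in\mu(f)$. The only difference is that you argue the reverse inclusion directly while the paper phrases it as a contradiction, which is purely cosmetic.
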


\begin{proof}
   Fix a firm $f$. $\mu(f)\setminus \mu'(f) \subseteq P_f(\mu)\setminus P_f(\mu')$ follows by definition and from Lemma~\ref{lem:additional}. For the reverse direction, assume by contradiction that there exists $w\in P_f(\mu)\setminus P_f(\mu')$ but $w\notin \mu(f)\setminus \mu'(f)$. Since $w\notin P_f(\mu')$ implies that $w\notin \mu'(f)$ by definition of $P_f(\cdot)$, we also have $w\notin \mu(f)$. By Lemma~\ref{lem:exist-dom-P-middle}, there exists a stable matching $\bar \mu$ such that $\mu'\succeq \bar \mu\succeq \mu$ and $w\in \bar \mu(f)$. However, since $\mu'$ is an immediate predecessor of $\mu$ in the stable matching lattice, we either have $\bar \mu=\mu$ or $\bar \mu=\mu'$. However, both are impossible since we deduced $w\notin \mu(f)\cup \mu'(f)$.
\end{proof}

\begin{lemma} \label{lem:out-cannot-be-in}
    Let $\mu_1,\mu_2,\mu_3\in \SS$ such that $\mu_1\succ \mu_2\succ \mu_3$. If $w\in \mu_1(f)\setminus\mu_2(f)$ for some firm $f$, then $w\notin \mu_3(f)$.
\end{lemma}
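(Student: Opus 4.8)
The plan is to translate the firm-side hypothesis into a worker-side statement and then exploit the stability of the \emph{middle} matching $\mu_2$. Since matchings satisfy $w\in\mu(f)\iff f\in\mu(w)$, the hypothesis $w\in\mu_1(f)\setminus\mu_2(f)$ reads $f\in\mu_1(w)\setminus\mu_2(w)$, and the goal $w\notin\mu_3(f)$ becomes $f\notin\mu_3(w)$. The guiding intuition is that at $\mu_2$ the firm $f$ still ``wants'' $w$ (it held $w$ at the $\succeq$-larger $\mu_1$), so the only way $(f,w)\notin\mu_2$ can be consistent with stability is that the worker $w$ rejects $f$ at $\mu_2$; and by the polarity property of Theorem~\ref{thm:alkan}, workers only improve as we descend the lattice, so $w$ should keep rejecting $f$ at $\mu_3\prec\mu_2$.

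First I would establish that $w\in\C_f(\mu_2(f)\cup\{w\})$. Indeed, $w\in\mu_1(f)=\C_f(\mu_1(f)\cup\mu_2(f))$, the equality being the definition of $\mu_1\succeq\mu_2$ from Theorem~\ref{thm:alkan}; applying substitutability of $\C_f$ with $T=\mu_2(f)\subseteq\mu_1(f)\cup\mu_2(f)$ then yields $w\in\C_f(\mu_2(f)\cup\{w\})$. (Equivalently, this is $w\in P_f(\mu_1)\subseteq P_f(\mu_2)$ via Lemma~\ref{lem:dom-P}.) Now $w\notin\mu_2(f)$ makes $(f,w)$ an acceptable non-edge of $\mu_2$, so stability of $\mu_2$ forbids it from being a blocking pair. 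Since the firm half of the blocking condition, $w\in\C_f(\mu_2(f)\cup\{w\})$, holds, the worker half must fail: $f\notin\C_w(\mu_2(w)\cup\{f\})$.

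Finally I would argue by contradiction. Suppose $f\in\mu_3(w)$. Applying the polarity property to $\mu_2\succeq\mu_3$ gives $\C_w(\mu_2(w)\cup\mu_3(w))=\mu_3(w)$, which contains $f$. Invoking substitutability of $\C_w$ with $S=\mu_2(w)\cup\mu_3(w)$ and $T=\mu_2(w)\subseteq S$ then forces $f\in\C_w(\mu_2(w)\cup\{f\})$, contradicting the previous paragraph. Hence $f\notin\mu_3(w)$, i.e., $w\notin\mu_3(f)$.

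The step I expect to be most delicate is keeping the two uses of substitutability and the single use of polarity pointed in the right directions: polarity reverses the roles of the two sides (workers improve exactly as firms worsen along $\succeq$), while substitutability is only ever used to pass from a larger chosen set to a smaller one (from $\C_f(\mu_1(f)\cup\mu_2(f))$ down to $\C_f(\mu_2(f)\cup\{w\})$, and from $\C_w(\mu_2(w)\cup\mu_3(w))$ down to $\C_w(\mu_2(w)\cup\{f\})$). The conceptual crux is recognizing that stability of the middle matching $\mu_2$ is precisely what certifies that $w$ rejects $f$ there, after which monotonicity of the worker's choices along the chain finishes the argument; note that strictness of the chain is not actually needed beyond $\mu_1\succeq\mu_2\succeq\mu_3$.
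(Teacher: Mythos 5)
Your proof is correct, but its second half takes a genuinely different route from the paper's. Both arguments open identically: from $\mu_1\succeq\mu_2$ and substitutability of $\C_f$ you obtain $w\in\C_f(\mu_2(f)\cup\{w\})$, i.e., $w\in P_f(\mu_2)$. The paper then stays entirely on the firm side: assuming $w\in\mu_3(f)$, it applies Lemma~\ref{lem:additional} to the pair $\mu_2\succ\mu_3$ --- a short path-independence/consistency computation with $\C_f$ that uses only the defining identity $\C_f(\mu_2(f)\cup\mu_3(f))=\mu_2(f)$ of the order --- to conclude $w\notin P_f(\mu_2)$, a contradiction. You instead cross to the worker side: you invoke stability of $\mu_2$ (the acceptable pair $(f,w)\notin\mu_2$ cannot block, so $f\notin\C_w(\mu_2(w)\cup\{f\})$), and then the polarity property of Theorem~\ref{thm:alkan} together with substitutability of $\C_w$ to contradict this when $f\in\mu_3(w)$. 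What each buys: the paper's argument never uses the no-blocking-pair property of $\mu_2$ nor polarity, so it is purely order-theoretic plus path-independence of $\C_f$ and hence more self-contained; your argument needs only substitutability (no consistency or path-independence) in its final step and makes the intuition transparent --- $w$ rejects $f$ at $\mu_2$, and workers only improve descending the lattice --- but it imports polarity, a nontrivial structural component of Alkan's theorem, as a black box. Your closing remark that only $\mu_1\succeq\mu_2\succeq\mu_3$ is needed (not strictness) is also accurate, and the same is true of the paper's proof.
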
 

\begin{proof}
    First, note that $\mu_1\succ \mu_2$ implies $w\in \mu_1(f) = \C_f(\mu_1(f)\cup \mu_2(f))$. Thus, by substitutability, we have $w\in\C_f(\mu_2(f)\cup \{w\})$. Assume by contradiction that $w\in \mu_3(f)$. Then, applying Lemma~\ref{lem:additional} on $\mu_2$ and $\mu_3$, we have that $w\notin P_f(\mu_2)$, which is a contradiction.
\end{proof}

\begin{lemma} \label{lem:rot-out-f-dom}
   Let $\mu_1,\mu_1', \mu_2,\mu_2'\in \SS$ and assume that $\mu_1', \mu_2'$ are immediate predecessors of $\mu_1, \mu_2$ in the stable matching lattice, respectively. In addition, assume that $\mu_1\succ \mu_2$. If $P(\mu_1)\setminus P(\mu_1') = P(\mu_2)\setminus P(\mu_2')$, then $\mu_1'(f) \setminus \mu_1(f) = \mu_2'(f) \setminus \mu_2(f)$ for all firm $f\in F$.
\end{lemma}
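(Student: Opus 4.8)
The plan is to avoid any direct blocking-pair manipulation and instead reduce the statement to two clean lattice identities, namely
$\mu_1\vee\mu_2' = \mu_1'$ and $\mu_1\wedge\mu_2' = \mu_2$,
after which the conclusion falls out of the explicit join/meet formulas of Theorem~\ref{thm:alkan} together with elementary set algebra. Throughout, let $K$ denote the common value $P(\mu_1)\setminus P(\mu_1') = P(\mu_2)\setminus P(\mu_2')$ supplied by the hypothesis; the whole point is to read this hypothesis as ``the two rotations delete the same block $K$ of P-pairs.''

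First I would establish the two identities via the ring-of-sets isomorphism. Since $\mu_1'\succ\mu_1$ and $\mu_2'\succ\mu_2$, Theorem~\ref{thm:iso-ros} gives $P(\mu_1')\subsetneq P(\mu_1)$ and $P(\mu_2')\subsetneq P(\mu_2)$, so that $P(\mu_1')=P(\mu_1)\setminus K$ and $P(\mu_2')=P(\mu_2)\setminus K$, with $K\subseteq P(\mu_1)\cap P(\mu_2)$. Moreover $\mu_1\succ\mu_2$ yields $P(\mu_1)\subsetneq P(\mu_2)$. Applying Lemma~\ref{lem:1to1-ros} I then compute $P(\mu_1\vee\mu_2')=P(\mu_1)\cap P(\mu_2')=P(\mu_1)\setminus K=P(\mu_1')$ and $P(\mu_1\wedge\mu_2')=P(\mu_1)\cup P(\mu_2')=P(\mu_2)$, where both simplifications use the nesting $K\subseteq P(\mu_1)\subseteq P(\mu_2)$. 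Because $P$ is a bijection (Theorem~\ref{thm:iso-ros}(1)), these P-set equalities lift to $\mu_1\vee\mu_2'=\mu_1'$ and $\mu_1\wedge\mu_2'=\mu_2$.

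With the two identities in hand, I would fix a firm $f$, write $U\coloneqq\mu_1(f)\cup\mu_2'(f)$ and $J\coloneqq\C_f(U)$, and invoke Theorem~\ref{thm:alkan}: $\mu_1'(f)=(\mu_1\vee\mu_2')(f)=J$ and $\mu_2(f)=(\mu_1\wedge\mu_2')(f)=(U\setminus J)\cup(\mu_1(f)\cap\mu_2'(f))$. A short computation then shows that for $x\in\mu_2'(f)$ one has $x\notin\mu_2(f)$ exactly when $x\in J$ and $x\notin\mu_1(f)$, whence $\mu_2'(f)\setminus\mu_2(f)=\mu_2'(f)\cap(J\setminus\mu_1(f))$. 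The closing observation is that $J\setminus\mu_1(f)\subseteq\mu_2'(f)$: any $x\in J=\C_f(U)\subseteq U$ with $x\notin\mu_1(f)$ must lie in $\mu_2'(f)$. Therefore $\mu_2'(f)\setminus\mu_2(f)=J\setminus\mu_1(f)=\mu_1'(f)\setminus\mu_1(f)$, which is the claim.

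I expect the only genuine obstacle to be discovering the identities $\mu_1\vee\mu_2'=\mu_1'$ and $\mu_1\wedge\mu_2'=\mu_2$; once the hypothesis is phrased in terms of the block $K$, they are forced by the distributive (ring-of-sets) structure, and everything afterward is routine. The one point demanding care is the orientation of the anti-isomorphism $P$, under which larger matchings in $\succeq$ correspond to \emph{smaller} P-sets; this governs every containment used above, so I would re-derive each $\subseteq$ from Theorem~\ref{thm:iso-ros}(2) before assembling the two identities. Note also that immediate-predecessor hypotheses on $\mu_1',\mu_2'$ enter only through $\mu_1'\succ\mu_1$ and $\mu_2'\succ\mu_2$, so no finer structural input about rotations is needed.
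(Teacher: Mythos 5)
Your proof is correct, and it takes a genuinely different route from the paper's. The paper argues by contradiction with a cardinality count: it first applies Lemma~\ref{lem:rot-in-f} and the~\ref{eq:equal-quota} property to get $|\mu_1'(f)\setminus\mu_1(f)|=|\mu_2'(f)\setminus\mu_2(f)|$, then --- just as you do --- derives $\mu_1\vee\mu_2'=\mu_1'$ from the P-set calculus, uses it to force a putative counterexample worker $w\in\mu_1'(f)\setminus\mu_1(f)$ into $\mu_2(f)$, and obtains a contradiction from Lemma~\ref{lem:out-cannot-be-in}, a stability-based statement about chains $\mu_1'\succ\mu_1\succ\mu_2$; the cardinality count then upgrades the resulting inclusion to equality. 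You replace the entire second half of this argument: by pairing the join identity with the companion meet identity $\mu_1\wedge\mu_2'=\mu_2$ (obtained from the same nesting $K\subseteq P(\mu_1)\subsetneq P(\mu_2)$ via Theorem~\ref{thm:iso-ros} and Lemma~\ref{lem:1to1-ros}), you can read off both $\mu_1'(f)$ and $\mu_2(f)$ from the explicit formulas in Theorem~\ref{thm:alkan} and finish by pure set algebra; I checked the computation $\mu_2'(f)\setminus\mu_2(f)=J\setminus\mu_1(f)=\mu_1'(f)\setminus\mu_1(f)$, including the needed inclusion $J\setminus\mu_1(f)\subseteq\mu_2'(f)$, and it is sound. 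What your route buys: it is direct rather than by contradiction, it dispenses with Lemma~\ref{lem:rot-in-f}, Lemma~\ref{lem:out-cannot-be-in}, and the equal-quota property, and, as you observe, it never uses immediacy of the predecessors, so it in fact proves a slightly more general statement (arbitrary predecessors $\mu_i'\succ\mu_i$ suffice). What the paper's route buys is economy within its own development: it reuses lemmas already built for the rotation machinery (notably Lemma~\ref{lem:rot-in-f}, whose own proof does require immediacy) instead of invoking the meet formula and the extra set manipulation.
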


\begin{proof}
    Fix a firm $f$. Due to Lemma~\ref{lem:rot-in-f}, we know $\mu_1(f) \setminus \mu_1'(f) = \mu_2(f) \setminus \mu_2'(f)$. By the~\ref{eq:equal-quota} property, we have $|\mu_1(f)| = |\mu_1'(f)|$ and $|\mu_2(f)|= |\mu_2'(f)|$. Thus, $|\mu_1'(f) \setminus \mu_1(f)| = |\mu_2'(f) \setminus \mu_2(f)|$ ($\natural$). If $\mu_1'(f) \setminus \mu_1(f)= \emptyset$, the claim follows immediately, and thus, in the following, we assume $\mu_1'(f) \setminus \mu_1(f) \neq \emptyset$. Assume by contradiction that there exists $w\in \mu_1'(f) \setminus \mu_1(f)$ but $w\notin \mu_2'(f) \setminus \mu_2(f)$. Since $\mu_1\succ \mu_2$ and $\mu_i'\succ \mu_i$ for $i\in \{1,2\}$, by Theorem~\ref{thm:iso-ros}, we have $P(\mu_1)\subsetneq P(\mu_2)$ and $P(\mu_i')\subsetneq P(\mu_i)$ for $i\in \{1,2\}$.  Therefore, $P(\mu_1')\subsetneq P(\mu_2')$ due to the assumption that $P(\mu_1)\setminus P(\mu_1') = P(\mu_2)\setminus P(\mu_2')$. Again by Theorem~\ref{thm:iso-ros}, we have $\mu_1'\succ \mu_2'$. Hence, $\mu_1\vee \mu_2' = \mu_1'$ and we must have $w\in \mu_2'(f)$ and thus, $w\in \mu_2(f)$. However, since $\mu_1'\succ \mu_1\succ \mu_2$ and $w\in \mu_1'(f) \setminus \mu_1(f)$, we can apply Lemma~\ref{lem:out-cannot-be-in} and conclude that $w\notin \mu_2(f)$, which is a contradiction. This shows $\mu_1'(f) \setminus \mu_1(f) \subseteq \mu_2'(f) \setminus \mu_2(f)$. Together with ($\natural$), we have $\mu_1'(f) \setminus \mu_1(f) = \mu_2'(f) \setminus \mu_2(f)$.
\end{proof}

\begin{lemma} \label{lem:set-operation}
    Let $A,B,A',B'$ be sets such that $A\subseteq A'$ and $B\subseteq B'$. In addition, assume that $A'\setminus A = B'\setminus B$. Then, $(A'\cap B') \setminus (A\cap B) = A'\setminus A$.
\end{lemma}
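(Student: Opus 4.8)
The plan is to prove the claimed identity by a direct element-chase, establishing the two inclusions separately. First I would abbreviate $D \coloneqq A'\setminus A = B'\setminus B$ (the two sets being equal by hypothesis), so that the goal becomes $(A'\cap B')\setminus(A\cap B) = D$. Throughout I will use only the hypotheses $A\subseteq A'$, $B\subseteq B'$, and $A'\setminus A = B'\setminus B$.

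For the inclusion $D \subseteq (A'\cap B')\setminus(A\cap B)$, I would take an arbitrary $x\in D$. Since $x\in A'\setminus A$ we get $x\in A'$ and $x\notin A$; since $x\in B'\setminus B$ we get $x\in B'$ and $x\notin B$. Hence $x\in A'\cap B'$, and because $x\notin A$ (indeed $x\notin B$ as well) we have $x\notin A\cap B$, so $x\in (A'\cap B')\setminus(A\cap B)$.

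For the reverse inclusion, I would take $x\in (A'\cap B')\setminus(A\cap B)$, so that $x\in A'$, $x\in B'$, and $x\notin A\cap B$. The last condition means $x\notin A$ or $x\notin B$. In the first case, $x\in A'$ and $x\notin A$ give $x\in A'\setminus A = D$; in the second case, $x\in B'$ and $x\notin B$ give $x\in B'\setminus B = D$. Either way $x\in D$, completing the inclusion and hence the proof.

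There is no real obstacle here: the statement is a purely set-theoretic identity, and the only point worth being careful about is the case split in the reverse inclusion, where membership in $D$ must be certified through $A'\setminus A$ or $B'\setminus B$ depending on which of $A$, $B$ the element $x$ avoids—both certifications land in the same set $D$ precisely because of the hypothesis $A'\setminus A = B'\setminus B$.
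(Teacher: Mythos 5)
Your proof is correct, and it is essentially the same argument as the paper's: both are direct, elementary set-theoretic verifications using only the hypothesis $A'\setminus A = B'\setminus B$. The paper merely packages the element-chase algebraically (writing $A' = A \sqcup X$, $B' = B \sqcup X$ and deducing $A\cap B = (A'\cap B')\setminus X$), whereas you spell out the two inclusions explicitly; the content is identical.
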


\begin{proof}
    Let $X\coloneqq A'\setminus A = B'\setminus B$. Notice that $A'=A\sqcup X$ and $B'=B\sqcup X$, where $\sqcup$ is the disjoint union operator. Therefore, we have $A\cap B = (A'\setminus X)\cap (B'\setminus X) = (A'\cap B')\setminus X$ and the claim follows. 
\end{proof}

\begin{lemma} \label{lem:rot-out-f}
   Let $\mu_1, \mu_1', \mu_2, \mu_2'\in \SS$ and assume that $\mu_1', \mu_2'$ are immediate predecessors of $\mu_1, \mu_2$ in the stable matching lattice, respectively. If $P(\mu_1)\setminus P(\mu_1') = P(\mu_2)\setminus P(\mu_2')$, then $\mu_1'(f) \setminus \mu_1(f) = \mu_2'(f) \setminus \mu_2(f)$ for every firm $f$. In particular, $\rho^{-}(\mu_1', \mu_1) = \rho^{-}(\mu_2', \mu_2)$.
\end{lemma}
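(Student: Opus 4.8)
The plan is to reduce to the already-proved comparable case, Lemma~\ref{lem:rot-out-f-dom}, by inserting the join $\mu_1\vee\mu_2$ as an intermediate stable matching that strictly dominates both $\mu_1$ and $\mu_2$ while carrying the very same rotation. First I would record the common ``$\rho^+$'' side. By Lemma~\ref{lem:rot-in-f}, the hypothesis $P(\mu_1)\setminus P(\mu_1')=P(\mu_2)\setminus P(\mu_2')$ says precisely that $\rho^+(\mu_1',\mu_1)=\rho^+(\mu_2',\mu_2)=:K$. Since $\mu_i'$ is an immediate predecessor of $\mu_i$, Theorem~\ref{thm:iso-ros} gives $P(\mu_i')\subsetneq P(\mu_i)$ with $P(\mu_i)\setminus P(\mu_i')=K$, so in fact $P(\mu_i')=P(\mu_i)\setminus K$; moreover Lemma~\ref{lem:center-md} shows $K\in\D(\P)$ and in particular $K\neq\emptyset$.

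If $\mu_1$ and $\mu_2$ are comparable the claim is immediate. If $\mu_1=\mu_2$, then $P(\mu_1')=P(\mu_1)\setminus K=P(\mu_2)\setminus K=P(\mu_2')$, whence $\mu_1'=\mu_2'$ by the bijection of Theorem~\ref{thm:iso-ros}(1); and if, say, $\mu_1\succ\mu_2$ (or symmetrically $\mu_2\succ\mu_1$), then Lemma~\ref{lem:rot-out-f-dom} applies directly. So the only real work is when $\mu_1,\mu_2$ are incomparable, and I set $\nu:=\mu_1\vee\mu_2$ and $\nu':=\mu_1'\vee\mu_2'$.

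The key step is to show that $\nu'$ is an immediate predecessor of $\nu$ realizing the same rotation $K$. By Lemma~\ref{lem:1to1-ros}, $P(\nu)=P(\mu_1)\cap P(\mu_2)$ and $P(\nu')=P(\mu_1')\cap P(\mu_2')$. Since $K\subseteq P(\mu_1)$ and $K\subseteq P(\mu_2)$ we have $K\subseteq P(\nu)$, and using $P(\mu_i')=P(\mu_i)\setminus K$ together with the set identity $(P(\mu_1)\cap P(\mu_2))\setminus K=(P(\mu_1)\setminus K)\cap(P(\mu_2)\setminus K)$ I obtain $P(\nu')=P(\nu)\setminus K$. As $K\in\D(\P)$ and $P(\nu')\subseteq P(\nu)$ with $P(\nu)\setminus P(\nu')=K$, Lemma~\ref{lem:md-imply-immediate} certifies that $P(\nu')$ is an immediate predecessor of $P(\nu)$ in $(\P,\subseteq)$; transporting this covering relation through the isomorphism of Theorem~\ref{thm:iso-ros} makes $\nu'$ an immediate predecessor of $\nu$ in $(\SS,\succeq)$, and Lemma~\ref{lem:rot-in-f} gives $\rho^+(\nu',\nu)=P(\nu)\setminus P(\nu')=K$.

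To finish, I note that because $\mu_1,\mu_2$ are incomparable, $\nu=\mu_1\vee\mu_2$ strictly dominates each of them. Applying Lemma~\ref{lem:rot-out-f-dom} to the comparable pair $(\nu,\mu_1)$ — whose immediate predecessors $\nu',\mu_1'$ share the common value $\rho^+=K$ — yields $\rho^-(\nu',\nu)=\rho^-(\mu_1',\mu_1)$, and applying it likewise to $(\nu,\mu_2)$ yields $\rho^-(\nu',\nu)=\rho^-(\mu_2',\mu_2)$. Chaining these equalities gives $\rho^-(\mu_1',\mu_1)=\rho^-(\mu_2',\mu_2)$, i.e.\ $\mu_1'(f)\setminus\mu_1(f)=\mu_2'(f)\setminus\mu_2(f)$ for every firm $f$. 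The main obstacle is exactly the third paragraph: it is not a priori obvious that $P(\nu)\setminus K$ lies in $\P$, and the crucial observation that unlocks the whole argument is that it coincides with $P(\mu_1'\vee\mu_2')$, so that ring-of-sets closure (Lemma~\ref{lem:1to1-ros}) produces the needed common predecessor for free.
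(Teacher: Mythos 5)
Your proposal is correct and follows essentially the same route as the paper's proof: the same case split (equal, comparable, incomparable), the same construction $\nu=\mu_1\vee\mu_2$, $\nu'=\mu_1'\vee\mu_2'$, the same use of Lemma~\ref{lem:1to1-ros} and Lemma~\ref{lem:md-imply-immediate} to show $\nu'$ is an immediate predecessor of $\nu$ with the same difference $K$, and the same double application of Lemma~\ref{lem:rot-out-f-dom} to $(\nu,\mu_1)$ and $(\nu,\mu_2)$. The only cosmetic difference is that you verify $P(\nu')=P(\nu)\setminus K$ via a direct set identity, where the paper packages the same computation as Lemma~\ref{lem:set-operation}.
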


\begin{proof}
    We first consider the case where $\mu_1=\mu_2$. By Lemma~\ref{lem:dom-P}, we have $P(\mu_i')\subseteq P(\mu_i)$ for $i\in \{1,2\}$. Therefore,
    $$P(\mu_1') = P(\mu_1)\setminus (P(\mu_1)\setminus P(\mu_1')) =  P(\mu_2)\setminus (P(\mu_2)\setminus P(\mu_2')) = P(\mu_2'),$$
    where the second equality is due to our assumptions that $\mu_1=\mu_2$ and $P(\mu_1)\setminus P(\mu_1') = P(\mu_2)\setminus P(\mu_2')$. Thus, $\mu_1'=\mu_2'$ because of Theorem~\ref{thm:iso-ros}, and the thesis then follows. Since the cases when $\mu_1\succ \mu_2$ or $\mu_2\succ \mu_1$ have already been considered in Lemma~\ref{lem:rot-out-f-dom}, for the following, we assume that $\mu_1$ and $\mu_2$ are not comparable. Let $\mu_3\coloneqq \mu_1\vee\mu_2$ and $\mu_3'\coloneqq \mu_1'\vee\mu_2'$. Note that $\mu_3' \succeq \mu_3$. Then, applying Lemma~\ref{lem:1to1-ros} and Lemma~\ref{lem:set-operation}, we have $$P(\mu_3)\setminus P(\mu_3') = (P(\mu_1) \cap P(\mu_2)) \setminus (P(\mu_1') \cap P(\mu_2')) = P(\mu_1)\setminus P(\mu_1').$$ By Theorem~\ref{thm:iso-ros}, Lemma~\ref{lem:center-md} and Lemma~\ref{lem:md-imply-immediate}, we also have that $\mu_3'$ is an immediate predecessor of $\mu_3$ in the stable matching lattice. Note that by construction, we have $\mu_3\succ \mu_1$ and $\mu_3\succ \mu_2$ since $\mu_1$ and $\mu_2$ are incomparable. Applying Lemma~\ref{lem:rot-out-f-dom} on $\mu_1$ and $\mu_3$ as well as on $\mu_2$ and $\mu_3$, we have $\mu_1'(f)\setminus \mu_1(f) = \mu_3'(f) \setminus \mu_3(f) = \mu_2'(f) \setminus \mu_2(f)$ for all firm $f\in F$, as desired.
\end{proof}

\begin{theorem} \label{thm:1to1-rot-md}
    Let $\mu_1, \mu_1', \mu_2,\mu_2'\in \SS$ and assume that $\mu_1', \mu_2'$ are immediate predecessors of $\mu_1, \mu_2$ in the stable matching lattice, respectively. Then, $P(\mu_1)\setminus P(\mu_1') = P(\mu_2)\setminus P(\mu_2')$ if and only if $\rho(\mu_1', \mu_1) = \rho(\mu_2', \mu_2)$.
\end{theorem}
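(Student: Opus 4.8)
The plan is to prove both directions of the biconditional, leaning heavily on the structural lemmas already established. The key observation is that a rotation $\rho(\mu',\mu)=(\rho^+(\mu',\mu),\rho^-(\mu',\mu))$ is completely determined by its two components, and the preceding lemmas already pin down each component in terms of the P-set difference $P(\mu)\setminus P(\mu')$.

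For the forward direction, suppose $P(\mu_1)\setminus P(\mu_1') = P(\mu_2)\setminus P(\mu_2')$. First I would recall Lemma~\ref{lem:rot-in-f}, which states that $P(\mu)\setminus P(\mu') = \rho^+(\mu',\mu)$ whenever $\mu'$ is an immediate predecessor of $\mu$. Applying this to both pairs immediately yields $\rho^+(\mu_1',\mu_1) = P(\mu_1)\setminus P(\mu_1') = P(\mu_2)\setminus P(\mu_2') = \rho^+(\mu_2',\mu_2)$, so the $\rho^+$ components coincide. The matching of the $\rho^-$ components is precisely the content of Lemma~\ref{lem:rot-out-f}, which (under exactly the same hypothesis on the P-set differences) gives $\rho^-(\mu_1',\mu_1) = \rho^-(\mu_2',\mu_2)$. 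Since both components agree, $\rho(\mu_1',\mu_1) = \rho(\mu_2',\mu_2)$, completing this direction.

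For the reverse direction, suppose $\rho(\mu_1',\mu_1) = \rho(\mu_2',\mu_2)$. In particular the positive parts coincide, $\rho^+(\mu_1',\mu_1) = \rho^+(\mu_2',\mu_2)$. Invoking Lemma~\ref{lem:rot-in-f} once more in the reverse reading, we have $P(\mu_i)\setminus P(\mu_i') = \rho^+(\mu_i',\mu_i)$ for $i\in\{1,2\}$, and since the right-hand sides are equal by assumption, the P-set differences must be equal as well. This direction is therefore essentially immediate from Lemma~\ref{lem:rot-in-f} alone.

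I expect the reverse direction to be trivial, so the real substance lives entirely in the forward direction, and within it the $\rho^+$ half is again a one-line consequence of Lemma~\ref{lem:rot-in-f}. The genuine work---matching the $\rho^-$ components---has already been discharged in Lemma~\ref{lem:rot-out-f}, whose proof handles the delicate case analysis (equal matchings, comparable matchings via Lemma~\ref{lem:rot-out-f-dom}, and the incomparable case via a join argument using Lemma~\ref{lem:1to1-ros} and Lemma~\ref{lem:set-operation}). Consequently the main obstacle is not in this theorem's proof at all, but was front-loaded into Lemma~\ref{lem:rot-out-f}; here the theorem serves to package those two lemmas into the clean statement that the rotation is a faithful encoding of the minimal difference $P(\mu)\setminus P(\mu')$, which is exactly what is needed to establish the bijection $Q$ of Theorem~\ref{thm:rp-isom-birkhoff}.
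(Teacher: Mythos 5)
Your proof is correct and follows exactly the same route as the paper: the forward direction combines Lemma~\ref{lem:rot-in-f} (for the $\rho^+$ components) with Lemma~\ref{lem:rot-out-f} (for the $\rho^-$ components), and the reverse direction is immediate from Lemma~\ref{lem:rot-in-f} applied to the positive parts. Your closing remarks about the substance being front-loaded into Lemma~\ref{lem:rot-out-f} accurately reflect how the paper organizes the argument.
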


\begin{proof}
    For the ``only if'' direction, assume $P(\mu_1) \setminus P(\mu_1')= P(\mu_2) \setminus P(\mu_2')$. Then, $\rho^+(\mu_1', \mu_1)= \rho^+(\mu_2', \mu_2)$ by Lemma~\ref{lem:rot-in-f} and $\rho^-(\mu_1', \mu_1)= \rho^-(\mu_2', \mu_2)$ by Lemma~\ref{lem:rot-out-f}. Thus, $\rho(\mu_1', \mu_1) = \rho(\mu_2', \mu_2)$. For the ``if'' direction, assume $\rho(\mu_1', \mu_1) = \rho(\mu_2', \mu_2)$. Then, immediately from Lemma~\ref{lem:rot-in-f}, we have that $P(\mu_1) \setminus P(\mu_1')= \rho^+(\mu_1', \mu_1) = \rho^+(\mu_2', \mu_2) = P(\mu_2) \setminus P(\mu_2')$.
\end{proof}

\begin{remark}
    In the \textsc{SM-Model} with P-sets defined as by Gusfield and Irving~\cite{gusfield1989stable} stated in Remark~\ref{rmk:P-set-defn}, Theorem~\ref{thm:1to1-rot-md} immediately follows from the definition of P-set. In fact, one can explicitly and uniquely construct $\rho(\mu', \mu)$ from $P(\mu)\setminus P(\mu')$. In particular, $\rho^{+}(\mu', \mu)$ is the set of edges $(f,w)$ such that $P_f(\mu)\neq P_f(\mu')$ and $w$ is the least preferred partner of $f$ among $P_f(\mu)\setminus P_f(\mu')$, and $\rho^{-}(\mu', \mu)$ is the set of edges $(f,w)$ such that $P_f(\mu)\neq P_f(\mu')$ and $w$ is the partner that, in the preference list $\ge_f$, is immediately before the most preferred partner of $f$ among $P_f(\mu)\setminus P_f(\mu')$.
\end{remark}

\begin{proof}[Proof of Theorem~\ref{thm:rp-isom-birkhoff}]
    Because of Theorem~\ref{thm:iso-ros} and Lemma~\ref{lem:rot-in-f}, for every $K_1\in \D$,  there exist stable matchings $\mu'$ and $\mu$ with $\mu'$ being an immediate predecessor of $\mu$ such that $K_1= P(\mu)\setminus P(\mu') = \rho^+(\mu', \mu)$. Thus, the mapping $Q$ is onto.  Theorem~\ref{thm:1to1-rot-md} further implies that $Q$ is injective. Hence, the mapping $Q$ is a bijection. This bijection and the definition of $\succeq^\star$ immediately imply that $(\D, \sqsupseteq)$ is isomorphic to $(\Pi, \succeq^\star)$. Together with the isomorphism between $(\SS,\succeq)$ and $(\P,\subseteq)$, and the fact that $(\D,\sqsupseteq)$ is a representation poset of $(\P,\subseteq)$, we deduce a bijection between elements of $(\SS,\succeq)$ and upper sets of $(\Pi,\succeq^\star)$. That is, $(\Pi,\succeq^\star)$ is a representation poset of $(\SS,\succeq)$ and its representation function $\psi_\SS$ satisfies that for every $\mu\in \SS$, $\{Q(\rho): \rho\in \psi_\SS(\mu)\} = \psi_\P(P(\mu))$. It remains to show that the formula for the inverse of $\psi_{\SS}$ given in the statement of the theorem is correct. Let $\mu\in \SS$ and let $\mu_0, \mu_1, \cdots, \mu_k$ be a sequence of stable matchings such that $\mu_{i-1}$ is an immediate predecessor of $\mu_i$ in $(\SS, \succeq)$ for all $i\in [k]$, $\mu_0=\mu_F$ and $\mu_k=\mu$. In addition, let $\rho_i= \rho(\mu_{i-1}, \mu_i)$ for all $i\in [k]$. Note that $\mu= \mu_F\triangle (\rho_1^{-} \triangle \rho_1^{+}) \triangle (\rho_2^{-} \triangle \rho_2^{+}) \triangle \cdots \triangle (\rho_k^{-} \triangle \rho_k^{+})$ ($\natural$). By Theorem~\ref{thm:iso-ros}, $P(\mu_0)\subseteq P(\mu_1) \subseteq \cdots \subseteq P(\mu_k)$, and thus,
    $$P(\mu) = P(\mu_0) \cup \big( P(\mu_1) \setminus P(\mu_0) \big) \cup \big( P(\mu_2) \setminus P(\mu_1) \big) \cup \cdots \cup \big( P(\mu_k) \setminus P(\mu_{k-1}) \big).$$
    Therefore, by Lemma~\ref{lem:rot-in-f}, $P(\mu) = P(\mu_F) \cup Q(\rho_1) \cup \cdots \cup Q(\rho_k)$. By Lemma~\ref{lem:unq-upperset}, we know that $\{Q(\rho_i): i\in [k]\}$ is an upper set of $\D$ and thus, $\psi_\P(P(\mu)) = \{Q(\rho_i): i\in [k]\}$ due to Theorem~\ref{thm:birkhoff}. Hence, $\psi_\SS(\mu) = \{\rho_i: i\in [k]\}$. The inverse of $\psi_\SS$ must be as in the first definition in the thesis so that  ($\natural$) holds. 
    
    Let $(f,w)$ be a firm-worker pair. If $(f,w)\in \rho_i^-$ for some $i\in [k]$, then $(f,w)\notin \mu$ due to Lemma~\ref{lem:out-cannot-be-in}. In addition, because of Lemma~\ref{lem:center-uniq} and the bijection $Q$, $\mu_F$, $\rho_1^+$, $\rho_2^+$, $\cdots$, $\rho_i^+$ are disjoint. Hence, if $(f,w)\in \mu_F\cup (\bigcup \{\rho_i^+: i\in[k]\})$ but $(f,w)\notin \bigcup \{\rho_i^-: i\in[k]\}$, then $(f,w)\in \mu$. The second definition of $\psi_\SS$ from the thesis follows immediately from these facts and the previous definition.
\end{proof}

\begin{example} \label{eg:rotation-poset}
    Consider the following instance where each agent has a quota of $2$. 
     \[\begin{array}{ccl}
        f_1: & \ge_{f_1,1}: & w_4\ w_2\ w_1\ w_3 \\
        & \ge_{f_1,2}: & w_3\ w_1\ w_2\ w_4 \\ 
        & \\
        f_2: & \ge_{f_2,1}: & w_2\ w_3\ w_4\ w_1 \\
        & \ge_{f_2,2}: & w_1\ w_4\ w_3\ w_2 \\
        & \\
        f_3: & \ge_{f_3,1}: & w_1\ w_2\ w_4\ w_3 \\
        & \ge_{f_3,2}: & w_3\ w_4\ w_2\ w_1 \\
        f_4: & \ge_{f_4,1}: & w_4\ w_3\ w_1\ w_2 \\
        & \ge_{f_4,2}: & w_2\ w_1\ w_3\ w_4
    \end{array} \qquad \qquad \qquad 
    \begin{array}{ccl}
        w_1: & \ge_{w_1,1}: & f_2\ f_1\ f_3\ f_4 \\
        & \ge_{w_1,2}: & f_4\ f_1\ f_3\ f_2 \\
        & \ge_{w_1,3}: & f_2\ f_4\ f_3\ f_1 \\
        w_2: & \ge_{w_2,1}: & f_1\ f_2\ f_4\ f_3 \\
        & \ge_{w_2,2}: & f_3\ f_2\ f_4\ f_1 \\
        & \ge_{w_2,3}: & f_1\ f_3\ f_4\ f_2 \\
        w_3: & \ge_{w_3,1}: & f_4\ f_3\ f_1\ f_2 \\
        & \ge_{w_3,2}: & f_2\ f_1\ f_3\ f_4 \\
        w_4: & \ge_{w_4,1}: & f_2\ f_3\ f_4\ f_1 \\
        & \ge_{w_4,2}: & f_1\ f_4\ f_3\ f_2 
    \end{array}\]
    
    The stable matchings of this instance and their corresponding P-sets are listed below. To be concise, for matching $\mu$, we list the assigned partners of firms $f_1, f_2, f_3, f_4$ in the exact order. Similarly, for P-set $P(\mu)$, we list in the order of $P_{f_1}(\mu)$, $P_{f_2}(\mu)$, $P_{f_3}(\mu)$, $P_{f_4}(\mu)$ and replace $\{w_1,w_2,w_3,w_4\}$ with $W$.
    \[\resizebox{.97\textwidth}{!}{$\begin{array}{cc}
    \begin{aligned}
        \mu_F&= (\{w_3, w_4\}, \{w_1, w_2\}, \{w_1, w_3\}, \{w_2, w_4\}) \\
        \mu_1&= (\{w_3, w_4\}, \{w_1, w_2\}, \{w_1, w_4\}, \{w_2, w_3\}) \\
        \mu_2&= (\{w_3, w_4\}, \{w_1, w_2\}, \{w_2, w_3\}, \{w_1, w_4\}) \\
        \mu_3&= (\{w_2, w_4\}, \{w_1, w_3\}, \{w_2, w_3\}, \{w_1, w_4\}) \\
        \mu_4&= (\{w_3, w_4\}, \{w_1, w_2\}, \{w_2, w_4\}, \{w_1, w_3\}) \\
        \mu_W&= (\{w_2, w_4\}, \{w_1, w_3\}, \{w_2, w_4\}, \{w_1, w_3\})
    \end{aligned} &
    \begin{aligned}
        P(\mu_F)&= (\{w_3, w_4\}, \{w_1, w_2\}, \{w_1, w_3\}, \{w_2, w_4\}) \\
        P(\mu_1)&= (\{w_3, w_4\}, \{w_1, w_2\}, \{w_1, w_3, w_4\}, \{w_2, w_3, w_4\}) \\
        P(\mu_2)&= (\{w_3, w_4\}, \{w_1, w_2\}, \{w_1, w_2, w_3\}, \{w_1, w_2, w_4\}) \\
        P(\mu_3)&= (\{w_2, w_3, w_4\}, \{w_1, w_2, w_3\}, \{w_1, w_2, w_3\}, \{w_1, w_2, w_4\}) \\
        P(\mu_4)&= (\{w_3, w_4\}, \{w_1, w_2\}, W, W) \\
        P(\mu_W)&= (\{w_2, w_3, w_4\}, \{w_1, w_2, w_3\}, W, W)
    \end{aligned}
    \end{array}$}
    \]

    The stable matching lattice $(\SS, \succeq)$ and the rotation poset $(\Pi, \succeq^\star)$ are shown in Figure~\ref{fig:eg-sml-rp}.
    
    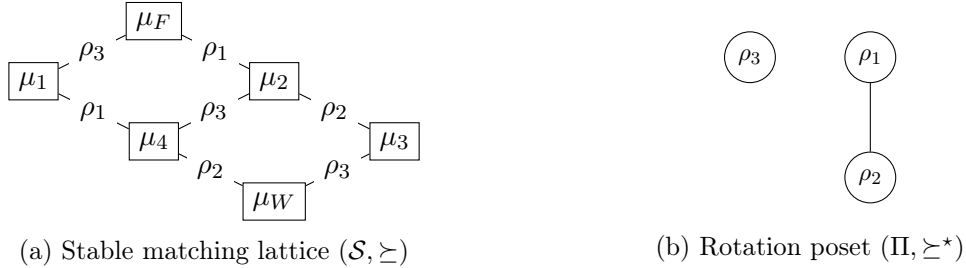
\begin{figure}[ht]
    \centering
    \begin{subfigure}{.58\textwidth}
        \centering
        \begin{tikzpicture}[scale=.8]
            \node[rectangle, draw] (0) at (2,4) {$\mu_F$};
            \node[rectangle, draw] (1) at (0,3) {$\mu_1$};
            \node[rectangle, draw] (2) at (4,3) {$\mu_2$};
            \node[rectangle, draw] (3) at (2,2) {$\mu_4$};
            \node[rectangle, draw] (4) at (6,2) {$\mu_3$};
            \node[rectangle, draw] (5) at (4,1) {$\mu_W$};
            \draw[] (0) -- (1) node [midway, fill=white] {$\rho_3$};
            \draw[] (0) -- (2) node [midway, fill=white] {$\rho_1$};
            \draw[] (1) -- (3) node [midway, fill=white] {$\rho_1$};
            \draw[] (2) -- (3) node [midway, fill=white] {$\rho_3$};
            \draw[] (2) -- (4) node [midway, fill=white] {$\rho_2$};
            \draw[] (3) -- (5) node [midway, fill=white] {$\rho_2$};
            \draw[] (4) -- (5) node [midway, fill=white] {$\rho_3$};
        \end{tikzpicture}
        \caption{Stable matching lattice $(\SS, \succeq)$} \label{fig:eg-sml}
    \end{subfigure}%
    \begin{subfigure}{.38\textwidth}
        \centering
        \begin{tikzpicture}[scale=.8]
            \node[] at (0,4.2) {};
            \node[] at (0,1) {};
            \node[circle, draw, scale=.85] (1) at (0,3.5) {$\rho_3$};
            \node[circle, draw, scale=.85] (2) at (2,3.5) {$\rho_1$};
            \node[circle, draw, scale=.85] (3) at (2,1.5) {$\rho_2$};
            \draw[] (2) -- (3);
        \end{tikzpicture}
        \caption{Rotation poset $(\Pi, \succeq^\star)$} \label{fig:eg-rp}
    \end{subfigure} 
    \caption{The stable matching lattice and its rotation poset of Example~\ref{eg:rotation-poset}} \label{fig:eg-sml-rp}
    \end{figure} 
    
    Due to Theorem~\ref{thm:iso-ros} and Theorem~\ref{thm:rp-isom-birkhoff}, one can also view Figure~\ref{fig:eg-sml} and Figure~\ref{fig:eg-rp} respectively as the ring of sets $(\P, \subseteq)$ and the poset of minimal differences $(\D, \sqsupseteq)$. 
    
    Below, we list the rotations in $\Pi$ and their corresponding minimal differences in $\D$. In addition, we label in Figure~\ref{fig:eg-sml} the edges of the Hasse Diagram by these rotations.
    \begin{align*}
        \rho_1:\ Q(\rho_1) =\rho^{+}_1 &= \{\{f_3, w_2\}, \{f_4, w_1\}\}; \ \rho^{-}_1 = \{\{f_3, w_1\}, \{f_4, w_2\}\} \\
        \rho_2:\ Q(\rho_2) =\rho^{+}_2 &= \{\{f_1, w_2\}, \{f_2, w_3\}\}; \ \rho^{-}_2 = \{\{f_1, w_3\}, \{f_2, w_2\}\} \\
        \rho_3:\ Q(\rho_3) =\rho^{+}_3 &= \{\{f_3, w_4\}, \{f_4, w_3\}\}; \ \rho^{-}_3 = \{\{f_3, w_3\}, \{f_4, w_4\}\} 
    \end{align*} 
\end{example}

\subsection{Concluding the proof for the first part of Theorem~\ref{thm:main-intro}} \label{sec:affine-repr-final}

Because of Theorem~\ref{thm:rp-isom-birkhoff}, part (3), we know that poset $(\Pi, \succeq^\star)$ represents lattice $({\cal S},\succeq)$. Let $\psi_\SS$ be the representation function as defined in Theorem~\ref{thm:rp-isom-birkhoff}. We denote by $E\subseteq F\times W$ the set of acceptable firm-worker pairs. Hence, $E$ is the base set of lattice $(\SS, \succeq)$. We deduce the following, proving the structural statement from Theorem~\ref{thm:main-intro}.

\begin{lemma} \label{lem:upperset-contained-dom}
    Let $\bar\Pi_1, \bar\Pi_2$ be two upper sets of $(\Pi, \succeq^\star)$ and let $\mu_i= \psi_\SS^{-1}(\bar\Pi_i)$ for $i\in \{1,2\}$. If $\bar\Pi_1 \subseteq \bar\Pi_2$, then $\mu_1\succeq \mu_2$.
\end{lemma}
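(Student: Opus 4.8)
The plan is to transport the hypothesised inclusion $\bar\Pi_1\subseteq\bar\Pi_2$ through the correspondences already established in this section until it becomes an inclusion of P-sets, which Theorem~\ref{thm:iso-ros} then translates back into the desired lattice relation. The only thing one must watch throughout is that each of these correspondences \emph{reverses} the order: a larger upper set of rotations corresponds (via $Q$) to a larger upper set of minimal differences, hence to a larger P-set, and since $P$ is order-reversing this corresponds to a \emph{smaller} stable matching.

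First I would move from the rotation poset to the poset of minimal differences. Put $\bar\D_i\coloneqq\{Q(\rho):\rho\in\bar\Pi_i\}$ for $i\in\{1,2\}$. By Theorem~\ref{thm:rp-isom-birkhoff}(2) the map $Q$ is an isomorphism between $(\Pi,\succeq^\star)$ and $(\D,\sqsupseteq)$, so each $\bar\D_i$ is an upper set of $(\D,\sqsupseteq)$, and since $Q$ is a bijection the hypothesis $\bar\Pi_1\subseteq\bar\Pi_2$ gives $\bar\D_1\subseteq\bar\D_2$. Next I would compute the P-sets explicitly: Theorem~\ref{thm:rp-isom-birkhoff}(3) yields $P(\mu_i)=P(\psi_\SS^{-1}(\bar\Pi_i))=\psi_\P^{-1}(\bar\D_i)$, and Theorem~\ref{thm:birkhoff}, applied to the ring of sets $(\P,\subseteq)$, gives the closed form $\psi_\P^{-1}(\bar\D_i)=\bigcup\{K_1:K_1\in\bar\D_i\}\cup H_0$, where $H_0$ is the (single, $i$-independent) minimal element of $(\P,\subseteq)$. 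Because $\bar\D_1\subseteq\bar\D_2$, the two unions are nested, so $P(\mu_1)\subseteq P(\mu_2)$.

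Finally I would invoke Theorem~\ref{thm:iso-ros}(2): for stable matchings, $\mu_1\succeq\mu_2$ holds precisely when $P(\mu_1)\subseteq P(\mu_2)$, which is exactly what the previous paragraph established, completing the argument. I do not expect a genuine obstacle here, since every ingredient is a prior result; the ``hard part'' is merely bookkeeping, namely checking that $\{Q(\rho):\rho\in\bar\Pi_i\}$ is indeed an upper set of $(\D,\sqsupseteq)$ (so that the formula of Theorem~\ref{thm:birkhoff} is applicable) and keeping the order-reversal consistent at each step. As a shortcut one could instead observe that $\psi_\SS$ is a representation function and hence, by Birkhoff's theorem, an order-reversing bijection, from which $\bar\Pi_1\subseteq\bar\Pi_2$ immediately forces $\mu_1=\psi_\SS^{-1}(\bar\Pi_1)\succeq\psi_\SS^{-1}(\bar\Pi_2)=\mu_2$; I would nonetheless prefer the explicit route above, as it relies only on the formulas proved in this section rather than on re-deriving the order-reversal property of $\psi_\SS$.
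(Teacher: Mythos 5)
Your proposal is correct and follows essentially the same route as the paper's own proof: define $\bar\D_i=\{Q(\rho):\rho\in\bar\Pi_i\}$, deduce $\bar\D_1\subseteq\bar\D_2$ and hence $P(\mu_1)=\psi_\P^{-1}(\bar\D_1)\subseteq\psi_\P^{-1}(\bar\D_2)=P(\mu_2)$ via Theorem~\ref{thm:rp-isom-birkhoff}(3), and conclude with Theorem~\ref{thm:iso-ros}. The only difference is cosmetic: you justify the inclusion $\psi_\P^{-1}(\bar\D_1)\subseteq\psi_\P^{-1}(\bar\D_2)$ explicitly through the union formula of Theorem~\ref{thm:birkhoff}, a step the paper leaves implicit.
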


\begin{proof}
    Let $\bar \D_i\coloneqq \{Q(\rho): \rho\in \bar\Pi_i\}$ and let $P_i\coloneqq \psi_\P^{-1}(\bar \D_i)$ for $i\in\{1,2\}$. Since $\bar\Pi_1 \subseteq \bar\Pi_2$, we have $\bar\D_1 \subseteq \bar\D_2$ and thus subsequently $P_1\subseteq P_2$. Since $\psi_\P^{-1} (\bar \D_i) = P(\psi_\SS^{-1} (\bar\Pi_i))$ by Theorem~\ref{thm:rp-isom-birkhoff},  $P_i=P(\mu_i)$ for both $i=1,2$. Therefore, by Theorem~\ref{thm:iso-ros}, $\mu_1\succeq \mu_2$.
\end{proof}

\begin{lemma} \label{lem:in-then-out}
    Let $\rho_1, \rho_2\in \Pi$. If $\rho_1^+\cap \rho_2^-\neq \emptyset$, then $\rho_1 \succ^\star \rho_2$.
\end{lemma}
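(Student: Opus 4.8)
The plan is to read the desired relation $\rho_1\succ^\star\rho_2$ through the isomorphisms of Section~\ref{sec:affine-ring-of-sets}. By Theorem~\ref{thm:rp-isom-birkhoff}(2) and the definition of $\sqsupseteq$, the relation $\rho_1\succeq^\star\rho_2$ is equivalent to $Q(\rho_1)\sqsupseteq Q(\rho_2)$, i.e. to $I(\rho_1^+)\subseteq I(\rho_2^+)$, where $I(\rho_i^+)$ denotes the irreducible element of $\P$ whose center is $\rho_i^+=Q(\rho_i)$. Fix $(f,w)\in\rho_1^+\cap\rho_2^-$. First I would dispose of strictness: for a single rotation $\rho=\rho(\mu',\mu)$ the sets $\rho^+$ and $\rho^-$ are disjoint by construction (the former consists of pairs in $\mu(f)\setminus\mu'(f)$, the latter of pairs in $\mu'(f)\setminus\mu(f)$), so no pair lies in both $\rho^+$ and $\rho^-$ of the same rotation; hence $\rho_1^+\cap\rho_2^-\neq\emptyset$ already forces $\rho_1\neq\rho_2$. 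It therefore suffices to establish $I(\rho_1^+)\subseteq I(\rho_2^+)$, and $\rho_1\succ^\star\rho_2$ follows.

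The key reduction is to show the single membership $(f,w)\in I(\rho_2^+)$. Indeed, since $(f,w)\in\rho_1^+=K(I(\rho_1^+))$, the definition of centers gives $H((f,w))=I(\rho_1^+)$, so $I(\rho_1^+)$ is the $\subseteq$-minimal element of $\P$ that contains $(f,w)$. Thus, as soon as I exhibit $I(\rho_2^+)$ as an element of $\P$ containing $(f,w)$, minimality yields $I(\rho_1^+)\subseteq I(\rho_2^+)$ for free. To pin down $(f,w)$ inside $I(\rho_2^+)$, I would use the bijection $P$ of Theorem~\ref{thm:iso-ros} to write $I(\rho_2^+)=P(\nu^*)$ for a stable matching $\nu^*$. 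Because $\rho_2^+=K(I(\rho_2^+))\subseteq P(\nu^*)$ and, by Lemma~\ref{lem:center-uniq}, $P(\nu^*)$ is the disjoint union of $P(\mu_F)$ with the centers $\{Q(\rho):\rho\in\psi_\SS(\nu^*)\}$ (this decomposition is the one derived in the proof of Theorem~\ref{thm:rp-isom-birkhoff}), the center $\rho_2^+$ must be one of those present, i.e. $\rho_2\in\psi_\SS(\nu^*)$.

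Finally I would take a maximal chain $\mu_F=\nu_0\succ\nu_1\succ\cdots\succ\nu_m=\nu^*$ in $(\SS,\succeq)$. As shown in the proof of Theorem~\ref{thm:rp-isom-birkhoff}, the rotations $\rho(\nu_{i-1},\nu_i)$ along such a chain are precisely the elements of $\psi_\SS(\nu^*)$; since $\rho_2\in\psi_\SS(\nu^*)$, some edge realizes it, say $\rho(\nu_{i-1},\nu_i)=\rho_2$, whence $\rho_2^-=\rho^-(\nu_{i-1},\nu_i)$ and $w\in\nu_{i-1}(f)\setminus\nu_i(f)$. By the definition of $P_f$ together with individual rationality of $\nu_{i-1}$, matched partners lie in the P-set, so $w\in\nu_{i-1}(f)\subseteq P_f(\nu_{i-1})$; and $\nu_{i-1}\succeq\nu^*$ with Lemma~\ref{lem:dom-P} gives $P_f(\nu_{i-1})\subseteq P_f(\nu^*)$. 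Chaining these, $w\in P_f(\nu^*)$, i.e. $(f,w)\in P(\nu^*)=I(\rho_2^+)$, which closes the argument. I expect the main obstacle to be exactly this last identification: translating the purely combinatorial ``deleted'' set $\rho_2^-$ into the center/irreducible language of $\P$ requires realizing $\rho_2$ as a genuine covering pair whose lower endpoint has P-set equal to the minimal set $I(\rho_2^+)$, and then transporting the partner $w$ downward along $\succeq$ via the monotonicity of $P_f$ in Lemma~\ref{lem:dom-P}. Once this is in place, the remaining steps are bookkeeping with the isomorphisms of Theorem~\ref{thm:iso-ros} and Theorem~\ref{thm:rp-isom-birkhoff} and the uniqueness of centers in Lemma~\ref{lem:center-uniq}.
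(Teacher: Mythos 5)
Your proof is correct, but it takes a genuinely different route from the paper's. The paper argues by contradiction entirely on the lattice/upper-set side: assuming $\rho_1\not\succ^\star\rho_2$, it builds three nested upper sets (the principal upper set of $\rho_2$, that set minus $\rho_2$, and the smallest upper set containing both rotations), maps them through $\psi_\SS^{-1}$ to stable matchings $\mu_0\succ\mu_1\succ\mu_2$, and observes that a pair $(f,w)\in\rho_1^+\cap\rho_2^-$ would be matched in $\mu_0$, unmatched in $\mu_1$, and matched again in $\mu_2$, contradicting Lemma~\ref{lem:out-cannot-be-in}. You instead give a direct proof on the ring-of-sets side: the minimality of $H((f,w))=I(\rho_1^+)$ reduces everything to the single membership $(f,w)\in I(\rho_2^+)$, which you obtain by realizing $\rho_2$ as a covering pair on a complete chain from $\mu_F$ to the stable matching $\nu^*$ with $P(\nu^*)=I(\rho_2^+)$, and then pushing $w\in\nu_{i-1}(f)$ into $P_f(\nu^*)$ via individual rationality and the antitonicity of P-sets (Lemma~\ref{lem:dom-P}); notably, you never invoke Lemma~\ref{lem:out-cannot-be-in}. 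Your route makes the order-theoretic content more transparent (it exhibits the containment $I(\rho_1^+)\subseteq I(\rho_2^+)$ explicitly rather than refuting its failure), while the paper's stays within the stated parts of Theorem~\ref{thm:rp-isom-birkhoff} and the symmetric-difference formula. Two small polish points: the decomposition you quote from the proof of Theorem~\ref{thm:rp-isom-birkhoff} is recoverable from stated results (combine Theorem~\ref{thm:rp-isom-birkhoff}(3) with Theorem~\ref{thm:birkhoff}), but the disjointness of the centers from $P(\mu_F)$ is not literally Lemma~\ref{lem:center-uniq} — it needs the one-line observation that any $a\in H_0$ has $H(a)=H_0$, so $a$ lies in no center of $\D$; and the chain you take from $\mu_F$ to $\nu^*$ is a \emph{complete} chain in the paper's terminology (``maximal'' is reserved for chains from $\mu_F$ to $\mu_W$).
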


\begin{proof}
    Assume by contradiction that $\rho_1\not \succ^\star \rho_2$, that is, either $\rho_2 \succ^\star \rho_1$ or that they are not comparable. Let $\bar\Pi_1\coloneqq \{\rho\in \Pi: \rho\succeq \rho_2\}$ be the inclusion-wise smallest upper set of $\Pi$ that contains $\rho_2$, let $\bar\Pi_0\coloneqq \bar\Pi_1\setminus \{\rho_2\}$, and let $\bar\Pi_2\coloneqq \{\rho\in \Pi: \rho\succeq \rho_1 \textup{ or } \rho\succeq \rho_2\}$ be the inclusion-wise smallest upper set of $\Pi$ that contains both $\rho_1$ and $\rho_2$. Note that $\bar\Pi_0 \subsetneq \bar\Pi_1 \subsetneq \bar\Pi_2$, where the second strict containment is due to our assumption that $\rho_1 \not\succ^\star \rho_2$ and thus $\rho_1\notin \bar\Pi_1$. For $i\in \{0,1,2\}$, let $\mu_i \coloneqq (\triangle_{\rho \in \bar\Pi_i} (\rho^- \triangle \rho^+)) \triangle \mu_F$. Since $\bar \Pi_i$ is an upper set of $(\Pi,\succeq^\star)$, $\mu_i$ is a stable matching by Theorem~\ref{thm:rp-isom-birkhoff}. Moreover,  $\mu_0\succ \mu_1\succ \mu_2$ by Lemma~\ref{lem:upperset-contained-dom}. Let $(f,w)\in \rho_1^+\cap \rho_2^-$. Since $\rho(\mu_0,\mu_1)=\rho_2$, we have $(f,w)\in \mu_0\setminus \mu_1$. Since $\rho_1$ is a $\succeq$-minimal element in $\bar\Pi_2$, $\bar\Pi_2\setminus \{\rho_1\}$ is also an upper set of $\Pi$. Then, $\mu_2'\coloneqq (\triangle_{\rho \in \bar\Pi_2\setminus \{\rho_1\}} (\rho^- \triangle \rho^+)) \triangle \mu_F$ is a stable matching by Theorem~\ref{thm:rp-isom-birkhoff}, and $\mu_2 = \mu_2'\setminus \rho_1^- \cup \rho_1^+$. Thus, we have $(f,w)\in \mu_2$. Together, we have $w\in (\mu_0(f)\cap \mu_2(f)) \setminus \mu_1(f)$. However, this contradicts Lemma~\ref{lem:out-cannot-be-in}. 
\end{proof}

\begin{theorem} \label{thm:main-first-half}
    The rotation poset $(\Pi, \succeq^\star)$ affinely represents the stable matching lattice $(\SS, \succeq)$ with affine function $g(u)=Au + \Chi^{\mu_F}$, where $A\in \{0,\pm 1\}^{E\times \Pi}$ is matrix with columns $\Chi^{\rho^+} - \Chi^{\rho-}$ for each $\rho\in \Pi$. Moreover, $|\Pi|=O(|F||W|)$ and matrix $A$ has full column rank.
\end{theorem}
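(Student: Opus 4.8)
The plan is to prove Theorem~\ref{thm:main-first-half} by combining the structural results already established: the isomorphism between $(\SS,\succeq)$ and the ring of sets $(\P,\subseteq)$ from Theorem~\ref{thm:iso-ros}, the affine representability of rings of sets from Theorem~\ref{thm:affine-representaibility-ring-of-sets}, and the bijection $Q:\Pi\to\D$ from Theorem~\ref{thm:rp-isom-birkhoff}. The key point is that affine representability of $(\P,\subseteq)$ via the poset $(\D,\sqsupseteq)$ must be ``translated'' across the isomorphism into affine representability of $(\SS,\succeq)$ via $(\Pi,\succeq^\star)$. Since $Q$ is a poset isomorphism between $(\Pi,\succeq^\star)$ and $(\D,\sqsupseteq)$, the upper sets $\bar\Pi\in\U(\Pi)$ correspond bijectively to upper sets $\bar\D=\{Q(\rho):\rho\in\bar\Pi\}\in\U(\D)$, and by Theorem~\ref{thm:rp-isom-birkhoff}(3) the stable matching $\mu=\psi_\SS^{-1}(\bar\Pi)$ satisfies $\psi_\SS^{-1}(\bar\Pi)=\mu_F\cup(\bigcup_{\rho\in\bar\Pi}\rho^+)\setminus(\bigcup_{\rho\in\bar\Pi}\rho^-)$.

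First I would verify the affine identity $\Chi^{\psi_\SS^{-1}(\bar\Pi)}=A\Chi^{\bar\Pi}+\Chi^{\mu_F}$ for every upper set $\bar\Pi$, where the $\rho$-column of $A$ is $\Chi^{\rho^+}-\Chi^{\rho^-}$. Computing entrywise, $(A\Chi^{\bar\Pi})_{(f,w)}=\sum_{\rho\in\bar\Pi}(\Chi^{\rho^+}_{(f,w)}-\Chi^{\rho^-}_{(f,w)})$. The crucial observation is that this sum collapses to a clean $\{0,\pm1\}$ value: by the bijection $Q$ and Lemma~\ref{lem:center-uniq}, a pair $(f,w)$ lies in $\rho^+$ for \emph{at most one} $\rho\in\Pi$ (the $\rho^+$'s are the disjoint centers of $\D$), so at most one positive contribution arises. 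The deleted pairs behave compatibly: by Theorem~\ref{thm:rp-isom-birkhoff}'s proof, along any maximal chain the sets $\mu_F,\rho_1^+,\rho_2^+,\dots$ are disjoint, and a pair removed by some $\rho^-$ is genuinely absent from $\mu$. The set-theoretic formula $\mu_F\cup(\bigcup\rho^+)\setminus(\bigcup\rho^-)$ then matches the arithmetic sum exactly, giving $\Chi^\mu=A\Chi^{\bar\Pi}+\Chi^{\mu_F}$.

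The main obstacle is precisely checking that the signed arithmetic sum $\sum_{\rho\in\bar\Pi}(\Chi^{\rho^+}-\Chi^{\rho^-})$ never overcounts or produces an out-of-range entry, i.e.\ that the ``$\cup$ then $\setminus$'' operation is faithfully captured by addition. This requires that whenever a pair $(f,w)$ is both added by some $\rho^+$ and later removed, the bookkeeping cancels to the correct value, and that removals only target pairs actually present. I would handle this by invoking Lemma~\ref{lem:in-then-out}: if $\rho_1^+\cap\rho_2^-\neq\emptyset$ then $\rho_1\succ^\star\rho_2$, so within an upper set $\bar\Pi$ the presence of a removing rotation $\rho_2$ forces the corresponding adding rotation $\rho_1$ to also lie in $\bar\Pi$ (and vice versa the predecessor relationship controls cancellation). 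This guarantees that a $-1$ from some $\rho^-$ is always paired with a matching $+1$ from the $\mu_F$-baseline or an earlier $\rho^+$, keeping every coordinate in $\{0,1\}$.

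Finally, the cardinality bound $|\Pi|=O(|F||W|)$ follows from the bijection $Q:\Pi\to\D$ together with Lemma~\ref{lem:center-uniq}, which gives $|\D(\P)|=O(|B|)$ where the base set $B=\{(f,w):f\in F,w\in\phi(f)\}$ has size $O(|F||W|)$. Full column rank of $A$ transfers from Theorem~\ref{thm:affine-representaibility-ring-of-sets}: there the ring-of-sets matrix has full column rank because each column (a center $K_1=Q(\rho)=\rho^+$) is nonzero and the $\rho^+$'s are disjoint, so the positive parts $\Chi^{\rho^+}$ alone are linearly independent; since each column $\Chi^{\rho^+}-\Chi^{\rho^-}$ has its unique-to-$\rho$ support in the $\rho^+$ coordinates by Lemma~\ref{lem:center-uniq}, no nontrivial combination can vanish, so $A$ has full column rank.
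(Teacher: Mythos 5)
Your treatment of the first two claims is correct and follows the paper's own route: the paper obtains the identity $\Chi^{\mu} = A \Chi^{\psi_\SS(\mu)} + \Chi^{\mu_F}$ directly from Theorem~\ref{thm:rp-isom-birkhoff}, part (3) — whose proof contains exactly the disjointness facts (via Lemma~\ref{lem:center-uniq} and the bijection $Q$) and the use of Lemma~\ref{lem:out-cannot-be-in} that you spell out entrywise — and it gets $|\Pi|=O(|F||W|)$ from the bijection $Q$ together with Lemma~\ref{lem:center-uniq}, just as you do.

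The full-column-rank argument, however, has a genuine gap. You claim that because the positive parts $\rho^+$ are nonempty and pairwise disjoint, ``no nontrivial combination can vanish.'' That inference is invalid: disjointness of the positive supports says nothing about where the $-1$ entries sit, and the negative part of one rotation can lie exactly inside the positive part of another — this is not a pathological possibility but precisely the situation governed by Lemma~\ref{lem:in-then-out} ($\rho_1^+\cap\rho_2^-\neq\emptyset$ does occur). As a reasoning pattern it fails outright: take $u_1=\Chi^{\{a\}}-\Chi^{\{b\}}$ and $u_2=\Chi^{\{b\}}-\Chi^{\{a\}}$; the positive parts $\{a\}$ and $\{b\}$ are disjoint and nonzero, yet $u_1+u_2=\mathbf{0}$. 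For the same reason, rank does not ``transfer'' from Theorem~\ref{thm:affine-representaibility-ring-of-sets}: that theorem concerns the unsigned matrix with columns $\Chi^{K}$ over the base set of $\P$, and no automatic principle carries its rank over to the signed matrix $A$ — which is why the paper proves this part by a separate argument. The missing ingredient is the partial-order structure itself: the paper assumes $\sum_{\rho\in\Pi}\lambda_\rho(\Chi^{\rho^+}-\Chi^{\rho^-})=\mathbf{0}$ with $\lambda\neq \mathbf{0}$, picks a $\succeq^\star$-\emph{minimal} rotation $\rho_1$ with $\lambda_{\rho_1}\neq 0$ and a pair $(f,w)\in\rho_1^+$; since no other rotation contains $(f,w)$ in its positive part (Lemma~\ref{lem:center-uniq} and the bijection $Q$), cancellation at that coordinate forces some $\rho_2$ with $\lambda_{\rho_2}\neq 0$ and $(f,w)\in\rho_2^-$, whence Lemma~\ref{lem:in-then-out} gives $\rho_1\succ^\star\rho_2$, contradicting minimality of $\rho_1$. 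It is the antisymmetry of $\succeq^\star$ that excludes circular cancellations such as the $u_1,u_2$ example; you invoke Lemma~\ref{lem:in-then-out} for the affine identity but drop it at the one place where it is indispensable.
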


\begin{proof}
    The first claim follows immediately because by Theorem~\ref{thm:rp-isom-birkhoff}, part (3), $\chi^{\mu} = A \chi^{\psi_\SS(\mu)} + \chi^{\mu_F}$, for any stable matching $\mu$. Because of Theorem~\ref{thm:rp-isom-birkhoff}, $|\Pi|=|\D|$. In addition, by Lemma~\ref{lem:center-uniq}, we have $|\D|=|E|=O(|F||W|)$. Thus, $|\Pi|= O(|F||W|)$. Finally, we show that matrix $A$ has full column rank. Assume by contradiction that there is a non-zero vector $\lambda\in \R^\Pi$ such that $\sum_{\rho\in \Pi} \lambda_\rho (\Chi^{\rho^+} - \Chi^{\rho-}) = \mathbf{0}$. Let $\tilde\Pi\coloneqq \{\rho\in \Pi: \lambda_\rho\neq 0\}$ denote the set of rotations whose corresponding coefficients in $\lambda$ are non-zero. Let $\rho_1$ be a minimal rotation (w.r.t. $\succeq^\star$) in $\tilde\Pi$ and let $(f,w)$ be a firm-worker pair in $\rho_1^+$. Because of Lemma~\ref{lem:center-uniq} and the bijection $Q$, there is no rotation $\rho\neq \rho_1$ such that $(f,w)\in \rho^+$. Therefore, there must exist a rotation $\rho_2\in \tilde\Pi$ with $(f,w)\in \rho_2^-$. Note that we must have $\rho_1 \succ^\star \rho_2$ due to Lemma~\ref{lem:in-then-out}. However, this contradicts the choice of $\rho_1$.
\end{proof}

\section{Algorithms} \label{sec:algo}

Because of Theorem~\ref{thm:main-first-half}, in order to conclude the proof of Theorem~\ref{thm:main-intro}, we are left to explicitly construct $(\Pi, \succeq^\star)$. That is, we need to find elements of $\Pi$, and how they relate to each other via $\succeq^\star$. We fix an instance $(F,W,\C_F,\C_W)$ and abbreviate ${\cal S}:={\cal S}(\C_F,\C_W)$. 

In this section, we further assume workers' choice functions to be quota-filling. Under this additional assumption, for each worker $w\in W$, the family of sets of partners $w$ is assigned to under all stable matchings (denoted as $\Phi_w$) satisfies an additional property, which we call the \emph{full-quota}\footnote{Note that the full-quota property is analogous to the \emph{Rural Hospital Theorem}~\cite{roth1986allocation} in the \textsc{SA-Model} where agents have preferences over individual partners instead of over sets of partners.} property (see Lemma~\ref{lem:full-quota}). Recall that $q_w$ denote the quota of worker $w$ and $\bar q_w$ is the number of firms matched to $w$ under every stable matching, which is constant due to the~\ref{eq:equal-quota} property (i.e., $|S|=\bar q_w$ for all $S\in \Phi_w$).

\begin{lemma} \label{lem:full-quota}
    For every worker $w\in W$, if $\bar q_w< q_w$, then $w$ is matched to the same set of firms in all stable matchings. That is,
    \begin{equation} \tag{full-quota} \label{eq:full-quota}
    \bar q_w < q_w \implies |\Phi_w|=1. 
    \end{equation}
\end{lemma}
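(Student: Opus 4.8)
The plan is to fix an arbitrary stable matching $\mu$ and show that $\mu(w)$ coincides with $\mu_F(w)$, where $\mu_F$ is the firm-optimal stable matching (the maximal element of $(\SS,\succeq)$). Since the~\ref{eq:equal-quota} property guarantees $|\mu(w)|=|\mu_F(w)|=\bar q_w$, it suffices to prove a single inclusion, say $\mu_F(w)\subseteq \mu(w)$; equality of cardinalities then forces $\mu(w)=\mu_F(w)$, and as $\mu$ is arbitrary this yields $|\Phi_w|=1$. I would establish the inclusion by contradiction: assuming some $f\in \mu_F(w)\setminus \mu(w)$, I would exhibit $(f,w)$ as a blocking pair of $\mu$, contradicting its stability. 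Note that $(f,w)$ is acceptable since $f\in\mu_F(w)\subseteq F(w)$, and $(f,w)\notin\mu$ since $f\notin\mu(w)$, so it is a legitimate candidate blocking pair.

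To verify the two blocking conditions I would treat the worker side and the firm side separately. The worker side is where the hypothesis $\bar q_w<q_w$ enters, and it is the easy half: because $f\notin\mu(w)$, the set $\mu(w)\cup\{f\}$ has cardinality $\bar q_w+1\le q_w$, so by quota-filling $|\C_w(\mu(w)\cup\{f\})|=\min(q_w,\bar q_w+1)=\bar q_w+1$; since $\C_w(\mu(w)\cup\{f\})\subseteq \mu(w)\cup\{f\}$ and the two sets have equal size, they coincide, whence $f\in \C_w(\mu(w)\cup\{f\})$. In words, a below-quota worker never rejects, so her blocking condition holds for free.

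The firm side is where the lattice order does the work, and it is the step I expect to be the crux. Because $\mu_F$ is maximal, $\mu_F\succeq\mu$, so by the definition of $\succeq$ we have $\C_f(\mu_F(f)\cup\mu(f))=\mu_F(f)$; and $f\in\mu_F(w)$ means $w\in\mu_F(f)$, hence $w\in\C_f(\mu_F(f)\cup\mu(f))$. Applying substitutability with the subset $T=\mu(f)\subseteq \mu_F(f)\cup\mu(f)$ then gives $w\in\C_f(\mu(f)\cup\{w\})$, the firm-side blocking condition. Combining the two halves, $(f,w)$ blocks $\mu$, the desired contradiction. The only delicate point is orienting the comparison correctly: choosing $\mu_F$ as the reference (rather than an arbitrary stable matching) is precisely what makes $\mu_F\succeq\mu$ available, so that substitutability can transport the firm's acceptance of $w$ from $\mu_F$ over to $\mu$; the worker-side argument, by contrast, requires nothing beyond the quota slack $\bar q_w<q_w$.
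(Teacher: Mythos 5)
Your proof is correct, but it follows a genuinely different route from the paper's. The paper argues by contradiction on two distinct partner sets $S_1,S_2\in\Phi_w$: it forms the meet $\mu_1\wedge\mu_2$ and uses the component-wise lattice operations from Theorem~\ref{thm:alkan} (via polarity, $(\mu_1\wedge\mu_2)(w)=\C_w(S_1\cup S_2)$) together with quota-filling to conclude $|(\mu_1\wedge\mu_2)(w)|=\min(|S_1\cup S_2|,q_w)>\bar q_w$, contradicting the~\ref{eq:equal-quota} property. You instead anchor at the firm-optimal matching $\mu_F$ and build a blocking pair: quota-filling gives the worker-side acceptance for free (a below-quota worker never rejects), while $\mu_F\succeq\mu$ plus substitutability transports $w\in\mu_F(f)$ into $w\in\C_f(\mu(f)\cup\{w\})$, so any $f\in\mu_F(w)\setminus\mu(w)$ would block $\mu$; equal cardinalities then force $\mu(w)=\mu_F(w)$. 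Both arguments lean on Alkan's structural theorem, but on different parts of it — the paper needs the explicit meet formula and polarity, whereas you need only the existence of the maximum element $\mu_F$, the definition of $\succeq$, and substitutability. Your version is closer in spirit to the classical Rural Hospital Theorem argument (which the paper itself notes as the analogue of this lemma), is arguably more elementary since it avoids the meet/polarity machinery, and yields the slightly more informative conclusion that the common partner set is precisely $\mu_F(w)$; the paper's proof is shorter and stays entirely inside the lattice algebra it has already set up.
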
 

\begin{proof}
    Assume by contradiction that $\bar q_w < q_w$ but $|\Phi_w|>1$. Let $S_1, S_2$ be two distinct elements from $\Phi_w$ and let $\mu_i$ be the matching such that $\mu_i(w)=S_i$ for $i=1,2$. Note that due to the~\ref{eq:equal-quota} property, we have $|S_1|= |S_2|= \bar q_w$. Consider the stable matching $\mu\coloneqq \mu_1\wedge \mu_2$. Then, 
    $$|\mu(w)|= |\C_w(\mu_1(w) \cup \mu_2(w))|= |\C_w(S_1 \cup S_2)| = \min(|S_1\cup S_2|, q_w|) > \bar q_w,$$
    where the first equality is by Theorem~\ref{thm:alkan} and the last two relations are by quota-filling. However, this contradicts the~\ref{eq:equal-quota} property since $\mu$ is a stable matching.
\end{proof}

Our approach to construct $(\Pi,\succeq^\star)$ is as follows. First, we recall Roth's adaptation of the Deferred Acceptance algorithm to find a firm- or worker-optimal stable matching (Section~\ref{sec:DA}). Second, we feed the output of Roth's algorithm to an algorithm that produces a maximal chain $C_1,C_2,\dots,C_k$ of $({\cal S},\succeq)$ and the set $\Pi$ (Section~\ref{sec:chain}). In Section~\ref{sec:find-from-chain}, we give an algorithm that, given a maximal chain of a ring of sets, constructs the partial order of the poset of minimal differences. This and previous facts are then exploited in Section~\ref{sec:irreducible} to construct the partial order $\succeq^\star$ on elements of $\Pi$. We sum up our algorithm in Section~\ref{sec:algo-summary}, where we show that the overall running time is $O(|F|^3 |W|^3 \textuptt{oracle-call})$.

We start with a definition and properties which will be used in later algorithms. For a matching $\mu$, let
$$\bar{X}_f(\mu) \coloneqq \{w\in W(f): \C_f(\mu(f)\cup\{w\}) = \mu(f)\},$$ and define the \emph{closure} of $\mu$, denoted by $\bar X(\mu)$, as the collection of sets $\{\bar X_f(\mu): f\in F\}$. Note that $\mu(f)\subseteq \bar X_f(\mu)$ for every firm $f$ and individually rational matching $\mu$.

\begin{lemma} \label{lem:choice-on-closure}
    Let $\mu$ be an individually rational matching. Then, for every firm $f$, we have $\C_f(\bar X_f(\mu)) = \mu(f)$.
\end{lemma}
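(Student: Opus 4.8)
The plan is to apply Lemma~\ref{lem:PI-repeat} directly, with $A_1 = \mu(f)$ and $A_2 = \bar X_f(\mu)$, after first checking the containment $\mu(f) \subseteq \bar X_f(\mu)$. The key observation is that the set $\bar X_f(\mu)$ is, by its very definition, the collection of acceptable workers $w$ whose singleton addition to $\mu(f)$ leaves the choice unchanged, which is exactly the hypothesis needed to ``collapse'' the entire set $\bar X_f(\mu)$ down to $\mu(f)$ under $\C_f$ via repeated path-independence.

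First I would verify $\mu(f) \subseteq \bar X_f(\mu)$. For any $w \in \mu(f)$ we have $\mu(f) \cup \{w\} = \mu(f)$, so individual rationality gives $\C_f(\mu(f) \cup \{w\}) = \C_f(\mu(f)) = \mu(f)$, placing $w$ in $\bar X_f(\mu)$. This containment has the pleasant consequence that $A_1 \cup A_2 = \mu(f) \cup \bar X_f(\mu) = \bar X_f(\mu)$, so the conclusion of Lemma~\ref{lem:PI-repeat} will read precisely $\C_f(\bar X_f(\mu)) = \C_f(\mu(f)) = \mu(f)$.

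Next I would check the hypothesis of Lemma~\ref{lem:PI-repeat} for the pair $(A_1, A_2) = (\mu(f), \bar X_f(\mu))$. For every $w \in A_2 \setminus A_1 = \bar X_f(\mu) \setminus \mu(f)$, the defining property of $\bar X_f(\mu)$ gives $\C_f(\mu(f) \cup \{w\}) = \mu(f)$, and individual rationality gives $\C_f(\mu(f)) = \mu(f)$; hence $\C_f(A_1 \cup \{w\}) = \C_f(A_1)$ for every such $w$. Lemma~\ref{lem:PI-repeat} then yields $\C_f(A_1 \cup A_2) = \C_f(A_1)$, which is the claim. (One could equally invoke Corollary~\ref{cor:PI-repeat}, since $w \notin \C_f(\mu(f)\cup\{w\})$ whenever $w \in \bar X_f(\mu)\setminus \mu(f)$, but routing through Lemma~\ref{lem:PI-repeat} keeps the bookkeeping minimal.)

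There is essentially no genuine obstacle here: the statement is a direct corollary of the path-independence machinery already established, and the only subtlety is recognizing that individual rationality is used twice — once to establish the containment $\mu(f)\subseteq\bar X_f(\mu)$ and once to supply the base identity $\C_f(\mu(f)) = \mu(f)$ that lets the single-element condition be read as $\C_f(A_1 \cup \{w\}) = \C_f(A_1)$. The proof is therefore a short two-line argument rather than anything requiring new ideas.
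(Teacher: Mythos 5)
Your proof is correct and follows exactly the paper's argument: the paper also applies Lemma~\ref{lem:PI-repeat} with $A_1=\mu(f)$ and $A_2=\bar X_f(\mu)$, using individual rationality for the identity $\C_f(\mu(f))=\mu(f)$ (and the containment $\mu(f)\subseteq\bar X_f(\mu)$ is noted just before the lemma's statement). You have merely spelled out the hypothesis-checking that the paper leaves implicit.
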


\begin{proof}
    Fix a firm $f$. Since $\mu$ is individually rational, $\C_f(\mu(f))=\mu(f)$. The claim then follows from a direct application of Lemma~\ref{lem:PI-repeat} with $A_1=\mu(f)$ and $A_2 = \bar X_f(\mu)$.
\end{proof}

\begin{lemma} \label{lem:closure-contain-dominated}
    Let $\mu_1, \mu_2 \in \SS(\C_F,\C_W)$ such that $\mu_1 \succeq \mu_2$. Then, for every firm $f$, $\mu_2(f)\subseteq \bar X_f(\mu_1)$.
\end{lemma}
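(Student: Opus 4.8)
The plan is to fix a firm $f$ and show that every $w \in \mu_2(f)$ lies in $\bar X_f(\mu_1)$, i.e., that $\C_f(\mu_1(f) \cup \{w\}) = \mu_1(f)$. Beyond the definition of $\bar X_f$, the only hypothesis I will use is the order relation $\mu_1 \succeq \mu_2$, which by Theorem~\ref{thm:alkan} unpacks into the identity $\C_f(\mu_1(f) \cup \mu_2(f)) = \mu_1(f)$ holding for every firm $f$.

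The key step is a single application of consistency. Fix $w \in \mu_2(f)$ and set $S \coloneqq \mu_1(f) \cup \mu_2(f)$ and $T \coloneqq \mu_1(f) \cup \{w\}$. Since $w \in \mu_2(f)$, we obtain the sandwich $\C_f(S) = \mu_1(f) \subseteq T \subseteq S$, where the first containment uses $\mu_1(f) \subseteq T$ and the second uses $\{w\} \subseteq \mu_2(f)$. Consistency (``irrelevance of rejected contracts'') then yields $\C_f(T) = \C_f(S) = \mu_1(f)$, that is, $\C_f(\mu_1(f) \cup \{w\}) = \mu_1(f)$. By definition this means $w \in \bar X_f(\mu_1)$, and since $w \in \mu_2(f)$ was arbitrary we conclude $\mu_2(f) \subseteq \bar X_f(\mu_1)$ for every $f$.

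The main thing to get right is the choice of property. It is tempting to invoke substitutability, reasoning that removing the competitors in $\mu_2(f) \setminus \mu_1(f)$ can only help $w$'s rivals; but substitutability propagates selection from larger to smaller sets (equivalently, rejection from smaller to larger), which is the opposite direction of what is needed here, and by itself it leaves open the possibility that some $w$ rejected from $S$ becomes selected once the other elements are dropped. Consistency is exactly the tool that forbids this: discarding elements already rejected by $\C_f(S)$ does not alter the chosen set. Note also that no case analysis on whether $w \in \mu_1(f)$ is required, since the sandwich argument covers that case as well (there $T = \mu_1(f)$ and the conclusion reduces to individual rationality).
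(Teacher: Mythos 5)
Your proof is correct and is essentially identical to the paper's: both unpack $\mu_1 \succeq \mu_2$ into $\C_f(\mu_1(f)\cup\mu_2(f))=\mu_1(f)$ and then apply consistency once with $T=\mu_1(f)\cup\{w\}$ sandwiched between $\C_f(S)$ and $S=\mu_1(f)\cup\mu_2(f)$. The paper states this in two lines; your write-up just makes the sandwich and the choice of $S,T$ explicit.
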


\begin{proof}
    Since $\mu_1 \succeq \mu_2$, we have $\C_f(\mu_1(f)\cup \mu_2(f))=\mu_1(f)$ for every firm $f$. Thus, by the consistency property of $\C_f$, for every $w\in \mu_2(f)$, we have $\C_f(\mu_1(f)\cup \{w\})=\mu_1(f)$. The claim follows.
\end{proof}

\begin{lemma} \label{lem:time-basic-ops}
    The following three operations can be performed in polynomial times: 
    \begin{enumerate}[leftmargin=1cm]
        \item[(1).] given a matching $\mu$, computing its closure $\bar X(\mu)$ can be performed in time  $O(|F||W|\textuptt{oracle-call})$;
        \item[(2).] given a matching $\mu$, deciding whether it is stable can be performed in time $O(|F||W| \textuptt{oracle-call})$;
        \item[(3).] given stable matchings $\mu,\mu' \in {\cal S}$, deciding whether $\mu' \succeq \mu$ can be performed in time $O(|F| \textuptt{oracle-call})$.
    \end{enumerate}
\end{lemma}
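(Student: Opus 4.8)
The plan is to read off all three bounds directly from the definitions involved, so that the only substantive work is a careful count of oracle calls. Throughout I use the standing assumption, recorded after Theorem~\ref{thm:main-intro}, that elementary set operations such as equality tests and membership queries cost $O(1)$; under that convention each of the three running times is dominated by the number of invocations of $\C_a$.

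For part (1), I would compute the closure one firm at a time. Fixing a firm $f$ and iterating over $w \in W(f)$, a single oracle call evaluates $\C_f(\mu(f) \cup \{w\})$, after which a constant-time comparison with $\mu(f)$ decides whether $w \in \bar X_f(\mu)$. Since $\sum_{f} |W(f)| \le |F||W|$, this uses $O(|F||W|)$ oracle calls in total, giving the claimed $O(|F||W|\,\textuptt{oracle-call})$ bound.

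For part (2) I would proceed in two stages. First test individual rationality by verifying $\C_a(\mu(a)) = \mu(a)$ for each agent $a$, which costs $|F|+|W|$ oracle calls; if it fails, $\mu$ is declared unstable. Assuming $\mu$ is individually rational, the one point needing a small argument is the blocking-pair search. By consistency together with individual rationality, for $w \notin \mu(f)$ we have $w \in \bar X_f(\mu)$ if and only if $w \notin \C_f(\mu(f)\cup\{w\})$; hence the firm's half of the blocking condition, $w \in \C_f(\mu(f)\cup\{w\})$, is equivalent to $w \notin \bar X_f(\mu)$, and symmetrically $f \notin \bar X_w(\mu)$ for the worker side. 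Computing the firm-side closure and the symmetric worker-side closure $\bar X_w(\mu)$ (each in $O(|F||W|\,\textuptt{oracle-call})$ by part (1)) therefore reduces the search to the constant-time tests $w \notin \bar X_f(\mu)$ and $f \notin \bar X_w(\mu)$ over all acceptable pairs $(f,w)\notin\mu$, an additional $O(|F||W|)$ constant-time operations. Thus the whole stability test runs in $O(|F||W|\,\textuptt{oracle-call})$.

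For part (3) I would invoke the defining relation from Theorem~\ref{thm:alkan}: $\mu' \succeq \mu$ if and only if $\C_f(\mu'(f) \cup \mu(f)) = \mu'(f)$ for every firm $f$. Evaluating $\C_f(\mu'(f)\cup\mu(f))$ costs one oracle call and the equality test is $O(1)$, so iterating over firms gives $O(|F|\,\textuptt{oracle-call})$; crucially, because the dominance order is characterized by firm-side conditions alone, the worker side need not be inspected, which is what yields $O(|F|)$ rather than $O(|F|+|W|)$ calls. None of the three parts poses a genuine obstacle — the lemma is essentially bookkeeping — and the only step warranting care is the reduction in part (2), where consistency and individual rationality together let us replace an explicit choice-function evaluation in the blocking-pair test by a membership query against the precomputed closures.
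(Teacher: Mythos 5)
Your proposal is correct and takes essentially the same route as the paper's proof, which is just a direct count of oracle calls: $O(|W|)$ calls per firm for the closure, individual rationality plus a pairwise blocking-pair check for stability, and one call to $\C_f(\mu'(f)\cup\mu(f))$ per firm for dominance via Theorem~\ref{thm:alkan}. The only divergence is in part (2), where you route the blocking-pair search through precomputed firm- and worker-side closures (justified via consistency and individual rationality); this is correct but not needed, since the blocking condition $w \in \C_f(\mu(f)\cup\{w\})$ and $f \in \C_w(\mu(w)\cup\{f\})$ can simply be tested directly with two oracle calls per acceptable pair, giving the same $O(|F||W|\,\textuptt{oracle-call})$ bound.
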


\begin{proof}
    (1). For any firm $f$, computing $\bar X_f(\mu)$ requires $O(|W|)$ \textuptt{oracle-calls} by definition and thus, computing the closure of $\mu$ takes $O(|F||W|)$ \textuptt{oracle-calls}. (2). To check if a matching $\mu$ is stable, we need to check first if it is individually rational, which takes $O(|F|+|W|)$ \textuptt{oracle-calls}, and then to check if it admits any blocking pair, which takes $O(|F||W|)$ $\textuptt{oracle-calls}$. (3). To decide if $\mu'\succeq \mu$, one need to check if for every firm $f\in F$, $\C_f(\mu'(f)\cup \mu(f))=\mu'(f)$, and this takes $O(|F|)$ \textuptt{oracle-calls}.
\end{proof}

\subsection{Deferred acceptance algorithm} \label{sec:DA}

The deferred acceptance algorithm introduced in~\cite{roth1984stability}\footnote{The model considered in~\cite{roth1984stability} is more general than our setting here, where choice functions are only assumed to be substitutable and consistent, not necessarily quota-filling.} can be seen as a generalization of the algorithm proposed in~\cite{gale1962college}. For the following, we assume that firms are the proposing side. Initially, for each firm $f$, let $X_f\coloneqq W(f)$, i.e., the set of acceptable workers of $f$. At every step, every firm $f$ \emph{proposes} to workers in $\C_f(X_f)$. Then, every worker $w$ considers the set of firms $X_w$ who made a proposal to $w$, \emph{temporarily accepts} $Y_w\coloneqq \C_w(X_w)$, and \emph{rejects} the rest. Afterwards, each firm $f$ removes from $X_f$ all workers that rejected $f$. The \emph{firm-proposing} algorithm iterates until there is no rejection. Hence, throughout the algorithm, $X_f$ denotes the set of acceptable workers of $f$ that have not rejected $f$.  A formal description is given in Algorithm~\ref{alg:DA}. 

\begin{algorithm}[ht]
    \caption{Firm-proposing DA algorithm for an instance $(F,W,\C_F,\C_W)$.} \label{alg:DA}
 	\begin{algorithmic}[1]
 	    \normalsize
     	\State initialize the step count $s\gets 0$
     	\ForInline{\textbf{each} firm $f$}{initialize $X^{(s)}_f \gets W(f)$}
     	\Repeat
     	\For{\textbf{each} worker $w$}
     	\State  $X^{(s)}_w \gets \{f\in F: w\in \C_f(X^{(s)}_f)\}$
     	\State $Y^{(s)}_w \gets \C_w(X^{(s)}_w)$
     	\EndFor
     	\For{\textbf{each} firm $f$}
     	\State update $X^{(s+1)}_f \gets X^{(s)}_f \setminus \{w\in W: f\in X^{(s)}_w \setminus Y^{(s)}_w\}$ \label{step:DA-remove}
     	\EndFor 
     	\State update the step count $s\gets s+1$
     	\Until{$X^{(s)}_f= X^{(s-1)}_f$ for every firm $f$}
     	\Output{matching $\bar\mu$ with $\bar\mu(w)= Y^{(s-1)}_w$ for every worker $w$} 
    \end{algorithmic}
\end{algorithm}

Note that for every step $s$ other than the final step, there exists a firm $f \in F$ such that $X_f^{(s)} \subsetneq X_f^{(s-1)}$. Therefore, the algorithm terminates, since there is a finite number of firms and workers. Moreover, the output has interesting properties. 

\begin{theorem}[Theorem 2,~\cite{roth1984stability}]
    Let $\bar \mu$ be the output of Algorithm~\ref{alg:DA} over a matching market $(F, W, \C_F, \C_W)$ assuming $\C_F, \C_W$ are path-independent. Then, $\bar \mu=\mu_F$.
\end{theorem}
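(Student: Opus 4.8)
The plan is to show that the output $\bar\mu$ is a stable matching satisfying $\bar\mu\succeq\mu$ for every stable matching $\mu$; since the lattice $(\SS,\succeq)$ of Theorem~\ref{thm:alkan} has a unique $\succeq$-maximal element $\mu_F$, this forces $\bar\mu=\mu_F$. Throughout, let $X_f,X_w,Y_w$ denote the values of $X_f^{(\cdot)},X_w^{(\cdot)},Y_w^{(\cdot)}$ at termination, and set $\bar\mu(f)\coloneqq\C_f(X_f)$; the first step is to check this agrees with the stated output $\bar\mu(w)=Y_w=\C_w(X_w)$. The termination test says no firm is rejected in the last round, i.e. every $w\in\C_f(X_f)$ satisfies $f\in Y_w$; combined with $Y_w=\C_w(X_w)\subseteq X_w$ and $X_w=\{f:w\in\C_f(X_f)\}$, this yields $w\in\bar\mu(f)\iff f\in\bar\mu(w)$, so $\bar\mu$ is a matching. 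Individual rationality is immediate from idempotence of path-independent choice functions ($\C_a(\C_a(S))=\C_a(S)$, the case $T=\C_a(S)$ of consistency).

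Next I would record two monotonicity facts. First, a firm withdraws a proposal only upon rejection: if $w\in\C_f(X_f^{(s)})$ and $w$ does not reject $f$ at step $s$, then $w\in X_f^{(s+1)}$, and since $X_f^{(s+1)}\subseteq X_f^{(s)}$, substitutability gives $w\in\C_f(X_f^{(s+1)})$. Letting $A_w^{(s)}$ be the set of all firms that have proposed to $w$ through step $s$, this implies $Y_w^{(s)}\subseteq X_w^{(s+1)}$, and an induction using path-independence ($\C_w(A_w^{(s+1)})=\C_w(\C_w(A_w^{(s)})\cup X_w^{(s+1)})=\C_w(Y_w^{(s)}\cup X_w^{(s+1)})=\C_w(X_w^{(s+1)})$) gives the invariant $\C_w(A_w^{(s)})=Y_w^{(s)}$. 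In particular $\bar\mu(w)=\C_w(A_w)$: a worker's final partners are its choice among \emph{all} firms that ever courted it.

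With these in hand, I would verify that $\bar\mu$ has no blocking pair by case analysis on a candidate $(f,w)\notin\bar\mu$. If $w$ never rejected $f$, then $\bar\mu(f)\cup\{w\}\subseteq X_f$ and $w\notin\bar\mu(f)=\C_f(X_f)$, so consistency gives $\C_f(\bar\mu(f)\cup\{w\})=\C_f(X_f)=\bar\mu(f)\not\ni w$. If $w$ did reject $f$ at some step, then $f\in A_w$, and since $\bar\mu(w)=\C_w(A_w)\subseteq\bar\mu(w)\cup\{f\}\subseteq A_w$, consistency gives $\C_w(\bar\mu(w)\cup\{f\})=\bar\mu(w)\not\ni f$. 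Either way $(f,w)$ does not block, so $\bar\mu$ is stable.

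The main obstacle is firm-optimality, for which I would prove by induction on $s$ the invariant that \emph{for every stable matching $\mu$ and every firm $f$, $\mu(f)\subseteq X_f^{(s)}$} — no firm is ever rejected by a worker it is matched to in some stable matching. The base case is $X_f^{(0)}=W(f)$. For the step, suppose some $w\in\mu(f)$ rejects $f$ at step $s$, so $f\in X_w^{(s)}$ but $f\notin\C_w(X_w^{(s)})$. Substitutability gives $f\notin\C_w(\mu(w)\cup X_w^{(s)})=\C_w(\mu(w)\cup Y_w^{(s)})$; as $f\in\mu(w)=\C_w(\mu(w))$, the contrapositive of Lemma~\ref{lem:PI-repeat} (with consistency) produces $g\in Y_w^{(s)}\setminus\mu(w)$ with $g\in\C_w(\mu(w)\cup\{g\})$. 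Since $g$ proposes to $w$ at step $s$ we have $w\in\C_g(X_g^{(s)})$, and the inductive hypothesis $\mu(g)\subseteq X_g^{(s)}$ with substitutability gives $w\in\C_g(\mu(g)\cup\{w\})$; hence $(g,w)$ blocks $\mu$, a contradiction. Evaluating the invariant at termination gives $\mu(f)\subseteq X_f$, and since $\bar\mu(f)\cup\mu(f)\subseteq X_f$, consistency yields $\C_f(\bar\mu(f)\cup\mu(f))=\C_f(X_f)=\bar\mu(f)$, i.e. $\bar\mu\succeq\mu$; therefore $\bar\mu=\mu_F$. The step I expect to require the most care is this inductive construction of the blocking pair $(g,w)$: with choice functions there is no ``next preferred partner'' to retreat to, so the classical Gale--Shapley argument must be recast entirely through the substitutability and path-independence manipulations above.
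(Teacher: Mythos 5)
Your proof is correct, but there is nothing in the paper to compare it against: the paper does not prove this statement, it imports it wholesale as Theorem~2 of \cite{roth1984stability}. Your reconstruction is self-contained and fits the paper's toolkit exactly. Your two ``monotonicity facts'' are the DA analogues of Lemma~\ref{lem:offer-remain-open} and Lemma~\ref{lem:rejection-final}, which the paper proves only for the break-marriage procedure while remarking they also hold for DA by \cite{roth1984stability}; your extraction of a blocking pair from the contrapositive of Lemma~\ref{lem:PI-repeat} plus consistency is the same mechanism as Corollary~\ref{cor:PI-repeat}; and the invariant $\C_w(A_w^{(s)})=Y_w^{(s)}$ (a worker's tentative acceptances equal its choice over all proposals ever received) is precisely the path-independence device that replaces the ``next on the preference list'' reasoning of Gale--Shapley, which, as you note, is the real crux. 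I checked the individual steps — the matching property at termination, individual rationality, the two-case argument excluding blocking pairs, the induction showing no firm is ever rejected by a stable partner, and the conclusion $\C_f(\bar\mu(f)\cup\mu(f))=\bar\mu(f)$, i.e.\ $\bar\mu\succeq\mu$ — and each is sound under the paper's definitions of substitutability, consistency, and blocking.

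One small repair: you close the argument by appealing to the lattice of Theorem~\ref{thm:alkan}, but that theorem assumes cardinal monotonicity, whereas the statement you are proving assumes only path-independence. The appeal is unnecessary. You have shown that $\bar\mu$ is stable and $\bar\mu\succeq\mu$ for every stable $\mu$; since $\succeq$ is antisymmetric (if $\C_f(\mu_1(f)\cup\mu_2(f))=\mu_1(f)$ and $\C_f(\mu_2(f)\cup\mu_1(f))=\mu_2(f)$ for all $f$, then $\mu_1=\mu_2$), this already makes $\bar\mu$ the unique $\succeq$-maximum of $\SS$, which is exactly what $\mu_F$ denotes. Phrased this way, your proof uses no hypothesis beyond path-independence, matching the theorem statement (and Roth's original generality).
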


Due to the symmetry between firms and workers in a market where the only assumption on choice functions is path-independence, swapping the role of firms and workers in Algorithm~\ref{alg:DA}, we have the \emph{worker-proposing} deferred acceptance algorithm, which outputs $\mu_W$.

\subsection{Constructing $\Pi$ via a maximal chain of $(\SS, \succeq)$ } \label{sec:chain}

Let $(\H,\subseteq)$ be a ring of sets. A \emph{chain} $C_0, \cdots, C_k$ in $(\H,\subseteq)$ is an ordered subset of $\H$ such that $C_{i-1}$ is a predecessor of $C_i$ in $(\H,\subseteq)$ for all $i\in [k]$. The chain is \emph{complete} if moreover $C_{i-1}$ is an immediate predecessor of $C_i$ for all $i \in [k]$; it is \emph{maximal} if it is complete, $C_0=H_0$ and $C_k=H_z$. Consider $K\in \D(\H)$. If $K=C_i\setminus C_{i-1}$ for some $i\in [k]$, then we say that the chain \emph{contains} the minimal difference $K$. We start with the theorem below, where it is shown that the set $\D(\H)$ can be obtained by following any maximal chain of $(\H, \subseteq)$.

\begin{theorem}[Theorem 2.4.2, \cite{gusfield1989stable}] \label{thm:chain-md}
    Let $H',H\in \H$ such that $H'\subseteq H$. Then, there exists a complete chain from $H'$ to $H$ in $(\H,\subseteq)$, and every such chain contains exactly the same set of minimal differences. In particular, for any maximal chain $(C_0, \cdots, C_k)$ in $(\H,\subseteq)$, we have $\{C_i \setminus C_{i-1}: i\in [k]\} = \D(\H)$ and $k=|\D(\H)|$.
\end{theorem}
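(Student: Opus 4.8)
The plan is to reduce everything to two facts already in hand: that consecutive differences along a complete chain are minimal differences (Lemma~\ref{lem:center-md}), and that distinct minimal differences are pairwise disjoint because each base element lies in at most one center (Lemma~\ref{lem:center-uniq}). Existence of a complete chain from $H'$ to $H$ is essentially free: the interval $\{Z\in\H : H'\subseteq Z\subseteq H\}$ is a finite poset under $\subseteq$, so any maximal chain in it is a complete chain from $H'$ to $H$. Indeed, a consecutive pair is a covering pair within the interval exactly when it is a covering pair in $\H$, since any witness $Z$ with $C_{i-1}\subsetneq Z\subsetneq C_i$ automatically lies in the interval. (One could instead build such a chain through Theorem~\ref{thm:birkhoff}, peeling off one $\sqsupseteq$-minimal center at a time from $\psi_\H(H)$, but finiteness makes this detour unnecessary.)

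The heart of the argument is the uniqueness of the set of minimal differences. Fix a complete chain $H'=C_0\subsetneq C_1\subsetneq\cdots\subsetneq C_m=H$ and set $K_i:=C_i\setminus C_{i-1}$. First, $K_i\in\D(\H)$ for each $i$ by Lemma~\ref{lem:center-md}. Since the chain is strictly nested, the sets $K_i$ are pairwise disjoint with $\bigsqcup_{i=1}^m K_i = C_m\setminus C_0 = H\setminus H'$; in particular they are distinct elements of $\D(\H)$ (two equal ones would give a nonempty center disjoint from itself). I will then show that $\{K_1,\dots,K_m\}$ is forced to equal $\mathcal K:=\{K\in\D(\H): K\subseteq H\setminus H'\}$, a collection depending only on $H'$ and $H$. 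The inclusion $\{K_i\}\subseteq\mathcal K$ is immediate. For the reverse inclusion, take any $K\in\mathcal K$ and a point $a\in K$; then $a\in H\setminus H'=\bigsqcup_i K_i$, so $a\in K_i$ for some $i$, and since both $K$ and $K_i$ are centers containing $a$, Lemma~\ref{lem:center-uniq} forces $K=K_i$. Hence every complete chain carries exactly the minimal differences $\mathcal K$ and has length $m=|\mathcal K|$, independent of the chain.

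For the ``in particular'' statement I specialize to $H'=H_0$ and $H=H_z$ and check that $\mathcal K$ is all of $\D(\H)$. Every $K\in\D(\H)$ is, by Lemma~\ref{lem:center-md}, of the form $C\setminus C'$ with $C'$ an immediate predecessor of $C$; since $H_0\subseteq C'$ (as $H_0$ is the minimal element) we get $K\cap H_0\subseteq K\cap C'=\emptyset$, and as $K\subseteq C\subseteq H_z$ this yields $K\subseteq H_z\setminus H_0$, i.e.\ $K\in\mathcal K$. Therefore $\{C_i\setminus C_{i-1} : i\in[k]\}=\D(\H)$ and $k=|\D(\H)|$ for any maximal chain, as claimed.

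The main obstacle — really the only nontrivial point — is the forcing step in the uniqueness argument: recognizing that disjointness of centers (Lemma~\ref{lem:center-uniq}) upgrades the telescoping identity $\bigsqcup_i K_i = H\setminus H'$ into a \emph{unique} decomposition of $H\setminus H'$ into centers, so that the chain has no freedom in which minimal differences it uses. Everything else (existence via finiteness, distinctness of the $K_i$, and the observation that centers avoid $H_0$) is routine bookkeeping.
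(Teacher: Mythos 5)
Your proof is correct, but there is no in-paper proof to compare it against: the paper imports this statement verbatim from Gusfield and Irving (Theorem 2.4.2 of \cite{gusfield1989stable}) and never proves it. What your argument accomplishes is to show the theorem is not an independent import but a consequence of the two facts the paper states just before it: that minimal differences are exactly the differences across immediate-predecessor pairs (Lemma~\ref{lem:center-md}), and that distinct centers are disjoint (Lemma~\ref{lem:center-uniq}). All three of your steps are sound: existence of a complete chain via a non-extendable chain in the finite interval $\{Z\in\H: H'\subseteq Z\subseteq H\}$, where covering pairs in the interval are covering pairs in $\H$ because any witness set lies in the interval; the identification of the differences along any complete chain with the fixed collection $\{K\in\D(\H): K\subseteq H\setminus H'\}$, where the telescoping disjoint union $\bigsqcup_i K_i = H\setminus H'$ plus Lemma~\ref{lem:center-uniq} forces each such $K$ to coincide with some $K_i$ (note each $K\in\D(\H)$ is nonempty, since an irreducible element $I(K)$ has a witness $a$ with $H(a)=I(K)$, so picking $a\in K$ is legitimate); and the specialization to maximal chains, where every center avoids $H_0$ and lies in $H_z$, so the collection is all of $\D(\H)$. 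One caveat worth recording: Lemma~\ref{lem:center-md} is itself cited from \cite{gusfield1989stable} (their Lemma 2.4.3 and Corollary 2.4.1), so your derivation is only as self-contained as that lemma; if Gusfield and Irving in turn prove their Lemma 2.4.3 from their Theorem 2.4.2, unwinding the citations would make the combined argument circular. Within this paper's logical ordering, however, Lemma~\ref{lem:center-md} precedes Theorem~\ref{thm:chain-md}, so your proof is a valid way to obtain the theorem from already-stated material.
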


In this section, we present Algorithm~\ref{alg:immediate-descendant} that, on inputs $\mu'$, outputs a stable matching $\mu$ that is an immediate descendant of $\mu'$ in $(\SS, \succeq)$. Then, using Algorithm~\ref{alg:immediate-descendant} as a subroutine, Algorithm~\ref{alg:max-chain} gives a maximal chain of $(\SS, \succeq)$. 

We start by extending to our setting the \emph{break-marriage} idea proposed by McVitie and Wilson~\cite{mcvitie1971stable} for finding the full set of stable matchings in the one-to-one stable marriage model. Given a stable matching $\mu'$ and a firm-worker pair $(f',w') \in \mu'\setminus \mu_W$, the break-marriage procedure, denoted as \textuptt{break-marriage($\mu',f',w'$)}, works as follows. We first initialize $X_f$ to be $\bar X_f(\mu')$ for every firm $f\neq f'$, while we set $X_{f'}=\bar X_{f'}(\mu')\setminus\{w'\}$. We then restart the deferred acceptance process. The algorithm continues in iterations as in the \textbf{repeat} loop of Algorithm~\ref{alg:DA}, with the exception that worker $w'$ temporarily accepts $Y_{w'}\coloneqq \C_{w'}(X_{w'} \cup\{f'\}) \setminus \{f'\}$. As an intuitive explanation, this acceptance rule of $w'$ ensures that for the output matching $\bar\mu$, we have $\C_{w'}(\bar\mu(w') \cup \mu'(w')) =\bar\mu(w')$, as we show in Lemma~\ref{lem:bm-firm-worse}. The formal break-marriage procedure is summarized in Algorithm~\ref{alg:BM}. See Example~\ref{eg:break-marriage} for a demonstration. Note that by choice of the pair $(f',w')$, we have $|\mu'(w')| = q_{w'}$. 

\begin{algorithm}[ht]
    \caption{\textuptt{break-marriage($\mu',f',w'$)}, with $(f',w')\in \mu' \setminus \mu_W$ and $\mu'\in {\cal S}$} \label{alg:BM}
	\begin{algorithmic}[1]
	    \normalsize
	    \ForInline{\textbf{each} firm $f\neq f'$}{initialize $X^{(0)}_f\gets \bar X_f(\mu')$}
     	\State initialize $X^{(0)}_{f'}\gets \bar X_{f'}(\mu') \setminus \{w'\}$ \label{step:BM-remove-init}
     	\State initialize the step count $s\gets 0$
     	\Repeat
     	\For{\textbf{each} worker $w$}
     	\State $X^{(s)}_w\gets \{f\in F: w\in \C_f(X^{(s)}_f)\}$
     	\IfThenElse{$w\neq w'$}{$Y^{(s)}_w \gets \C_w(X^{(s)}_w)$}{$Y^{(s)}_w \gets \C_w(X^{(s)}_w\cup \{f'\}) \setminus \{f'\}$}
     	\EndFor
     	\For{\textbf{each} firm $f$}
     	\State update $X^{(s+1)}_f \gets X^{(s)}_f \setminus \{w\in W: f\in X^{(s)}_w\setminus Y^{(s)}_w\}$ \label{step:BM-remove}
     	\EndFor
     	\State update the step count $s\gets s+1$
     	\Until{$X^{(s-1)}_f= X^{(s)}_f$ for every firm $f$}
     	\Output{matching $\bar\mu$ with $\bar\mu(w) =Y^{(s-1)}_w$ for every worker $w$}
    \end{algorithmic}
\end{algorithm}

With the same reasoning as for the DA algorithm, the \textuptt{break-marriage($\mu', f', w'$)} procedure is guaranteed to terminate. Let $s^\star$ be the value of step count $s$ at the end of the algorithm. Note that, for every firm $f \in F$, we have 
\begin{equation} \label{eq:bm-Xf-nested}
    \bar X_f(\mu') \supseteq X^{(0)}_f \supseteq  X^{(1)}_f \supseteq \dots \supseteq X^{(s^\star)}_f,
\end{equation}
where the first containment is an equality unless $f=f'$. In particular, \eqref{eq:bm-Xf-nested} implies that $f'\notin X_{w'}^{(s)}$ for all $s\in \{0,1,\cdots, s^\star\}$. Also note that the termination condition implies 
\begin{equation} \label{eq:bm-term-f}
    \bar\mu(f)= \C_f(X_f^{(s^\star)}) = \C_f(X_f^{(s^\star-1)}) 
\end{equation}
for every firm $f$, while for every worker $w\neq w'$ it implies that 
\begin{equation} \label{eq:bm-term-w}
    \bar\mu(w)= Y_w^{(s^\star-1)} =\C_w(X_w^{(s^\star-1)})= X_w^{(s^\star-1)}.
\end{equation}

Let $(f,w)\in F\times W$, we say $f$ is \emph{rejected by $w$ at step $s$} if $f\in X_w^{(s)}\setminus Y_w^{(s)}$, and we say $f$ is \emph{rejected by $w$} if $f$ is rejected by $w$ at some step during the break-marriage procedure. Note that a firm $f$ is rejected by all and only the workers in $X_f^{(0)} \setminus X_f^{(s^\star)}$.

In the following, we prove Theorem~\ref{thm:bm-immediate}.

\begin{theorem} \label{thm:bm-immediate}
    Let $\mu',\mu \in \SS(\C_F,\C_W)$ and assume $\mu'$ is an immediate predecessor of $\mu$ in the stable matching lattice. Pick $(f', w')\in \mu'\setminus \mu$ and let $\bar \mu$ be the output matching of \textuptt{break-marriage($\mu', f', w'$)}. Then, $\bar\mu = \mu$.
\end{theorem}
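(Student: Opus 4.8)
The plan is to sandwich $\bar\mu$ between $\mu$ and $\mu'$ in the lattice and then invoke the immediate-predecessor hypothesis. Concretely, I would prove four things: (i) $\bar\mu$ is a stable matching; (ii) $\mu'\succeq\bar\mu$; (iii) $\bar\mu\succeq\mu$; and (iv) $\bar\mu\neq\mu'$. Granting these, $\bar\mu\in\SS$ satisfies $\mu'\succeq\bar\mu\succeq\mu$, and since $\mu'$ is an immediate predecessor of $\mu$ there is no stable matching strictly between them, so $\bar\mu\in\{\mu,\mu'\}$; as $\bar\mu\neq\mu'$, we conclude $\bar\mu=\mu$. Before starting I would record two facts used repeatedly. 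First, since $(f',w')\in\mu'\setminus\mu$ with $\mu',\mu$ both stable, $\mu'(w')\neq\mu(w')$, so by the \eqref{eq:full-quota} property $w'$ is full, i.e.\ $|\mu'(w')|=q_{w'}$; second, by \eqref{eq:bm-Xf-nested} the sets $X_f^{(s)}$ only shrink, and $w'$ is deleted from $X_{f'}^{(0)}$ and never reinstated. Note also that $(f',w')\notin\mu$ means $w'\notin\mu(f')$ and $f'\notin\mu(w')$.

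The technical heart is the invariant $\mu(f)\subseteq X_f^{(s)}$ for every firm $f$ and every step $s$, which I would prove by induction on $s$. For the base case, $\mu'\succeq\mu$ and Lemma~\ref{lem:closure-contain-dominated} give $\mu(f)\subseteq\bar X_f(\mu')=X_f^{(0)}$ for $f\neq f'$, while for $f'$ the same inclusion survives the deletion of $w'$ because $w'\notin\mu(f')$. For the inductive step I must show that no worker $w\in\mu(f)$ rejects $f$ at step $s$; so fix such a $w$ with $f\in X_w^{(s)}$. Every firm $g$ proposing to $w$ at step $s$ satisfies $w\in\C_g(X_g^{(s)})$, so the induction hypothesis $\mu(g)\subseteq X_g^{(s)}$ together with substitutability yields $w\in\C_g(\mu(g)\cup\{w\})$; stability of $\mu$ (no blocking pair) then forces $g\notin\C_w(\mu(w)\cup\{g\})$ whenever $g\notin\mu(w)$. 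Applying Corollary~\ref{cor:PI-repeat} with $A_1=\mu(w)$ and $A_2=X_w^{(s)}$ gives $\C_w(\mu(w)\cup X_w^{(s)})=\mu(w)\ni f$, and substitutability then returns $f\in\C_w(X_w^{(s)})=Y_w^{(s)}$, so $w$ does not reject $f$. The worker $w'$ is handled identically after replacing $X_{w'}^{(s)}$ by the effective set $X_{w'}^{(s)}\cup\{f'\}$: the extra firm $f'$ also satisfies $f'\notin\C_{w'}(\mu(w')\cup\{f'\})$, which follows from the polarity property of Theorem~\ref{thm:alkan} (giving $\C_{w'}(\mu'(w')\cup\mu(w'))=\mu(w')$) and consistency; the final removal of $f'$ is harmless since $f\neq f'$. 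With the invariant in hand, at termination $\mu(f)\subseteq X_f^{(s^\star)}$ and $\bar\mu(f)=\C_f(X_f^{(s^\star)})$, so consistency (with $\C_f(X_f^{(s^\star)})\subseteq\bar\mu(f)\cup\mu(f)\subseteq X_f^{(s^\star)}$) gives $\C_f(\bar\mu(f)\cup\mu(f))=\bar\mu(f)$, i.e.\ $\bar\mu\succeq\mu$, establishing (iii).

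For (ii), note $X_f^{(s^\star)}\subseteq\bar X_f(\mu')$ by \eqref{eq:bm-Xf-nested} and $\C_f(\bar X_f(\mu'))=\mu'(f)$ by Lemma~\ref{lem:choice-on-closure}; since $\mu'(f)\cup\bar\mu(f)\subseteq\bar X_f(\mu')$ and $\mu'(f)=\C_f(\bar X_f(\mu'))$, consistency yields $\C_f(\mu'(f)\cup\bar\mu(f))=\mu'(f)$, which is exactly $\mu'\succeq\bar\mu$. For (iv), $w'$ was deleted from $X_{f'}^{(0)}$ and never re-added, so $w'\notin X_{f'}^{(s^\star)}$ and hence $w'\notin\bar\mu(f')=\C_{f'}(X_{f'}^{(s^\star)})$, whereas $w'\in\mu'(f')$; thus $\bar\mu\neq\mu'$.

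It remains to prove (i), which I expect to be the main obstacle, precisely because the break-marriage rule for $w'$ destroys the clean symmetry of the usual deferred-acceptance correctness proof and, unlike the \textsc{SM-Model}, we cannot appeal to a strict ordering of partners. Individual rationality of $\bar\mu$ is routine for firms and for workers $w\neq w'$ by idempotence of path-independent choice (a consequence of consistency); for $w'$ it follows from quota-filling, since deleting the single firm $f'$ from $\C_{w'}(X_{w'}^{(s^\star-1)}\cup\{f'\})$ leaves a set whose cardinality still meets the quota bound, forcing $\C_{w'}(\bar\mu(w'))=\bar\mu(w')$. For the absence of blocking pairs I would argue as in the deferred-acceptance proof: if $(f,w)$ blocked $\bar\mu$ then $w\in\C_f(\bar\mu(f)\cup\{w\})$ forces $f$ to have been rejected by $w$ at some step, and a worker-improvement monotonicity argument shows $w$'s accepted set only gets better over time, so $f\notin\C_w(\bar\mu(w)\cup\{f\})$, contradicting the block. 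The delicate cases are those involving $f'$ or $w'$, where the nonstandard acceptance rule and the full-quota status of $w'$ must be invoked to rule out blocks; making this worker-side monotonicity rigorous in the choice-function setting, without a strict preference order to fall back on, is the crux of the argument.
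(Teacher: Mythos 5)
Your overall skeleton --- sandwiching $\bar\mu$ via (i) stability, (ii) $\mu'\succeq\bar\mu$, (iii) $\bar\mu\succeq\mu$, (iv) $\bar\mu\neq\mu'$, with the invariant $\mu(f)\subseteq X_f^{(s)}$ as the engine --- is exactly the paper's strategy, and your execution of the invariant (Corollary~\ref{cor:PI-repeat} plus the blocking-pair argument, with the polarity/consistency fix at $w'$), of (ii), (iii), (iv), and of individual rationality is sound; your quota-filling shortcut for individual rationality at $w'$ is even a bit simpler than the paper's substitutability argument. The genuine gap is item (i). Your sketch ``$w\in\C_f(\bar\mu(f)\cup\{w\})$ forces $f$ to have been rejected by $w$ at some step'' is false as stated: unlike plain deferred acceptance, the procedure initializes $X_f^{(0)}$ to the closure $\bar X_f(\mu')$ (minus $w'$ for $f'$), not to $W(f)$, so a blocking partner $w$ may never have been available to $f$ at any step. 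The paper eliminates this third case by combining stability of $\mu'$ with the worker-improvement property $\C_w(\mu'(w)\cup\bar\mu(w))=\bar\mu(w)$ (Claim~\ref{cl:bm-W-better}) and Lemma~\ref{lem:dom-P-pre} to conclude $w\in\bar X_f(\mu')$ before the rejected/not-rejected dichotomy can even be invoked; your proposal never addresses this case.

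The deeper missing piece is the \emph{successful} condition $f'\notin\C_{w'}(X_{w'}^{(s^\star-1)}\cup\{f'\})$, on which the whole stability argument (Lemma~\ref{lem:bm-success}) is conditioned: without it, worker-improvement fails at $w'$ and nothing prevents $(f',w')$ itself from blocking $\bar\mu$. Establishing successfulness is where the paper spends most of the proof of Theorem~\ref{thm:bm-immediate}, via a global counting argument: cardinal monotonicity, \eqref{eq:equal-quota}, Lemma~\ref{lem:bm-firm-worse}, and the invariant give $|\bar\mu(f)|=|\mu(f)|=|\mu'(f)|$ for every firm $f$; then a case analysis on workers $w\neq w'$ (the case $|\mu'(w)|<q_w$ handled via \eqref{eq:full-quota} and stability of $\mu$, the case $|\mu'(w)|=q_w$ handled by induction and quota-filling) gives $|\bar\mu(w)|=|\mu'(w)|$; summing degrees over the two sides of the market then forces $|\bar\mu(w')|=q_{w'}$, and quota-filling converts this into successfulness. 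Your proposal contains no trace of this counting argument --- it only gestures at ``the full-quota status of $w'$'' and explicitly defers the crux --- so stability of $\bar\mu$, and with it the theorem, is not actually proved. Note also that you slightly misidentify the crux: the worker-side monotonicity lemmas (Lemmas~\ref{lem:offer-remain-open} and~\ref{lem:rejection-final}) are routine path-independence manipulations; the hard content is the closure issue and the successfulness count above.
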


We start by outlining the proof steps of Theorem~\ref{thm:bm-immediate}. We first show in Lemma~\ref{lem:bm-ind-rational} that the output matching $\bar\mu$ of \textuptt{break-marriage($\mu', f', w'$)} is individually rational. We then show in Lemma~\ref{lem:bm-success} that under a certain condition (i.e., the break-marriage operation being \emph{successful}), $\bar\mu$ is a stable matching and $\mu' \succ \bar\mu$. Lastly, we show that under the assumptions in the statement of Theorem~\ref{thm:bm-immediate}, the above-mentioned condition is satisfied and $\bar\mu \succeq \mu$.

\begin{lemma} \label{lem:bm-ind-rational}
    Let $\mu'\in \SS$ be a stable matching that is not the worker-optimal stable matching $\mu_W$ and let $(f',w')\in \mu’\setminus \mu_W$. Consider the \textuptt{break-marriage($\mu', f', w'$)} procedure with output $\bar\mu$. Then, $\bar \mu$ is individually rational.
\end{lemma}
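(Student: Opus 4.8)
The plan is to verify the individual rationality condition $\C_a(\bar\mu(a)) = \bar\mu(a)$ separately for the three types of agents: firms, ordinary workers $w\neq w'$, and the distinguished worker $w'$. In each case I would first rewrite $\bar\mu(a)$ as the image of a choice function applied to a set computed by the algorithm at termination, and then exploit either idempotence or substitutability. Recall that since choice functions are path-independent, they are in particular both substitutable and consistent (Aizerman--Malishevski).

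For firms I would invoke the termination identity \eqref{eq:bm-term-f}, namely $\bar\mu(f) = \C_f(X_f^{(s^\star)})$. Consistency applied with $\C_f(S)\subseteq \C_f(S)\subseteq S$ yields the idempotence relation $\C_f(\C_f(S)) = \C_f(S)$; taking $S = X_f^{(s^\star)}$ gives $\C_f(\bar\mu(f)) = \bar\mu(f)$ at once. The identical argument disposes of every worker $w\neq w'$: by \eqref{eq:bm-term-w} we have $\bar\mu(w) = \C_w(X_w^{(s^\star-1)})$, so idempotence again gives $\C_w(\bar\mu(w)) = \bar\mu(w)$.

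The only genuinely different case, and the one I expect to be the main obstacle, is $w'$, whose acceptance rule deliberately discards $f'$: by construction $\bar\mu(w') = Y_{w'}^{(s^\star-1)} = \C_{w'}\big(X_{w'}^{(s^\star-1)}\cup\{f'\}\big)\setminus\{f'\}$. Writing $S\coloneqq X_{w'}^{(s^\star-1)}\cup\{f'\}$ and $T\coloneqq \C_{w'}(S)\setminus\{f'\} = \bar\mu(w')$, idempotence no longer applies directly because $T$ is a \emph{strict} subset of $\C_{w'}(S)$. Instead I would argue via substitutability: every $f\in T$ satisfies $f\in\C_{w'}(S)$ and $T\subseteq S$ (as $T\subseteq\C_{w'}(S)\subseteq S$), so substitutability gives $f\in\C_{w'}(T\cup\{f\}) = \C_{w'}(T)$; hence $T\subseteq\C_{w'}(T)$. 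Combined with the universal inclusion $\C_{w'}(T)\subseteq T$, this forces $\C_{w'}(\bar\mu(w')) = \C_{w'}(T) = T = \bar\mu(w')$, completing the verification. The only point requiring care is that $f'$ is genuinely removed rather than silently reintroduced, i.e.\ that $f'\notin X_{w'}^{(s)}$ throughout the run; this is exactly the observation recorded just after \eqref{eq:bm-Xf-nested}, so no further work is needed there.
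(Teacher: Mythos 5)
Your proposal is correct and follows essentially the same route as the paper's proof: idempotence (from path-independence/consistency) applied to the termination identities \eqref{eq:bm-term-f} and \eqref{eq:bm-term-w} for all agents other than $w'$, and substitutability to show $\bar\mu(w')\subseteq \C_{w'}(\bar\mu(w'))$ for the distinguished worker. The only cosmetic difference is that you apply substitutability with the subset $T=\bar\mu(w')$ directly, while the paper takes $T=X_{w'}^{(s^\star-1)}$ and invokes the termination condition $X_{w'}^{(s^\star-1)}=Y_{w'}^{(s^\star-1)}$; both yield the same conclusion.
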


\begin{proof}
    By~\eqref{eq:bm-term-f} and~\eqref{eq:bm-term-w} above, for every agent $a\in F\cup W\setminus \{w'\}$, $\bar \mu(a)= \C_a(X_a^{(s^\star-1)})$ and thus, $\C_a(\bar \mu(a)) = \C_a(\C_a(X_a^{(s^\star-1)})) =\C_a(X_a^{(s^\star-1)}) = \bar \mu(a)$, where the second equality is due to path-independence. For worker $w'$, note that $X^{(s^\star-1)}_{w'} = Y^{(s^\star-1)}_{w'} = \bar\mu(w')= \C_{w'} (X^{(s^\star-1)}_{w'}\cup \{f'\}) \setminus \{f'\}$, where the first equality is due to the termination criterion. Then, by the substitutability property, with $T=X^{(s^\star-1)}_{w'}$ and $S=X^{(s^\star-1)}_{w'} \cup \{f'\}$, we have that for every firm $f\in \bar\mu(w')$, $f\in \C_{w'}(X^{(s^\star-1)}_{w'})$ holds. Thus, $\bar\mu(w')\subseteq \C_{w'}(\bar\mu(w'))$. Since $\C_{w'}(X)\subseteq X$ for any $X$ in the domain of $\C_{w'}$, we have $\bar\mu(w')= \C_{w'}(\bar\mu(w'))$. Therefore, $\bar\mu$ is individually rational.
\end{proof}

\begin{lemma} \label{lem:bm-firm-worse}
     Consider the \textuptt{break-marriage($\mu', f', w'$)} procedure with output matching $\bar\mu$. Then, for every firm $f$, $\C_f(\bar\mu(f) \cup \mu'(f)) = \mu'(f)$.
\end{lemma}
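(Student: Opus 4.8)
The plan is to show, for each firm $f$ separately, that the set $\bar\mu(f)\cup\mu'(f)$ is sandwiched between $\mu'(f)$ and the closure $\bar X_f(\mu')$, and then to invoke consistency together with Lemma~\ref{lem:choice-on-closure}. Concretely, I would aim to verify the chain $\C_f(\bar X_f(\mu'))=\mu'(f)\subseteq \bar\mu(f)\cup\mu'(f)\subseteq \bar X_f(\mu')$ and read off the claim from the consistency axiom.

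First I would establish the containment $\bar\mu(f)\cup\mu'(f)\subseteq \bar X_f(\mu')$. For the $\mu'(f)$ part, since $\mu'$ is stable and hence individually rational, the remark following the definition of the closure gives $\mu'(f)\subseteq \bar X_f(\mu')$ directly. For the $\bar\mu(f)$ part, I would use \eqref{eq:bm-term-f}, which yields $\bar\mu(f)=\C_f(X_f^{(s^\star)})\subseteq X_f^{(s^\star)}$, and then the monotonicity chain \eqref{eq:bm-Xf-nested}, which gives $X_f^{(s^\star)}\subseteq X_f^{(0)}\subseteq \bar X_f(\mu')$ (the last containment being an equality for $f\neq f'$ and a subset for $f=f'$, where $w'$ has been removed from the initial set). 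Hence $\bar\mu(f)\subseteq \bar X_f(\mu')$, and combining with the previous observation we get $\bar\mu(f)\cup\mu'(f)\subseteq \bar X_f(\mu')$.

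Next, since $\mu'$ is individually rational, Lemma~\ref{lem:choice-on-closure} gives $\C_f(\bar X_f(\mu'))=\mu'(f)$. Setting $S\coloneqq \bar X_f(\mu')$ and $T\coloneqq \bar\mu(f)\cup\mu'(f)$, we have shown $\C_f(S)=\mu'(f)\subseteq T\subseteq S$, so the consistency property of $\C_f$ applies and yields $\C_f(T)=\C_f(S)=\mu'(f)$, which is exactly the desired identity $\C_f(\bar\mu(f)\cup\mu'(f))=\mu'(f)$. As $f$ was arbitrary, this proves the lemma.

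I do not expect a serious obstacle here: once the sandwich is set up, consistency does all the work. The only point requiring care is the firm-side bookkeeping needed to see $\bar\mu(f)$ as a choice over a subset of $\bar X_f(\mu')$, which is precisely where \eqref{eq:bm-term-f} and \eqref{eq:bm-Xf-nested} enter, and the separate handling of $f=f'$ (where the initial available set is strictly smaller, so the containment only improves).
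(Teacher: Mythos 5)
Your proof is correct, and it closes the argument by a different mechanism than the paper's. Both arguments rest on the same three ingredients---$\C_f(\bar X_f(\mu'))=\mu'(f)$ from Lemma~\ref{lem:choice-on-closure}, $\bar\mu(f)=\C_f(X_f^{(s^\star)})$ from \eqref{eq:bm-term-f}, and the nesting $X_f^{(s^\star)}\subseteq X_f^{(0)}\subseteq \bar X_f(\mu')$ from \eqref{eq:bm-Xf-nested}---but the paper finishes with a path-independence computation: $\C_f(\bar\mu(f)\cup\mu'(f)) = \C_f\bigl(\C_f(X_f^{(s^\star)})\cup \C_f(\bar X_f(\mu'))\bigr) = \C_f\bigl(X_f^{(s^\star)}\cup \bar X_f(\mu')\bigr) = \C_f(\bar X_f(\mu'))=\mu'(f)$, absorbing the two inner choices into a single choice over the union and then collapsing the union via the nesting. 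You instead verify the sandwich $\C_f(\bar X_f(\mu'))=\mu'(f)\subseteq \bar\mu(f)\cup\mu'(f)\subseteq \bar X_f(\mu')$ and invoke consistency once. The net effect is the same and both proofs are equally short, but your final step uses only the consistency half of path-independence (the identity $\C(\C(A)\cup\C(B))=\C(A\cup B)$ that the paper uses requires the full axiom), so your argument is marginally more parsimonious in what it assumes at that point; the paper's equational style, on the other hand, avoids spelling out the set inclusions and matches the computations used elsewhere in Section~\ref{sec:chain}. Your separate remark about $f=f'$ is also handled correctly: only the inclusion $X_{f'}^{(0)}\subseteq \bar X_{f'}(\mu')$ is ever needed, never equality.
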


\begin{proof}
    For a firm $f$, we have 
    \begin{align*}
    C_f(\bar\mu(f) \cup \mu'(f)) &= \C_f(\C_f(X_f^{(s^\star)}) \cup \C_f(\bar X_f(\mu'))\\
    &= \C_f(X_f^{(s^\star)} \cup \bar X_f(\mu'))= \C_f(\bar X_f(\mu')) = \mu'(f),
    \end{align*}
    where the first and last equality holds since $\mu'(f)=C_f(\bar X_f(\mu'))$ by Lemma~\ref{lem:choice-on-closure} and $\bar\mu(f)=C_f(X_f^{(s^\star)})$ by~\eqref{eq:bm-term-f}, the second equality is by path-independence, and the third equality is due to $X_f^{(s^\star)} \subseteq X_f^{(0)} \subseteq \bar X_f(\mu')$ by~\eqref{eq:bm-Xf-nested}.
\end{proof}

The following two properties of the break-marriage procedure are direct consequences of the path-independence assumption imposed on choice functions. These properties are also true for the deferred acceptance algorithm, as shown in~\cite{roth1984stability}. Let $f\in F$ and $w\in W$ be an arbitrary firm and worker. Lemma~\ref{lem:offer-remain-open} states that once $f$ proposes to $w$ in some step of the algorithm, it will keep proposing to $w$ in future steps until $w$ rejects $f$. Lemma~\ref{lem:rejection-final} states that once $w$ rejects $f$, $w$ would never accept $f$ in later steps even if the proposal is offered again.

\begin{lemma} \label{lem:offer-remain-open}
    For all $s\in [s^\star-1]$ and $w \in W$, we have $Y^{(s-1)}_w \subseteq X^{(s)}_w$.
\end{lemma}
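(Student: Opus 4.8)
The plan is to fix a step $s \in [s^\star-1]$ and a worker $w$, take an arbitrary firm $f \in Y_w^{(s-1)}$, and show directly that $f \in X_w^{(s)}$, i.e.\ that $w \in \C_f(X_f^{(s)})$. The guiding intuition is exactly the one stated just before the lemma: a firm keeps proposing to a worker until it is rejected, and temporary acceptance at step $s-1$ certifies that the proposal has not (yet) been rejected.

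First I would record that $Y_w^{(s-1)} \subseteq X_w^{(s-1)}$ holds for every worker, including the special worker $w'$: for $w \neq w'$ this is immediate from $Y_w^{(s-1)} = \C_w(X_w^{(s-1)}) \subseteq X_w^{(s-1)}$, and for $w'$ the modified acceptance rule gives $Y_{w'}^{(s-1)} = \C_{w'}(X_{w'}^{(s-1)} \cup \{f'\}) \setminus \{f'\} \subseteq X_{w'}^{(s-1)}$ (note in passing that this also forces $f \neq f'$ whenever $w = w'$). Hence $f \in X_w^{(s-1)}$, which by the defining formula for $X_w^{(s-1)}$ means $w \in \C_f(X_f^{(s-1)})$, and in particular $w \in X_f^{(s-1)}$.

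Next I would argue that $w$ survives into $X_f^{(s)}$. Since $f \in Y_w^{(s-1)}$, the firm $f$ is \emph{not} rejected by $w$ at step $s-1$, i.e.\ $f \notin X_w^{(s-1)} \setminus Y_w^{(s-1)}$; by the firm-update rule (Step~\ref{step:BM-remove}) the worker $w$ is therefore not deleted from $X_f^{(s-1)}$, so $w \in X_f^{(s)}$. Finally I would invoke substitutability of $\C_f$ with $S = X_f^{(s-1)}$ and $b = w$: because $X_f^{(s)} \subseteq X_f^{(s-1)}$ by~\eqref{eq:bm-Xf-nested} and $w \in X_f^{(s)}$, taking $T = X_f^{(s)}$ yields $w \in \C_f(T \cup \{w\}) = \C_f(X_f^{(s)})$, that is $f \in X_w^{(s)}$, as desired.

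The argument is short and essentially syntactic, so there is no deep obstacle; the only points requiring care are the bookkeeping for the special worker $w'$ (so that the inclusion $Y_{w'}^{(s-1)} \subseteq X_{w'}^{(s-1)}$ and the fact $f \neq f'$ are not overlooked) and applying substitutability in the correct direction---namely using it to transfer membership in the choice set from the \emph{larger} set $X_f^{(s-1)}$ down to the \emph{smaller} set $X_f^{(s)}$, rather than the reverse.
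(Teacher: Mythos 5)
Your proof is correct and follows the same route as the paper's: from $f \in Y_w^{(s-1)}$ deduce $w \in \C_f(X_f^{(s-1)}) \cap X_f^{(s)}$, then apply substitutability along the inclusion $X_f^{(s)} \subseteq X_f^{(s-1)}$ from~\eqref{eq:bm-Xf-nested} to get $w \in \C_f(X_f^{(s)})$, i.e.\ $f \in X_w^{(s)}$. The only difference is that you spell out explicitly (including the bookkeeping for $w'$) what the paper compresses into the phrase ``by construction.''
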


\begin{proof}
    Let $f\in Y^{(s-1)}_w$. By construction, we have $w \in \C_f(X_f^{(s-1)})\cap X_f^{(s)}$. Since $X_f^{(s)}\subseteq X_f^{(s-1)}$ by~\eqref{eq:bm-Xf-nested}, we deduce that $w \in \C_f(X^{(s)}_f)$ by the substitutability property. Hence, $f \in X_w^{(s)}$ by definition.
\end{proof}

\begin{lemma} \label{lem:rejection-final}
   Let $s\in [s^\star-1]$, $f \in F$, and $w \in W$. Assume $f\in X^{(s-1)}_w \setminus Y^{(s-1)}_w$, i.e., $f$ is rejected by $w$ at step $s-1$. If $w\neq w'$, then for every step $s'\ge s$, $f\notin \C_w(X^{(s')}_w \cup \{f\})$; and if $w=w'$, then for every step $s'\ge s$, $f\notin \C_w(X^{(s')}_w\cup \{f'\} \cup \{f\})$.
\end{lemma}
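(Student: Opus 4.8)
The plan is to prove both cases at once by a single induction on the step index, after unifying the notation. For $w\neq w'$ set $\widehat X_w^{(t)}\coloneqq X_w^{(t)}$, and for $w=w'$ set $\widehat X_{w'}^{(t)}\coloneqq X_{w'}^{(t)}\cup\{f'\}$; then in both cases $Y_w^{(t)}=\C_w(\widehat X_w^{(t)})$ up to deleting $f'$ when $w=w'$, and the two conclusions of the lemma collapse into the single statement $f\notin\C_w(\widehat X_w^{(s')}\cup\{f\})$ for every admissible $s'\ge s$. First I would observe that $f\neq f'$: since $f'\notin X_{w'}^{(t)}$ for every $t$ by \eqref{eq:bm-Xf-nested}, a firm rejected by $w$ can never be $f'$. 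Hence the hypothesis $f\in X_w^{(s-1)}\setminus Y_w^{(s-1)}$ translates in both cases into $f\in X_w^{(s-1)}\subseteq\widehat X_w^{(s-1)}$ together with $f\notin\C_w(\widehat X_w^{(s-1)})$; because $f$ already lies in $\widehat X_w^{(s-1)}$, this is exactly the base case $f\notin\C_w(\widehat X_w^{(s-1)}\cup\{f\})$ of the induction.

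The main obstacle is that the proposing sets $X_w^{(t)}$ are \emph{not} nested in $t$: as firms are rejected elsewhere, new firms may start proposing to $w$, so one cannot simply invoke substitutability on an inclusion $\widehat X_w^{(s-1)}\subseteq\widehat X_w^{(s')}$, which is false in general. The device that circumvents this is to carry the information forward through the \emph{retained} set rather than the proposing set. Indeed, Lemma~\ref{lem:offer-remain-open} gives $Y_w^{(t)}\subseteq X_w^{(t+1)}$, which in the unified notation reads $\C_w(\widehat X_w^{(t)})\subseteq\widehat X_w^{(t+1)}$ (for $w=w'$ one adds $f'$ back to both sides). This ``the retained set keeps proposing'' inclusion is monotone across consecutive steps even though the proposing sets are not, and it is the bridge I would use to propagate the rejection.

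With this in hand the inductive step is short, and I expect it to be essentially the same computation as the base case. Assume $f\notin\C_w(\widehat X_w^{(t)}\cup\{f\})$ and put $B\coloneqq\C_w(\widehat X_w^{(t)}\cup\{f\})$. Since $f\notin B$ we have $B\subseteq\widehat X_w^{(t)}$, so consistency (applied once with middle set $\widehat X_w^{(t)}$ and once with middle set $B\cup\{f\}$, both sandwiched between $B$ and $\widehat X_w^{(t)}\cup\{f\}$) yields $B=\C_w(\widehat X_w^{(t)})$ and $\C_w(B\cup\{f\})=B\not\ni f$. By the bridge inclusion, $B=\C_w(\widehat X_w^{(t)})\subseteq\widehat X_w^{(t+1)}$. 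Finally, applying the contrapositive of substitutability with $S=\widehat X_w^{(t+1)}\cup\{f\}$ and $T=B\subseteq S$, the fact $f\notin\C_w(T\cup\{f\})$ forces $f\notin\C_w(S)=\C_w(\widehat X_w^{(t+1)}\cup\{f\})$. Starting from the base case at $t=s-1$ and iterating, where the invocation of Lemma~\ref{lem:offer-remain-open} at index $t+1$ is legitimate as long as $t+1\le s^\star-1$, establishes $f\notin\C_w(\widehat X_w^{(s')}\cup\{f\})$ for every admissible $s'\ge s$, i.e.\ the two stated conclusions. The only points requiring genuine care are the non-monotonicity noted above and the bookkeeping of the extra element $f'$ in the $w=w'$ case, both of which are fully absorbed by the $\widehat X$ reformulation.
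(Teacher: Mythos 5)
Your proof is correct, but it is organized differently from the paper's. The paper first records that $f\notin X_w^{(s')}$ for all $s'\ge s$, and then, in two separate computations for $w\neq w'$ and $w=w'$, unrolls $\C_w(X_w^{(s')}\cup\{f\})$ \emph{backward} through all intermediate steps by repeated application of path-independence together with Lemma~\ref{lem:offer-remain-open}, arriving at a choice over a large union $X_w^{(s')}\cup X_w^{(s'-1)}\cup\cdots\cup\C_w(X_w^{(s-1)}\cup\{f\})$ in which $f$ appears nowhere, so $f$ cannot be chosen; the annotations $f\notin X_w^{(t)}$ for $t\ge s$ are essential there. You instead run a \emph{forward} induction on the step index: the invariant $f\notin\C_w(\widehat X_w^{(t)}\cup\{f\})$ is pushed one step at a time using consistency (to identify $B=\C_w(\widehat X_w^{(t)})$ and deduce $f\notin\C_w(B\cup\{f\})$), the bridge $B\subseteq\widehat X_w^{(t+1)}$ from Lemma~\ref{lem:offer-remain-open}, and the contrapositive of substitutability. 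Three features distinguish your route: (i) the hatted notation $\widehat X_{w'}^{(t)}=X_{w'}^{(t)}\cup\{f'\}$ merges the two cases into one argument, whereas the paper repeats the chain with $\{f'\}$ threaded through; (ii) you split path-independence into its two constituents, substitutability and consistency, and use each exactly where needed; (iii) your induction never needs the fact that $f\notin X_w^{(s')}$ at later steps, which the paper's telescoping relies on. What the paper's approach buys is a single closed-form chain of equalities; what yours buys is modularity and a halved case analysis, at the cost of setting up the induction. Both arguments hinge on the same key lemma (Lemma~\ref{lem:offer-remain-open}), need $f\neq f'$ in the $w=w'$ case (you justify this exactly as the paper does, via \eqref{eq:bm-Xf-nested}), and share the same harmless boundary subtlety at $s'=s^\star$, where the termination condition $X_f^{(s^\star)}=X_f^{(s^\star-1)}$ supplies the missing instance of the bridge.
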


\begin{proof}
    By construction, $w \notin X_f^{(s)}$. Hence, $f\notin X_w^{(s')}$ for all $s'\ge s$ because of~\eqref{eq:bm-Xf-nested} and the definition of $X_w^{(s')}$. Fix a value of $s'\geq s$. First consider the case when $w\neq w'$. By repeated application of the path-independence property of $\C_w$ and Lemma~\ref{lem:offer-remain-open}, we have 
    \begin{align*}
        \C_w(X_w^{(s')} \cup \{f\})&= \C_w(X_w^{(s')} \cup Y_w^{(s'-1)} \cup \{f\}) = \C_w(X_w^{(s')} \cup \C_w(Y_w^{(s'-1)} \cup \{f\})) \\
        &= \C_w(X_w^{(s')} \cup \C_w(\C_w(X_w^{(s'-1)}) \cup \{f\}))\\
        &= \C_w(X_w^{(s')} \cup \C_w(X_w^{(s'-1)} \cup \{f\})) \\
        & = \cdots \\ &= \C_w(\underbrace{X_w^{(s')}}_{\not\ni f} \cup \underbrace{X_w^{(s'-1)}}_{\not\ni f} \cup \cdots \cup \underbrace{\C_w(X_w^{(s-1)} \cup \{f\})}_{= \C_w(X_w^{(s-1)}) =Y_w^{(s-1)} \not\ni f}).
    \end{align*}
    Therefore, $f\notin \C_w(X_w^{(s')} \cup \{f\})$ as desired. We next consider the case where $w=w'$. Since $w \notin X_{f'}^{(0)}$ by construction, we have $w \notin X_{f'}^{(s-1)}$ by~\eqref{eq:bm-Xf-nested}, which then implies $f' \notin X_{w}^{(s-1)}$ by definition. Thus, we have $f\neq f'$. Again, by repeated application of the path-independence property of $\C_w$ and Lemma~\ref{lem:offer-remain-open}, we have 
    \begin{equation*}
    \resizebox{.97\textwidth}{!}{$\begin{aligned}
        \C_w(X^{(s')}_w\cup \{f'\} \cup \{f\})&= \C_w(X_w^{(s')} \cup Y_w^{(s'-1)} \cup \{f'\} \cup \{f\}) \\
        &= \C_w(X_w^{(s')} \cup \{f'\} \cup \C_w(Y_w^{(s'-1)} \cup \{f'\} \cup \{f\}))\\
        &= \C_w(X_w^{(s')} \cup \{f'\} \cup \C_w((\C_w( X_w^{(s'-1)} \cup\{f'\})\setminus \{f'\}) \cup\{f'\} \cup \{f\}))\\
        &= \C_w(X_w^{(s')} \cup \{f'\} \cup \C_w( X_w^{(s'-1)} \cup\{f'\} \cup \{f\})) \\ 
        & = \cdots \\
        &= \C_w(\underbrace{X_w^{(s')}}_{\not\ni f} \cup \underbrace{X_w^{(s'-1)}}_{\not\ni f} \cup \cdots \cup \{f'\} \cup \underbrace{\C_w(X_w^{(s-1)} \cup \{f'\}\cup \{f\})}_{= \C_w(X_w^{(s-1)} \cup \{f'\}) \not\ni f}).
    \end{aligned}$
    }
    \end{equation*}
    Therefore, $f\notin \C_w(X_w^{(s')} \cup \{f'\} \cup \{f\})$ as desired in this case as well.
\end{proof}

We say the break-marriage procedure \textuptt{break-marriage($\mu', f', w'$)} is \emph{successful} if $f'\notin \C_{w'}(X_{w'}^{(s^\star-1)} \cup \{f'\})$. We next show that when the procedure is successful, the output matching is stable. 

\begin{remark}
    For the \textsc{SM-Model}, McVitie and Wilson~\cite{mcvitie1971stable} defines the break-marriage procedure \textuptt{break-marriage($\mu', f', w'$)} to be successful if $w'$ receives a proposal from a firm that $w'$ prefers to $f'$. To translate this condition the \textsc{CM-QF-Model}, we interpret it as the follows: if $w'$ were to choose between this proposal and $f'$, $w'$ would not choose $f'$.
\end{remark}

\begin{lemma} \label{lem:bm-success}
    If \textuptt{break-marriage($\mu', f', w'$)} is successful, then the output matching $\bar\mu$ is stable. Moreover, $\mu'\succ \bar\mu$.
\end{lemma}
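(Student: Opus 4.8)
The plan is to verify the two defining conditions of stability for $\bar\mu$ — individual rationality and the absence of blocking pairs — and then to upgrade the firm-side domination already in hand to the strict relation $\mu'\succ\bar\mu$. Individual rationality is exactly Lemma~\ref{lem:bm-ind-rational}, so the real work is ruling out blocking pairs. Suppose for contradiction that $(f,w)\notin\bar\mu$ blocks $\bar\mu$, i.e.\ $w\in\C_f(\bar\mu(f)\cup\{w\})$ and $f\in\C_w(\bar\mu(w)\cup\{f\})$. Starting from the firm-side condition, since $\bar\mu(f)=\C_f(X_f^{(s^\star)})$ by \eqref{eq:bm-term-f}, path-independence gives $w\in\C_f(X_f^{(s^\star)}\cup\{w\})$; since $w\notin\bar\mu(f)=\C_f(X_f^{(s^\star)})$, this forces $w\notin X_f^{(s^\star)}$. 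I would then split into two cases according to why $w$ is absent from $X_f^{(s^\star)}$.

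In the first case $f$ was rejected by $w$ at some step of the procedure (equivalently $w\in X_f^{(0)}\setminus X_f^{(s^\star)}$). Here Lemma~\ref{lem:rejection-final} applies: taking $s'=s^\star-1$ it gives $f\notin\C_w(X_w^{(s^\star-1)}\cup\{f\})$, and rewriting $\C_w(\bar\mu(w)\cup\{f\})$ through \eqref{eq:bm-term-w} and path-independence yields $f\notin\C_w(\bar\mu(w)\cup\{f\})$, contradicting the blocking condition. The subcase $w=w'$ is precisely where the hypothesis that the procedure is \emph{successful} enters: successfulness means $f'\notin Z$ for $Z\coloneqq\C_{w'}(X_{w'}^{(s^\star-1)}\cup\{f'\})$, so $\bar\mu(w')=Z\setminus\{f'\}=Z$, and a short consistency argument lets the conclusion of Lemma~\ref{lem:rejection-final} (stated there with the extra element $f'$) pass to $\C_{w'}(\bar\mu(w')\cup\{f\})$.

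In the second case $f$ never proposed to $w$, which can only happen when $w\notin X_f^{(0)}$; since $X_f^{(0)}=\bar X_f(\mu')$ (with $w'$ additionally removed when $f=f'$), this means $\C_f(\mu'(f)\cup\{w\})\neq\mu'(f)$, i.e.\ $w\in\C_f(\mu'(f)\cup\{w\})$ with $(f,w)\notin\mu'$. Stability of $\mu'$ then forces $f\notin\C_w(\mu'(w)\cup\{f\})$, and I would transfer this to $\bar\mu$ using a worker-side domination statement $\C_w(\mu'(w)\cup\bar\mu(w))=\bar\mu(w)$ together with Lemma~\ref{lem:dom-P-pre} (applied with $A_1=\bar\mu(w)$, $A_2=\mu'(w)$, $a=f$), which turns $f\notin\C_w(\mu'(w)\cup\{f\})$ into $f\notin\C_w(\bar\mu(w)\cup\{f\})$ — again contradicting the blocking condition.

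The crux of the argument, and the step I expect to be the main obstacle, is establishing the worker-side domination $\C_w(\mu'(w)\cup\bar\mu(w))=\bar\mu(w)$ \emph{directly} from the break-marriage dynamics. We cannot invoke the polarity property of Theorem~\ref{thm:alkan}, since that presupposes $\bar\mu\in\SS$, which is exactly what we are proving. Instead I would argue that any $f^\star\in\mu'(w)\setminus\bar\mu(w)$ must propose to $w$ already at step $0$ — because $\C_{f^\star}(X_{f^\star}^{(0)})=\C_{f^\star}(\bar X_{f^\star}(\mu'))=\mu'(f^\star)\ni w$ by Lemma~\ref{lem:choice-on-closure} — and is therefore rejected by $w$ at some step, whence Lemma~\ref{lem:rejection-final} yields $f^\star\notin\C_w(\bar\mu(w)\cup\{f^\star\})$; Lemma~\ref{lem:PI-repeat} then assembles these per-firm facts into the full domination. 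Finally, the strict relation $\mu'\succ\bar\mu$ follows by combining the firm-side domination $\mu'\succeq\bar\mu$ from Lemma~\ref{lem:bm-firm-worse} with $\bar\mu\neq\mu'$, the latter because successfulness gives $f'\notin\bar\mu(w')$ while $(f',w')\in\mu'$. The recurring care throughout is the bookkeeping for the distinguished worker $w'$, whose modified acceptance rule and the successfulness hypothesis must be threaded through each case.
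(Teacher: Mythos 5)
Your proposal follows essentially the same route as the paper: individual rationality from Lemma~\ref{lem:bm-ind-rational}, the ``rejection is final'' consequence of Lemma~\ref{lem:rejection-final}, the worker-side domination $\C_w(\mu'(w)\cup\bar\mu(w))=\bar\mu(w)$ as the crux, the transfer between $\bar\mu$ and $\mu'$ via Lemma~\ref{lem:dom-P-pre} and stability of $\mu'$, and strictness from Lemma~\ref{lem:bm-firm-worse} plus $f'\notin\bar\mu(w')$; the case analysis is merely reorganized (you split on why $w\notin X_f^{(s^\star)}$, while the paper first shows $(f,w)\notin\mu'$ and then splits on whether $w\in X_f^{(s^\star)}$). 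However, there is a genuine gap: the distinguished pair $(f',w')$ slips through your argument in two places. First, in your second case of the blocking-pair analysis, the inference ``$w\notin X_f^{(0)}$ implies $\C_f(\mu'(f)\cup\{w\})\neq\mu'(f)$, hence $(f,w)\notin\mu'$'' fails when $(f,w)=(f',w')$: there $w'\notin X_{f'}^{(0)}$ holds by fiat (Line~\ref{step:BM-remove-init} of Algorithm~\ref{alg:BM}), yet $\C_{f'}(\mu'(f')\cup\{w'\})=\mu'(f')$ because $w'\in\mu'(f')$ and $\mu'$ is individually rational, and the subsequent appeal to stability of $\mu'$ is vacuous for a pair that lies in $\mu'$. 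Since $(f',w')$ is acceptable and not in $\bar\mu$, it is a candidate blocking pair that your case analysis never contradicts; note it does not fall into your first case either, because $f'$ never proposes to $w'$ and hence is never rejected by $w'$.

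Second, the same pair breaks your proof of the crux claim: when $w=w'$, the set $\mu'(w')\setminus\bar\mu(w')$ contains $f'$, and your per-firm argument (``$f^\star$ proposes at step $0$ because $\C_{f^\star}(X_{f^\star}^{(0)})=\C_{f^\star}(\bar X_{f^\star}(\mu'))=\mu'(f^\star)$, then is rejected'') does not apply to $f^\star=f'$, for the same reason as above. (A smaller imprecision: for $f^\star=f'$ and $w\neq w'$, the identity $\C_{f'}(X_{f'}^{(0)})=\C_{f'}(\bar X_{f'}(\mu'))$ need not hold since $X_{f'}^{(0)}$ lacks $w'$; one needs substitutability to still conclude $w\in\C_{f'}(X_{f'}^{(0)})$.) Both holes are filled by the same observation, which is exactly how the paper treats the case $(f,w)=(f',w')$ separately: successfulness and consistency give $\bar\mu(w')=Y_{w'}^{(s^\star-1)}=X_{w'}^{(s^\star-1)}=\C_{w'}(X_{w'}^{(s^\star-1)}\cup\{f'\})$, whence by path-independence $\C_{w'}(\bar\mu(w')\cup\{f'\})=\C_{w'}(X_{w'}^{(s^\star-1)}\cup\{f'\})=\bar\mu(w')\not\ni f'$; this supplies both the missing per-firm fact in the domination claim and a direct proof that $(f',w')$ cannot block $\bar\mu$. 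With these cases added, your proof goes through.
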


\begin{proof} 
    Since \textuptt{break-marriage($\mu', f', w'$)} is successful, applying the consistency property with $T=X^{(s^\star-1)}_{w'}$ and $S=T\cup \{f'\}$, we have $\C_{w'}(X^{(s^\star-1)}_{w'} \cup \{f'\})= \C_{w'}(X^{(s^\star-1)}_{w'})$ and thus, $Y_{w'}^{(s^\star-1)} = \C_{w'}(X_{w'}^{(s^\star-1)})$. In addition, by the termination condition, $Y_{w'}^{(s^\star-1)} = X_{w'}^{(s^\star-1)}$. Therefore, we have the following identity
    \begin{equation} \label{eq:bm-success-1}
        \bar\mu(w')= Y_{w'}^{(s^\star-1)} = X_{w'}^{(s^\star-1)} = \C_{w'}(X^{(s^\star-1)}_{w'}) = \C_{w'}(X^{(s^\star-1)}_{w'} \cup \{f'\}),
    \end{equation}
    which is similar to~\eqref{eq:bm-term-w} for other workers.
    
    \begin{claim} \label{cl:reject-final-reject}
        Let $(f,w) \in F \times W$. If $f$ is rejected by $w$ during the break-marriage procedure, then $f\notin \C_w(\bar\mu(w) \cup \{f\})$.
    \end{claim}
    
    \begin{proof}
        If $w\neq w'$, then by Lemma~\ref{lem:rejection-final}, $f\notin \C_w(X^{(s^\star-1)}_w \cup \{f\}) = \C_w(\bar\mu(w) \cup \{f\})$ where the equality is due to~\eqref{eq:bm-term-w}. This is also true if $w=w'$ because again by Lemma~\ref{lem:rejection-final}, 
        \begin{equation*}
            \resizebox{.97\textwidth}{!}{$f\notin \C_{w'}(X^{(s^\star-1)}_{w'} \cup \{f'\} \cup \{f\}) = \C_{w'}(\C_{w'}(X^{(s^\star-1)}_{w'} \cup \{f'\}) \cup \{f\}) = \C_{w'}(\bar\mu(w') \cup \{f\}),$}
        \end{equation*}
        where the first equality is by path-independence, and the second equality by~\eqref{eq:bm-success-1}.
    \end{proof}
    
    \begin{claim} \label{cl:bm-W-better}
        $\C_w(\mu'(w) \cup \bar\mu(w)) = \bar\mu(w)$ for all $w \in W$.
    \end{claim}
    
    \begin{proof}
        Let $f\in \mu'(w)\setminus \bar\mu(w)$, and suppose first $(f,w)\neq (f',w')$. Because of Lemma~\ref{lem:choice-on-closure} and Lemma~\ref{lem:offer-remain-open}, $f$ must be rejected by $w$ during the break-marriage procedure since otherwise $f\in X_w^{(s)}$ for all $s\in [s^\star]\cup \{0\}$, which in particular implies $w\in \bar\mu(f)$ due to~\eqref{eq:bm-term-w}. Then, by Claim~\ref{cl:reject-final-reject}, $f\notin \C_w(\bar\mu(w)\cup \{f\})$. Next assume $(f,w)=(f',w')$. By~\eqref{eq:bm-success-1}, we know that $X_w^{(s^\star -1)}=\bar \mu(w)$. Since \textuptt{break-marriage($\mu', f', w'$)} is successful, we have  $f' \notin \C_{w}(X_w^{(s^\star -1)}\cup \{f'\})=\C_{w}(\bar \mu(w)\cup \{f'\}$). We conclude that in both cases, $\C_w(\bar\mu(w)\cup \{f\}) =\C_w(\bar\mu(w))$ by consistency. Thus, we can apply Lemma~\ref{lem:PI-repeat} with $A_1=\bar\mu(w)$ and $A_2=\mu'(w)$ and conclude that $\C_w(\mu'(w)\cup \bar\mu(w))= \C_w(\bar\mu(w))$. The claim then follows from Lemma~\ref{lem:bm-ind-rational}.
    \end{proof}
    
    \noindent Fix an acceptable firm-worker pair $(f,w) \notin \bar\mu$. We show that $(f,w)$ does not block $\bar\mu$. Assume by contradiction that $f\in \C_w(\bar\mu(w)\cup \{f\})$ ($\dagger$) and $w\in \C_f(\bar\mu(f)\cup \{w\})$ ($\ddagger$). We claim that $(f,w)\notin \mu'$. If this is not the case, the consistency property of $\C_w$, with $S=\mu'(w)\cup \bar\mu(w)$ and $T=\bar\mu(w) \cup \{f\}$, implies $\C_w(\bar\mu(w) \cup \{f\})= \C_w(\mu'(w)\cup \bar\mu(w)) = \bar\mu(w)$, where the last equality is by Claim~\ref{cl:bm-W-better}. Thus, $f\notin \C_w(\bar\mu(w) \cup \{f\})$, which contradicts our assumption ($\dagger$). Thus, $(f,w)\notin \mu'$. Note that in particular, $(f,w)\neq (f',w')$. By Lemma~\ref{lem:dom-P-pre} and Claim~\ref{cl:bm-W-better}, ($\dagger$) implies $f\in \C_w(\mu'(w)\cup \{f\})$. Hence, we must have $w\notin \C_f(\mu'(f) \cup \{w\})$ since $\mu'$ is stable, i.e., not blocked by $(f,w)$. This implies $\C_f(\mu'(f) \cup \{w\}) = \C_f(\mu'(f)) = \mu'(f)$ due to the consistency property or $\C_f$ and the fact that $\mu'$ is individually rational. Thus, $w\in \bar X_f(\mu')=X_f^{(0)}$ since $f\neq f'$.
    
    Suppose first $w\notin X_f^{(s^\star)}$. Then, worker $w$ rejected firm $f$ during the break-marriage procedure. This implies $f\notin \C_w(\bar\mu(w)\cup \{f\})$ by Claim~\ref{cl:reject-final-reject}, contradicting assumption ($\dagger$). Suppose next $w\in X_f^{(s^\star)}$. Since $(f,w)\notin \bar\mu$, we have $w\notin \bar\mu(f) = \C_f(X_f^{(s^\star)})$, where the equality is due to~\eqref{eq:bm-term-f}. Then by the consistency property, with $S=X_f^{(s^\star)}$ and $T=\bar\mu(f)\cup \{w\}$, we have that $w \notin \C_f(\bar \mu(f)\cup \{w\})$. However, this contradicts ($\ddagger$). Therefore, $\bar\mu$ must be stable. 
    
    By Lemma~\ref{lem:bm-firm-worse}, $\mu'\succeq \bar\mu$. Moreover, we have $\mu'\neq \bar\mu$ since $f'\in \mu'(w')\setminus \bar\mu(w')$. Hence, $\mu'\succ \mu$ as desired.
\end{proof}

We now give the proof of Theorem~\ref{thm:bm-immediate}.

\begin{proof}[Proof of Theorem~\ref{thm:bm-immediate}]
    Note that by Lemma~\ref{lem:closure-contain-dominated}, $\mu(f)\subseteq \bar X_f(\mu')$ for every $f\in F$. We start by showing that during the break-marriage procedure, for every firm $f$, no worker in $\mu(f)$ rejects $f$. Assume by contradiction that this is not true. Let $s'$ be the first step where such a rejection happens, with firm $f_1$ being rejected by worker $w_1\in \mu(f_1)$. Hence, $f_1\in X_{w_1}^{(s')} \setminus Y_{w_1}^{(s')}$.
    
    \begin{claim}\label{cl:proof-of-break-marriage}
        There exists a firm $f_2\in Y_{w_1}^{(s')} \setminus \mu(w_1)$ such that $f_2\in \C_{w_1}(\mu(w_1) \cup\{f_2\})$. 
    \end{claim}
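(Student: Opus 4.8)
The plan is to read the claim as a ``crowding-out'' statement: worker $w_1$ has just rejected its $\mu$-partner $f_1$ at step $s'$, so whatever $w_1$ is now tentatively holding ought to contain a firm $f_2$ that displaced $f_1$, and such an $f_2$ should be at least as desirable to $w_1$ as $f_1$ while not itself being a $\mu$-partner. To make this precise I would compare $w_1$'s tentative choice at step $s'$ with its set of $\mu$-partners through the auxiliary set $Z\coloneqq \C_{w_1}(\mu(w_1)\cup X_{w_1}^{(s')})$ and extract $f_2$ from $Z\setminus\mu(w_1)$. Throughout I use that $\mu$ is stable, hence individually rational, so that $\C_{w_1}(\mu(w_1))=\mu(w_1)$, that $w_1\in\mu(f_1)$ gives $f_1\in\mu(w_1)$, and that $\C_{w_1}$ is cardinal monotone.

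I would first treat the generic case $w_1\neq w'$, where $Y_{w_1}^{(s')}=\C_{w_1}(X_{w_1}^{(s')})$ and, since $f_1$ is rejected at step $s'$, $f_1\in X_{w_1}^{(s')}\setminus \C_{w_1}(X_{w_1}^{(s')})$. The steps are: (i) show $f_1\notin Z$, because if $f_1\in Z$ then, taking $T=X_{w_1}^{(s')}$ (so $T\cup\{f_1\}=X_{w_1}^{(s')}$), substitutability would force $f_1\in\C_{w_1}(X_{w_1}^{(s')})$, a contradiction; (ii) by cardinal monotonicity $|Z|\geq|\mu(w_1)|$, and since $f_1\in\mu(w_1)\setminus Z$ this gives $|Z\setminus\mu(w_1)|\geq|\mu(w_1)\setminus Z|\geq 1$, so I may pick $f_2\in Z\setminus\mu(w_1)$; since $f_2\in Z\subseteq\mu(w_1)\cup X_{w_1}^{(s')}$ and $f_2\notin\mu(w_1)$, we have $f_2\in X_{w_1}^{(s')}$; (iii) two applications of substitutability from the superset $\mu(w_1)\cup X_{w_1}^{(s')}$, restricting once to $T=X_{w_1}^{(s')}$ and once to $T=\mu(w_1)$, yield respectively $f_2\in\C_{w_1}(X_{w_1}^{(s')})=Y_{w_1}^{(s')}$ and $f_2\in\C_{w_1}(\mu(w_1)\cup\{f_2\})$, which is exactly the claim.

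The main obstacle is the special worker $w'=w_1$, whose acceptance rule discards $f'$: here $Y_{w'}^{(s')}=\C_{w'}(\hat X)\setminus\{f'\}$ with $\hat X\coloneqq X_{w'}^{(s')}\cup\{f'\}$, so the produced $f_2$ must additionally be shown to differ from $f'$ (since $f'\notin Y_{w'}^{(s')}$ by construction). Running the same argument with $\hat X$ in place of $X_{w_1}^{(s')}$ and $Z\coloneqq\C_{w'}(\mu(w')\cup\hat X)$ again delivers $f_1\notin Z$ (using $f_1\neq f'$, which holds because $f'\notin\mu(w')$ as $(f',w')\notin\mu$) and $Z\setminus\mu(w')\neq\emptyset$; the genuinely new ingredient I would add is that $f'\notin Z$. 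To obtain this I translate the hypothesis $\mu'\succ\mu$ into information about $w'$: by the polarity property of the stable lattice (Theorem~\ref{thm:alkan}), $\C_{w'}(\mu'(w')\cup\mu(w'))=\mu(w')$, and since $f'\in\mu'(w')$, consistency (with $T=\mu(w')\cup\{f'\}$) gives $\C_{w'}(\mu(w')\cup\{f'\})=\mu(w')$, i.e.\ $f'\notin\C_{w'}(\mu(w')\cup\{f'\})$; were $f'\in Z$, substitutability restricted to $T=\mu(w')$ would contradict this. With $f'$ excluded, any $f_2\in Z\setminus\mu(w')$ satisfies $f_2\neq f'$, hence $f_2\in X_{w'}^{(s')}$, and the two substitutability restrictions (to $\hat X$ and to $\mu(w')\cup\{f_2\}$) conclude exactly as before, now landing $f_2$ in $\C_{w'}(\hat X)\setminus\{f'\}=Y_{w'}^{(s')}$. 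I expect the bookkeeping around $f'$ — specifically isolating the polarity fact $f'\notin\C_{w'}(\mu(w')\cup\{f'\})$ — to be the only delicate point; the rest is a uniform use of substitutability and cardinal monotonicity.
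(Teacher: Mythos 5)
Your proof is correct, but it takes a genuinely different route from the paper's in the generic case. The paper argues by contradiction: assuming no such $f_2$ exists, Corollary~\ref{cor:PI-repeat} (repeated path-independence) collapses $\C_{w_1}(\mu(w_1)\cup Y_{w_1}^{(s')})$ to $\mu(w_1)$; substitutability then places the rejected firm $f_1$ in $\C_{w_1}(Y_{w_1}^{(s')}\cup\{f_1\})$, which consistency identifies with $Y_{w_1}^{(s')}$, contradicting the rejection of $f_1$. Your argument is constructive: you work with the larger set $Z=\C_{w_1}(\mu(w_1)\cup X_{w_1}^{(s')})$ rather than with $Y_{w_1}^{(s')}$, exclude $f_1$ (and $f'$ when $w_1=w'$) from $Z$ via substitutability, and then invoke cardinal monotonicity to get $|Z|\ge|\C_{w_1}(\mu(w_1))|=|\mu(w_1)|$, which together with $f_1\in\mu(w_1)\setminus Z$ forces $Z\setminus\mu(w_1)\neq\emptyset$; two restrictions of substitutability (to $X_{w_1}^{(s')}$, resp.\ $\hat X$, and to $\mu(w_1)$) then land any $f_2\in Z\setminus\mu(w_1)$ exactly where the claim needs it. The trade-off: the paper's proof of this claim uses only path-independence, so it is more parsimonious in assumptions, whereas yours additionally consumes cardinal monotonicity --- freely available in the \textsc{CM-QF-Model}, so no harm here --- and in exchange exhibits $f_2$ explicitly instead of deducing its existence by contradiction. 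Your handling of the delicate $w_1=w'$ case --- deriving $f'\notin\C_{w'}(\mu(w')\cup\{f'\})$ from polarity plus consistency and using it to rule $f'$ out of $Z$, with $f_1\neq f'$ following from $(f',w')\notin\mu$ --- parallels the paper's treatment and is sound.
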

    
    \begin{proof}
        Assume by contradiction that such a firm $f_2$ does not exist. We first consider the case when $w_1\neq w'$. By Corollary~\ref{cor:PI-repeat} with $A_1=\mu(w_1)$ and $A_2=Y_{w_1}^{(s')}$, we have $\C_{w_1}(\mu(w_1) \cup Y_{w_1}^{(s')}) = \C_{w_1}(\mu(w_1)) =\mu(w_1)$, where the last equality is because $\mu$ is individually rational. Hence, $f_1 \in \C_{w_1}(\mu(w_1) \cup Y_{w_1}^{(s')})$, and using substitutability, we deduce $f_1\in \C_{w_1}(Y_{w_1}^{(s')} \cup \{f_1\})$. However, using consistency, with $T=Y_{w_1}^{(s')} \cup \{f_1\}$ and $S=X_{w_1}^{(s')}$, we conclude $\C_{w_1}(Y_{w_1}^{(s')} \cup \{f_1\}) = \C_{w_1}(X_{w_1}^{(s')}) = Y_{w_1}^{(s')} \not\ni f_1$, a contradiction.
        
        We next consider the case when $w_1=w'$. Note that $f_1\neq f'$, because $(f',w')\notin \mu$ by choice of $(f',w')$. Since $\mu'\succeq\mu$, by Theorem~\ref{thm:alkan}, $\C_{w'}(\mu'(w') \cup \mu(w')) = \mu(w')$. Thus, by the consistency property, with $S=\mu'(w')\cup \mu(w')$ and $T=\mu(w')\cup \{f'\}$, we have $\C_{w'}(\mu(w') \cup \{f'\}) = \mu(w')\not\ni f'$. As in the  case $w_1\neq w'$, by Corollary~\ref{cor:PI-repeat} with $A_1=\mu(w')$ and $A_2=Y_{w_1}^{(s')} \cup \{f'\}$ and the fact that $\mu$ is individually rational, $\mu(w')= \C_{w'}(\mu(w')) =\C_{w'}(\mu(w') \cup \{f'\} \cup Y_{w'}^{(s')}) $. Then, since $f_1 \in \mu(w')\cap X_{w'}^{(s')}$, by substitutability and path independence, we have: 
        \begin{align*}
        f_1\in \C_{w'}(Y_{w'}^{(s')} \cup \{f'\} \cup \{f_1\}) &= \C_{w'}(\C_{w'}(X_{w'}^{(s')} \cup \{f'\}) \setminus \{f'\} \cup \{f'\} \cup \{f_1\}) \\
        &= \C_{w'}(X_{w'}^{(s')} \cup \{f'\}).
        \end{align*}
        However, since $f_1\notin Y_{w'}^{(s')}$ by our choice and $f_1\neq f'$, we should have $f_1\notin \C_{w'}(X_{w'}^{(s')} \cup \{f'\})$, which is again a contradiction. 
    \end{proof}
    
    \noindent Now let $f_2$ be the firm whose existence is guaranteed by Claim~\ref{cl:proof-of-break-marriage}. In particular, $f_2 \in Y_{w_1}^{(s')}$ implies $w_1 \in \C_{f_2}(X_{f_2}^{(s')}) \subseteq X_{f_2}^{(s')}$. Note that by our choice of $f_1$, $\mu(f_2)\subseteq X_{f_2}^{(s')}$. Therefore, by substitutability and $w_1\in \C_{f_2}(X_{f_2}^{(s')})$, we have $w_1\in \C_{f_2}(\mu(f_2)\cup \{w_1\})$. However, this means that $(f_2,w_1)$ is a blocking pair of $\mu$, which contradicts stability of $\mu$. Thus, for every firm $f\in F$, no worker in $\mu(f)$ rejects $f$ during the break-marriage procedure as we claimed, which, together with the fact that $\mu(f)\subseteq \bar X_f(\mu')$, implies $\mu(f)\subseteq X_f^{(s^\star)}$. By path-independence and~\eqref{eq:bm-term-f}, we have that for every firm $f$:
    \begin{align} \label{eq:bm-immediate-1}
    \begin{split}
        \C_f(\bar\mu(f) \cup \mu(f)) &= \C_f(\C_f(X_f^{(s^\star)}) \cup \mu(f)) = \C_f(\C_f(X_f^{(s^\star)} \cup \mu(f))) \\
        &= \C_f(X_f^{(s^\star)}) = \bar\mu(f).
    \end{split}
    \end{align}
    Moreover, 
    \begin{equation} \label{eq:three-mu-card-same}
        |\mu(f)|=|\mu'(f)|=|\bar \mu(f)|, \; \forall f\in F
    \end{equation} 
    because 
    \begin{align*} 
    	|\mu(f)|= |\mu'(f)|&= |\C_f(\bar\mu(f) \cup \mu'(f))|\geq |\C_f(\bar\mu(f))|=|\bar \mu(f)| \\
	    &= |\C_f(\bar\mu(f) \cup \mu(f))|\geq |\C_f(\mu(f))|= |\mu(f)|,
    \end{align*}
    where the first equality is due to the~\ref{eq:equal-quota} property, the second and the fourth equalities are by Lemma~\ref{lem:bm-firm-worse} and~\eqref{eq:bm-immediate-1} respectively, the remaining two equalities are due to the fact that $\bar\mu$ and $\mu$ are individually rational, and the two inequalities hold because of cardinal monotonicity.
    
    We next show that the break-marriage procedure is successful. Consider the following two cases for a worker $w\neq w'$. The first is when $|\mu'(w)|<q_w$. By the~\ref{eq:full-quota} property, $w$ has the same set of partners in all stable matchings. In particular, $\mu'(w)=\mu(w)$. We claim that only firms from $\mu(w)$ proposes to $w$ during the break-marriage procedure. Assume by contradiction that a firm $f\notin \mu(w)$ proposes to $w$ at step $s$ (i.e., $w\in \C_f(X_f^{(s)})$). Then, since $\bar\mu(f) = \C_f(X_f^{(s^\star)}) \subseteq X_f^{(s^\star)} \subseteq X_f^{(s)}$ due to~\eqref{eq:bm-Xf-nested} and~\eqref{eq:bm-term-f}, by substitutability, we have $w\in \C_f(\bar\mu(f)\cup\{w\})$ and thus, $w\in C_f(\mu(f)\cup\{w\})$ because of~\eqref{eq:bm-immediate-1} and Lemma~\ref{lem:dom-P-pre}. Since $|\mu(w)|<q_w$, we also have that $f\in C_w(\mu(w)\cup \{f\})$ by the quota-filling property of $\C_w$. However, this contradicts stability of $\mu$. Therefore, $Y_{w}^{(s)}= X_{w}^{(s)} = \mu'(w)$ for all $s\in \{0,1,\cdots,s^\star\}$ by Lemma~\ref{lem:choice-on-closure} and Lemma~\ref{lem:offer-remain-open}. Hence, $\bar\mu(w)=\mu'(w)$ by~\eqref{eq:bm-term-w}. 
    
    We next consider the second case for worker $w\neq w'$, which is when $|\mu'(w)|=q_w$, and we claim that $|Y_w^{(s)}|=q_w$ for all $s\in \{0\} \cup [s^\star]$. We will show this by induction. For the base case with $s=0$, we want to show that $X_w^{(0)}\supseteq \mu'(w)$ because then we have $|X_w^{(0)}|\ge q_w$ and thus $|Y_w^{(0)}|=q_w$ by quota-filling. Let $f\in \mu'(w)$. If $f\neq f'$, then by Lemma~\ref{lem:choice-on-closure}, we have $w\in \C_f(X_f^{(0)})$; and if $f=f'$, by substitutability of $\C_{f'}$, we also have $w\in \C_f(X_f^{(0)})$ since $w\neq w'$. Hence, $f\in X_w^{(0)}$ by definition of $X_w^{(0)}$. For the inductive step, assume that $|Y_w^{(s-1)}|=q_w$ and we want to show that $|Y_w^{(s)}|=q_w$. Because of Lemma~\ref{lem:offer-remain-open}, $X_w^{(s)}\supseteq Y_w^{(s-1)}$. Hence, similar to the base case, we have $|X_w^{(s)}|\ge q_w$ and subsequently $|Y_w^{(s)}|=q_w$ by quota-filling. Therefore, $|\bar\mu(w)| = |\mu'(w)|$ by~\eqref{eq:bm-term-w}. 
    
    Combining both cases, we have $|\bar\mu(w)| = |\mu'(w)|$ for every worker $w\neq w'$. Together with~\eqref{eq:three-mu-card-same}, we have:
    \begin{align*}
        \sum_{w \in W\setminus\{w'\}} |\bar \mu(w)| + |\bar \mu(w')| &=\sum_{w \in W} |\bar \mu(w)| = \sum_{f \in F} |\overline \mu(f)|= \sum_{f \in F} |\mu(f)| \\
        &=\sum_{w \in W} |\mu(w)|=\sum_{w \in W\setminus\{w'\}} |\mu(w)| + |\mu(w')|.
    \end{align*}
    Hence, we must also have  $|\bar\mu(w')| = |\mu'(w')| = q_{w'}$. Therefore, $f'\not\in \C_{w'}(X_{w'}^{(s^\star-1)} \cup \{f'\})$ because otherwise $|\bar\mu(w')| = |\C_{w'}(X_{w'}^{(s^\star -1)} \cup \{f'\}) \setminus \{f'\}| \le q_{w'}-1$, where the inequality is by quota-filling. Hence, the break-marriage procedure is successful.
     
    Finally, by Lemma~\ref{lem:bm-success}, we have $\bar\mu\in \SS$ and $\mu'\succ \bar\mu$. We also have $\bar\mu \succeq \mu$ by~\eqref{eq:bm-immediate-1}. Therefore, it must be that $\bar\mu=\mu$ since $\mu$ is an immediate descendant of $\mu'$ in $\SS$.
\end{proof}

\begin{example} \label{eg:break-marriage}
    Consider the following instance adapted from the one given in~\cite{martinez2004algorithm}. One can check that every choice function is quota-filling with quota $2$.
     \[\begin{array}{ccl}
        f_1: & \ge_{f_1,1}: & w_1\ \underline{w_4}\ w_3\ w_2 \\
        & \ge_{f_1,2}: & \underline{w_2}\ w_3\ w_4\ w_1 \\
        & \\
        f_2: & \ge_{f_2,1}: & \underline{w_1}\ w_3\ w_4\ w_2 \\
        & \ge_{f_2,2}: & \underline{w_2}\ w_4\ w_3\ w_1 \\
        f_3: & \ge_{f_3,1}: & \underline{w_3}\ w_1\ w_2\ w_4 \\
        & \ge_{f_3,2}: & \underline{w_4}\ w_2\ w_1\ w_3 \\
        & \\
        f_4: & \ge_{f_4,1}: & \underline{w_3}\ w_2\ w_1\ w_4 \\
        & \ge_{f_4,2}: & w_4\ \underline{w_1}\ w_2\ w_3 \\
        &
    \end{array} \qquad \qquad \qquad 
    \begin{array}{ccl}
        w_1: & \ge_{w_1,1}: & f_3\ \underline{f_2}\ f_1\ f_4 \\
        & \ge_{w_1,2}: & \underline{f_4}\ f_2\ f_1\ f_3 \\
        & \ge_{w_1,3}: & f_3\ \underline{f_4}\ f_1\ f_2 \\
        w_2: & \ge_{w_2,1}: & f_3\ \underline{f_1}\ f_2\ f_4 \\
        & \ge_{w_2,2}: & f_4\ \underline{f_2}\ f_1\ f_3 \\
        w_3: & \ge_{w_3,1}: & f_1\ \underline{f_3}\ f_4\ f_2 \\
        & \ge_{w_3,2}: & f_2\ \underline{f_3}\ f_4\ f_1 \\
        & \ge_{w_3,3}: & f_1\ f_2\ \underline{f_4}\ f_3 \\
        w_4: & \ge_{w_4,1}: & \underline{f_1}\ f_3\ f_4\ f_2 \\
        & \ge_{w_4,2}: & f_2\ \underline{f_3}\ f_4\ f_1 \\
        & \ge_{w_4,3}: & \underline{f_1}\ f_2\ f_4\ f_3
    \end{array}\]
    
    Consider the stable matching $\mu'= (\{w_2, w_4\}, \{w_1, w_2\}, \{w_3, w_4\}, \{w_1, w_3\})$, where, to be concise, we list the assigned partners of firms $f_1, f_2, f_3, f_4$ in the exact order. Matched pairs are underlined above. The closure of $\mu'$ is $$\bar X(\mu')=\{\{w_2,w_3,w_4\}, \{w_1,w_2,w_3,w_4\}, \{w_1,w_2,w_3,w_4\}, \{w_1,w_2,w_3\}\}.$$ In the following, we describe the iterations of the \textuptt{break-marriage($\mu',f_1,w_2$)} procedure. The rejected firms are bolded.
    {\renewcommand{\arraystretch}{1.1}
    \[\begin{array}{c|clclclclc}
        &\;& s=0 &\;& s=1 &\;& s=2 &\;& s=3 &\; \\
        \hline
        X_{f_1}^{(s)}\ &\;& \{w_3,w_4\} &\;& \{w_3,w_4\} &\;& \{w_3,w_4\} &\;& \{w_3,w_4\} &\; \\
        X_{f_2}^{(s)}\ &\;& \{w_1,w_2,w_3,w_4\} &\;& \{w_1,w_2,w_3,w_4\} &\;& \{w_1,w_3,w_4\} &\;& \{w_1,w_3,w_4\} &\; \\ 
        X_{f_3}^{(s)}\ &\;& \{w_1,w_2,w_3,w_4\} &\;& \{w_1,w_2,w_3,w_4\} &\;& \{w_1,w_2,w_3,w_4\} &\;& \{w_1,w_2,w_3\} &\; \\
        X_{f_4}^{(s)}\ &\;& \{w_1,w_2,w_3\} &\;& \{w_1,w_2\} &\;& \{w_1,w_2\} &\;& \{w_1,w_2\} &\; \\
        \hline
        X_{w_1}^{(s)}\ &\;& \{f_2,f_4\} &\;& \{f_2,f_4\} &\;& \{f_2,f_4\} &\;& \{f_2,f_4\} &\; \\
        X_{w_2}^{(s)}\ &\;& \{f_2\} &\;& \{\bm{f_2}, f_4\} &\;& \{f_4\} &\;& \{f_3,f_4\} &\; \\
        X_{w_3}^{(s)}\ &\;& \{f_1,f_3, \bm{f_4}\} &\;& \{f_1,f_3\}  &\;& \{f_1,f_3\}  &\;& \{f_1,f_3\} &\; \\ 
        X_{w_4}^{(s)}\ &\;& \{f_1,f_3\} &\;& \{f_1,f_3\}  &\;& \{f_1,f_2, \bm{f_3}\} &\;& \{f_1,f_2\} &\; \\
        \hline
        Y_{w_1}^{(s)}\ &\;& \{f_2,f_4\} &\;& \{f_2,f_4\} &\;& \{f_2,f_4\} &\;& \{f_2,f_4\} &\; \\
        Y_{w_2}^{(s)}\ &\;& \{f_2\} &\;& \boxed{\{f_4\}} &\;& \{f_4\} &\;& \{f_3,f_4\} &\; \\
        Y_{w_3}^{(s)}\ &\;& \{f_1,f_3\} &\;& \{f_1,f_3\} &\;& \{f_1,f_3\} &\;& \{f_1,f_3\} &\; \\ 
        Y_{w_4}^{(s)}\ &\;& \{f_1,f_3\} &\;& \{f_1,f_3\} &\;& \{f_1,f_2\} &\;& \{f_1,f_2\}
    \end{array}\]
    }%
    
    The output matching is $\bar\mu= (\{w_3, w_4\}, \{w_1, w_4\}, \{w_2, w_3\}, \{w_1, w_2\})$, which, one can check, is stable. Note the step highlighted in box above where $Y_{w_2}^{(1)} = \C_{w_2}(\{f_2,f_4\} \cup\{f_1\}) \setminus \{f_1\}= \{f_1,f_4\} \setminus \{f_1\}= \{f_4\}$. If instead, $w_2$ used the original (i.e., same as in the DA algorithm) acceptance rule and accepted both $f_2$ and $f_4$, the algorithm would prematurely stopped after $s=1$, without leading to a stable matching.
\end{example}

We are now ready to present the algorithm that finds an immediate descendant for any given stable matching, using the break-marriage procedure. The details of the algorithm are presented in Algorithm~\ref{alg:immediate-descendant}. 

\begin{algorithm}[ht]
    \caption{Immediate descendant of stable matching $\mu'\neq \mu_W$} \label{alg:immediate-descendant}
	\begin{algorithmic}[1]
	    \normalsize
	    \Input{$\mu', \mu_W$}
		\State initialize $\T\gets \emptyset$
		\For{\textbf{each} $(f',w')\in \mu'\setminus \mu_W$}
		\State run the \textuptt{break-marriage($\mu',f',w'$)} procedure
		\IfThen{the procedure is successful}{add the output matching $\bar\mu$ to $\T$}
		\EndFor
		\State let $\mu^*$ be a matching in $\T$ 
		\For{\textbf{each} $\mu\in \T\setminus \{\mu^*\}$}
		\IfThen{$\mu\succeq\mu^*$}{update $\mu^*\gets \mu$}
		\EndFor \Comment{$\mu^*$ is a maximal matching from $\T$}
		\Output $\mu^*$
	\end{algorithmic}
\end{algorithm}

\begin{theorem} \label{thm:algo-descendant-correct}
    The output $\mu^*$ of Algorithm~\ref{alg:immediate-descendant} is an immediate descendant of $\mu'$ in the stable matching lattice $(\SS, \succeq)$.
\end{theorem}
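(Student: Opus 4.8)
The plan is to analyze the set $\T$ assembled by Algorithm~\ref{alg:immediate-descendant} and to show that its $\succeq$-maximal elements are exactly the immediate descendants of $\mu'$; since the final loop returns such a maximal element, the theorem follows. First I would note that each matching put into $\T$ comes from a \emph{successful} run of the break-marriage procedure, so Lemma~\ref{lem:bm-success} guarantees it is stable and strictly dominated by $\mu'$. Hence $\T\subseteq\{\nu\in\SS:\mu'\succ\nu\}$, and the output $\mu^*$ is automatically a proper descendant of $\mu'$; the whole point is to upgrade ``proper'' to ``immediate''.

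Next I would show that \emph{every} immediate descendant $\mu$ of $\mu'$ is placed in $\T$. As $\mu'\succ\mu$, they differ, so by the~\ref{eq:equal-quota} property there is a firm $f'$ with $\mu'(f')\neq\mu(f')$ and $|\mu'(f')|=|\mu(f')|$, yielding a worker $w'\in\mu'(f')\setminus\mu(f')$. The pair $(f',w')$ must be one the algorithm actually iterates over, i.e. $(f',w')\in\mu'\setminus\mu_W$: this is immediate if $\mu=\mu_W$, and otherwise $\mu'\succ\mu\succ\mu_W$ lets me apply Lemma~\ref{lem:out-cannot-be-in} with $(\mu_1,\mu_2,\mu_3)=(\mu',\mu,\mu_W)$ to get $w'\notin\mu_W(f')$. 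For this pair, Theorem~\ref{thm:bm-immediate} (whose proof in particular establishes that the run is successful) shows \textuptt{break-marriage($\mu',f',w'$)} succeeds with output exactly $\mu$, so $\mu\in\T$. In particular $\T\neq\emptyset$, because $\mu'\neq\mu_W$ implies $\mu'$ has at least one immediate descendant, so $\mu^*$ is well defined.

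Finally I would combine these two facts. If $\nu\in\T$ is \emph{not} an immediate descendant of $\mu'$, then $\mu'\succ\nu$ is not a covering relation, so a maximal chain from $\mu'$ down to $\nu$ has as its first step an immediate descendant $\nu''$ of $\mu'$ with $\nu''\succ\nu$; by the previous paragraph $\nu''\in\T$, so $\nu$ is not $\succeq$-maximal in $\T$. Thus every $\succeq$-maximal element of $\T$ is an immediate descendant of $\mu'$. It remains to check that the greedy loop actually returns a maximal element: the running value of $\mu^*$ only increases weakly under $\succeq$ (it is replaced by $\mu$ only when $\mu\succeq\mu^*$), so if some $\nu\in\T$ strictly dominated the final value, then either $\nu$ was the initial choice --- in which case the final value already dominates it --- or, when $\nu$ was examined, it dominated the then-current value and would have been adopted, forcing the final value to dominate $\nu$; either way a contradiction. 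Hence $\mu^*$ is $\succeq$-maximal in $\T$ and therefore an immediate descendant of $\mu'$. The step I would watch most closely is the bookkeeping in the middle paragraph: guaranteeing the witnessing pair lies in $\mu'\setminus\mu_W$ (so the algorithm truly tests it) and invoking the successfulness of the run, which is proved inside Theorem~\ref{thm:bm-immediate} rather than stated in it.
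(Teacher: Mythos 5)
Your proof is correct and follows essentially the same route as the paper's: Lemma~\ref{lem:bm-success} for stability and domination of the members of $\T$, Lemma~\ref{lem:out-cannot-be-in} together with Theorem~\ref{thm:bm-immediate} to show that immediate descendants of $\mu'$ land in $\T$, and $\succeq$-maximality in $\T$ to conclude. If anything, you are slightly more careful than the paper --- you explicitly take the intermediate matching to be an \emph{immediate} descendant of $\mu'$ so that Theorem~\ref{thm:bm-immediate} genuinely applies, you note that successfulness is established inside that theorem's proof rather than in its statement, and you verify that the single-pass loop returns a $\succeq$-maximal element --- but these are refinements of the same argument, not a different one.
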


\begin{proof}
    First note that due to Lemma~\ref{lem:bm-success}, all matchings in the set $\T$ constructed by Algorithm~\ref{alg:immediate-descendant} are stable matchings and $\mu'\succeq \mu$ for all $\mu\in \T$. Moreover, we claim that $\T\neq\emptyset$. Let $\mu_1\in \SS$ such that $\mu'$ is an immediate predecessor of $\mu_1$ in $(\SS,\succeq^\star)$. Such a stable matching $\mu_1$ exists because $\mu'\neq \mu_W$. Because of Lemma~\ref{lem:out-cannot-be-in}, we have $\mu' \setminus \mu_1 \subseteq \mu' \setminus \mu_W$ and thus by Theorem~\ref{thm:bm-immediate}, we have $\mu_1\in \T$. Hence, $\T\neq\emptyset$ as desired. Now, to prove the theorem, assume by contradiction that the output matching $\mu^*$ is not an immediate descendant of $\mu'$ in $(\SS, \succeq)$. Then, there exists a stable matching $\mu$ such that $\mu'\succ \mu\succ \mu^*$. By Lemma~\ref{lem:out-cannot-be-in}, for every firm-worker pair $(f',w')\in \mu'\setminus \mu$, we also have $(f',w')\notin \mu_W$. Thus, $\mu\in \T$ due to Theorem~\ref{thm:bm-immediate}. However, this means that $\mu^*$ is not a maximal matching from $\T$, which is a contradiction. 
\end{proof}

Finally, putting everything together, Algorithm~\ref{alg:max-chain} finds a maximal chain of the stable matching lattice, as well as the set of rotations. Its correctness follows from Theorem~\ref{thm:algo-descendant-correct}, Theorem~\ref{thm:chain-md}, and Theorem~\ref{thm:rp-isom-birkhoff}.

\begin{algorithm}[ht]
    \caption{A maximal chain of $(\SS,\succeq)$ and the set of rotations $\Pi$} \label{alg:max-chain}
	\begin{algorithmic}[1]
	    \normalsize
		\Input $\mu_F$ and $\mu_W$
		\State initialize counter $k\gets 0$ and $C_k\gets \mu_F$
		\While{$C_k\neq \mu_W$}
		\State run Algorithm~\ref{alg:immediate-descendant} with input $C_k$ and $\mu_W$, and let $\mu^*$ be its output
		\State update counter $k\gets k+1$ and $C_k\gets \mu^*$ 
		\EndWhile
		\Output maximal chain $C_0, C_1, \cdots, C_k$; and $\Pi= \{\rho_i \coloneqq \rho(C_{i-1}, C_i): i\in [k]\}$.
	\end{algorithmic}
\end{algorithm}

\subsection{Finding irreducible elements via maximal chains}\label{sec:find-from-chain}

The goal of this section is to prove the following. Note that the result below holds for any ring of sets. 

\begin{theorem} \label{thm:alg-correct-irreducible-ros}
    Consider a ring of sets $(\H,\subseteq)$ with base set $B$. Let $C_0, C_1, \cdots, C_k$ be a maximal chain of $(\H,\subseteq)$ and let $K_i\coloneqq C_i\setminus C_{i-1}$ for all $i\in [k]$. For $H\subseteq B$, let \textuptt{ros-membership} denote the running time of an algorithm that decides if $H\in \H$.
    There exists an algorithm with running time $O(k^2 \textuptt{ros-membership})$ that takes $C_0$, $C_1$, $\cdots$, $C_k$ as input and outputs, for each minimal difference $K_i$, a set of indices $\Lambda(K_i)$ such that $I(K_i) = \bigcup\{K_j: j\in \Lambda(K_i)\}\cup C_0$. In particular, this algorithm can be used to obtain the partial order $\sqsupseteq$ over $\D(\H)$.
\end{theorem}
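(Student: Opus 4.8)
The plan is to compute, for each $i\in[k]$, the irreducible element $I(K_i)$ explicitly as a subset of $B$, and then to read off $\Lambda(K_i)$ as the set of indices $j$ with $K_j\subseteq I(K_i)$ (recall that by Theorem~\ref{thm:chain-md} the $K_i$ range over all of $\D(\H)$, so this covers every minimal difference). The starting observation is that each prefix $C_i=C_0\sqcup K_1\sqcup\cdots\sqcup K_i$ lies in $\H$, so by Lemma~\ref{lem:unq-upperset} its index set $\{K_1,\dots,K_i\}$ must be an upper set of $(\D(\H),\sqsupseteq)$ for every $i$. Taking $i=b$ yields the \emph{ordering property}: $K_a\sqsupseteq K_b$ with $a\neq b$ forces $a<b$, i.e.\ every predecessor of $K_j$ in the poset carries a strictly smaller index. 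Combining this with Theorem~\ref{thm:birkhoff} and the fact that $I(K_i)$ is the inclusion-minimal element of $\H$ containing $K_i$, a short argument gives $\psi_\H(I(K_i))=\{K_j:K_j\sqsupseteq K_i\}$, so the target index set is exactly $\Lambda(K_i)=\{j:K_j\sqsupseteq K_i\}$, and moreover $\Lambda(K_i)\subseteq\{1,\dots,i\}$.

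The algorithm computes $I(K_i)$ by a single peeling pass. First I would set $H\gets C_i$; then, for $j=i-1,i-2,\dots,1$ in decreasing order, test whether $H\setminus K_j\in\H$ (one \textuptt{ros-membership} call) and, if so, update $H\gets H\setminus K_j$. The claim to prove is that the resulting $H$ equals $I(K_i)$. Writing $H$ as $C_0$ together with a union of minimal differences, its index set is an upper set, and by Theorem~\ref{thm:birkhoff} and Lemma~\ref{lem:unq-upperset} the test $H\setminus K_j\in\H$ succeeds precisely when $j$ is a minimal element of that upper set (since removing a minimal element preserves the upper-set property). Because $K_i$ is never touched (we only process $j<i$), any $j\in\Lambda(K_i)\setminus\{i\}$ lies strictly above $K_i$ and hence is never removed; the substance of the claim is therefore that one decreasing pass removes every index of $\{1,\dots,i\}\setminus\Lambda(K_i)$.

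I would prove this by maintaining the invariant that, just before index $j$ is processed, the upper set associated with $H$ equals $\Lambda(K_i)\cup\{1,\dots,j\}$. The base case is $\{1,\dots,i\}=\Lambda(K_i)\cup\{1,\dots,i-1\}$, valid since $i\in\Lambda(K_i)$. For the inductive step: if $j\in\Lambda(K_i)$, then $K_j$ lies strictly above $K_i$, which remains in $H$, so $j$ is not minimal and nothing is removed; if $j\notin\Lambda(K_i)$, then by the ordering property every predecessor of $K_j$ has index $>j$, while the only surviving indices $>j$ belong to $\Lambda(K_i)$, and no element of $\Lambda(K_i)$ can lie below $K_j$ without forcing $j\in\Lambda(K_i)$. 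Hence $j$ is minimal and $K_j$ is removed. After the pass the invariant gives $H=I(K_i)$, and $\Lambda(K_i)$ is recovered as the set of indices not removed.

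For the running time, each $I(K_i)$ uses exactly $i-1$ membership tests, so the total is $\sum_{i=1}^{k}(i-1)=O(k^2)$ calls to \textuptt{ros-membership}, the remaining set differences and subset checks being polynomial in $|B|$ and dominated by these calls. Finally, since $j\in\Lambda(K_i)$ iff $K_j\sqsupseteq K_i$, reading off all the $\Lambda(K_i)$ directly yields the partial order $\sqsupseteq$ on $\D(\H)$. The main obstacle is precisely the correctness of a \emph{single} decreasing-order pass — that peeling never needs to revisit an already-processed index — which is exactly what the ordering property $K_a\sqsupseteq K_b\Rightarrow a\le b$ and the upper-set invariant are designed to guarantee.
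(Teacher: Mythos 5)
Your proposal is correct, and it analyzes the identical peeling algorithm that the paper uses (Algorithm~\ref{alg:irreducible-ros}); where you differ is in how correctness is established. The paper reasons only about the algorithm's \emph{final} state: a backward induction over the surviving indices (Claim~\ref{claim:chain-cannot-up}) shows that every survivor $K_{j_m}$ satisfies $K_{j_m}\sqsupseteq K_i$, invoking Theorem~\ref{thm:pomd-by-chain}, and then $H^*=I(K_i)$ is proved by contradiction via a complete chain from $H_0$ to $I(K_i)$ and Theorem~\ref{thm:chain-md}. You instead pin down the target first --- $\psi_\H(I(K_i))$ is the principal filter $\{K_j:K_j\sqsupseteq K_i\}$ --- and then run a forward loop invariant describing the upper set associated with $H$ before every iteration, based on the observation that the membership test succeeds exactly when $K_j$ is minimal in the current upper set. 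This buys you two things. First, the argument is more self-contained: the ordering property $K_a\sqsupseteq K_b\Rightarrow a\le b$ falls out of Lemma~\ref{lem:unq-upperset} applied to the prefixes $C_b$, so neither Theorem~\ref{thm:pomd-by-chain} nor Theorem~\ref{thm:chain-md} is needed for correctness (you only use the latter to note that the $K_i$ exhaust $\D(\H)$). Second, it yields the sharper characterization $\Lambda(K_i)=\{j:K_j\sqsupseteq K_i\}$, from which the partial order is read off entrywise ($K_j\sqsupseteq K_i$ iff $j\in\Lambda(K_i)$), whereas the paper recovers it through the containment test $\Lambda(K_{i_1})\subseteq\Lambda(K_{i_2})$. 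The paper's route, by contrast, gets away with no information about intermediate states of $H$, at the price of two arguments by contradiction. Both proofs ultimately rest on the same foundation, namely the Birkhoff correspondence of Theorem~\ref{thm:birkhoff} and the uniqueness of the decomposition in Lemma~\ref{lem:unq-upperset}; one small point you should make explicit is that $H\setminus K_j=\bigcup\{K_\ell:\ell\in U\setminus\{j\}\}\cup C_0$ relies on the pairwise disjointness of $C_0,K_1,\dots,K_k$, which holds because they arise as successive differences along a chain.
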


We start with the theorem below, which gives an alternative definition of the partial order $\sqsupseteq$.

\begin{theorem}[Theorem 2.4.4, \cite{gusfield1989stable}] \label{thm:pomd-by-chain}
    Let $K_1, K_2\in \D(\H)$. Then, $K_1 \sqsupseteq K_2$ if and only if $K_1$ appears before $K_2$ on every maximal chain in $(\H,\subseteq)$.
\end{theorem}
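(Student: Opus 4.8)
The plan is to transport the statement, via Birkhoff's theorem (Theorem~\ref{thm:birkhoff}), into a statement about upper sets of $(\D(\H),\sqsupseteq)$ and the way maximal chains build them up one element at a time. Recall that $\psi_\H$ is an order-isomorphism from $(\H,\subseteq)$ onto $(\U((\D(\H),\sqsupseteq)),\subseteq)$, that the centers in $\D(\H)$ are pairwise disjoint subsets of $H_z\setminus H_0$ (Lemma~\ref{lem:center-uniq}), and that $\psi_\H^{-1}(\bar\D)=\bigcup_{K\in\bar\D}K\cup H_0$. Given a maximal chain $C_0\subsetneq\cdots\subsetneq C_k$ of $(\H,\subseteq)$, I would first observe that $\bar\D_i:=\psi_\H(C_i)$ is an upper set for each $i$, that $\emptyset=\bar\D_0\subsetneq\cdots\subsetneq\bar\D_k=\D(\H)$, and --- since $k=|\D(\H)|$ by Theorem~\ref{thm:chain-md} --- that each step enlarges the upper set by exactly one element; by disjointness of centers that element is precisely $K_i:=C_i\setminus C_{i-1}$, so that $\bar\D_b=\{K_i:i\le b\}$. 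Hence ``$K_1$ appears before $K_2$ on the chain'' is the same as ``the index of $K_1$ is strictly smaller than that of $K_2$'', and $K\in\bar\D_b$ iff the index of $K$ is $\le b$. I may assume $K_1\neq K_2$, the reflexive case being immediate.

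For the forward direction I would argue directly on an arbitrary maximal chain. Suppose $K_1\sqsupseteq K_2$ and let $b$ be the index at which $K_2$ enters, i.e.\ $K_2\in\bar\D_b\setminus\bar\D_{b-1}$. Since $\bar\D_b$ is an upper set of $(\D(\H),\sqsupseteq)$ containing $K_2$ and $K_1\sqsupseteq K_2$, it must also contain $K_1$; therefore the index $a$ of $K_1$ satisfies $a\le b$, and $a\neq b$ because $K_1\neq K_2$. Thus $a<b$, so $K_1$ appears before $K_2$, and as the chain was arbitrary this holds on every maximal chain.

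For the converse I would use the contrapositive: assuming $K_1\not\sqsupseteq K_2$, I would exhibit one maximal chain on which $K_2$ precedes $K_1$. The key construction is to take the principal filter $\bar\D:=\{K\in\D(\H):K\sqsupseteq K_2\}$, which is an upper set containing $K_2$ but not $K_1$ (the latter precisely because $K_1\not\sqsupseteq K_2$), and to set $H^\star:=\psi_\H^{-1}(\bar\D)\in\H$, so that $H_0\subseteq H^\star\subseteq H_z$. By Theorem~\ref{thm:chain-md} there is a complete chain from $H_0$ to $H^\star$ and a complete chain from $H^\star$ to $H_z$; concatenating them yields a maximal chain of $(\H,\subseteq)$ passing through $H^\star$. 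Along this chain, the minimal differences appearing up to $H^\star$ are exactly the elements of $\psi_\H(H^\star)=\bar\D$, so $K_2$ (which lies in $\bar\D$) appears in the first segment while $K_1$ (which does not) appears only after $H^\star$; hence $K_2$ precedes $K_1$, and ``$K_1$ before $K_2$ on every chain'' fails.

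The step I expect to be the main obstacle is this converse, specifically the claim that a chain through $H^\star$ genuinely separates the two minimal differences in the asserted order. This rests on identifying the set of minimal differences carried by the sub-chain $H_0\to H^\star$ with $\bar\D$, which I would pin down using the telescoping identity $\psi_\H(H^\star)=\bigsqcup_{i\le m}\{C_i\setminus C_{i-1}\}$ together with the disjointness of centers (Lemma~\ref{lem:center-uniq}), rather than re-deriving it from scratch. The remaining ingredients --- that $\bar\D$ is an upper set, that $\psi_\H^{-1}(\bar\D)\in\H$, and that the two complete chains concatenate into a genuine maximal chain with endpoints $H_0$ and $H_z$ --- are immediate from Theorem~\ref{thm:birkhoff} and Theorem~\ref{thm:chain-md}.
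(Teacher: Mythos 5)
Your proof is correct, but note that there is no in-paper proof to compare it against: the paper states Theorem~\ref{thm:pomd-by-chain} as an imported result (Theorem 2.4.4 of Gusfield and Irving~\cite{gusfield1989stable}) and never proves it. What you have done is re-derive it from the other ingredients the paper does state, and the derivation checks out. The transport through Birkhoff is sound: the centers in $\D(\H)$ are nonempty, pairwise disjoint (Lemma~\ref{lem:center-uniq}), and disjoint from $H_0$ (any $a\in H_0$ has $H(a)=H_0$, and $K(H_0)$ is excluded from $\D(\H)$ by definition), so the formula $\psi_\H^{-1}(\bar\D)=\bigcup\{K:K\in\bar\D\}\cup H_0$ together with $k=|\D(\H)|$ from Theorem~\ref{thm:chain-md} forces each step of a maximal chain to add to the upper set exactly the center $K_i=C_i\setminus C_{i-1}$; hence ``position on the chain'' coincides with ``entry time into the upper sets $\psi_\H(C_i)$,'' and each center appears exactly once per chain. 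Your forward direction is then just the upper-set property of $\psi_\H(C_b)$, and your converse construction --- the principal upper set $\{K\in\D(\H):K\sqsupseteq K_2\}$ (an upper set by transitivity, containing $K_2$ but not $K_1$), mapped back to $H^\star\in\H$ and threaded through by concatenating two complete chains supplied by Theorem~\ref{thm:chain-md} --- genuinely yields a maximal chain on which $K_2$ precedes $K_1$, with the identification of the first segment's minimal differences with $\bar\D$ pinned down by the same disjointness/uniqueness argument (Lemma~\ref{lem:unq-upperset}). Two caveats, both minor: first, ``the reflexive case is immediate'' holds only under a weak reading of ``before'' (under a strict reading, $K_1=K_2$ makes the right-hand side false while the left-hand side is true, so the statement is implicitly about distinct minimal differences, as in Gusfield--Irving); second, your argument leans on Theorem~\ref{thm:chain-md}, itself quoted from the same source without proof --- legitimate as a black box here, and non-circular since that result is proved independently in Gusfield--Irving, who establish 2.4.4 by direct set-theoretic manipulation on the ring of sets, whereas your route obtains it purely from the representation-theoretic facts already assembled in this paper.
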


We now present the algorithm stated in Theorem~\ref{thm:alg-correct-irreducible-ros} in  Algorithm~\ref{alg:irreducible-ros}. The idea is as follows. In order to find $I(K_i)$ (i.e., the minimal element in $\H$ that contains $K_i$), the algorithm tries to remove from the set $C_i$ as many items as possible, while keeping $C_i\in \H$. That is, the algorithm removes from $C_i$ all minimal differences $K\in \{K_1, K_2, \cdots, K_i\}$ such that $K\not \sqsupseteq K_i$. As we show in the proof of Theorem~\ref{thm:alg-correct-irreducible-ros}, the resulting set is $I(K_i)$. A demonstration of this algorithm is given in Example~\ref{eg:irreducible-ros} on the ring of sets from Example~\ref{eg:ring-of-sets}.

\begin{algorithm}[ht]
	\caption{} \label{alg:irreducible-ros}
	\begin{algorithmic}[1]
	    \normalsize
	    \Input A maximal chain $C_0, C_1, \cdots, C_k$ of $(\H,\subseteq)$.
	    \For{$i=1,2,\cdots,k$}
        \State{define $K_i\gets C_i\setminus C_{i-1}$}
        \State{initialize $H\gets C_i$ and $\Lambda(K_i)\gets \{1,2,\cdots,i\}$}
		\For{$j=i-1,i-2,\cdots, 1$}
		\If{$H\setminus K_j\in \H$} \label{step:memebership}
		\State{update $H \gets H \setminus K_j$ and $\Lambda(K_i) \gets \Lambda(K_i) \setminus \{j\}$}  
		\EndIf
		\EndFor
		\EndFor
		\Output $\Lambda(K_i)$ for all $i\in [k]$
	\end{algorithmic}
\end{algorithm}

\begin{example} \label{eg:irreducible-ros}
    Consider the ring of sets given in Example~\ref{eg:ring-of-sets}, and assume Algorithm~\ref{alg:irreducible-ros} takes in the maximal chain $C_0=H_1$, $C_1=H_2$, $C_2=H_4$, $C_3=H_6$, $C_4=H_7$. Then, $K_1=\{b\}$, $K_2=\{c\}$, $K_3=\{d,e\}$ and $K_4=\{f\}$. Now, image we would like to obtain $\Lambda(K_3)$. From Figure~\ref{fig:ros-rep}, it is clear that $I(K_3)=H_5$ and thus, $\Lambda(K_3)=\{2,3\}$. During Algorithm~\ref{alg:irreducible-ros}, at the outer \textbf{for} loop with $i=3$, $H$ is initialized to be $C_3=H_6$. In the first iteration of the inner \textbf{for} loop, since $H_6\setminus K_2=\{a,b,d,e\} \notin \H$, $\Lambda(K_3)$ remains $\{1,2,3\}$. Next, $H$ is updated to be $H_6\setminus K_1= H_5$ and $\Lambda(K_3)$ is updated to be $\{2,3\}$. The output is as expected.
\end{example}

We now give the proof of Theorem~\ref{thm:alg-correct-irreducible-ros}. 

\begin{proof}[Proof of Theorem~\ref{thm:alg-correct-irreducible-ros}]
    It is clear that the running time of Algorithm~\ref{alg:irreducible-ros} is $O(k^2 \textuptt{ros-membership})$. Suppose first the output of Algorithm~\ref{alg:irreducible-ros} is correct, that is, $I(K_i) = \bigcup\{K_j: j\in \Lambda(K_i)\}\cup C_0$. Then, for two minimal differences $K_{i_1}, K_{i_2}\in \D(\H)$, $K_{i_1} \sqsupseteq K_{i_2}$ if and only if $\Lambda(K_{i_1})\subseteq \Lambda(K_{i_2})$ by definition of $\sqsupseteq$. Hence, the partial order $\sqsupseteq$ can be obtained in time $O(k^2)$ from the output of Algorithm~\ref{alg:irreducible-ros}. It remains to show the correctness of Algorithm~\ref{alg:irreducible-ros}. Fix a value of $i \in [k]$ and for the following, consider the $i^\textup{th}$ iteration of the outer {\bf for} loop of the algorithm. Let $\{j_1,j_2, \cdots, j_M\}$ be an enumeration of $\Lambda(K_i)$ at the end of the iteration such that $j_1< j_2< \cdots< j_M$. Note that $j_M=i$. We start by showing the following claim.
    
    \begin{claim} \label{claim:chain-cannot-up}
        For all $m\in [M-1]$, $K_{j_m} \sqsupseteq K_i$.
    \end{claim}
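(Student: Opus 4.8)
The plan is to translate the purely set-theoretic condition that drives the algorithm---``$K_j$ cannot be removed,'' i.e.\ $H \setminus K_j \notin \H$---into an order-theoretic statement about the poset $(\D(\H), \sqsupseteq)$, and then run a downward induction on the chain index. The bridge between the two worlds is the correspondence between elements of $\H$ and upper sets of $(\D(\H), \sqsupseteq)$. Concretely, I would first record the following characterization: for any $R \subseteq [k]$, the set $C_0 \cup \bigcup_{l \in R} K_l$ lies in $\H$ if and only if $\{K_l : l \in R\}$ is an upper set of $(\D(\H), \sqsupseteq)$. The ``if'' direction is Theorem~\ref{thm:birkhoff} applied to $\psi_\H^{-1}$, and the ``only if'' direction is Lemma~\ref{lem:unq-upperset}; here I use that the $K_l = C_l \setminus C_{l-1}$ are pairwise disjoint and disjoint from $C_0$ (immediate from $C_0 \subseteq C_{l-1} \subseteq C_{l'-1}$ for $l \le l'$), together with Theorem~\ref{thm:chain-md}, which guarantees that $\{K_l\}_{l\in[k]}$ enumerates $\D(\H)$ bijectively.

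Next I would set up the bookkeeping. Let $R_j$ denote the set of indices still present in $H$ just before the inner loop processes $j$; since the loop runs in decreasing order, $R_j = \{1,\dots,j\} \cup \{l \in \Lambda(K_i): j < l \le i\}$, so $j \in R_j$ and $H_j := C_0 \cup \bigcup_{l \in R_j} K_l$ is the current (in-$\H$) value of $H$. By disjointness of the $K_l$, removing $K_j$ gives $H_j \setminus K_j = C_0 \cup \bigcup_{l \in R_j \setminus \{j\}} K_l$. Applying the characterization to $H_j$ and to $H_j \setminus K_j$, the index $j$ is retained (i.e.\ $H_j \setminus K_j \notin \H$) exactly when $\{K_l : l \in R_j\}$ is an upper set but $\{K_l : l \in R_j \setminus \{j\}\}$ is not. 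A short argument about upper sets shows this happens precisely when $K_j$ is \emph{not} a $\sqsupseteq$-minimal element of $\{K_l : l \in R_j\}$, i.e.\ there exists $l^\star \in R_j \setminus \{j\}$ with $K_j \sqsupsetneq K_{l^\star}$. Since $K_j \sqsupseteq K_{l^\star}$ forces $K_j$ to precede $K_{l^\star}$ on the fixed chain by Theorem~\ref{thm:pomd-by-chain}, we get $l^\star > j$, so $l^\star$ is itself a retained index in $(j, i]$.

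Finally I would conclude by downward induction on the retained indices $l \in \Lambda(K_i)$, from $l = i$ down to $l = 1$, with the statement $K_l \sqsupseteq K_i$. The base case $l = i$ is reflexivity of $\sqsupseteq$. For a retained $j < i$, the previous paragraph produces a retained witness $l^\star \in (j, i]$ with $K_j \sqsupsetneq K_{l^\star}$; the induction hypothesis gives $K_{l^\star} \sqsupseteq K_i$, and transitivity yields $K_j \sqsupseteq K_i$, which is exactly the assertion of the claim for the $j_m$'s. I expect the main obstacle to be the order-theoretic translation step: establishing the ``element of $\H$ $\Leftrightarrow$ upper set'' dictionary cleanly and then pinning down that deleting a single center from an upper set preserves the upper-set property if and only if that center is $\sqsupseteq$-minimal. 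Once this is in place the induction is routine; a secondary point needing care is verifying that the witness $l^\star$ lands strictly above $j$ so that the induction hypothesis applies, which is precisely where Theorem~\ref{thm:pomd-by-chain} is invoked.
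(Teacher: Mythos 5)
Your proof is correct and follows essentially the same route as the paper's: the same dictionary between membership in $\H$ and upper sets of $(\D(\H),\sqsupseteq)$ via Lemma~\ref{lem:unq-upperset} and Theorem~\ref{thm:birkhoff}, the same observation that deleting a single minimal difference breaks the upper-set property exactly when it is not $\sqsupseteq$-minimal among the retained ones, the same appeal to Theorem~\ref{thm:pomd-by-chain} to force the witness index strictly above $j$, and the same descending induction closed by transitivity. The only cosmetic difference is that you start the induction at $l=i$ with reflexivity, whereas the paper's base case is the largest retained index below $i$, where the witness must be $K_i$ itself.
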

    
    \begin{proof}
        We prove this by induction on $m$, where the base case is $m=M-1$. We start with the base case. Note that $j_m$ is the first index for which the \textbf{if} statement at Line~\ref{step:memebership} is evaluated to be false. That is, $(\bigcup_{\ell=1}^{j_m} K_{\ell}) \cup K_i \cup C_0\in \H$ but $(\bigcup_{\ell=1}^{j_m-1} K_{\ell}) \cup K_i \cup C_0\notin \H$. By Lemma~\ref{lem:unq-upperset}, $\{K_1, K_2, \cdots, K_{j_m}, K_i\}$ is an upper set of $(\D(\H),\sqsupseteq)$, and by Theorem~\ref{thm:birkhoff}, $\{K_1, K_2, \cdots, K_{j_m-1}, K_i\}$ is not an upper set of $(\D(\H),\sqsupseteq)$. Since for all $j'<j_m$, $K_{j_m} \not \sqsupseteq K_{j'}$ because of Theorem~\ref{thm:pomd-by-chain}, the reason why $\{K_1, K_2, \cdots, K_{j_m-1}, K_i\}$ is not an upper set of $(\D(\H),\sqsupseteq)$ must be that $K_{j_m}\sqsupseteq K_i$. For the inductive step, assume the claim is true for all $m'> m$ and we want to show that $K_{j_m}\sqsupseteq K_i$. Note that again by Theorem~\ref{thm:birkhoff}, $\{K_1, K_2, \cdots, K_{j_m}, K_{j_{m+1}}, K_{j_{m+2}}, \cdots, K_i\}$ is an upper set of $(\D(\H), \sqsupseteq)$ but $\{K_1, K_2, \cdots, K_{j_m-1}, K_{j_{m+1}}, K_{j_{m+2}}, \cdots, K_i\}$ is not an upper set of $(\D(\H), \sqsupseteq)$. With the same argument as in the base case, since for all $j'<j_m$, $K_{j_m} \not \sqsupseteq K_{j'}$ by Theorem~\ref{thm:pomd-by-chain}, it must be that $K_{j_m}\sqsupseteq K_{j_{m'}}$ for some $m'>m$. Therefore, applying the inductive hypothesis, we have $K_{j_m}\sqsupseteq K_i$ as desired.
    \end{proof}
    
    Let $H^*$ be set $H$ at the end of $i^\textup{th}$ iteration of the outer \textbf{for} loop. Note that $H^*= \bigcup\{K_j: j\in \Lambda(K_i)\}\cup C_0$ by construction. Since $K_i\subseteq H^*$, we have $I(K_i)\subseteq C_i\subseteq H^*$ by definition. Also note that by definition, $I(K_i) \in {\cal H}$. Assume by contradiction that $H^*\neq I(K_i)$ (i.e., $H^*\not \subseteq I(K_i)$). Consider a complete chain from the minimal element $H_0$ of $(\H,\subseteq)$ to $I(K_i)$ in $(\H, \subseteq)$, whose existence is guaranteed by Theorem~\ref{thm:chain-md}. Then, at least one minimal difference from $\{K_j: j\in \Lambda(K_i)\setminus \{i\} \}$, call it $K'$, is not contained in this complete chain. However, this means $K'\not\sqsupseteq K_i$ due to Theorem~\ref{thm:chain-md}, which contradicts Claim~\ref{claim:chain-cannot-up}. Therefore, we must have $I(K_i)= H^*$.
\end{proof}

\subsection{Partial order $\succeq^\star$ over $\Pi$}  \label{sec:irreducible}

In this section, we show how to obtain the partial order $\succeq^\star$ over the rotation poset $\Pi$. Recall that as stated in Theorem~\ref{thm:alg-correct-irreducible-ros} of the previous section, there exists an algorithm that finds the partial order $\sqsupseteq$ over $\D\coloneqq \D(\P)$ when given as input a maximal chain of $\P$. Employing the isomorphism between $\SS$ and $\P$ shown in Theorem~\ref{thm:iso-ros} and that between $\D$ and $\Pi$ shown in Theorem~\ref{thm:rp-isom-birkhoff}, we adapt the algorithm so that from a maximal chain of ${\cal S}$, we obtain the partial order $\succeq^\star$ over $\Pi$. 

\begin{algorithm}[ht]
	\caption{} \label{alg:irreducible}
	\begin{algorithmic}[1]
	    \normalsize
	    \Input outputs of Algorithm~\ref{alg:max-chain} -- maximal chain $C_0, \cdots, C_k$ of $(\SS,\succeq)$ and the set of rotations $\Pi= \{\rho_i \coloneqq \rho(C_{i-1}, C_i): i\in [k]\}$ 
	    \For{$i=1,2,\cdots,k$}
		\State initialize $\mu\gets C_i$ and $\Lambda(\rho_i)\gets \{1,2,\cdots,i\}$
		\For{$j=i-1,i-2,\cdots, 1$}
		\If{$\mu\triangle \rho_j^-\triangle \rho_j^+\in \SS$}
		\State{update $\mu\gets \mu \triangle \rho_j^-\triangle \rho_j^+$ and $\Lambda(\rho_i) \gets \Lambda(\rho_i) \setminus \{j\}$} 
		\EndIf
		\EndFor
		\EndFor
		\Output $\Lambda(\rho_i)$ for all $i\in [k]$
	\end{algorithmic}
\end{algorithm}

\begin{theorem} \label{thm:irreducible-compare-I} 
    Let $\Lambda(\rho)$ and $\Lambda(\rho')$ be the outputs of Algorithm~\ref{alg:irreducible} for rotations $\rho, \rho'\in \Pi$, respectively. Then, $\rho \succeq^\star \rho'$ if and only if $\Lambda(\rho) \subseteq \Lambda(\rho')$.
\end{theorem}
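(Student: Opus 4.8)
The plan is to transfer the correctness statement for Algorithm~\ref{alg:irreducible-ros} (which operates on rings of sets) to Algorithm~\ref{alg:irreducible} (which operates on the stable matching lattice) via the isomorphisms already established. The key observation is that Algorithm~\ref{alg:irreducible} is nothing but Algorithm~\ref{alg:irreducible-ros} executed on the ring of sets $(\P,\subseteq)$, read through the isomorphism $P:\SS\to\P$ of Theorem~\ref{thm:iso-ros} and the bijection $Q:\Pi\to\D$ of Theorem~\ref{thm:rp-isom-birkhoff}.

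First I would set up the dictionary between the two runs. Given the maximal chain $C_0,\dots,C_k$ of $(\SS,\succeq)$, define $D_i\coloneqq P(C_i)$; by Theorem~\ref{thm:iso-ros}, part (2), $P$ reverses the order, so $D_0\subseteq D_1\subseteq\cdots\subseteq D_k$ is a maximal chain of the ring of sets $(\P,\subseteq)$. By Lemma~\ref{lem:rot-in-f}, the minimal difference generated at step $i$ is exactly $D_i\setminus D_{i-1}=P(C_i)\setminus P(C_{i-1})=\rho^+(C_{i-1},C_i)=Q(\rho_i)$, so the $i$th minimal difference $K_i$ of the ring-of-sets run corresponds under $Q$ to the $i$th rotation $\rho_i$. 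I then need to check that the two algorithms make identical branching decisions: Algorithm~\ref{alg:irreducible-ros} tests whether $H\setminus K_j\in\H$, while Algorithm~\ref{alg:irreducible} tests whether $\mu\triangle\rho_j^-\triangle\rho_j^+\in\SS$. The crux is to show that if $\mu$ corresponds to the current set $H=P(\mu)$, then $\mu\triangle\rho_j^-\triangle\rho_j^+$ is a stable matching if and only if $P(\mu)\setminus Q(\rho_j)\in\P$, and moreover that when this holds the new matching has P-set exactly $P(\mu)\setminus Q(\rho_j)$. Using Theorem~\ref{thm:rp-isom-birkhoff}, part (3), the operation $\mu\mapsto\mu\triangle\rho_j^-\triangle\rho_j^+$ corresponds, on the level of upper sets of $\Pi$, to removing $\rho_j$; via $Q$ this is removing $Q(\rho_j)$ from the associated subset of $\D$, which on the level of $\P$ is exactly deleting the center $Q(\rho_j)$ from $P(\mu)$. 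Hence the membership tests coincide step by step, the loop invariant $H=P(\mu)$ is maintained throughout, and the index sets $\Lambda(\rho_i)$ produced by Algorithm~\ref{alg:irreducible} equal the index sets $\Lambda(K_i)$ produced by Algorithm~\ref{alg:irreducible-ros}.

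With this correspondence established, I would invoke Theorem~\ref{thm:alg-correct-irreducible-ros}: the $\Lambda(K_i)$ satisfy $K_{i_1}\sqsupseteq K_{i_2}\iff\Lambda(K_{i_1})\subseteq\Lambda(K_{i_2})$. Since $Q$ is an order isomorphism between $(\Pi,\succeq^\star)$ and $(\D,\sqsupseteq)$ by Theorem~\ref{thm:rp-isom-birkhoff}, part (2), we have $\rho\succeq^\star\rho'\iff Q(\rho)\sqsupseteq Q(\rho')$, and because $\Lambda(\rho)=\Lambda(Q(\rho))$ this immediately yields $\rho\succeq^\star\rho'\iff\Lambda(\rho)\subseteq\Lambda(\rho')$, which is the claim.

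The main obstacle I anticipate is verifying cleanly that the single-step update $\mu\triangle\rho_j^-\triangle\rho_j^+$ mirrors the set deletion $P(\mu)\setminus Q(\rho_j)$ at intermediate stages, where $\mu$ need not be $C_i$ itself but some matching reached after several symmetric differences. One must argue that at each point in the inner loop the current $\mu$ is the stable matching whose P-set is $\bigcup\{Q(\rho_\ell):\ell\in\Lambda(\rho_i)\text{ so far}\}\cup P(\mu_F)$, invoke Theorem~\ref{thm:rp-isom-birkhoff} to guarantee it is indeed stable when the corresponding index set is an upper set, and use Lemma~\ref{lem:unq-upperset} together with Theorem~\ref{thm:birkhoff} to relate stability of the candidate matching to the ring-of-sets membership test. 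Handling the bookkeeping of which rotations have been removed, and confirming the invariant survives the conditional update, is the delicate part; once it is in place, the transfer of Theorem~\ref{thm:alg-correct-irreducible-ros} is routine.
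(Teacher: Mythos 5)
Your proposal is correct and takes essentially the same route as the paper: the paper's proof likewise runs Algorithm~\ref{alg:irreducible-ros} on the image chain $C_i = P(\mu_i)$, identifies $K_i = Q(\rho_i)$ and $\Lambda(\rho_i) = \Lambda(K_i)$ via the isomorphisms of Theorem~\ref{thm:iso-ros} and Theorem~\ref{thm:rp-isom-birkhoff}, and then concludes through Theorem~\ref{thm:alg-correct-irreducible-ros} and the order isomorphism $Q$ exactly as you do. The only difference is one of detail: the step-by-step equivalence of the two membership tests ($H\setminus K_j\in\P$ versus $\mu\triangle\rho_j^-\triangle\rho_j^+\in\SS$), which you rightly single out as the delicate part and propose to handle via the loop invariant $H=P(\mu)$, is asserted in the paper's proof as an immediate consequence of the isomorphisms without further elaboration.
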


\begin{proof}
     To distinguish between the inputs of Algorithm~\ref{alg:irreducible} and Algorithm~\ref{alg:irreducible-ros}, we let $\mu_0, \mu_1, \cdots, \mu_k$ denote the maximal chain in the input of Algorithm~\ref{alg:irreducible}. Consider the outputs of Algorithm~\ref{alg:irreducible-ros} with inputs $C_i=P(\mu_i)$ for all $i\in [k]\cup \{0\}$. Then, because of the isomorphism between $(\SS,\succeq)$ and $(\P,\subseteq)$ and the isomorphism between $(\Pi,\succeq^\star)$ and $(\D,\sqsupseteq)$ as respectively stated in Theorem~\ref{thm:iso-ros} and Theorem~\ref{thm:rp-isom-birkhoff}, $K_i= Q(\rho_i)$ and $\Lambda(\rho_i)= \Lambda(K_i)$ for all $i\in [k]$, where $K_i= C_i\setminus C_{i-1}$ as defined in Algorithm~\ref{alg:irreducible-ros}. Thus, together with Theorem~\ref{thm:alg-correct-irreducible-ros},
     $$\rho \succeq^\star \rho' \Leftrightarrow Q(\rho) \sqsupseteq Q(\rho') \Leftrightarrow \Lambda(Q(\rho)) \subseteq \Lambda(Q(\rho')) \Leftrightarrow \Lambda(\rho) \subseteq \Lambda(\rho'),$$
     concluding the proof.
\end{proof}

\begin{example} \label{eg:irreducible}
    Consider the instance given in Example~\ref{eg:rotation-poset} and assume the maximal chain we obtained is $C_0=\mu_F, C_1=\mu_2, C_2=\mu_3, C_3=\mu_W$ so that we exactly have $\rho_i= \rho(C_{i-1}, C_i)$ for all $i\in [3]$ as denoted in Example~\ref{eg:rotation-poset}. Imagine we want to compare $\rho_1$ and $\rho_2$. As shown in Figure~\ref{fig:eg-rp}, $\rho_1 \succeq^\star \rho_2$. First, consider the outer \textbf{for} loop of Algorithm~\ref{alg:irreducible} with $i=1$. Then the body of the inner \textbf{for} loop is not executed and immediately we have $\Lambda(\rho_1)=\{1\}$. Next, consider the outer \textbf{for} loop of Algorithm~\ref{alg:irreducible} with $i=2$. Then, $\mu$ is initialized to be $\mu_3$ and $\Lambda(\rho_2)$ is initialized to be $\{1,2\}$. During the first and only iteration of the inner \textbf{for} loop, since $\mu_3\triangle \rho_1^-\triangle \rho_1^+$ is not a stable matching, $\Lambda(\rho_2)$ is not updated and remains $\{1,2\}$. Finally, $\Lambda(\rho_1)\subseteq \Lambda(\rho_2)$ and thus, $\rho_1 \succeq^\star \rho_2$ as expected.
\end{example}

\subsection{Summary and time complexity analysis} \label{sec:algo-summary}

The complete procedure to build the rotation poset is summarized in Algorithm~\ref{alg:complete-rp}. 

\begin{algorithm}[ht]
    \caption{Construction of the rotation poset $(\Pi, \succeq^\star)$} \label{alg:complete-rp}
 	\begin{algorithmic}[1]
 	    \normalsize
 	    \parState{Run Algorithm~\ref{alg:DA}'s, firm-proposing and worker-proposing, to obtain $\mu_F$ and $\mu_W$.}
 	    \parState{Run Algorithm~\ref{alg:max-chain} to obtain a maximal chain $C_0, C_1, \cdots, C_k$ of the stable matching lattice $(\SS, \succeq)$, and the set of rotations $\Pi \equiv \{\rho_1, \rho_2, \cdots, \rho_k\}$.}
 	    \parState{Run Algorithm~\ref{alg:irreducible} to obtain the sets $\Lambda(\rho_i)$ for each rotation $\rho_i\in \Pi$.}
 	    \parState{Define the partial order relation $\succeq^\star$: for $\rho_i, \rho_j\in \Pi$, $\rho_i \succeq^\star \rho_j \Leftrightarrow \Lambda(\rho_i) \subseteq \Lambda(\rho_j)$.}
    \end{algorithmic}
\end{algorithm}

The rest of the section focuses on time complexity analysis. 
\begin{theorem}
    Algorithm~\ref{alg:complete-rp} runs in time $|W|^3 |F|^3 \textuptt{oracle-call}$.
\end{theorem}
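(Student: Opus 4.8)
The plan is to bound the running time of Algorithm~\ref{alg:complete-rp} step by step, accumulating the cost of each of its four lines and showing the dominant term is $O(|F|^3|W|^3\,\textuptt{oracle-call})$. The basic accounting uses Lemma~\ref{lem:time-basic-ops}: computing a closure or checking stability each costs $O(|F||W|\,\textuptt{oracle-call})$, while checking the dominance relation $\mu'\succeq\mu$ costs $O(|F|\,\textuptt{oracle-call})$. A second crucial bound is the length of any maximal chain: since $k=|\D(\P)|=|\Pi|=O(|F||W|)$ by Theorem~\ref{thm:main-first-half} (equivalently Lemma~\ref{lem:center-uniq}), there are $O(|F||W|)$ rotations, and any single run of the DA or break-marriage procedure terminates in $O(|F||W|)$ iterations since at each non-final step at least one set $X_f^{(s)}$ strictly shrinks and each $X_f$ can lose at most $|W|$ workers.

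First I would handle Line~1: each of the two DA runs performs $O(|F||W|)$ iterations, and each iteration recomputes the proposing and accepting sets with $O(|F||W|)$ oracle calls, giving $O(|F|^2|W|^2\,\textuptt{oracle-call})$, which is negligible. Next, Line~2 (Algorithm~\ref{alg:max-chain}) is the crux of the bound and the step I expect to be the main obstacle. The outer \textbf{while} loop runs $k=O(|F||W|)$ times, and each iteration invokes Algorithm~\ref{alg:immediate-descendant}. Inside Algorithm~\ref{alg:immediate-descendant}, the \textbf{for} loop ranges over all pairs $(f',w')\in\mu'\setminus\mu_W$, of which there are $O(|F||W|)$, and for each such pair it runs one full break-marriage procedure. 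A single break-marriage run costs $O(|F||W|)$ iterations times $O(|F||W|\,\textuptt{oracle-call})$ per iteration, i.e.\ $O(|F|^2|W|^2\,\textuptt{oracle-call})$; one must also add the cost of computing the closure $\bar X(\mu')$ once per call, which is dominated. Multiplying the three nested factors — $O(|F||W|)$ chain steps, $O(|F||W|)$ broken pairs per step, and $O(|F|^2|W|^2\,\textuptt{oracle-call})$ per break-marriage — yields $O(|F|^4|W|^4\,\textuptt{oracle-call})$, which is \emph{worse} than the claimed bound. The delicate point is therefore to argue the per-iteration cost of break-marriage more carefully: rather than recomputing everything from scratch, one observes that across the whole run the total number of \emph{rejections} is $O(|F||W|)$ (each firm is rejected by each worker at most once, by Lemma~\ref{lem:rejection-final}), so the amortized work per break-marriage run is $O(|F||W|\,\textuptt{oracle-call})$ rather than $O(|F|^2|W|^2\,\textuptt{oracle-call})$; this shaves one factor and brings Line~2 to $O(|F|^3|W|^3\,\textuptt{oracle-call})$.

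Finally, Line~3 runs Algorithm~\ref{alg:irreducible}, which by Theorem~\ref{thm:alg-correct-irreducible-ros} costs $O(k^2\,\textuptt{ros-membership})$; here $k=O(|F||W|)$ and a single membership test ``is $\mu\triangle\rho_j^-\triangle\rho_j^+$ stable?'' costs $O(|F||W|\,\textuptt{oracle-call})$ by Lemma~\ref{lem:time-basic-ops}, giving $O(|F|^2|W|^2)\cdot O(|F||W|\,\textuptt{oracle-call})=O(|F|^3|W|^3\,\textuptt{oracle-call})$. Line~4 merely compares the index sets $\Lambda(\rho_i)$, costing $O(k^2)=O(|F|^2|W|^2)$ with no oracle calls. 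Summing the four lines, Lines~2 and~3 each contribute $O(|F|^3|W|^3\,\textuptt{oracle-call})$ and dominate, establishing the claimed total. The one step I would write out most carefully is the amortized analysis of the break-marriage inner loop in Line~2, since the naive bound overshoots; everything else is a routine multiplication of the chain length, the number of broken pairs, and the per-step oracle cost drawn from Lemma~\ref{lem:time-basic-ops}.
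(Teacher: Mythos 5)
Your proposal is correct and follows essentially the same route as the paper: your amortized analysis of the break-marriage runs (each firm is rejected by each worker at most once, so the total work per run is $O(|F||W|\,\textuptt{oracle-call})$) is exactly the paper's argument, which it formalizes via the efficient implementation in Algorithm~\ref{alg:DA-efficient} together with Lemma~\ref{lem:offer-remain-open} and Lemma~\ref{lem:rejection-final}, and your decomposition into chain length $\times$ broken pairs per step $\times$ per-run cost, plus the identical treatment of Algorithm~\ref{alg:irreducible}, matches the paper's accounting. The only detail you omit is the cost of selecting the maximal matching from $\T$ in Algorithm~\ref{alg:immediate-descendant} ($O(|F||W|)$ comparisons at $O(|F|\,\textuptt{oracle-call})$ each, by Lemma~\ref{lem:time-basic-ops}), which is dominated and does not affect the bound.
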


\paragraph{DA algorithm (Algorithm~\ref{alg:DA}).} Because of Lemma~\ref{lem:offer-remain-open} and Lemma~\ref{lem:rejection-final}, Algorithm~\ref{alg:DA} can be implemented as in Algorithm~\ref{alg:DA-efficient} to reduce the number of \textuptt{oracle-calls}. In particular, during each \textbf{repeat} loop, only firms that are rejected in the previous step (i.e., in $\bar F$) and only workers who receive new proposals (i.e., in $\bar W$) need to invoke their choice functions. Therefore, the \textbf{for} loop at Line~\ref{step:da-proposal} is entered at most $|F||W|$ times, and similarly, the \textbf{for} loop at Line~\ref{step:da-rejection} is entered at most $|F||W|$ times. That is, the total number of \textuptt{oracle-calls} is $O(|F||W|)$. Moreover, and for each firm-worker pair $(f,w)$, $w$ is removed from $X_f$ at most once and $f$ is added to $X_w$ at most once. That is, Line~\ref{step:da-proposal-ind} (resp.~Line~\ref{step:da-rejection-ind}) is repeated at most $|F||W|$ times. Therefore, the running time of the DA algorithm is $O(|F||W|\textuptt{oracle-call})$.

\begin{algorithm}[ht]
    \caption{Efficient implementation of Algorithm~\ref{alg:DA}} \label{alg:DA-efficient}
 	\begin{algorithmic}[1]
 	    \normalsize
 	    \State set $\bar F\gets F$ and $\bar W\gets \emptyset$
     	\ForInline{\textbf{each} firm $f$}{initialize $X_f \gets W(f)$ and $Y_f^{\textup{prev}} \gets \emptyset$ }
     	\ForInline{\textbf{each} worker $w$}{initialize $X_w\gets \emptyset$ and $Y_w^{\textup{prev}}\gets \emptyset$ }
     	\Repeat
     	\For{\textbf{each} firm $f\in \bar F$} \label{step:da-proposal}
     	\State $A_f \gets \C_f(X_f)$
     	\For{\textbf{each} worker $w\in A_f \setminus Y_f^{\textup{prev}}$}
     	\State update $X_w\gets X_w\cup \{f\}$ and $\bar W\gets \bar W\cup \{w\}$ \label{step:da-proposal-ind}
     	\EndFor
     	\State update $Y_f^{\textup{prev}} \gets A_f$
     	\EndFor
     	\State re-set $\bar F\gets \emptyset$
     	\For{\textbf{each} worker $w\in \bar W$} \label{step:da-rejection}
        \State $X_w\gets \C(X_w)$
        \For{\textbf{each} firm $f\in Y_w^{\textup{prev}} \setminus X_w$}
        \State update $X_f \gets X_f \setminus \{w\}$ and $\bar F\gets \bar F\cup \{f\}$ \label{step:da-rejection-ind}
        \EndFor
        \State update $Y_w^{\textup{prev}} \gets X_w$
     	\EndFor
     	\State re-set $\bar W\gets \emptyset$
     	\Until{$\bar F=\emptyset$}
        \Output{matching $\bar\mu$ with $\bar\mu(w)=Y_w^{\textup{prev}}$ for every worker $w$; closure $\tilde X(\bar\mu)$ with $\tilde X_f(\bar\mu)= X_f$ for every firm $f$}
    \end{algorithmic}
\end{algorithm}

\paragraph{Break-marriage procedure (Algorithm~\ref{alg:BM}).} Since the core steps (i.e., the loops) of the break-marriage procedure is the same as that of the DA algorithm, the running time of the break-marriage procedure is $O(|F||W|\textuptt{oracle-call})$, with the same arguments as above.

\paragraph{Immediate descendant (Algorithm~\ref{alg:immediate-descendant}).} Recall that $\bar q_f$ denotes the number of workers matched to firm $f$ under any stable matching (see the~\ref{eq:equal-quota} property). Let $\Upsilon\coloneqq \sum_{f\in F} \bar q_f$ denote the number of worker-firm pairs in any stable matching. Then, Algorithm~\ref{alg:BM} is run for at most $\Upsilon$ times. In addition, finding one maximal element $\mu^*$ from $\T$ requires at most $\Upsilon$ comparisons of pairs of stable matchings, each of which requires $|F|$ \textuptt{oracle-calls} by Part (3) of Lemma~\ref{lem:time-basic-ops}. All together, since $\Upsilon\le |F||W|$, the running time of Algorithm~\ref{alg:immediate-descendant} is $O(|F|^2 |W|^2 \textuptt{oracle-call})$.

\paragraph{Maximal chain (Algorithm~\ref{alg:max-chain}).} Since the length of a maximal chain of $\P$, and equivalently of $\SS$ due to Theorem~\ref{thm:iso-ros}, is at most the size of its base set due to Lemma~\ref{lem:center-uniq} and Theorem~\ref{thm:chain-md}, Algorithm~\ref{alg:immediate-descendant} is repeated for at most $|F||W|$ times. Thus, the running time of Algorithm~\ref{alg:max-chain} is $O(|F|^3 |W|^3 \textuptt{oracle-call})$. 

\paragraph{Partial order $\succeq^\star$ (Algorithm~\ref{alg:irreducible}).} Recall that checking if a matching is stable requires $O(|F||W|)$ \textuptt{oracle-calls} by Part (2) of Lemma~\ref{lem:time-basic-ops}. Thus, \texttt{ros-membership} is $O(|F||W| \textuptt{oracle-call})$. Since $k$ is at most $|F||W|$ as argued above, the running time of Algorithm~\ref{alg:irreducible} is $O(|F|^3 |W|^3 \textuptt{oracle-call})$. 

\paragraph{Rotation poset $(\Pi,\succeq^\star)$ (Algorithm~\ref{alg:complete-rp}).} Summing up the time of running Algorithm~\ref{alg:DA} twice, then Algorithm~\ref{alg:max-chain}, and lastly Algorithm~\ref{alg:irreducible}, the time complexity for building $(\Pi,\succeq^\star)$ is $O(|F|^3 |W|^3 \textuptt{oracle-call})$.

\section{The convex hull of lattice elements} \label{sec:poly}

Consider a poset $(Y,\succeq^\star)$. Its associated \emph{order polytope} is defined as
$$\O(Y,\succeq^\star) \coloneqq \{y\in [0,1]^Y: y_i\ge y_j, \ \forall i,j\in Y \textup{ s.t. } i\succeq^\star j \}.$$
A characterization of vertices and facets of $\O(X,\succeq^\star)$ is given in~\cite{stanley1986two}.

\begin{theorem}[\cite{stanley1986two}] \label{thm:order-polytope}
    The vertices of $\O(Y,\succeq^\star)$ are the characteristic vectors of upper sets of $Y$. The facets of $\O(Y,\succeq^\star)$ are all and only the following: $y_i \geq 0$ if $i$ is a minimal element of the poset; $y_i\leq 1$ if $i$ is a maximal element of the poset; $y_i\geq y_j$ if $i$ is an immediate predecessor of $j$.
\end{theorem}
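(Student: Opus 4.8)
The plan is to establish the two assertions separately, characterizing the vertices first and the facets second, using throughout the superlevel-set (threshold) decomposition of a point of the order polytope.

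For the vertices, I would first check that $\chi^U \in \O(Y,\succeq^\star)$ for every upper set $U$: the box constraints are immediate, and if $i \succeq^\star j$ with $j \in U$ then up-closedness of $U$ forces $i \in U$, so $\chi^U_i = 1 \ge \chi^U_j$. Conversely, any $0/1$-point $y \in \O(Y,\succeq^\star)$ has support $\{i : y_i = 1\}$ that is up-closed (if $y_j = 1$ and $i \succeq^\star j$ then $y_i \ge y_j = 1$), hence equals some $\chi^U$. To see these are exactly the vertices, I would write an arbitrary $y \in \O(Y,\succeq^\star)$ as the integral $y = \int_0^1 \chi^{U_t}\,dt$, where $U_t := \{i \in Y : y_i > t\}$; each $U_t$ is an upper set because $y$ is order-preserving, and only finitely many distinct sets occur as $t$ sweeps $[0,1]$, so $y$ is a convex combination of the $\chi^U$ with coefficients equal to the lengths of the intervals of constancy. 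Thus $\O(Y,\succeq^\star) = \conv\{\chi^U : U \text{ an upper set of } (Y,\succeq^\star)\}$, and since every $\chi^U$ is a $0/1$ vertex of the cube $[0,1]^Y \supseteq \O(Y,\succeq^\star)$ it cannot be a proper convex combination of other points of $\O(Y,\succeq^\star)$; hence the vertices are precisely the $\chi^U$.

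For the facets, I would first record that $\O(Y,\succeq^\star)$ is full-dimensional: picking a linear extension of $(Y,\succeq^\star)$ and assigning strictly increasing values in $(0,1)$ along it yields a point satisfying every defining inequality strictly, so $\dim \O(Y,\succeq^\star) = |Y|$. Next I would argue the listed inequalities already describe the polytope. If $i$ is not minimal, then $y_i \ge 0$ is redundant since chaining $y_i \ge y_{j_1} \ge \cdots \ge y_{j_r} \ge 0$ down to a minimal $j_r$ implies it; dually $y_i \le 1$ is redundant unless $i$ is maximal; and if $i \succeq^\star j$ is not a covering relation, inserting an intermediate $m$ with $i \succ^\star m \succ^\star j$ gives $y_i \ge y_m \ge y_j$, so only immediate-predecessor constraints are needed.

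It then remains to show each listed inequality is facet-defining; since $\O(Y,\succeq^\star)$ is full-dimensional and these inequalities describe it, this yields ``all and only'' at once. For each inequality I would exhibit a point of $\O(Y,\succeq^\star)$ at which it holds with equality while every other defining inequality is strict; such a point lies in the relative interior of the corresponding face, whose affine hull is then the single tight hyperplane, giving dimension $|Y|-1$. For $y_i \ge 0$ with $i$ minimal I assign $0$ to $i$ and strictly positive, order-preserving values to the rest; dually for $y_i \le 1$ with $i$ maximal. The delicate case is the covering inequality $y_i \ge y_j$: here I would merge $i$ and $j$ into one element and verify that, because $i$ is an \emph{immediate} predecessor of $j$, no element lies strictly between them, so the quotient relation stays acyclic and is a poset; a strict order-preserving valuation of the quotient, pulled back, sets $y_i = y_j$ while keeping every other constraint strict. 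The main obstacle is exactly this merge step --- confirming the constructed point is genuinely in the relative interior, i.e.\ that forcing $y_i = y_j$ does not incidentally tighten any \emph{other} constraint --- which is precisely where immediacy of the covering relation is essential.
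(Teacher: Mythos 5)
Your proof is correct. Note, however, that the paper does not prove this statement at all: Theorem~\ref{thm:order-polytope} is imported as a black box from Stanley~\cite{stanley1986two} and used directly in the proof of Theorem~\ref{thm:poly-intro}, so there is no in-paper argument to compare against — what you have written is a self-contained proof of the cited result, and it follows the standard route. The layer-cake decomposition $y=\int_0^1 \chi^{U_t}\,dt$ with $U_t=\{i: y_i>t\}$ cleanly gives $\O(Y,\succeq^\star)=\conv\{\chi^U: U \text{ an upper set}\}$, which combined with the $0/1$ observation yields the vertex characterization; and for the facets, establishing full-dimensionality and then exhibiting, for each listed inequality, a point tight for it and strict for all others is exactly the right mechanism. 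Your quotient/merge construction for the covering inequality $y_i\ge y_j$ is the correct way to produce such a point, and you isolate precisely where immediacy is needed: an element strictly between $i$ and $j$ would create a cycle in the merged relation, and its absence is what makes the quotient a poset. Two small points are worth making explicit, though both are immediate: the valuations you pull back should be injective (strictly order-preserving), so that covering constraints among unmerged elements remain strict; and the ``all and only'' conclusion also uses that no two listed inequalities are positive multiples of one another, so that, the polytope being full-dimensional, they define pairwise distinct facets.
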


\begin{proof}[Proof of Theorem~\ref{thm:poly-intro}]
    Let $(Y,\succeq^\star)$ affinely represent $(X,\succeq)$ via functions $\psi$ and $g(u)=Au+ x^0$. We claim that 
    \begin{equation} \label{eq:affine-map2}
    \begin{aligned}
        \conv(\X)&\coloneqq \conv(\{\Chi^{\mu}: \mu\in X\}) = \{x^0\} \oplus A\cdot \O(Y,\succeq^\star)\\
        &=\{x \in \R^X : x = x^0 + Ay, y \in \O(Y,\succeq^\star)\}, 
    \end{aligned}
    \end{equation}
    where $\oplus$ denotes the Minkowski sum operator. Indeed, by definition of affine representation and the fact that both polytopes, $\conv(\X)$ and $\O(Y, \succeq^\star)$, have $0/1$ vertices, $g$ defines a bijection between vertices of these two polytopes. Convexity then implies~\eqref{eq:affine-map2}. As $\O(Y,\succeq^\star)$ has $O(|Y|^2)$ facets shown in Theorem~\ref{thm:order-polytope}, we conclude the first statement from Theorem~\ref{thm:poly-intro}. 
    
    Now suppose that $A$ has full column rank. This implies that $\conv(\X)$ is affinely isomorphic to $\O(Y,\succeq^\star)$. Hence, there is a one-to-one correspondence between facets of $\O(Y,\succeq^\star)$ and facets of $\conv(\X)$, concluding the proof. 
\end{proof}

Following the proof of Theorem~\ref{thm:poly-intro}, when a poset ${\cal B}=(Y,\succeq^\star)$ affinely represent a lattice ${\cal L}=({\cal X},\succeq^*)$ via a function $g(u)=Au + x^0$, with $A$ having full column rank, many properties of $\conv({\cal X})$ can be derived from the analogous properties of ${\cal O}(Y,\succeq^*)$. For instance, the following immediately follows from the fact that ${\cal O}(Y,\succeq^*)$ is full-dimensional.

\begin{corollary}
    Let $\B=(Y,\succeq^\star)$ affinely represent the lattice ${\cal L}=({\cal X},\succeq)$ via functions $\psi$ and $g(u)=Au+ x^0$, with $A$ having full column rank. Then the dimension of $\conv({\cal X})$ is equal to the number of elements in ${\cal B}$.
\end{corollary}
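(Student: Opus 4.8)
The plan is to read off the result directly from the description of $\conv(\X)$ obtained in the proof of Theorem~\ref{thm:poly-intro}. There, equation~\eqref{eq:affine-map2} shows that
\[
    \conv(\X) = \{x^0\} \oplus A\cdot \O(Y,\succeq^\star) = \{x^0 + Ay : y \in \O(Y,\succeq^\star)\},
\]
so $\conv(\X)$ is the image of the order polytope $\O(Y,\succeq^\star)$ under the affine map $T(y) = x^0 + Ay$. Since dimension is an affine invariant under injective affine maps, it suffices to (i) compute $\dim \O(Y,\succeq^\star)$ and (ii) verify that $T$ is injective; combining the two then yields $\dim \conv(\X) = \dim \O(Y, \succeq^\star)$.

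For (i), I would argue that $\O(Y,\succeq^\star)$ is full-dimensional in $\R^Y$, i.e.\ $\dim \O(Y,\succeq^\star) = |Y|$. This is the fact flagged in the sentence preceding the corollary, and it is seen directly by exhibiting a point in the interior: fix a linear extension of $(Y,\succeq^\star)$ and assign to the elements strictly decreasing values in the open interval $(0,1)$ along this extension. Such a point satisfies $0 < y_i < 1$ for every $i$ and $y_i > y_j$ for every comparable pair, hence lies strictly inside every facet-defining inequality listed in Theorem~\ref{thm:order-polytope}. Thus $\O(Y,\succeq^\star)$ has nonempty interior and dimension $|Y|$.

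For (ii), recall $A$ has full column rank by hypothesis, so the linear map $y \mapsto Ay$ has trivial kernel and is injective; consequently $T(y) = x^0 + Ay$ is an injective affine map. An injective affine map carries the affine hull of a set isomorphically onto the affine hull of its image, and therefore preserves affine dimension. Applying this to $\O(Y,\succeq^\star)$ gives $\dim \conv(\X) = \dim \O(Y,\succeq^\star) = |Y|$, which is exactly the number of elements of $\B$.

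There is essentially no hard step here: once equation~\eqref{eq:affine-map2} is in hand, the argument reduces to the standard fact that an injective affine map preserves dimension, together with the elementary full-dimensionality of the order polytope. The only point requiring a sentence of care is (ii), namely that \emph{full column rank} of $A$ is precisely what guarantees injectivity of $y \mapsto Ay$ and hence the dimension count; without it, as Example~\ref{ex:not-aff-isomorphic} illustrates, the image can collapse to a lower-dimensional affine subspace and the equality would fail.
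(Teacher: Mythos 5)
Your proof is correct and follows essentially the same route as the paper: the paper derives the corollary from equation~\eqref{eq:affine-map2} together with the observation (made in the proof of Theorem~\ref{thm:poly-intro}) that full column rank of $A$ makes $\conv(\X)$ affinely isomorphic to $\O(Y,\succeq^\star)$, which is full-dimensional. You simply make explicit two steps the paper leaves implicit — the interior-point (linear extension) argument for full-dimensionality of the order polytope, and the fact that an injective affine map preserves dimension — both of which are handled correctly.
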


Example~\ref{eg:polytope} shows that statements above need not hold when $A$ does not have full column-rank.

\begin{example} \label{eg:polytope}
    Consider the lattice $(\X, \succeq)$ and its representation poset $(Y,\succeq^\star)$ from Example~\ref{ex:not-aff-isomorphic}. Note that $$\conv(\X) = \{x\in [0,1]^4: x_1=1,\ x_2+x_3=1\}.$$
    Thus, $\conv(\X)$ has dimension $2$. On the other hand, $\O(Y,\succeq^\star)$ has dimension $3$. So the two polytopes are not affinely isomorphic. Polytopes $\conv(\X)$ and $\O(Y,\succeq^\star)$ are shown in Figure~\ref{fig:polytope}.
    \begin{figure}[ht]
        \centering
        \begin{subfigure}{.48\textwidth}
        \centering
        \begin{tikzpicture}[x=1cm, y=1.5cm, z=-0.6cm]
            \draw [thick, -{Latex[length=2mm]}] (0,0,0) -- (1.7,0,0) node [at end, right] {$x_2$};
            \draw [thick, -{Latex[length=2mm]}] (0,0,0) -- (0,1.3,0) node [at end, left] {$x_4$};
            \draw [thick, -{Latex[length=2mm]}] (0,0,0) -- (0,0,1.7) node [at end, left] {$x_3$};
            \draw[fill=gray!30, fill opacity=0.6] (1,0,0) -- (0,0,1) -- (0,1,1) -- (1,1,0) -- (1,0,0);
            \node[draw, circle, fill, scale=.5] (1) at (1,0,0) {};
            \node[draw, rectangle, fill, scale=.7] (2) at (0,0,1) {};
            \node[draw, star, star points=9, fill, scale=.5] (3) at (0,1,1) {};
            \node[draw, regular polygon,regular polygon sides=5, fill, scale=.5] (4) at (1,1,0) {};
        \end{tikzpicture}
        \caption{$\conv(\X)$ in the space of $x_1=1$}
        \end{subfigure}%
        \begin{subfigure}{.48\textwidth}
        \centering
        \begin{tikzpicture}[x=2cm, y=1.5cm, z=1cm]
            \draw [thick, -{Latex[length=2mm]}] (0,0,0) -- (2,0,0) node [at end, right] {$y_1$};
            \draw [thick, -{Latex[length=2mm]}] (0,0,0) -- (0,2,0) node [at end, left] {$y_3$};
            \draw [thick, -{Latex[length=2mm]}] (0,0,0) -- (0,0,2) node [at end, above] {$y_2$};
            \draw[fill=gray!30!yellow, fill opacity=0.2] (0,0,0) -- (1,0,0) -- (1,0,1) -- (0,0,0);
            \draw[fill=gray!30!red, fill opacity=0.2] (0,0,0) -- (1,0,0) -- (1,1,1) -- (0,0,0);
            \draw[fill=gray!30!blue, fill opacity=0.2] (1,1,1) -- (1,0,0) -- (1,0,1) -- (1,1,1);
            \draw[fill=gray!30!green, fill opacity=0.2] (0,0,0) -- (1,1,1) -- (1,0,1) -- (0,0,0);
            \node[draw, circle, fill, scale=.5] (a) at (0,0,0) {};
            \node[draw, rectangle, fill, scale=.7] (b) at (1,0,0) {};
            \node[draw, regular polygon,regular polygon sides=5, fill, scale=.5] (c) at (1,0,1) {};
            \node[draw, star, star points=9, fill, scale=.5] (d) at (1,1,1) {};
        \end{tikzpicture}
        \caption{$\O(Y,\succeq^\star)$}
        \end{subfigure}%
        \caption{Polytopes for Example~\ref{eg:polytope}.} \label{fig:polytope}
    \end{figure}
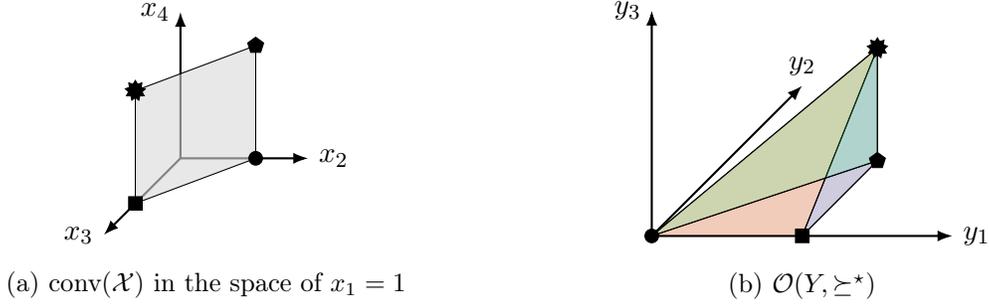
    
    More generally, one can easily construct a ``trivial'' distributive lattice $(\X, \succeq)$ such that the number of facets of $\O(Y,\succeq^\star)$ gives no useful information on the number of facets of $\conv(\X)$, where $(Y,\succeq^\star)$ is a poset that affinely represents $(\X, \succeq)$. In fact, the vertices of any $0/1$ polytope can be arbitrarily arranged in a chain to form a distributive lattice $(\X, \succeq)$. A poset $\O(Y,\succeq^\star)$ that affinely represents $(\X, \succeq)$ is given by a chain with $|Y|=|\X|-1$. It is easy to see that $\O(Y,\succeq^\star)$ is a simplex and has therefore $|Y|+1=|\X|$ facets. However, $\conv(\X)$ could have much more (or much less) facets than the number of its vertices.
\end{example}

\section{Representations of choice functions and algorithms}\label{sec:representation-and-algo}

Recall our previous observation that a choice function may be defined on all the (exponentially many) subsets of agents from the opposite side. The oracle model bypasses the computational concerns of representing choice functions explicitly. However, one drawback of this model is that it requires multiple rounds of communication between the ``central planner'' and each agent in the market. This, from an application point of view, is time-consuming: one of the major improvements brought about by the implementations of the Deferred Acceptance algorithm when applied, e.g., to the New York City school system, lies in the fact that it does not require multiple rounds of communication between the agents and the central planner~\cite{abdulkadirouglu2005new}.

This observation leads to the following practically relevant and theoretically intriguing questions: is there a way to represent choice functions ``compactly'', and do our algorithms perform efficiently in such a model? A natural starting point is the MC-representation defined in Section~\ref{sec:mc-representation}. We show in Section~\ref{sec:mc-rep-algo} that the time complexity of our algorithms in the model where choice functions are given through their MC-representation is polynomial in the input size (where now the input includes the MC-representations). However, the MC-representation of a choice function may need a number of preference relations that are  exponential in the number of agents (see Remark~\ref{rmk:mc-rep-exp}).

It is therefore interesting to investigate whether there are other ways to represent choice functions that is of size polynomial in the number of agents. Via a counting argument, we give a negative answer to this question in Section~\ref{sec:cardinal-monotone-cf-count} for choice functions that are substitutable, consistent, and cardinal monotone (see Theorem~\ref{thm:CMCF-count} and Remark~\ref{rmk:CMCF-rep}). We remark that our argument leaves it open whether a similar result holds if we replace cardinal monotonicity with quota-filling.

\subsection{Algorithms with MC-representation} \label{sec:mc-rep-algo}

In this section, we show how to modify the algorithms and analyze their time complexities when agents' choice functions are explicitly given via the MC-representations.

In Algorithm~\ref{alg:DA} and Algorithm~\ref{alg:BM}, instead of relying on an oracle model, we need to compute the outcomes of choice functions $\C_a(S)$ for agent $a\in F\cup W$ and subset of acceptable partners $S$. Using results in Section~\ref{sec:mc-representation}, $\C_a(S)$ can be obtained as a set of maximizers: $\{\max(S,\ge_{a,i}): i\in [p(\C_a)]\}$. Since each $\max(S,\ge_{a,i})$ requires $O(\max(|F|,|W|)$ time to compute, the time-complexity for obtaining $\C_a(S)$ is $O(\max(|F|,|W|)p(\C_a))$. Thus, for all previous results in terms of time complexity, one can simply replace $O(\textuptt{oracle-calls})$ with $O(\max(|F|,|W|) \max_{a\in F\cup W}p(\C_a))$. Note that this time complexity bound is polynomial in the input size, but could be exponential in the number of agents, since $\max_{a\in F\cup W}p(\C_a)$ maybe exponential in the number of the agents as discussed in Remark~\ref{rmk:mc-rep-exp}.

\begin{remark} \label{rmk:mc-rep-exp}
    \cite{dougan2021capacity} constructed strict preference lists (i.e., choice functions for the \textsc{MM-Model}) whose MC-representation needs exponentially many preference relations. Since such choice functions is a special case of the quota-filling choice functions, in general the MC-representation of quota-filling choice functions is not polynomial in the number of agents.
\end{remark}

\subsection{On the number of  substitutable, consistent, and cardinal monotone choice functions} \label{sec:cardinal-monotone-cf-count}

In this section, the domain of all choice functions is the family of subsets of $X$, with $|X|=n$. The simplest choice functions ${\cal C}$ appears in the \textsc{SM-Model}, where there is a single underlying strict preference list. The number of such choice functions is 
$$\sum_{i=0}^n \binom{n}{i} i! = \sum_{i=0}^n \frac{n!}{(n-i)!} =  \sum_{i=0}^n \frac{n!}{(n-i)!} = n!\sum_{i=0}^n\frac{1}{i!}\le e n!,$$
hence, singly exponential in $n$. On the other extreme, the number of all choice functions is doubly-exponential in $n$ (see, e.g., \cite{echenique2007counting}). We give the proof of this fact for completeness. 

\begin{theorem}
    The number of choice functions on subsets of $X$ with $|X|=n$ is $2^{n2^{n-1}}$.
\end{theorem}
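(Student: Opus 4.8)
The plan is to count choice functions by decomposing the task over the domain. A choice function $\C: 2^X \to 2^X$ must satisfy $\C(S) \subseteq S$ for every $S \subseteq X$, and there are no other constraints in the statement (this is the count of \emph{all} choice functions, not just the substitutable/consistent/cardinal-monotone ones, despite the section title). So the counting is entirely combinatorial: for each subset $S \subseteq X$, the value $\C(S)$ can be any subset of $S$, and these choices are independent across distinct $S$. Hence the total count factorizes as a product $\prod_{S \subseteq X} 2^{|S|}$, since a set $S$ of size $|S|$ has exactly $2^{|S|}$ subsets.

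The key step is then to evaluate this product, equivalently the exponent $\sum_{S \subseteq X} |S|$, which is the sum of the cardinalities of all subsets of an $n$-element set. First I would rewrite $\prod_{S \subseteq X} 2^{|S|} = 2^{\sum_{S \subseteq X} |S|}$ and reduce the problem to computing $\sum_{S \subseteq X} |S|$. To evaluate this, I would group subsets by their size: $\sum_{S \subseteq X} |S| = \sum_{i=0}^{n} i \binom{n}{i}$, because there are $\binom{n}{i}$ subsets of size $i$ each contributing $i$ to the sum. The standard identity $\sum_{i=0}^n i \binom{n}{i} = n 2^{n-1}$ (for instance via $i\binom{n}{i} = n\binom{n-1}{i-1}$ and then summing $\binom{n-1}{i-1}$ over $i$, or by differentiating the binomial theorem, or by the double-counting argument that each of the $n$ elements lies in exactly $2^{n-1}$ subsets) finishes the computation, giving $\sum_{S\subseteq X}|S| = n 2^{n-1}$ and therefore the total count $2^{n 2^{n-1}}$.

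A cleaner and perhaps more transparent alternative avoids the binomial sum entirely: observe that $\sum_{S \subseteq X}|S|$ counts pairs $(x, S)$ with $x \in S \subseteq X$; fixing an element $x \in X$, the number of subsets containing $x$ is $2^{n-1}$, and summing over the $n$ choices of $x$ gives $n 2^{n-1}$ directly. I would likely present this double-counting version since it is the shortest.

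No step here is a genuine obstacle — the result is essentially an exercise in counting functions with a containment constraint. The only thing worth stating carefully is the independence of the choices $\C(S)$ over the $2^n$ different subsets $S$, which justifies the product formula; everything else is the elementary identity $\sum_{i} i\binom{n}{i} = n2^{n-1}$. The final assembly is $2^{\sum_{S} |S|} = 2^{n 2^{n-1}}$, matching the claim.
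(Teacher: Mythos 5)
Your proposal is correct and follows essentially the same route as the paper: both count choice functions by noting that $\C(S)$ ranges freely over the $2^{|S|}$ subsets of each $S$ independently, and both reduce the exponent to the identity $\sum_{i} i\binom{n}{i} = n2^{n-1}$ (the paper evaluates it algebraically via $i\binom{n}{i}=n\binom{n-1}{i-1}$, while your double-counting of pairs $(x,S)$ is just a cosmetic variant of the same computation).
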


\begin{proof}
    Since for each set of partners $S\subseteq X$ with $|S|=i$, $\C(S)$ can take $2^i$ possible values and there are $\binom{n}{i}$ subsets of $X$ with size $i$, the number of possible choice function is $\prod_{i=1}^n (2^i)^{\binom{n}{i}}$. Taking the logarithm with base $2$, we have
    $$\log_2\left( \prod_{i=1}^n (2^i)^{\binom{n}{i}}\right) =\sum_{i=1}^n \binom{n}{i}i= \sum_{i=1}^{n} \frac{n!}{(n-i)!(i-1)!} =n \sum_{i'=0}^{n-1} \frac{(n-1)!}{(n-1-i')!(i')!} = n 2^{n-1}.$$
\end{proof}

It has also been shown by Echenique~\cite{echenique2007counting} that when choice functions are assumed to be substitutable and consistent (i.e., path-independent), the number of choice functions remains doubly exponential in $n$. 

\begin{theorem}[\cite{echenique2007counting})]
    The number of substitutable and consistent choice functions on subsets of $X$ with $|X|=n$ is $2^{\Omega \left(\frac{2^{n-1}}{\sqrt{n-1}} \right)}$.
\end{theorem}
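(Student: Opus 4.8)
The plan is to exhibit explicitly a family of $2^{\Omega(2^{n-1}/\sqrt{n-1})}$ pairwise distinct path-independent choice functions on $2^X$, which suffices by the Aizerman--Malishevski characterization (substitutable and consistent $\Leftrightarrow$ path-independent). Fix an element $x_0 \in X$ and write $Y \coloneqq X \setminus \{x_0\}$, so $|Y| = n-1$. The key observation I would exploit is that substitutability constrains the behavior of any $\C$ on $x_0$ in a very structured way: if $x_0 \in \C(A \cup \{x_0\})$ for some $A \subseteq Y$, then taking $S = A \cup \{x_0\}$ and $T = A' \subseteq A$ in the definition of substitutability forces $x_0 \in \C(A' \cup \{x_0\})$ for every $A' \subseteq A$. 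Hence the collection $\mathcal{D}(\C) \coloneqq \{A \subseteq Y : x_0 \in \C(A \cup \{x_0\})\}$ is necessarily a down-set (order ideal) in the Boolean lattice $(2^Y, \subseteq)$. I would turn this necessary condition into a construction.

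Concretely, for each down-set $\mathcal{D} \subseteq 2^Y$ I would define a choice function $\C_{\mathcal{D}}$ that keeps every element of $Y$ unconditionally and keeps $x_0$ exactly when the remaining elements form a set of $\mathcal{D}$; that is, for $S \subseteq X$,
\[
\C_{\mathcal{D}}(S) \coloneqq \begin{cases} S & \text{if } x_0 \notin S \text{ or } S \setminus \{x_0\} \in \mathcal{D}, \\ S \setminus \{x_0\} & \text{otherwise.} \end{cases}
\]
The next step is to verify that $\C_{\mathcal{D}}$ is substitutable and consistent for every down-set $\mathcal{D}$. Substitutability is immediate for elements of $Y$ (they are always chosen), and for $x_0$ it reduces precisely to the down-set property of $\mathcal{D}$ via the computation above; consistency is a short case check according to whether $x_0 \in S$ and whether $S \setminus \{x_0\} \in \mathcal{D}$. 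Distinctness is clear: if $\mathcal{D}_1 \ne \mathcal{D}_2$, pick $A$ in their symmetric difference, and then $\C_{\mathcal{D}_1}$ and $\C_{\mathcal{D}_2}$ disagree on $A \cup \{x_0\}$ (one retains $x_0$, the other does not). Thus $\mathcal{D} \mapsto \C_{\mathcal{D}}$ is an injection from down-sets of $2^Y$ into path-independent choice functions on $2^X$.

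It then remains to lower-bound the number of down-sets of $2^Y$, equivalently the Dedekind number $M(n-1)$, since down-sets are in bijection with antichains. For this I would use the middle layer: every one of the $\binom{n-1}{\lfloor (n-1)/2 \rfloor}$ subsets of $Y$ of size $\lfloor (n-1)/2\rfloor$ is pairwise incomparable, so any subcollection of them forms an antichain, yielding at least $2^{\binom{n-1}{\lfloor (n-1)/2\rfloor}}$ distinct antichains and hence that many down-sets. Finally, the central binomial estimate $\binom{n-1}{\lfloor (n-1)/2\rfloor} = \Theta\!\left(2^{n-1}/\sqrt{n-1}\right)$ (Stirling) converts this into the claimed bound $2^{\Omega(2^{n-1}/\sqrt{n-1})}$. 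The only genuinely delicate point is the substitutability verification for $x_0$, where one must see that the down-set condition is both necessary (forced by substitutability) and sufficient for the construction; every other ingredient is either a routine case analysis of consistency or a standard combinatorial estimate.
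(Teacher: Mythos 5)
Your proposal is correct and takes essentially the same approach as the paper: the paper (following Echenique) proves its cardinal-monotone strengthening, Theorem~\ref{thm:CMCF-count}, by fixing one distinguished element $w$, letting every element of $X\setminus\{w\}$ always be chosen, governing the retention of $w$ by an antichain $\A_w$ of $(2^{X\setminus\{w\}},\subseteq)$, and counting via the middle layer of the Boolean lattice --- which is exactly your family, since your down-sets $\mathcal{D}$ and the paper's antichains $\A_w$ are in bijection ($\mathcal{D}$ is the complement of the filter generated by $\A_w$). The only presentational difference is that you verify substitutability and consistency of the family directly, whereas the paper invokes Echenique's general antichain-to-substitutable-choice-function injection (Theorem~\ref{thm:antichain-subcf}) as a black box.
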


In the rest of the section, we show that the number of choice functions that additionally satisfies cardinal monotonicity remains doubly exponentially in $n$. The proof idea follows from that given in~\cite{echenique2007counting}.

\begin{theorem} \label{thm:CMCF-count}
    The number of substitutable, consistent, and cardinal monotone choice functions on subsets of $X$ with $|X|=n$ is $2^{\Omega \left(\frac{2^{n-1}}{\sqrt{n-1}} \right)}$.
\end{theorem}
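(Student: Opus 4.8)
The plan is to construct a large family of substitutable, consistent, and cardinal monotone choice functions on a ground set of size $n$, and to lower-bound its cardinality by $2^{\Omega(2^{n-1}/\sqrt{n-1})}$. Following the MC-representation introduced in Section~\ref{sec:mc-representation}, a path-independent choice function is exactly one expressible as a ``maximizer-collecting'' rule over a finite list of strict preference relations, so every such list yields a valid path-independent $\C$. The key additional constraint to control is cardinal monotonicity. First I would recall that cardinal monotonicity requires $|\C(S)| \le |\C(T)|$ whenever $S \subseteq T$; the cleanest way to guarantee this while retaining freedom is to work with choice functions of a \emph{prescribed shape}, namely those that behave like a fixed rank-threshold rule on the ``large'' sets and are free to vary only on an antichain of medium-sized sets.

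The concrete approach is to fix the middle layer of the Boolean lattice. Let $m = \lfloor (n-1)/2 \rfloor$ and consider the sets of size exactly $m$ or $m+1$. The number of such sets is $\Theta(2^{n}/\sqrt{n})$ by Stirling's approximation, which is where the $2^{n-1}/\sqrt{n-1}$ in the exponent will come from. The plan is to define, for each subset $\F$ of an appropriate antichain of mid-sized sets, a distinct choice function $\C_\F$, in such a way that (i) each $\C_\F$ is substitutable, consistent, and cardinal monotone, and (ii) the map $\F \mapsto \C_\F$ is injective. Injectivity gives $2^{|\text{antichain}|} = 2^{\Omega(2^{n-1}/\sqrt{n-1})}$ distinct functions. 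To build $\C_\F$ I would start from a fixed baseline path-independent cardinal-monotone function (e.g.\ a single strict order, which is trivially quota-filling hence cardinal monotone) and then perturb its value on exactly the designated mid-sized sets, toggling whether a particular pair of elements is chosen; the toggle is indexed by membership in $\F$. The perturbation must be engineered so the three axioms survive, which is the delicate part.

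The hard part will be verifying that these local perturbations preserve substitutability, consistency, and cardinal monotonicity simultaneously, since these are global conditions linking $\C(S)$ across all $S$. I expect to handle this by mimicking Echenique's construction~\cite{echenique2007counting}, who established the same doubly-exponential bound for the substitutable-and-consistent case, and checking that his construction can be arranged to also satisfy $|\C(S)|\le|\C(T)|$ for $S\subseteq T$. Concretely, I would choose the perturbations so that the \emph{cardinality} $|\C_\F(S)|$ depends only on $|S|$ (and not on $\F$), forcing cardinal monotonicity for free while the \emph{identity} of the chosen elements still varies with $\F$; this decouples the cardinality axiom from the combinatorial freedom used for counting. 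The remaining obligations — substitutability and consistency — would then be verified on the perturbed layer using the MC-representation, where path-independence is automatic.

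Finally, I would assemble the count. With an antichain of size $\Theta\!\left(\binom{n-1}{\lfloor (n-1)/2\rfloor}\right) = \Theta(2^{n-1}/\sqrt{n-1})$ and an injective assignment $\F \mapsto \C_\F$, the number of substitutable, consistent, and cardinal monotone choice functions is at least
\begin{equation*}
    2^{\,\Theta\!\left(2^{n-1}/\sqrt{n-1}\right)} = 2^{\Omega\left(\frac{2^{n-1}}{\sqrt{n-1}}\right)},
\end{equation*}
matching the claimed bound. The upper bound is immediate since the total number of choice functions is $2^{n 2^{n-1}}$, so no separate argument is needed there. The entire technical weight rests on the second paragraph's construction and the cardinality-decoupling trick of the third paragraph; once those are in place the counting is routine.
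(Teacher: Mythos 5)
Your proposal has a genuine gap, and it sits exactly where you flag "the delicate part": the perturbation is never constructed, and the mechanism you propose for getting cardinal monotonicity for free makes the construction harder, not easier. Concretely, the "toggle a pair" idea breaks substitutability. Take the baseline to be the top-$q$ rule of a strict order, let $S$ be a designated mid-sized set on which you swap the $q$-th best element of $S$ for the $(q+1)$-th best element $b$, and let $T$ be the top $q$ elements of $S$. Then $b\in\C_\F(S)$, but $T\cup\{b\}$ has size $q+1$, is not in your antichain of mid-sized sets, and is therefore evaluated by the baseline rule, so $b\notin\C_\F(T\cup\{b\})$ --- substitutability fails. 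Echenique's construction, which you plan to mimic, avoids this for a structural reason: there an element $x$ is \emph{dropped} from $\C(S)$ precisely when $S$ lies in a filter (upward-closed family) $\T_x$, so a dropped element stays dropped in every superset, and this monotone dropping is what makes substitutability survive. But dropping changes cardinality, so Echenique's mechanism is incompatible with your requirement that the toggle preserve $|\C_\F(S)|$.

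The deeper issue is that your "cardinality-decoupling trick" is self-defeating. Suppose $|\C(S)|=g(|S|)$ for all $S$, with $\C$ path-independent and cardinal monotone. Consistency gives $g(g(k))=g(k)$; cardinal monotonicity gives that $g$ is non-decreasing; setting $q\coloneqq g(n)$, every set of size $q$ is fully chosen, and substitutability then forces every set of size $j\le q$ to be fully chosen (embed it in a set of size $q$ and apply substitutability element by element), so $g(j)=j$ for $j\le q$; and for $j>q$ monotonicity sandwiches $q=g(q)\le g(j)\le g(n)=q$. Hence $|\C(S)|=\min(|S|,q)$: any choice function built under your decoupling requirement is \emph{quota-filling}. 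So your plan, if completed, would show there are doubly-exponentially many quota-filling choice functions --- precisely the question the paper explicitly leaves open (see the introduction of Section~\ref{sec:representation-and-algo} and Section~\ref{sec:concluding}). The paper's actual proof exploits the slack between cardinal monotonicity and quota-filling instead of eliminating it: it fixes a single element $w$, picks an arbitrary antichain $\A_w$ of $(2^{X\setminus\{w\}},\subseteq)$, and drops $w$ from $\C(S)$ exactly when $S$ contains some $A\cup\{w\}$ with $A\in\A_w$. Then $|\C(S)|\in\{|S|-1,|S|\}$, so cardinality genuinely depends on $S$ and not just on $|S|$, and cardinal monotonicity follows from the crude bound $|\C(S)|\le|S|\le|T|-1\le|\C(T)|$ for $S\subsetneq T$; injectivity of Echenique's map plus the Sperner-type count of antichains ($2^{\binom{n-1}{\lfloor (n-1)/2\rfloor}}$ many) finishes the proof. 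Your counting scaffold (subsets of a middle layer, injectivity, Stirling) matches the paper's, but the object you feed into it cannot be built the way you describe; the fix is to abandon the decoupling and instead bound how far $|\C(S)|$ can fall below $|S|$, which is what the paper does.
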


\begin{remark} \label{rmk:CMCF-rep}
    Because of Theorem~\ref{thm:CMCF-count}, in order to encode all substitutable, consistent, and cardinal monotone choice function in binary strings, we need a number of strings that is super-polynomial in $n$, i.e., the number of agents in the market. 
\end{remark}

A family of subsets $\A\subseteq 2^X$ is an \emph{antichain} of $(2^X,\subseteq)$ if for any subsets $A,B\in \A$, they are not comparable, i.e., $A\setminus B\neq \emptyset$ and $B\setminus A\neq \emptyset$. A family of subsets $\F\subseteq 2^X$ is a \emph{filter} (i.e., \emph{lower set}) if for all $F\in \F$, $F'\supseteq F$ implies $F'\in \F$. Moreover, we say filter $\F$ is a \emph{filter at $x$} if for all $F\in \F$, we have $x\in F$. Note that $\emptyset$ is a filter at $x$.

\begin{theorem}[\cite{echenique2007counting}] \label{thm:antichain-subcf}
    There is an injective function mapping collections of antichains $\bm{\A}=\{\A_x: x\in X\}$ where each $\A_x$ is an antichain of the poset $(2^{X\setminus \{x\}}, \subseteq)$ to substitutable choice functions. The image of $\bm\A$ is defines as follows: for all $C\subseteq X$, $$\C(S)\coloneqq \{x\in S: S\notin \T_x\},$$ where  $$\T_x\coloneqq \{B\subseteq X: A\cup \{x\} \subseteq B \hbox{ for some $A \in \A_x$}\}.$$
    Moreover, $\T_x$ is a filter at $x$ for all $x\in X$.
\end{theorem}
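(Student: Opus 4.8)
The plan is to verify the three assertions in turn — that each $\T_x$ is a filter at $x$, that the induced map $\C$ is a substitutable choice function, and that the assignment $\bm\A\mapsto\C$ is injective — with the injectivity being the only part that requires real work. (I read the displayed ``for all $C\subseteq X$'' as ``for all $S\subseteq X$'', matching the rest of the statement.)

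First I would dispatch the filter claim. If $B\in\T_x$ and $B'\supseteq B$, then some $A\in\A_x$ satisfies $A\cup\{x\}\subseteq B\subseteq B'$, so $B'\in\T_x$; thus $\T_x$ is an upper set. Since every $B\in\T_x$ contains $A\cup\{x\}$ for some $A$, in particular $x\in B$, so $\T_x$ is a filter at $x$. Next, $\C(S)\subseteq S$ holds by the very definition $\C(S)=\{x\in S:S\notin\T_x\}$, so $\C$ is a choice function. For substitutability, suppose $b\in\C(S)$, i.e.\ $b\in S$ and $S\notin\T_b$, and let $T\subseteq S$. Then $T\cup\{b\}\subseteq S$ (using $b\in S$), and since $\T_b$ is an upper set, $T\cup\{b\}\in\T_b$ would force $S\in\T_b$, a contradiction. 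Hence $T\cup\{b\}\notin\T_b$, giving $b\in\C(T\cup\{b\})$. This is exactly substitutability, and it follows purely formally from the filter property just established.

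The substance lies in injectivity, which I would prove by exhibiting an explicit left inverse. Given $\C$ in the image, observe that for any $S$ with $x\in S$ we have $x\in\C(S)$ precisely when $S\notin\T_x$; combined with the fact that $\T_x$ consists only of sets containing $x$, this recovers the filter from the choice function via $\T_x=\{S\subseteq X: x\in S,\ x\notin\C(S)\}$. It then remains to recover the antichain $\A_x$ from the filter $\T_x$.

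The main obstacle is this last reconstruction, for which the key lemma is that the inclusion-minimal elements of the filter generated by an antichain coincide with that antichain. Concretely, the sets $\{A\cup\{x\}:A\in\A_x\}$ form an antichain (adjoining the common element $x$ preserves incomparability), and $\T_x$ is exactly the filter they generate. I would show each $A\cup\{x\}$ is minimal in $\T_x$ — any $B\in\T_x$ with $B\subseteq A\cup\{x\}$ contains some $A'\cup\{x\}$, whence $A'\subseteq A$ and the antichain property forces $A'=A$, so $B=A\cup\{x\}$ — and conversely every minimal element of $\T_x$ contains, hence equals, some $A\cup\{x\}$. Thus the minimal elements of $\T_x$ are precisely $\{A\cup\{x\}:A\in\A_x\}$, and deleting $x$ from each recovers $\A_x$. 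Since each $\A_x$ is reconstructed from $\C$, distinct collections $\bm\A$ yield distinct choice functions, establishing injectivity and completing the proof.
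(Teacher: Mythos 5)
Your proof is correct. There is, however, nothing in the paper to compare it against: the paper imports Theorem~\ref{thm:antichain-subcf} from~\cite{echenique2007counting} without proof, so your write-up supplies an argument the authors deliberately left out. For the record, your route is the standard one underlying Echenique's correspondence: the filter property of $\T_x$ is immediate from upward closure, substitutability follows purely formally from it (if $S\notin\T_b$ and $T\cup\{b\}\subseteq S$, upward closure forces $T\cup\{b\}\notin\T_b$), and injectivity is obtained by exhibiting a left inverse --- first recovering $\T_x=\{S\subseteq X:\ x\in S,\ x\notin\C(S)\}$ from the choice function, then recovering $\A_x$ as the inclusion-minimal elements of $\T_x$ with $x$ deleted, using that $\{A\cup\{x\}:A\in\A_x\}$ is an antichain generating $\T_x$ (here the hypothesis $\A_x\subseteq 2^{X\setminus\{x\}}$, i.e.\ $x\notin A$, is what makes both the incomparability and the deletion step valid, and you use it correctly). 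Your reconstruction also correctly separates the edge cases $\A_x=\emptyset$ (giving $\T_x=\emptyset$, which the paper's conventions admit as a filter at $x$) and $\A_x=\{\emptyset\}$ (giving the principal filter of $\{x\}$), so no two collections of antichains collide. The argument is complete as written.
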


Because of Theorem~\ref{thm:antichain-subcf}, let $\C[\bm\A]$ denote the substitutable choice function corresponding to the collection of antichains $\bm\A$ constructed by the statement of the theorem.

\begin{lemma} \label{lem:CMCF-construct}
    Let $(Y,W)$ be a partition of $X$ with $W=\{w\}$. Let $\bm\A=\{\A_x: x\in X\}$ be a collection of antichains such that (i) for all $x\in Y$, $\A_x=\emptyset$ and (ii) $\A_w$ is an antichain of $(2^Y,\subseteq)$. Then $\C[\bm\A]$ is consistent and cardinal monotone.
\end{lemma}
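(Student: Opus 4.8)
The plan is to first extract an explicit combinatorial description of the choice function $\C \coloneqq \C[\bm\A]$ from the formula in Theorem~\ref{thm:antichain-subcf}, and then verify the two properties directly from that description. The key observation I am aiming for is that the hypotheses force $\C(S)$ to agree with $S$ on everything except possibly the single element $w$, after which both properties become almost immediate.

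First I would compute $\T_x$ for each $x$. For $x\in Y$ we have $\A_x=\emptyset$, so there is no $A\in\A_x$ and hence $\T_x=\emptyset$; consequently $S\notin\T_x$ for every $S$, and every $x\in S\cap Y$ belongs to $\C(S)$. For $x=w$, since every $A\in\A_w$ satisfies $A\subseteq Y$, we have $S\in\T_w$ exactly when $w\in S$ and $A\subseteq S$ for some $A\in\A_w$. Combining the two cases yields $S\setminus\{w\}\subseteq\C(S)\subseteq S$ for all $S$, with $w\in\C(S)$ if and only if $w\in S$ and no $A\in\A_w$ satisfies $A\subseteq S$. In particular $\C(S)$ and $S$ differ in at most the element $w$, so $|S|-1\le|\C(S)|\le|S|$.

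For cardinal monotonicity, take $S\subseteq T$. If $S=T$ the inequality $|\C(S)|\le|\C(T)|$ is trivial; otherwise $|T|\ge|S|+1$, and the bound above gives $|\C(S)|\le|S|\le|T|-1\le|\C(T)|$. This settles cardinal monotonicity with no further work. For consistency, suppose $\C(S)\subseteq T\subseteq S$. Since $S\setminus\{w\}\subseteq\C(S)$, the set $T$ is squeezed between $S\setminus\{w\}$ and $S$, which differ only in $w$; hence $T=S$ or $T=S\setminus\{w\}$. In the first case $\C(T)=\C(S)$ trivially. In the second case $w\notin T$, so $T\subseteq Y$ and thus $\C(T)=T$ by the structural description; moreover the hypothesis $\C(S)\subseteq T=S\setminus\{w\}$ together with $S\setminus\{w\}\subseteq\C(S)$ forces $\C(S)=S\setminus\{w\}=T$, giving $\C(T)=T=\C(S)$.

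The only subtlety — and the step I would be most careful about — is the reduction in the second paragraph, in particular the boundary fact that an empty antichain yields an empty filter $\T_x$, so that no element of $Y$ is ever rejected. Once that structural description is in hand, both properties follow from the single remark that $\C$ can only delete $w$; I would also note that the antichain hypothesis on $\A_w$ plays no role in this lemma, as it is needed elsewhere only to guarantee injectivity of the encoding in Theorem~\ref{thm:antichain-subcf}.
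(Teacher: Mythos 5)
Your proof is correct and follows essentially the same route as the paper's: both rest on the observation that $\T_x=\emptyset$ for every $x\in Y$, hence $S\setminus\{w\}\subseteq\C(S)\subseteq S$ (the paper's fact $(\sharp)$, stated there as $S\cap Y\subseteq \C(S)$), after which consistency follows by the same squeeze argument ($T=S$ or $T=S\setminus\{w\}$) and cardinal monotonicity from $|\C(S)|\le|S|\le|T|-1\le|\C(T)|$. Your organization is marginally cleaner in that you avoid the paper's case split on $w\in T$ versus $w\notin T$, and your closing remark that the antichain condition on $\A_w$ is not used in this lemma (it serves only the injectivity in Theorem~\ref{thm:antichain-subcf}) is accurate.
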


\begin{proof}
    We abbreviate $\C:=\C[\bm\A]$. Let $\T_x$ be as defined in the statement of Theorem~\ref{thm:antichain-subcf}. That is, $\T_x= \emptyset$ for all $x \in Y$ and $\T_w$ is a filter at $w$. Hence, note that $S\cap Y\subseteq \C(S)$ for all $S \subseteq X$ ($\sharp$).
    
    Let $T\subseteq X$. We consider first the case when $w \notin T$. Then, $\C(T) = T$ because of ($\sharp$). Let $S\subseteq X$ be such that $\C(T)\subseteq S\subseteq T$. Then it must be that $S=T$ and it follows immediately that $\C(T)=\C(S)$. In addition, for all $S\subseteq T$, we also have $S\subseteq Y$ and thus, using ($\sharp$) again, $|\C(S)|= |S|\le |T| = |\C(T)|$. 
    
    We next consider the case when $ w\in T$. Then, either $\C(T) = T$ or $\C(T) = T\setminus \{w\}$, again because of ($\sharp$). We start with the consistency property. Assume we are in the former case, and let $S\subseteq X$ be such that $\C(T)\subseteq S\subseteq T$. Since $T=\C(T)$, we have $S=T$ and thus $\C(T)=\C(S)$. Now assume we are in the latter case: $\C(T)=T\setminus \{w\}$. If $S\subseteq X$ satisfies $\C(T)\subseteq S\subseteq T$, we either have $S=T$ or $S=T\setminus \{w\}$. Regardless, we have $\C(S)=\C(T)$. Lastly, we show the cardinality monotonicity property, and we consider both cases at once. For all $S\subsetneq T$, we either have $\C(S)=S$ or $\C(S)=S\setminus \{w\}$ due to ($\sharp$). Either way, $|\C(S)|\le |S| \le |T|-1\le |\C(T)|$. Hence, $\C$ is both consistent and cardinal monotone, concluding the proof. 
\end{proof}

Thus, a lower bound to the number of substitutable, consistent, and cardinal monotone choice functions can be obtained by counting the number of antichains. The problem of counting the number of antichains of a poset is called the \emph{Dedekind's problem}. Let $\mathcal{N}(k)$ denote the collection of antichains of poset $(2^{[k]},\subseteq)$. The following result is well-known and we include the proof for completeness.

\begin{lemma} \label{lem:dedekind-lb}
    $|\mathcal{N}(k)| \ge 2^{\binom{k}{\lfloor k/2 \rfloor}} = 2^{\Theta(2^k/ \sqrt{k})}$.
\end{lemma}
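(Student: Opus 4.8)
The plan is to exhibit a large family of antichains by exploiting the \emph{middle layer} of the Boolean lattice $(2^{[k]},\subseteq)$. First I would let $M \coloneqq \{S\subseteq [k]: |S|=\lfloor k/2\rfloor\}$ denote the collection of all subsets of $[k]$ of size $\lfloor k/2\rfloor$, so that $|M|=\binom{k}{\lfloor k/2\rfloor}$. The key observation is that any two distinct sets $S,S'\in M$ are incomparable under $\subseteq$: since $|S|=|S'|$ and $S\neq S'$, neither can contain the other. Hence $M$ is itself an antichain of $(2^{[k]},\subseteq)$.

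Next I would use the elementary fact that every subset of an antichain is again an antichain. Therefore each of the $2^{|M|}$ subsets of $M$ is an element of $\mathcal{N}(k)$, and distinct subsets of $M$ are distinct as antichains. This immediately yields
\[
    |\mathcal{N}(k)| \ge 2^{|M|} = 2^{\binom{k}{\lfloor k/2\rfloor}},
\]
establishing the first inequality of the lemma.

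For the asymptotic identity $2^{\binom{k}{\lfloor k/2\rfloor}} = 2^{\Theta(2^k/\sqrt{k})}$, I would invoke the standard estimate on the central binomial coefficient obtained from Stirling's approximation, namely $\binom{k}{\lfloor k/2\rfloor} = \Theta(2^k/\sqrt{k})$. Substituting this into the exponent gives the claimed bound. This last step is the only place requiring a genuine (though routine and well-known) calculation; the combinatorial heart of the argument—that the middle layer and all its subsets form antichains—is immediate. I therefore expect no substantive obstacle, with the only minor care needed in quoting the central-binomial estimate at the level of precision ($\Theta$) asserted in the statement.
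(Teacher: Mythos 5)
Your proposal is correct and follows essentially the same argument as the paper: both take the middle layer of $(2^{[k]},\subseteq)$, observe that any collection of equal-size subsets (in particular, any subset of the middle layer) is an antichain, count these $2^{\binom{k}{\lfloor k/2\rfloor}}$ subsets, and invoke Stirling's approximation for the asymptotic form of the exponent. No gaps; your explicit remark that distinct subsets of the middle layer give distinct antichains is a small clarity bonus over the paper's phrasing.
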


\begin{proof}
    Consider any two distinct subsets $A,B\subseteq X$ with $|A|=|B|$, then it must be that $A\setminus B\neq \emptyset$ and $B\setminus A\neq \emptyset$. Thus, a collection of subsets, each with the same size, is an antichain of $(2^{[k]},\subseteq)$. Therefore, the number of antichains of $(2^{[k]}, \subseteq)$ is at least the number of subsets of $\{A\subseteq X: |A|=\lfloor k/2 \rfloor\}$, which is exactly $2^{\binom{k}{\lfloor k/2 \rfloor}}$ since there are $\binom{k}{\lfloor k/2 \rfloor}$ subsets of $X$ with size $\lfloor k/2 \rfloor$. The last equality follows from Stirling's approximation.
\end{proof}
 
We now present the proof for Theorem~\ref{thm:CMCF-count}.

\begin{proof}[Proof of Theorem~\ref{thm:CMCF-count}]
    Let $(Y,W)$ be a partition of $X$ with $|Y|=n-1$ and $|W|=1$, as in the statement of Lemma~\ref{lem:CMCF-construct}. By Lemma~\ref{lem:dedekind-lb}, the possible choices of antichains $\A_x$ for $x\in W$ is at least $\mathcal{N}(n-1)$. Hence, the number of $\bm\A$ (i.e., collection of antichains) in the statement of Lemma~\ref{lem:CMCF-construct} is also at least $\mathcal{N}(n-1)$. Finally, together with Theorem~\ref{thm:antichain-subcf}, we have that the number of substitutable, consistent, and cardinal monotone choice functions is again at least $\mathcal{N}(n-1)$.
\end{proof}

\section{Concluding remarks} \label{sec:concluding}

Our results show that approaching stable matching problems by regarding their feasible regions as a distributive lattice leads to efficient optimization algorithms and a polyhedral description of the associated convex sets. Our study leaves some questions open and it poses research directions which we think are worth exploring. 

First, it is not clear if algorithms from Section~\ref{sec:algo} extend to the \textsc{CM-Model} -- or even beyond --  and if conversely the lower bound from Section~\ref{sec:representation-and-algo} extends to choice functions that are quota-filling. Second, there has been some recent work showing how feasible regions of certain problems in combinatorial optimization can be seen as a distributive lattice~\cite{garg2020predicate}. This fact, combined with our approach, may lead to (known or new) efficient algorithms for optimizing linear functions over the associated polytopes.

\medskip\noindent\textbf{Acknowledgements.} Yuri Faenza acknowledges support from the NSF Award 2046146 \emph{Career: An Algorithmic Theory of Matching Markets}. Xuan Zhang thanks the Cheung Kong Graduate School of Business (CKGSB) for their fellowship support. Part of this work was done when Yuri Faenza was a Visiting Scientist in the \emph{Online and Matching-Based Market Design program} at the Simons Institute in Fall 2019. The authors wish to thank Vijay Garg for pointing out to them the reference~\cite{garg2020predicate}, as well as the members of the OpLog division at UBC for stimulating discussions when the material from the current paper was presented there.

\bibliography{ref} 
\bibliographystyle{plainnat}

\end{document}